\documentclass[11pt,a4paper]{article}
\usepackage[a4paper,margin=26mm,headheight=14pt]{geometry}
\usepackage[T1]{fontenc}
\usepackage[utf8]{inputenc}
\usepackage{lmodern,microtype}
\usepackage{amsmath,amssymb,amsthm,mathtools}
\usepackage{xcolor}
\usepackage{xurl}
\usepackage[unicode,hidelinks]{hyperref}
\usepackage{fancyhdr}
\usepackage{bbm}
\usepackage{mathrsfs}

\usepackage[normalem]{ulem}
\newcommand{\eee}{{\rm e}}

\newcommand{\me}{\mathbb{E}}
\newcommand{\mn}{\mathbb{N}}
\newcommand{\mr}{\mathbb{R}}
\newcommand{\mmp}{\mathbb{P}}

\def\1{\mathbbm{1}}

\newcommand{\PP}{\mathbb P}

\newcommand{\ii}{\mathrm i}
\allowdisplaybreaks[2]
\numberwithin{equation}{section}
\newtheorem{theorem}{Theorem}[section]
\newtheorem{lemma}[theorem]{Lemma}
\newtheorem{proposition}[theorem]{Proposition}
\newtheorem{corollary}[theorem]{Corollary}
\theoremstyle{remark}

\newcommand{\Prob}{\mathbb P}
\newcommand{\F}{\mathcal F}
\newcommand{\Primes}{\mathcal P}

\newcommand{\dd}{\,\mathrm d}

\newcommand{\norm}[1]{\left\lVert#1\right\rVert}
\newcommand{\floor}[1]{\left\lfloor#1\right\rfloor}

\newcommand{\as}{\text{almost surely}}

\makeatletter
\newcommand{\rmflabelalias}[2]{%
  \expandafter\let\csname r@#1\expandafter\endcsname\csname r@#2\endcsname}
\AtBeginDocument{%
  \rmflabelalias{eq:martrepr}{eq:decomposition}%
  \rmflabelalias{eq:deterministic-Q-bounds}{eq:clockelementary}%
  \rmflabelalias{lem:deterministic-Q-bounds}{lem:clockelementary}%
  \rmflabelalias{a:eq:PNT}{eq:pnt}%
  \rmflabelalias{a:eq:orth}{eq:orth}%
  \rmflabelalias{a:eq:second}{eq:orthogon}%
  \rmflabelalias{b:eq:covinput}{eq:covinput}%
}
\makeatother

\begin{document}
\title{A law of the iterated logarithm for random multiplicative functions on integers with a large prime factor}\date{}
\author{Dariusz Buraczewski\footnote{Mathematical Institute, University of Wroclaw, Poland; e-mail address: dariusz.buraczewski@math.uni.wroc.pl} \ \ and \ \ Alexander Iksanov\footnote{Faculty of Computer Science and Cybernetics, Taras Shevchenko National University of Kyiv, Ukraine; e-mail address:
iksan@univ.kiev.ua}}
\maketitle

\begin{abstract}
Let $f$ be a Rademacher random multiplicative function supported on the squarefree integers. We investigate the almost sure fluctuations of the restricted sum $M^{>}(x)=\sum_{n\le x;~ P(n)>x^{1/2}}f(n)$, where $P(n)$
is the largest prime factor of $n$. With $Q(x)$ being the conditional
variance of $M^{>}(x)$ given the values of $f$ at primes not exceeding $x^{1/2}$, we prove that, %  $Q(x)=\sum_{x^{1/2}<p\le x}(M(x/p))^2$   We prove a self-normalized law of the
%iterated logarithm:
almost surely, the upper and lower limits of $M^{>}(x)(2Q(x)\log\log Q(x))^{-1/2}$ as $x\to\infty$ are $1$ and $-1$, respectively. %, where $Q(x)=\sum_{x^{1/2}<p\le x}(M(x/p))^2$ and
%$M(u)=\sum_{n\le u}f(n)$.
Furthermore, it is shown that the almost sure cluster set of the normalized sum is $[-1,1]$. %The normalization $Q(x)$ is the conditional
%variance of $M^{>}(x)$ given the values of $f$ at primes not exceeding $x^{1/2}$.
\end{abstract}

\noindent\textit{Key words:} % random sums;
conditional variance; law of the iterated logarithm;  probabilistic number theory; Rademacher random multiplicative function; random Euler product.
%restricted sum;

\noindent\textit{2020 Mathematics Subject Classification:}
Primary: 60F15;
Secondary: 11K65, 11N37, 60G50, 60G42.

\section{Introduction and main results}\label{sec:model}

Let $\Primes$ be the set of primes. On a probability space $(\Omega,\F,\Prob)$, let $(f(p))_{p\in\Primes}$ be independent random variables with $\mmp\{f(p)=\pm1\}=1/2$. A positive integer is \emph{squarefree} if no square of a prime divides it. Define $f(1)=1$ and, for $n\ge 2$, $$f(n)=\begin{cases}
\displaystyle\prod_{p\mid n}f(p),&n\text{ squarefree},\\
0,&n\text{ not squarefree}.
 \end{cases}
$$ Equivalently, $f(n)=(\mu(n))^2\prod_{p\mid n}f(p)$, where $\mu$ is the M\"obius function. Thus, for instance, $f(6)=f(2)f(3)$, $f(12)=0$ and $f(30)=f(2)f(3)f(5)$. The function $f$ so defined is called a
\emph{Rademacher random multiplicative function}. Put $$M(x)=\sum_{1\le n\le x}f(n),\quad x\in\mr$$ with the usual convention that the sum over the empty set is equal to $0$. Plainly, $M(x)=0$ for $x<1$ and $M(x)=M(\floor{x})$ whenever $x\geq 1$.

Let $P(n)$ be the largest prime factor of an integer $n\ge 2$, and put $P(1)=1$. For $x\geq 0$, define % real $x\ge 2$, put
\begin{equation*}
M^{>}(x):=\sum_{n\le x;~ P(n)>x^{1/2}}f(n)\quad\text{and}\quad Q(x):=\sum_{x^{1/2}<p\le x}(M(x/p))^2. %,\quad x>0.
\end{equation*}
All sums with the index $p$ are over primes. The sum $M^{>}$ is called a restricted sum. The restriction retains a positive proportion of the integers up to $x$, while permitting a particularly useful separation of the prime signs.

\subsection{A short survey of relevant works}

Wintner introduced this model in \cite{Wintner:1944} as a probabilistic counterpart of the M\"obius function. He proved that, for each $\varepsilon>0$, $M(x)=O(x^{1/2+\varepsilon})$ almost surely (a.s.), whereas $M(x)=O(x^{1/2-\varepsilon})$ fails a.s. The former limit relation % result
is a probabilistic counterpart of the fact that the Riemann hypothesis is equivalent to $\sum_{n\leq x}\mu(n)=O(x^{1/2+\varepsilon})$ for each $\varepsilon>0$. These results determine the power of $x$, but leave open the size of fluctuations on the scale $x^{1/2}$. The problem of finding a counterpart of the law of the iterated logarithm (LIL) for $M(x)$, properly normalized, goes back to Wintner. % Erd\H{o}s on p.~251 in~\cite{Erdos:1961} subsequently asked whether $(n\log\log n)^{-1/2}M(n)$ has a finite positive deterministic upper limit a.s. Here, the normalization is the same as in the classical LIL for a standard random walk with independent identically distributed increments of zero mean and positive finite variance, although the values $f(n)$ are dependent.

Hal\'asz in \cite{Halasz:1983} strengthened the earlier upper bounds by proving that, for some $C>0$, $$|M(x)|=O\big(x^{1/2}\exp\{C(\log\log x\log\log\log x)^{1/2}\}\big), \quad x\to\infty\quad\text{a.s.}$$ Significantly later, Basquin in \cite{Basquin:2012} and, independently, Lau, Tenenbaum and Wu in \cite{Lau+Tenenbaum+Wu:2013} % subsequently
obtained, for each $\varepsilon>0$,
\begin{equation}\label{eq:wu}
|M(x)|=O\big(x^{1/2}(\log\log x)^{2+\varepsilon}\big)\quad\text{a.s.}
\end{equation}
Caich in \cite{Caich:2025+} reduced the exponent $2+\varepsilon$ to $3/4+\varepsilon$. More recently, Durkan and Pearce-Crump \cite{DurkanPearceCrump} proved that, for each $\varepsilon>0$,
$$|M(x)|=O\big(x^{1/2}(\log\log x)^{1/4+\varepsilon}\big),\quad x\to\infty\quad\text{a.s.}$$ Verreault \cite{Verreault} further refined the upper bound to $O\big(x^{1/2}(\log\log x)^{1/4}(\log\log\log x)^{1+\varepsilon}\big)$, for each $\varepsilon>0$.

In the opposite direction, Harper in \cite{Harper:2023} proved that, for any fixed deterministic function $V$ with $\lim_{x\to\infty}V(x)=\infty$,
$$|M(x)|\geq \frac{x^{1/2}(\log\log x)^{1/4}}{V(x)}\quad\text{a.s.}$$ for arbitrarily large $x$. Together, these results determine the optimal exponent $1/4$ of $\log\log x$, but do not identify an exact deterministic LIL normalization, nor a limit constant. Taking $\varepsilon<1/4$ in the upper bound of Durkan and Pearce-Crump % also
gives $\lim_{n\to\infty}(n\log\log n)^{-1/2}M(n)= 0$ a.s., %, thereby answering Erd\H{o}s's question in the negative.
showing that the classical LIL normalization is too large for this model. A more extensive account of these developments is given in the accompanying paper \cite{Buraczewski+Iksanov:accompanying} (in preparation).

%Let $P(n)$ be the largest prime factor of $n\ge2$, and put $P(1)=1$.
%For real $x\ge2$, define
%\begin{equation}\label{eq:defs}
% M^{>}(x):=\sum_{n\le x;\,P(n)>x^{1/2}}f(n)
% \quad\text{and}\quad
% Q(x):=\sum_{x^{1/2}<p\le x}(M(x/p))^2.
%\end{equation}
%The sum $M^{>}$ is called a restricted sum. The restriction retains a positive proportion of the integers up to $x$, while permitting a particularly useful separation of the prime signs. It has played a
%central role in the investigation of both almost sure fluctuations and distributional limits.

Limit theorems for the restricted sums have also received considerable attention. Mastrostefano proved in~ Theorem~1.1 of~\cite{Mastrostefano:2022} that, for each $\varepsilon>0$, $$|M^{>}(x)|=O\big(x^{1/2}(\log\log x)^{1/4+\varepsilon}\big),\quad x\to\infty\quad\text{a.s.}$$ His result is also valid for Steinhaus random multiplicative functions (completely multiplicative functions whose prime values are independent and uniform on the unit circle). The proof combines conditioning on the small primes with estimates for the resulting random variance. Large prime subsums also enter Harper's lower-bound argument, see Section~3.1 and Proposition~2 in \cite{Harper:2023}, where conditional covariance estimates allow one to find sufficiently many weakly correlated sums. Thus, this restriction isolates a part of the multiplicative sum which already exhibits the fluctuations relevant to the full problem.

For Steinhaus random multiplicative functions, Hardy in \cite{Hardy:2025} proved that the restricted sum, multiplied by $x^{-1/2}(\log\log x)^{1/4}$, converges in distribution to a complex Gaussian scale mixture. In Theorem~2 of the same paper, Hardy also obtained an approximation of the conditional variance by an Euler product integral for both the Steinhaus and Rademacher models. The corresponding distributional limit for the full Steinhaus sum was subsequently proved by Gorodetsky and Wong in \cite{Gorodetsky+Wong:2025}. These results describe convergence in distribution as $x\to\infty$. Our purpose is to determine the a.s.\ fluctuations of the Rademacher restricted sum as $x$ varies.

\subsection{Main results}

Denote by $\F_y=\sigma(f(p):~p\le y)$ the $\sigma$-algebra generated by the collection $(f(p))_{p\leq y}$. We write $\me_y$ and $\mmp_y$ for conditional expectation and conditional probability given $\F_y$, respectively.
If $n\le x$ has a prime factor $p>x^{1/2}$, then $n=pm$ with $m\le x/p<x^{1/2}<p$. In particular, $p$ is unique and $p\nmid m$. Consequently $f(pm)=f(p)f(m)$ and
\begin{equation}\label{eq:decomposition}
M^{>}(x) =\sum_{x^{1/2}<p\le x}f(p)M(x/p).
\end{equation}
Conditionally on $\F_{x^{1/2}}$, the coefficients $M(x/p)$ in this representation are fixed and the signs $(f(p))_{p>x^{1/2}}$ remain independent. Hence
\begin{equation*}
\me_{x^{1/2}}[M^{>}(x)]=0\quad\text{and}\quad \me_{x^{1/2}}\big[(M^{>}(x))^2\big]=Q(x)\quad\text{a.s.}
\end{equation*}

The restriction $p>x^{1/2}$ ensures that the coefficients $M(x/p)$ depend only on the signs $f(q)$ with $q\le x^{1/2}$. Thus, conditionally on $\F_{x^{1/2}}$, the representation
\eqref{eq:decomposition} is a sum of independent Rademacher variables with fixed coefficients and variance $Q(x)$. This gives exponential tail estimates at each fixed $x$.
To prove an LIL, we must also control the sum uniformly in $x$ and show that large excursions occur infinitely often. For the lower bound of $\limsup$, we compare sums at many different points.
These sums share prime signs, and conditional covariance estimates allow us to control their dependence.

The random variance retains the contribution of the small-prime signs to the size of the sum. This is consistent with the role of random variances in the distributional results just discussed and with
Harper's low-moment estimates \cite{Harper:2020}, which show that the second moment alone does not describe the typical size of a multiplicative sum. In the present article we use $Q(x)$ itself in the
normalization and obtain the classical signed LIL constants.

Here is our main result.
\begin{theorem}\label{thm:main}
We have
\begin{equation*}
\limsup_{x\to\infty}\frac{M^{>}(x)}{\big(2Q(x)\log\log Q(x)\big)^{1/2}} =1\quad\as,
\end{equation*}
with the corresponding lower limit being equal to $-1$.
\end{theorem}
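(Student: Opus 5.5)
By the symmetry $f\mapsto -f$, which preserves the law of $(f(p))_p$ and leaves $Q$ unchanged, it suffices to prove the statement about the upper limit. The proof splits into an upper bound $\limsup\le 1$ and a lower bound $\limsup\ge 1$. Both rest on the representation \eqref{eq:decomposition}: conditionally on $\F_{x^{1/2}}$, $M^{>}(x)$ is a Rademacher sum with fixed coefficients $M(x/p)$ and variance $Q(x)$. I would first record deterministic and almost sure facts about $Q$. It is a function of $\lfloor x/p\rfloor$ for $p\in(x^{1/2},x]$, so it changes only at integers and is piecewise constant. By the known upper bounds (e.g.\ the Durkan--Pearce-Crump estimate together with a lower bound of Harper type for conditional variances), $\log Q(x)\sim\log x$ a.s. Hence $\log\log Q(x)\sim\log\log x$, and I may replace $\log\log Q(x)$ by $\log\log x$. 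I would also need a regularity statement: for $x\le y\le x(1+\delta_x)$ with $\delta_x\to0$ suitably slowly, $Q(y)/Q(x)\to1$ a.s. Equivalently, the increments $\sum_{x^{1/2}<p\le x}(M(y/p)-M(x/p))^2$, plus the primes near $x^{1/2}$, are $o(Q(x))$. I would prove this via second-moment bounds: $\me[(M(y/p)-M(x/p))^2]\ll (y-x)/p$, which gives an $L^1$ bound $\ll (y-x)\log\log$, compared with a lower bound for $Q(x)$ of order $x/(\log\log x)^{1/2}$ holding with high probability.

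\emph{Upper bound.} Take a geometric grid $x_k=\theta^k$ and, inside each block, a finer net of $K_k$ points. At each net point Hoeffding's inequality conditionally on $\F_{x^{1/2}}$ gives
\[
\mmp_{x^{1/2}}\{M^{>}(x)>(1+\varepsilon)(2Q(x)\log\log x)^{1/2}\}\le (\log x)^{-(1+\varepsilon)^2}.
\]
Summing over the net, with $K_k$ polylogarithmic in $k$, and applying Borel--Cantelli handles the net points. Between net points I would control $M^{>}(y)-M^{>}(x)$. This difference involves both changes of coefficients and the primes in $(y^{1/2},x^{1/2}]$ and $(x,y]$. I would bound it by a conditional maximal inequality (Lévy/Ottaviani, or a chaining bound for conditionally sub-Gaussian processes indexed by $y$) to show it is $o((Q(x)\log\log x)^{1/2})$. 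This is made awkward by the fact that the conditioning $\sigma$-algebra itself moves with $x$. I would handle that by conditioning on $\F_{x_k^{1/2}}$ for the whole block and treating the few primes in $(x_k^{1/2},x^{1/2}]$ as additional, separately estimated terms.

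\emph{Lower bound.} This is the main obstacle. I would choose sparse points $x_k=\exp(k^{1+\eta})$, or even $\exp(k^{2})$, so that the sums $M^{>}(x_k)$ are nearly independent. The large primes $(x_k^{1/2},x_k]$ are disjoint across $k$ once $x_{k+1}^{1/2}>x_k$, so the $k$-th sum uses fresh signs, while its coefficients depend only on $\F_{x_k^{1/2}}\supset\F_{x_{k-1}}$. Thus the sums are adapted to the filtration $\mathcal G_k=\F_{x_k}$, and conditionally on $\mathcal G_{k-1}\vee\F_{x_k^{1/2}}$ each is a Rademacher sum with variance $Q(x_k)$. I would then use the conditional Borel--Cantelli lemma (Lévy's extension). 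It requires the conditional lower tail bound
\[
\mmp_{x_k^{1/2}}\{M^{>}(x_k)>(1-\varepsilon)(2Q(x_k)\log\log x_k)^{1/2}\}\ge (\log x_k)^{-(1-\varepsilon/2)^2}=k^{-(1+\eta)(1-\varepsilon/2)^2},
\]
whose sum diverges for $\eta$ small. The difficulty is that a Rademacher sum satisfies such a Gaussian-type lower tail only if no single coefficient dominates, i.e.\ $\max_p M(x/p)^2=o(Q(x)/\log\log x)$. I would verify this a.s.\ from $|M(u)|\ll u^{1/2}(\log\log u)^{1/4+\varepsilon}$ against $Q(x)\gg x(\log\log x)^{-1/2-\varepsilon}$ a.s. I expect this lower bound on $Q$ along the sparse sequence, obtained via Harper-type low-moment estimates plus Borel--Cantelli, to be the hardest input. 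Given it, a Kolmogorov-type exponential lower bound, or a Berry--Esseen/Cramér moderate deviation for weighted Rademacher sums, yields the displayed estimate. Lévy's Borel--Cantelli then gives $\limsup\ge 1-\varepsilon$. Letting $\varepsilon\downarrow0$ and using the symmetry completes the proof.
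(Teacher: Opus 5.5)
\textbf{The lower bound has a genuine gap.} The disjointness you rely on, $x_{k+1}^{1/2}>x_k$, fails for both of your sequences. For $x_k=\exp(k^{1+\eta})$ (or $\exp(k^2)$) one has $(k+1)^{1+\eta}<2k^{1+\eta}$, so $x_{k+1}^{1/2}<x_k$. In fact $M^{>}(x_k)$ shares large primes with $\asymp k$ of the later sums. Hence the claim that, given $\F_{x_{k-1}}\vee\F_{x_k^{1/2}}$, $M^{>}(x_k)$ is a Rademacher sum with variance $Q(x_k)$ is false. Conditioning on $\F_{x_{k-1}}$ freezes the signs $f(p)$ for $x_k^{1/2}<p\le x_{k-1}$. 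The fresh primes in $(x_{k-1},x_k]$ carry only a proportion $\asymp\log(\log x_k/\log x_{k-1})\asymp 1/k$ of $\me[Q(x_k)]$.

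Nor can you make the sequence sparser. If $x_{k+1}\ge x_k^2$, then $\log x_k\ge c\,2^k$, and \eqref{eq:conditional-tail} gives $\mmp\{|M^{>}(x_k)|>c'(2Q(x_k)\log\log x_k)^{1/2}\}\le 2(\log x_k)^{-c'^2}$. This is summable for every $c'>0$, so along any such sequence the normalized sums tend to $0$ a.s. Divergence of $\sum_k(\log x_k)^{-(1-\varepsilon/2)^2}$ forces $\log x_k$ to grow only polynomially in $k$, so $x_{k+1}<x_k^2$ for most $k$, and then the sums are strongly dependent. Unlike the classical LIL there is no increment structure here, so a sequential Lévy--Borel--Cantelli scheme with one point per scale cannot work.

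The missing idea is to use many points at the \emph{same} scale and to prove that their sums are nearly uncorrelated. The paper takes $\mathcal X_r=\{\exp(T_r+k):0\le k\le T_r^{1-\lambda}\}$, $T_r=4^r$, and conditions on the common $\sigma$-algebra $\F_{a_r}$, $a_r=\exp((T_r+T_r^{1-\lambda})/2)$. This freezes only the part $U_r(x)$ from primes in $(x^{1/2},a_r]$, with $\me[U_r(x)^2]=O(xT_r^{-\lambda})$. The remaining sums $V_r(x)$ use the same fresh signs $(f(p))_{p>a_r}$, with conditional covariances $C_r(x,y)=\sum_{a_r<p\le\min(x,y)}M(x/p)M(y/p)$.

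The key input is \eqref{eq:covinput}: with probability tending to one, each $x$ has at most $(\log a_r)^\kappa$ partners $y$ with $|C_r(x,y)|>(\log\log a_r)^{-3/5}(xy)^{1/2}$. It is proved through a Perron representation of $C_r(x,y)$ by the random Euler product $F_{a_r}$, barrier events and high-moment bounds, and nothing in your proposal substitutes for it. A greedy selection then yields about $T_r^{1-\lambda-\kappa}$ weakly correlated sums. Their maximum exceeds $(1-d)(2\log|\mathcal S_r|)^{1/2}$ standard deviations with conditional probability tending to one (Lemma~\ref{lem:rademacher-max}). Since $\log|\mathcal S_r|\approx(1-\lambda-\kappa)\log\log x$, the conditional Borel--Cantelli lemma across blocks finishes the argument.

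By contrast, the inputs you flag as hardest are routine. The deterministic bound $Q(x)\ge x/(3\log x)$, together with $\max_p|M(x/p)|\le\max_{u\le x^{1/2}}|M(u)|\le x^{1/4+o(1)}$ a.s., already rules out dominating coefficients.

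Your upper-bound outline has the paper's structure, but Lévy/Ottaviani inequalities do not apply to $y\mapsto M^{>}(y)$. The paper's interpolation instead uses that $M^{>}(y)-M^{>}(z)=\sum_p f(p)(a_p(y)-a_p(z))$ has $\F_{p^-}$-predictable coefficients, so \eqref{eq:predictable-tail} applies once $d(z,y)$ is controlled in $L^2$. This is supplemented by a hypercontractive fourth-moment maximal bound on very fine cells.
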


The normalization is well defined for all sufficiently large $x$. Lemma~\ref{lem:clockelementary} entails $\log\log Q(x)\sim\log\log x$ as $x\to\infty$. Thus, $\log\log Q(x)$ in Theorem~\ref{thm:main} can be replaced by $\log\log x$.

For a random function $A$, denote by ${\rm Clust}_{x\to\infty}A(x)$ its cluster set as $x\to\infty$, understood realization-wise. Write $\mathcal C$ for the cluster set of the normalized function in
Theorem~\ref{thm:main}. The theorem gives $\sup\mathcal C=1$ and $\inf\mathcal C=-1$ a.s. Our next result identifies all its points.
\begin{corollary}\label{cor:cluster}
We have
\begin{equation}\label{eq:cluster}
 {\rm Clust}_{x\to\infty}
 \frac{M^{>}(x)}{\big(2Q(x)\log\log Q(x)\big)^{1/2}}
 =[-1,1]\quad\text{almost surely}.
\end{equation}
\end{corollary}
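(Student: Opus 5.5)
The statement to prove is Corollary~\ref{cor:cluster}. Theorem~\ref{thm:main} gives $\sup\mathcal C=1$ and $\inf\mathcal C=-1$, so $\mathcal C\subseteq[-1,1]$ almost surely. The plan is to obtain the reverse inclusion from a continuity argument. The cluster set of a function of a real variable $x$ is exactly the set of limit points of its values. Suppose the normalized function
\[
R(x):=\frac{M^{>}(x)}{\big(2Q(x)\log\log Q(x)\big)^{1/2}}
\]
takes values near $1$ and near $-1$ at arbitrarily large $x$, and its increments between consecutive jump points tend to $0$ almost surely. Then every point of $[-1,1]$ is a cluster point by an intermediate-value argument.

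\textbf{Reduction to integers.} Both $M^{>}$ and $Q$ are step functions of $x$, constant on $[N,N+1)$ except where $\lfloor x^{1/2}\rfloor$ or the sets $\{p: x^{1/2}<p\le x\}$ change, or where some $\lfloor x/p\rfloor$ changes. Define the jump points $x_1<x_2<\cdots$ as all $x$ at which either function changes. I would instead compare $R$ at an arbitrary $x$ with its values at nearby points directly. This works because on a real interval $[a,b]$ with $R(a)\le c\le R(b)$, if every jump of $R$ inside has size at most $\delta$, then some point has $|R(x)-c|\le\delta$.

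\textbf{Key step: small increments.} I must show $\sup_{y\in(x,x+1]}|R(y)-R(x)|\to0$ a.s.; I expect this to be the main obstacle. The denominator is of order at least $x^{1/2}$ up to powers of $\log\log x$, by Lemma~\ref{lem:clockelementary}, and it varies continuously enough that its ratio over $(x,x+1]$ tends to $1$. I would try to verify this from the same lemma's bounds on $Q$. So it suffices to show that $|M^{>}(y)-M^{>}(x)|=o(x^{1/2})$ uniformly for $y\in(x,x+1]$. Between $x$ and $y$, the difference consists of two parts: the terms $f(n)$ with $n\in(x,y]$, of which there are at most one or two, and the integers $n\le x$ whose largest prime factor $P(n)$ lies between $x^{1/2}$ and $y^{1/2}$. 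The interval $(x^{1/2},y^{1/2}]$ has length at most $x^{-1/2}$, so it contains at most one prime $p$. That prime contributes $\pm f(p)M(x/p)$ with $x/p\approx x^{1/2}$. By \eqref{eq:wu}, this term is $O(x^{1/4}(\log\log x)^{3})=o(x^{1/2})$ almost surely. Hence $R(y)-R(x)\to0$ uniformly on windows of length $1$, and in particular all jumps of $R$ are $o(1)$.

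\textbf{Conclusion.} Fix $c\in(-1,1)$ and $\varepsilon>0$. By Theorem~\ref{thm:main}, almost surely there are arbitrarily large $a<b$ with $R(a)<c$ and $R(b)>c$, or with the roles reversed. Since the jumps of $R$ on $[a,b]$ are eventually below $\varepsilon$, there is a point $x\in[a,b]$ with $|R(x)-c|<\varepsilon$. Thus $c\in\mathcal C$. The points $\pm1$ belong to $\mathcal C$ because the limsup and liminf are attained along sequences. Since $\mathcal C$ is closed, this gives $\mathcal C=[-1,1]$ on a single event of probability one, independent of $c$.
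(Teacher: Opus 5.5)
Your architecture matches the paper's: show that the increments of the normalized sum tend to $0$ almost surely, then run an intermediate-value (upcrossing) argument using Theorem~\ref{thm:main}. Your treatment of the numerator is correct. Across $(x,y]$ with $y\le x+1$, at most one $\pm1$ term enters, and at most one prime $p$ with $x<p^2\le y$ leaves, contributing $-f(p)M(p-1)=O(x^{1/4}(\log\log x)^3)$ by \eqref{eq:wu}.

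The gap is in the denominator $D(x):=(2Q(x)\log\log Q(x))^{1/2}$. You assert $D(y)/D(x)\to1$ uniformly for $y\in(x,x+1]$ and propose to verify this from Lemma~\ref{lem:clockelementary}. That lemma only gives $x/(3\log x)\le Q(x)\le 2x^{3/2}$. This handles the factor $\log\log Q$, whose ratio does tend to $1$, but it says nothing about $Q(y)/Q(x)$. For that you need the size of the jumps of $Q$ itself, and the dangerous ones are exactly those you met in the numerator. At $n=p^2$ the coordinate $a_p$ drops out of the coefficient vector, and $Q$ falls by $(M(p-1))^2$. With only the trivial bound $|M(p-1)|\le p$, this drop could be comparable to $Q(p^2)$.

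The paper closes this with the triangle inequality in $\ell^2$, $|(Q(n))^{1/2}-(Q(n-1))^{1/2}|\le d(n-1,n)=:h_n$. It then observes that $h_n\le 1$ unless $n=p^2$, in which case $h_n=|M(p-1)|$. Then \eqref{eq:LTW} and $Q(n)\ge n/(3\log n)$ give $h_n/(Q(n))^{1/2}\to0$, hence $D(n-1)/D(n)\to1$. So the repair uses the same almost-sure bound you already invoked, applied to the whole coefficient vector $\mathbf a(\cdot)$ rather than only to $M^{>}$.

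There are two smaller points. First, passing from small numerator increments and $D(x)/D(y)\to1$ to small increments of the normalized sum $R$ uses
\[
R(y)-R(x)=\frac{M^{>}(y)-M^{>}(x)}{D(y)}+R(x)\Big(\frac{D(x)}{D(y)}-1\Big),
\]
so you also need $R$ to be eventually bounded. This follows from Theorem~\ref{thm:main}, but you should say so. Second, Lemma~\ref{lem:clockelementary} only yields $D(x)\gtrsim(x/\log x)^{1/2}$, that is, $x^{1/2}$ up to powers of $\log x$, not of $\log\log x$. So reducing to $|M^{>}(y)-M^{>}(x)|=o(x^{1/2})$ is not sufficient as stated, although the bound $O(x^{1/4}(\log\log x)^3)$ you actually obtain is more than enough.
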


The random normalization in Theorem~\ref{thm:main} reflects the small-prime environment. For each fixed $x$, $Q(x)$ is the predictable quadratic variation of the martingale obtained by revealing the large-prime signs in \eqref{eq:decomposition}. Normalizing by this variance recovers the constants $1$ and $-1$ of the classical LIL. This is consistent with the role of self-normalization in martingale limit theory, see \cite{delaPena+Lai+Shao:2009,HallHeyde}. For the Steinhaus model, deterministically normalized restricted and full sums converge in distribution to Gaussian scale mixtures \cite{Gorodetsky+Wong:2025, Hardy:2025}. Theorem~\ref{thm:main} provides an almost sure analogue of this principle for the restricted Rademacher sum, with the random variance entering the normalization and the limit constants remaining deterministic.

%: the random variance enters the normalization, while the extremal constants are deterministic.

{Theorem~\ref{thm:main} can also be viewed as an arithmetic counterpart of Stout's LIL for martingales normalized by their conditional variances \cite{Stout:1970} and of self-normalized LILs, see, for instance, \cite{delaPena+Lai+Shao:2009}. Although $(M^{>}(x))_{x\geq 1}$ is not a martingale with respect to the filtration $(\F_x)_{x\geq 1}$, a martingale structure emerges when $x$ is  fixed. Indeed, conditionally on $\F_{x^{1/2}}$, the coefficients $M(x/p)$ in \eqref{eq:decomposition} are fixed, and the partial sums obtained by successively revealing the signs $f(p)$ for $x^{1/2}<p\leq x$, in increasing order of $p$, form a martingale with terminal value $M^{>}(x)$ and terminal predictable quadratic variation $Q(x)$. This structure depends on $x$, since both the coefficients and the range of summation vary with $x$, and summands may enter or leave the sum, see \eqref{eq:S-entry-exit-short}. %This structure depends on $x$: as $x$ varies, both the coefficients and the range of summation change, and summands may enter or leave the sum, see \eqref{eq:S-entry-exit-short}.
Thus, the classical martingale LIL does not directly yield the desired a.s. % almost sure
asymptotics as $x\to\infty$. To pass from this fixed-$x$ structure to an LIL, we need the interpolation argument of Section~\ref{proof:prop2.1} for the upper bound and the analysis of conditional covariances via random Euler products in Sections~\ref{sect:proofnovel} and~\ref{sect:proofcov} for the lower bound.

In the accompanying paper \cite{Buraczewski+Iksanov:accompanying} we prove the counterpart of Theorem~\ref{thm:main} for the full sum $M(x)$. The normalization there uses its full predictable quadratic variation
$$\sum_{p\le x}\Big(\sum_{\substack{m\le x/p\\P(m)<p}}f(m)\Big)^2$$ with respect to the filtration obtained by revealing the prime signs. We also show that this full quadratic variation has no deterministic
asymptotic equivalent, even in probability along a deterministic subsequence. Furthermore, we prove in \cite{Buraczewski+Iksanov:accompanying} that the a.s.\ upper limit of $$\frac{|M(n)|}{n^{1/2}(\log\log n)^{1/4}}$$ cannot be a finite deterministic constant. The restricted sum theorem isolates the conditional independence and covariance analysis needed for this approach. The full sum lower bound requires an additional analysis of the
contributions from smaller prime factors, since its quadratic variation is larger than $Q(x)$. We expect that the main ingredients of our proof, namely, the uniform interpolation estimate of Proposition~\ref{prop:interp}, which relies on fourth-moment bounds for the entering and leaving summands, and the selection of weakly correlated sums by means of high moments of integrals of random Euler products, can be adapted to other random sums possessing a similar conditionally independent structure, for instance, to restricted sums of Steinhaus random multiplicative functions.

The remainder of the paper is organized as follows. Section~\ref{sect:upper} is concerned with the upper bound for $\limsup$ in Theorem~\ref{thm:main}. The proof rests heavily upon the uniform interpolation
estimate stated in Proposition~\ref{prop:interp} and proved in Section~\ref{proof:prop2.1}. Its proof uses several auxiliary results, most notably the fourth-moment bound of Lemma~\ref{lem:fourth}, obtained using hypercontractivity \cite{Bonami,ODonnell2014}. The lower bound for $\limsup$ in Theorem~\ref{thm:main} is obtained in Section~\ref{sect:lower}, and Corollary~\ref{cor:cluster} is proved in Section~\ref{sect:cor}.
Finally, the conditional covariance estimate \eqref{eq:covinput}, which is the core of the argument for the lower bound, is proved in Section~\ref{sect:proofcov}. The auxiliary results needed for its proof, including
numerous number-theoretic estimates, are established in Section~\ref{sect:proofnovel}.

\section{Proof of Theorem~\ref{thm:main} \texorpdfstring{--}{--} the upper bound}\label{sect:upper}
The first part of the paper is concerned with the upper bound for $\limsup$.
\begin{proposition}\label{prop:upper bound}
We have
\begin{equation}\label{eq:upper sup}
\limsup_{x\to\infty}\frac{|M^{>}(x)|}{(2Q(x)\log\log Q(x))^{1/2}}\leq 1\quad\text{{\rm a.s.}}
\end{equation}
%and $$\liminf_{x\to\infty}\frac{M^{>}(x)}{D(x)}\leq -1\quad\text{{\rm a.s.}}$$
\end{proposition}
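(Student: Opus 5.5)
The plan is to discretize $x$ along a geometric-type grid, prove the bound at grid points by conditional sub-Gaussian tail estimates and Borel--Cantelli, and then interpolate between grid points. The interpolation step is expected to be the main difficulty.

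\textbf{Step 1: grid points.} Fix $\theta>0$ small and choose a grid $x_k$. The density must be such that $\sum_k(\log k)^{-(1+\delta)}$-type series converge at the LIL level, yet consecutive points are close enough for interpolation. A natural first choice is $x_k=\exp(k^{\beta})$ or $x_k=\lfloor(1+\theta)^{k}\rfloor$-like. At each grid point, conditionally on $\F_{x_k^{1/2}}$, the representation \eqref{eq:decomposition} is a Rademacher sum with fixed coefficients and variance $Q(x_k)$. Hoeffding's inequality therefore gives
\begin{equation*}
\mmp_{x_k^{1/2}}\Big\{|M^{>}(x_k)|>\big(2(1+\varepsilon)Q(x_k)\log\log Q(x_k)\big)^{1/2}\Big\}\le 2\exp\{-(1+\varepsilon)\log\log Q(x_k)\}.
\end{equation*}
Since $Q(x_k)$ is random, use Lemma~\ref{lem:clockelementary}, which gives $\log\log Q(x)\sim\log\log x$ a.s. This lets us replace $\log\log Q(x_k)$ by $(1-\varepsilon')\log\log x_k$ on an event of probability tending to one, made almost sure through a deterministic lower bound $Q(x)\ge x^{1-o(1)}$ eventually. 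Taking expectations and summing, the bound $\sum_k(\log x_k)^{-(1+\varepsilon'')}<\infty$ holds for polynomially spaced $\log x_k$, e.g.\ $\log x_k=k^{\beta}$ with any $\beta>0$. Conditional Borel--Cantelli, or plain Borel--Cantelli after taking expectations, then yields $\limsup_k$ of the normalized $|M^{>}(x_k)|\le(1+\varepsilon)^{1/2}$.

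\textbf{Step 2: interpolation.} Here I would invoke the uniform interpolation estimate, Proposition~\ref{prop:interp}. I expect it to state that
\begin{equation*}
\max_{x_k\le x\le x_{k+1}}|M^{>}(x)-M^{>}(x_k)|=o\big((Q(x_k)\log\log x_k)^{1/2}\big)\quad\text{a.s.},
\end{equation*}
together with $Q(x)/Q(x_k)\to1$ uniformly on the block. If the grid is too sparse for this, I would refine it by inserting intermediate points $x_{k,j}$. At those points the Step~1 bound holds with a union bound, at a cost absorbed by $\varepsilon$. The increments between refined points come from primes entering and leaving near $x^{1/2}$ and $x$, see \eqref{eq:S-entry-exit-short}, and from changes in the coefficients $M(x/p)$. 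These would be controlled by the fourth-moment bound of Lemma~\ref{lem:fourth}, via hypercontractivity, Markov's inequality and a chaining/dyadic maximal argument over the block.

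\textbf{Step 3: conclusion and main obstacle.} Combining Steps~1 and~2 gives the bound with $(1+\varepsilon)^{1/2}+o(1)$. Letting $\varepsilon\downarrow0$ along a countable sequence yields \eqref{eq:upper sup}.

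The main obstacle is Step~2. The process $x\mapsto M^{>}(x)$ is not a martingale in $x$, and both its coefficients and the conditioning $\sigma$-algebra move with $x$. Control of the maximum over a block must therefore come from moment bounds on increments rather than a maximal inequality. The grid must also be balanced carefully: dense enough that the fourth-moment increment bounds are summable, and sparse enough for Step~1.
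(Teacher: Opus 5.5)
Your outline matches the paper's: grid points, conditional Hoeffding and Borel--Cantelli, then interpolation. Step~1 is essentially Proposition~\ref{prop:grid}, apart from one slip. With $\log x_k=k^\beta$, the series $\sum_k k^{-\beta(1+\varepsilon'')}$ converges only for $\beta>1/(1+\varepsilon'')$, not for every $\beta>0$. Since the logarithmic spacing must also tend to $0$, this forces about $T^{1+\alpha}$ grid points per block $[\eee^T,\eee^{2T}]$, with log-spacing $T^{-\alpha}$ and $\alpha<\varepsilon$. This is a hard budget, because each point fails with probability about $T^{-1-\varepsilon}$. Step~2, however, only postulates a relative interpolation estimate, and two ingredients it needs are missing.

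The first missing ingredient is a lower bound on $Q$. At log-spacing $T^{-\alpha}$, the increment $M^{>}(y)-M^{>}(c)$ between neighbouring grid points has $L^2$-norm of order $(cT^{-\alpha})^{1/2}$; by orthogonality it counts the integers entering or leaving the sum. So you cannot expect an interpolation error much below $x^{1/2}T^{-\alpha/2}$, up to factors $T^{o(1)}$. If all you know is $Q(x)\ge x/(3\log x)$, the normalization is only guaranteed to exceed a constant times $(x\log T/T)^{1/2}$. The error is then negligible only if $\alpha$ is at least about $1$, which the budget forbids for small $\varepsilon$. The same problem undermines your claim $Q(x)/Q(x_k)\to1$: the available tool is $|Q(x)^{1/2}-Q(z)^{1/2}|\le d(x,z)$, which helps only if $Q$ is nearly of order $x$. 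The paper supplies this through Lemma~\ref{lem:lowerclock}: for each fixed $b>0$, almost surely $Q(x)\ge xT^{-b}$ on the whole block, for all large dyadic $T$. This is a genuinely random input, derived from Harper's estimates \eqref{eq:HarperDet}--\eqref{eq:HarperProb} and Borel--Cantelli. Proposition~\ref{prop:interp} gives only the absolute error $2x^{1/2}T^{-\alpha/12}$; it is Lemma~\ref{lem:lowerclock} with $b=\alpha/12$ that makes this error negligible.

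The second missing ingredient is the interpolation mechanism itself, and your fallback would fail. You cannot add intermediate points where the Step~1 bound holds beyond the $T^{1+\alpha}$ budget. At log-spacing $T^{-\alpha}$, fourth moments are useless. Lemma~\ref{lem:fourth} with $v-u\approx uT^{-\alpha}$ gives $\me[\sup(\cdots)^4]\le C u^2T^{52/3-4\alpha/3}$, so Markov's inequality at level $u^{1/2}T^{-b}$ gives a bound larger than $1$. Chaining cannot repair this, because the polylogarithmic loss from hypercontractivity already sits in the bound for a single interval. Fourth moments are affordable only on cells of tiny log-length (the paper uses $T^{-120}$, giving $T^{121}$ cells per block), far too many points for Step~1.

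The paper bridges the two scales differently. Take a fine-grid point $y$ and the coarse point $c\le y$ next to it. Then $M^{>}(y)-M^{>}(c)=\sum_p f(p)b_p$ with $b_p=a_p(y)-a_p(c)$, and each $b_p$ is $\F_{p^-}$-measurable because $y/p<p$. The supermartingale bound \eqref{eq:predictable-tail} then bounds $\mmp\{|M^{>}(y)-M^{>}(c)|>c^{1/2}T^{-\alpha/12},\ d(c,y)^2\le cT^{-\alpha/3}\}$ by $2\exp(-T^{\alpha/6}/2)$. This survives a union bound over $T^{121}$ points. The event $\{d(c,y)^2\le cT^{-\alpha/3}\}$ is shown to hold eventually via Lemmas~\ref{lem:distance}, \ref{lem:shifts} and~\ref{lem:R}. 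These use only second moments (hence no logarithmic loss), primes in short intervals and the almost sure bound \eqref{eq:LTW}. This is also why Lemma~\ref{lem:fourth} controls the coefficient distance $d(u,x)$ alongside the increments of $M^{>}$.
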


We begin with the main idea of the proof of Proposition \ref{prop:upper bound}. We work on logarithmic blocks $[\eee^T, \eee^{2T}]$, where $T=2^r$ for $r\in\mn:=\{1,2,\ldots\}$. We also put $\mn_0:=\mn\cup\{0\}$. In what follows, we refer to such $T$ as dyadic $T$. On each block we choose a geometric grid with logarithmic spacing $T^{-\alpha}$. At a fixed grid point $z$, conditioning on
$\F_{\sqrt z}$ makes the coefficients $M(z/p)$ in \eqref{eq:martrepr} fixed, while the signs $f(p)$ for $p>z^{1/2}$ remain independent. The conditional exponential bound of
Lemma~\ref{lem:expo}, followed by a union bound over the grid, gives the desired upper estimate there, see Proposition~\ref{prop:grid}.

The main difficulty is extending this estimate to each real $x$ in the block. We choose a nearby grid point $z\le x$ and control both $M^{>}(x)-M^{>}(z)$ and the distance between their coefficient vectors. For $p\in\Primes$ and $x\ge 2$, put $a_p(x):=M(x/p)\1_{\{p>x^{1/2}\}}$. Then $$M^{>}(x)=\sum_p f(p)a_p(x)\quad\text{and}\quad Q(x)=\sum_p(a_p(x))^2.$$ Further, we put
\begin{equation}\label{eq:distance}
\mathbf a(x):=(a_p(x))_{p\in\Primes}\quad\text{and}\quad d(x,z):=\norm{\mathbf a(x)-\mathbf a(z)}_2=\Big(\sum_p (a_p(x)-a_p(z))^2\Big)^{1/2}
\end{equation}
for $x,z\geq 2$. For fixed $x$, the vector $\mathbf a(x)$ has a finite support, since $a_p(x)=0$ when $p>x$. The large prime decomposition also gives
\begin{equation}\label{eq:avector}
\mathbf a(x)= \sum_{\substack{2\le n\le x\\ n\text{ squarefree}\\ P(n)>x^{1/2}}}f\!\left(\frac{n}{P(n)}\right)e_{P(n)},
\end{equation}
where $e_p$ is the standard unit vector with $1$ in the coordinate indexed by the prime $p$ and $0$ in all other coordinates. % the $p$th coordinate vector in $\ell^2(\Primes)$.
In particular, $Q(x)=\norm{\mathbf a(x)}_2^2$, and an application of the triangle inequality yields
$\big|(Q(x))^{1/2}-(Q(z))^{1/2}\big|\le d(x,z)$. A finer auxiliary grid then makes the control uniform in $x$, see Proposition~\ref{prop:interp}. Finally, the lower bound for $Q(x)$ in Lemma~\ref{lem:lowerclock} shows that the interpolation errors are negligible on the LIL scale.

Fix $0<\beta<1/2$ and put $\alpha=\beta/2$. For each dyadic $T$, define
\begin{equation}\label{eq:coarse}
C_T:=\{\eee^{T+kT^{-\alpha}}:~0\le k\le\lfloor T^{1+\alpha} \rfloor\}\cup\{\eee^{2T}\}.
\end{equation}
Thus $|C_T|=O(T^{1+\alpha})$, and each $x\in[\eee^T, \eee^{2T}]$ has a grid point below it at logarithmic distance at most $T^{-\alpha}$. Here and hereafter, for a finite discrete set $A$, we write interchangeably $|A|$ or $\#A$ for the number of its elements.
\begin{proposition}\label{prop:grid}
For each fixed $0<\beta<1/2$, almost surely, for all sufficiently large dyadic $T$ and each $z\in C_T$,
\begin{equation*}
|M^{>}(z)|\leq (1+\beta)(2Q(z)\log\log Q(z))^{1/2}.
\end{equation*}
\end{proposition}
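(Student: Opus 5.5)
At each fixed grid point $z\in C_T$ I will use the conditional exponential bound of Lemma~\ref{lem:expo}, then sum over the grid and over dyadic $T$ and apply Borel--Cantelli. Fix $z$. Conditionally on $\F_{z^{1/2}}$, representation \eqref{eq:decomposition} writes $M^{>}(z)=\sum_{z^{1/2}<p\le z}f(p)M(z/p)$ as a Rademacher sum with fixed coefficients and variance $Q(z)$. Hoeffding's inequality (which is what I expect Lemma~\ref{lem:expo} to state) gives, for every $\F_{z^{1/2}}$-measurable $\lambda\ge0$,
\begin{equation*}
\mmp_{z^{1/2}}\big\{|M^{>}(z)|>\lambda\, Q(z)^{1/2}\big\}\le 2\exp(-\lambda^2/2).
\end{equation*}
I will take $\lambda=(1+\beta)(2\log\log Q(z))^{1/2}$ on the event $\{Q(z)\ge \eee^{\eee}\}$. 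This gives the conditional bound $2(\log Q(z))^{-(1+\beta)^2}$.

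The random $Q(z)$ in the threshold has to be replaced by something deterministic. For this I will use the lower bound of Lemma~\ref{lem:lowerclock}, in the form: almost surely, for all large dyadic $T$ and all $z\in[\eee^T,\eee^{2T}]$ in the grid, $Q(z)\ge z^{c}$ for some fixed $c>0$. The precise statement might be $Q(z)\ge z(\log z)^{-A}$, and any polynomial lower bound suffices. Let $G_T$ be the event that this bound holds for every $z\in C_T$. On $G_T$ we have $\log Q(z)\ge cT$. Therefore
\begin{equation*}
\mmp\Big(\{|M^{>}(z)|>(1+\beta)(2Q(z)\log\log Q(z))^{1/2}\}\cap G_T\Big)\le \me\Big[\1_{\{\log Q(z)\ge cT\}}\,2(\log Q(z))^{-(1+\beta)^2}\Big]\le 2(cT)^{-(1+\beta)^2}.
\end{equation*}
The indicator $\1_{\{\log Q(z)\ge cT\}}$ is $\F_{z^{1/2}}$-measurable, so the conditional bound can be applied inside the expectation. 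This way I never need the event $G_T$ itself to be $\F_{z^{1/2}}$-measurable.

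Next comes the union bound over $z\in C_T$. Since $|C_T|=O(T^{1+\alpha})$ with $\alpha=\beta/2$, the bound for block $T$ is
\begin{equation*}
O\big(T^{1+\beta/2-(1+\beta)^2}\big)=O\big(T^{-3\beta/2-\beta^2}\big).
\end{equation*}
With $T=2^r$ this is summable over $r$, because the exponent is negative. Borel--Cantelli then shows that almost surely only finitely many blocks contain a violating grid point together with $G_T$. Since $G_T$ holds for all large $T$ almost surely by Lemma~\ref{lem:lowerclock}, the proposition follows. The same conditional bound also handles the upper tail and the lower tail simultaneously.

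The main obstacle is the replacement of $\log\log Q(z)$ by something of order $\log T$ uniformly over the grid. This is exactly where the lower bound on $Q$ enters. Without it, small values of $Q(z)$ would make $(\log Q(z))^{-(1+\beta)^2}$ too large to beat the $T^{1+\alpha}$ grid points. The choice $\alpha=\beta/2<\beta$ is what leaves the margin $T^{1+\alpha-(1+\beta)^2}$ summable. Note that no upper bound on $Q$ is needed for this step.
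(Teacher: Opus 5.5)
Your proposal is correct and follows the paper's route: the conditional Hoeffding bound of Lemma~\ref{lem:expo} at each grid point, a union bound over $C_T$ with exponent $1+\alpha-(1+\beta)^2<0$, and Borel--Cantelli over dyadic $T$. The only difference is where the lower bound on $\log Q(z)$ comes from. The paper uses the deterministic estimate $Q(x)\ge x/(3\log x)$ of Lemma~\ref{lem:clockelementary}, which holds because each prime in $(x/2,x]$ contributes $1$; this gives $(\log Q(z))^{-(1+\beta)^2}\le C_\beta T^{-(1+\beta)^2}$ surely. Your auxiliary event $G_T$, the intersection trick and the appeal to Lemma~\ref{lem:lowerclock} (which rests on Harper's estimates) are therefore valid but unnecessary.
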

For a proof of Proposition \ref{prop:grid}, we need an observation regarding orthogonality and two auxiliary results. The sequence $(f(n))_{n\geq 1}$ satisfies
\begin{equation}\label{eq:orth}
\me[f(n)f(m)]=(\mu(n))^2\1_{\{n=m\}}.
\end{equation}
This shows that the variables $f(1)$, $f(2)$, $f(3),\ldots$ are orthogonal. Hence
\begin{equation}\label{eq:orthogon}
\me[(M(x))^2]=\sum_{1\leq n\leq x}\me [(f(n))^2]=\sum_{1\leq n\leq x}(\mu(n))^2\leq x,\quad x\geq 0
\end{equation}
and thereupon
\begin{equation*}
\me [Q(x)]=\sum_{x^{1/2}<p\le x}\sum_{m\le x/p}(\mu(m))^2=\#\{n\le x:~n\text{ squarefree},\ P(n)>x^{1/2}\}\le x,\quad x\geq 0.
\end{equation*}
Here, the second equality follows from the unique representation $n=P(n)m$. Indeed, if $x^{1/2}<p\leq x$ and $m\leq x/p$, then $m<x^{1/2}<p$. Thus $p$ does not divide $m$, and $pm$ is squarefree if, and only if, $m$ is squarefree. Moreover, $P(pm)=p$.

For later use, we note that, by the fundamental maximal inequality for partial sums of orthogonal random variables (see, for instance, Theorem 2.3.1 on p.~18 in~\cite{Stout:1974}), $$\me\big[\max_{u\leq v}(M(u))^2\big]=\me\big[\max_{1\leq u\leq \lfloor v\rfloor}(M(u))^2\big]\leq v\Big(\frac{\log (4v)}{\log 2}\Big)^2.$$ Furthermore, by Markov's inequality,
\begin{equation}\label{eq:maxM}
\mmp\{\max_{u\leq b_r}|M(u)|>b_r^{2/3}\}=O\big(b_r^{-1/3}(\log b_r)^2\big)=o(1),\quad r\to\infty
\end{equation}
for any sequence $(b_r)_{r\geq 1}$ diverging to $+\infty$.

Recall that, with $y(s)>0$ for all sufficiently large $s$, $x(s)=\Theta(y(s))$ as $s\to\infty$ means that $|x(s)|/y(s)$ is bounded and bounded away from zero for all sufficiently large $s$.
\begin{lemma}\label{lem:clockelementary}
For all sufficiently large $x$,
\begin{equation}\label{eq:clockelementary}
\frac{x}{3\log x}\le Q(x)\le 2x^{3/2}.
\end{equation}
Consequently, uniformly in $\eee^T \leq x \leq \eee^{2T}$,
\begin{equation}\label{eq:logclock}
\log Q(x)=\Theta(T)\quad\text{and}\quad \log\log Q(x)=\log T+O(1)=\log\log x+O(1).
\end{equation}
\end{lemma}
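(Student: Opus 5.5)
The statement is deterministic: the bounds \eqref{eq:clockelementary} hold for every realization of $f$, since $|f(n)|\le 1$ and $f(1)=1$ surely. I would prove the two inequalities in \eqref{eq:clockelementary} directly from the definition $Q(x)=\sum_{x^{1/2}<p\le x}(M(x/p))^2$, and then read off \eqref{eq:logclock} by taking logarithms.

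\emph{Upper bound.} Since $|f(n)|\le 1$, we have $|M(u)|\le u$ for $u\ge 0$. Hence
\[
Q(x)\le x^2\sum_{p>x^{1/2}}p^{-2}\le x^2\sum_{n>x^{1/2}}n^{-2}\le \frac{x^2}{\lfloor x^{1/2}\rfloor}.
\]
The right-hand side is at most $2x^{3/2}$ once $x\ge 4$, because then $\lfloor x^{1/2}\rfloor\ge x^{1/2}-1\ge x^{1/2}/2$.

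\emph{Lower bound.} I keep only the primes $p\in(x/2,x]$. For $x\ge 4$ these satisfy $p>x^{1/2}$, because $x/2\ge x^{1/2}$. They also satisfy $1\le x/p<2$, so $M(x/p)=M(1)=f(1)=1$. All other summands are nonnegative, so
\[
Q(x)\ge \pi(x)-\pi(x/2).
\]
By the prime number theorem, $\pi(x)-\pi(x/2)\sim x/(2\log x)$. In particular $\pi(x)-\pi(x/2)\ge x/(3\log x)$ for all sufficiently large $x$. Since no fine information about $M$ is needed, this step is the only place where any arithmetic input enters, and it is a mild one.

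\emph{Consequences.} For $\eee^T\le x\le \eee^{2T}$ we have $T\le\log x\le 2T$. The two bounds give
\[
\log x-\log(3\log x)\le \log Q(x)\le \tfrac32\log x+\log 2,
\]
so $T/2\le \log Q(x)\le 3T+1$ for all large $T$, uniformly in $x$ in the block. This is $\log Q(x)=\Theta(T)$. Taking logarithms once more gives $\log\log Q(x)=\log T+O(1)$. The same argument applied to $\log x\in[T,2T]$ gives $\log\log x=\log T+O(1)$. Both $O(1)$ terms are bounded by $\log 4$ uniformly in the block, which establishes \eqref{eq:logclock}.
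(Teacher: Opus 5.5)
Your proof is correct and follows essentially the same route as the paper's: the lower bound keeps only the primes in $(x/2,x]$, where $M(x/p)=1$, and uses the prime number theorem; the upper bound uses $|M(u)|\le u$ and a tail sum of $n^{-2}$; and \eqref{eq:logclock} follows by taking logarithms. You also make explicit the elementary tail estimate $\sum_{n>x^{1/2}}n^{-2}\le 1/\lfloor x^{1/2}\rfloor$ and the uniform $O(1)$ constants, which the paper leaves implicit.
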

\begin{proof}
For $x>4$, we have $(x/2, x]\subset (x^{1/2}, x]$, whence $Q(x)\geq \sum_{x/2<p\leq x}(M(\lfloor x/p\rfloor))^2$. Under the last sum, $\lfloor x/p\rfloor=1$ and consequently $(M(\lfloor x/p\rfloor))^2=(f(1))^2=1$. With $\pi(x)$ denoting the number of prime numbers that do not exceed $x$, this proves that
\begin{equation*} %\label{eq:primes}
Q(x)\geq \pi(x)-\pi(x/2)\geq x/(3\log x)
\end{equation*}
for all sufficiently large $x$. Here, the last inequality is secured by the prime number theorem. Since $|M(u)|\le u$ for $u\ge 1$, $$Q(x)\le x^2\sum_{p>x^{1/2}}p^{-2} \le x^2\sum_{n>x^{1/2}}n^{-2}\leq 2x^{3/2}$$ for all sufficiently large $x$. %Distinct squarefree integers give orthogonal products of prime signs. More precisely, if $\mu$ denotes
%the M\"obius function,
%\begin{equation}\label{eq:ortho}
%\E[f(m)f(n)]=\1_{\{m=n\}}\mu^2(n).
%\end{equation}
%Using ... for the first equality and the unique representation $n=P(n)m$ for the second, we infer
%$$\me [Q(x)]=\sum_{x^{1/2}<p\le x}\sum_{m\le x/p}(\mu(m))^2=\#\{n\le x:~n\text{ squarefree},\ P(n)>x^{1/2}\}\le x.$$ The second equality follows from the unique representation
%$n=pm$ with $p=P(n)$.
The last part of the lemma follows from \eqref{eq:clockelementary}.
\end{proof}

For each prime $p$, put $\mathcal F_{p^-}:=\sigma(f(q):~q\in\mathcal P,~q<p)$.

\begin{lemma}\label{lem:expo}
For each fixed sufficiently large $x$ and any positive $\F_{x^{1/2}}$-measurable random variable $v$,
\begin{equation}\label{eq:conditional-tail}
\PP_{x^{1/2}}\big\{|M^{>}(x)|\ge v\big\} \le 2\exp\big(-v^2/(2Q(x))\big)\quad\text{almost surely}.
\end{equation}
Also, if $(b_p)_p$ is a family of real random variables such that $b_p=0$ for all primes $p$ outside a deterministic
finite set $\mathcal P_1$, with $b_p$ measurable with respect to $\F_{p^-}$ for $p\in\mathcal P_1$, then, for deterministic $t,v>0$,
\begin{equation}\label{eq:predictable-tail}
\PP\Big\{\Big|\sum_p f(p)b_p\Big|\geq t,~ \sum_p b_p^2\le v\Big\} \le 2\exp\big(-t^2/(2v)\big).
\end{equation}
\end{lemma}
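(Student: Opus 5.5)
Both bounds come from the exponential supermartingale built from Rademacher signs, together with the elementary inequality $\cosh u\le \eee^{u^2/2}$, $u\in\mr$.

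\emph{Bound \eqref{eq:conditional-tail}.} Fix a large $x$. The coefficients $M(x/p)$, $x^{1/2}<p\le x$, are $\F_{x^{1/2}}$-measurable, and the signs $f(p)$ with $p>x^{1/2}$ are independent of $\F_{x^{1/2}}$. Hence, for any $\F_{x^{1/2}}$-measurable $\lambda\ge 0$,
\begin{equation*}
\me_{x^{1/2}}\big[\eee^{\lambda M^{>}(x)}\big]=\prod_{x^{1/2}<p\le x}\cosh\big(\lambda M(x/p)\big)\le \eee^{\lambda^2Q(x)/2}\quad\text{a.s.}
\end{equation*}
To justify this with random $\lambda$, first take $\lambda$ deterministic. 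The identity then holds by independence, e.g.\ by the standard lemma that $\me[g(X,Y)\mid\mathcal G]=\me[g(x,Y)]|_{x=X}$ for $X$ $\mathcal G$-measurable and $Y$ independent of $\mathcal G$. Applying the same lemma with $X=(\lambda,(M(x/p))_p)$ handles random $\lambda$.

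The conditional Markov inequality then gives
\begin{equation*}
\PP_{x^{1/2}}\{M^{>}(x)\ge v\}\le \eee^{-\lambda v+\lambda^2Q(x)/2}.
\end{equation*}
We choose $\lambda=v/Q(x)$, which is $\F_{x^{1/2}}$-measurable; here $Q(x)>0$ for large $x$ by Lemma~\ref{lem:clockelementary}. This yields the bound $\eee^{-v^2/(2Q(x))}$. Replacing $f$ by $-f$ (equivalently, $M^{>}$ by $-M^{>}$) bounds the lower tail in the same way, and adding the two tails gives the factor $2$.

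\emph{Bound \eqref{eq:predictable-tail}.} Order the primes of the finite set $\mathcal P_1$ increasingly as $p_1<\dots<p_k$. Put $S_j=\sum_{i\le j}f(p_i)b_{p_i}$ and $V_j=\sum_{i\le j}b_{p_i}^2$. For a deterministic $\lambda>0$ define
\begin{equation*}
Z_j=\exp\big(\lambda S_j-\lambda^2V_j/2\big).
\end{equation*}
The variable $b_{p_j}$ is $\F_{p_j^-}$-measurable, and $f(p_j)$ is independent of $\F_{p_j^-}$. Therefore
\begin{equation*}
\me[Z_j\mid\F_{p_j^-}]=Z_{j-1}\cosh(\lambda b_{p_j})\,\eee^{-\lambda^2b_{p_j}^2/2}\le Z_{j-1}.
\end{equation*}
Since $Z_{j-1}$ is $\F_{p_{j-1}}$-measurable and $\F_{p_{j-1}}\subseteq\F_{p_j^-}$, taking expectations iteratively gives $\me Z_k\le Z_0=1$.

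On the event $\{S_k\ge t,\ V_k\le v\}$ we have $Z_k\ge \eee^{\lambda t-\lambda^2v/2}$. Markov's inequality therefore gives
\begin{equation*}
\PP\{S_k\ge t,\ V_k\le v\}\le \eee^{-\lambda t+\lambda^2 v/2}.
\end{equation*}
Since $t$ and $v$ are deterministic, we may take $\lambda=t/v$, which gives $\eee^{-t^2/(2v)}$. The symmetric argument with $-b_p$ in place of $b_p$ controls $\{S_k\le -t\}$, and adding the two bounds gives the factor $2$.

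The argument has no real obstacle. The only point needing care is the measurability bookkeeping: $\lambda$ must be conditionally deterministic in the first part, and the second part needs $\me[Z_j\mid\F_{p_j^-}]\le Z_{j-1}$, which uses the strict predictability $b_p\in\F_{p^-}$ and the independence of $f(p)$ from $\F_{p^-}$.
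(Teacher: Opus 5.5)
Your proof is correct and follows essentially the same route as the paper. For \eqref{eq:predictable-tail} it is the same exponential supermartingale argument with $\lambda=t/v$; for \eqref{eq:conditional-tail} the only difference is that you write out the conditional Chernoff bound with the $\F_{x^{1/2}}$-measurable parameter $\lambda=v/Q(x)$, whereas the paper simply applies Hoeffding's inequality conditionally on $\F_{x^{1/2}}$.
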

\begin{proof}
Although \eqref{eq:conditional-tail} follows from Hoeffding's inequality, see, for instance, Lemma 2.3 in \cite{Aymone etal:2020}, and \eqref{eq:predictable-tail} can be obtained by the standard supermartingale argument underlying Hoeffding’s inequality, we provide details.

Noting that, conditionally on  $\F_{x^{1/2}}$, the variables $f(p)$ for $p>x^{1/2}$ are independent and that $v$ and the coefficients $M(x/p)$ are $\F_{x^{1/2}}$-measurable, hence fixed upon conditioning, an application of the Hoeffding inequality yields $$\PP_{x^{1/2}}\big\{M^{>}(x)\ge v\big\} \le \exp\big(-v^2/(2Q(x))\big)\quad\text{a.s.}$$ Applying the same argument to $-M^{>}(x)$ and then adding the two conditional bounds we arrive at \eqref{eq:conditional-tail}.

To prove \eqref{eq:predictable-tail}, fix $\theta>0$. Since $b_p$ is $\F_{p^-}$-measurable and $f(p)$ is independent of $\F_{p^-}$, we have
$$\me_{p^-}\big[\exp\big(\theta f(p)b_p-\theta^2b_p^2/2\big)\big]=\exp(-\theta^2b_p^2/2)\cosh(\theta b_p)\leq 1,\quad p\in\mathcal P_1.$$
Here, $\me_{p^-}$ denotes the conditional expectation given $\F_{p^-}$. With this at hand, successive conditioning gives $$\me\Big[\exp\Big(\theta\sum_p f(p)b_p-\frac{\theta^2}{2}\sum_p b_p^2\Big)\Big]\leq 1.$$ Consequently, by Markov's inequality,
\begin{multline*}
\mmp\Big\{\sum_p f(p)b_p\geq t,~ \sum_p b_p^2\leq v\Big\}\leq \mmp\Big\{\exp\Big(\theta \sum_p f(p)b_p-(\theta^2/2)\sum_p b_p^2\Big)\geq \exp(\theta t-\theta^2v/2)\Big\}\\\leq \exp(-\theta t+\theta^2v/2)
\me\Big[\exp\Big(\theta \sum_p f(p)b_p-(\theta^2/2)\sum_p b_p^2\Big)\Big]\leq \exp(-\theta t+\theta^2v/2).
\end{multline*}
Choosing $\theta=t/v$ we obtain the upper bound $\exp(-t^2/(2v))$. Applying the same argument to $(-b_p)_p$ and adding the resulting bounds proves \eqref{eq:predictable-tail}.
\end{proof}
\begin{proof}[Proof of Proposition~\ref{prop:grid}]
For all sufficiently large $T$ and each $z\in C_T$, the variable $(1+\beta)(2Q(z)\log\log Q(z))^{1/2}$ is positive and $\F_{z^{1/2}}$-measurable. Using \eqref{eq:conditional-tail} for the first inequality and \eqref{eq:logclock} for the second, we infer $$\mmp_{z^{1/2}}\big\{|M^{>}(z)|>(1+\beta)(2Q(z)\log\log Q(z))^{1/2}\big\} \leq 2 (\log Q(z))^{-(1+\beta)^2}\leq C_\beta T^{-(1+\beta)^2}$$
uniformly for $z\in C_T$, for a deterministic constant $C_\beta>0$. Taking expectations and then a union bound over the deterministic grid $C_T$ yields
$$\mmp\big\{\exists z\in C_T:~ |M^{>}(z)|>(1+\beta)(2Q(z)\log\log Q(z))^{1/2}\big\}\le C_\beta' T^{1+\alpha-(1+\beta)^2}.$$ The exponent is negative since $\alpha=\beta/2<(1+\beta)^2-1$. For $T=2^r$, these probabilities are summable in $r$. Invoking the direct part of the Borel-Cantelli lemma completes the proof.
\end{proof}

Lemma~\ref{lem:lowerclock} bounds $Q$ uniformly from below. Its proof is given, together with that of formula \eqref{eq:var v+w}, in Section~\ref{sect:aux}.
\begin{lemma}\label{lem:lowerclock}
For each fixed $b>0$, almost surely, for all sufficiently large dyadic $T$, simultaneously for all $x\in[\eee^T, \eee^{2T}]$,
\begin{equation*}
Q(x)\ge xT^{-b}.
\end{equation*}
\end{lemma}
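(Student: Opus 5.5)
The plan is to reduce the statement to a lower bound for a single random series that does not depend on $x$, and then to control that series with the random Euler product. Lemma~\ref{lem:clockelementary} already gives $Q(x)\ge x/(3\log x)\ge x/(6T)$ from the primes in $(x/2,x]$, so the case $b\ge1$ is immediate. For $b<1$ we must also use the primes $p\in(x/(k+1),x/k]$ with $k\ge2$, on which $M(x/p)=M(k)$.

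\emph{Step 1 (deterministic reduction).} Fix a small $\delta>0$. Restricting to $k\le x^{\delta}$ gives
$$Q(x)\ \ge\ \sum_{k\le x^{\delta}}\big(\pi(x/k)-\pi(x/(k+1))\big)\,(M(k))^2 .$$
For $k\le x^{\delta}$, the interval $(x/(k+1),x/k]$ has length about $y^{1-\delta'}$ with $y=x/k$ and some $\delta'=\delta'(\delta)<5/12$. Huxley's prime number theorem in short intervals therefore gives $\pi(x/k)-\pi(x/(k+1))\ge x/(2k^2\log x)$ for all large $x$, uniformly in such $k$. Since $\log x\le 2T$ on the block, and since $x^{\delta}\ge\eee^{\delta T}$ while the summands are nonnegative, we obtain
$$Q(x)\ \ge\ \frac{x}{4T}\,S(\eee^{\delta T})\quad\text{for all } x\in[\eee^T,\eee^{2T}],\qquad\text{where } S(K):=\sum_{k\le K}\frac{(M(k))^2}{k^2}.$$
This bound is uniform in $x$, so the ``simultaneously for all $x$'' requirement costs nothing. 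It therefore suffices to show that $\sum_r\mmp\{S(\eee^{\delta 2^r})<T^{1-b}\}<\infty$ with $T=2^r$, and then to apply Borel--Cantelli. Only a polylogarithmic bound in $T$ is needed for these probabilities. Note that $\me S(K)\asymp\log K\asymp T$, so we may lose any factor $T^{b}$ and still succeed.

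\emph{Step 2 (Euler product).} Put $\sigma=1/2+1/\log K$ with $K=\eee^{\delta T}$. By Plancherel,
$$\sum_{k\ge1}\frac{(M(k))^2}{k^{2\sigma+1}}\asymp\int_{\mr}\frac{|F(\sigma+\ii t)|^2}{|\sigma+\ii t|^2}\dd t,\qquad F(s)=\prod_p\big(1+f(p)p^{-s}\big).$$
Let $A$ be a large constant. The tail $k>K^{A}$ of the left-hand sum has expectation $O(\log K\,\eee^{-cA})$, and the range $K<k\le K^{A}$ changes $S$ only by a constant factor after replacing $K$ by $K^{A}$, i.e.\ $\delta$ by $A\delta$. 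So it suffices to bound the integral from below by $T^{1-b/2}$ outside an event of probability $O(T^{-2})$.

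For the integral, I would split $|t|\le 1/\log K$ from the rest and apply Jensen's inequality to $\log|F|$ on suitably chosen windows. On each window, $\log|F(\sigma+\ii t)|=\Re\sum_p f(p)p^{-\sigma-\ii t}+O(1)$ is a sum of independent bounded terms. Averaged over the window, its variance is $O(\log\log K)$, and it has subgaussian tails. Hence each window contributes at least $\exp(-C(\log\log K)^{1/2}\cdot\text{small})$ times its length, outside an event of probability $\exp(-c(\log\log K)^{\ldots})$.

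\emph{Main obstacle.} The delicate point is extracting the growth of order $\log K$, rather than merely a constant, from the Euler-product integral with a lower tail summable in $r$. A naive argument, either Jensen on a single window or the block-by-block Littlewood--Offord bound obtained by conditioning on $\F_N$ (so that $M$ on $(N,2N]$ is a fixed path plus a Rademacher walk over the primes in $(N,2N]$), only yields $S\gtrsim1$ and hence $Q(x)\gtrsim x/T$. To gain the factor $T^{1-b}$, I would aggregate about $T$ nearly independent contributions: the dyadic ranges $k\in(2^j,2^{j+1}]$, $j\le\delta T/\log2$, each of which should typically contribute $\gg(\log\log K)^{-1/2}$. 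Conditioning successively on $\F_{2^{j/2}}$ makes these contributions a martingale-type sequence whose conditional means are bounded below with probability $\ge1/2$. A Freedman/Chernoff bound over the $\asymp T$ blocks then gives at least $cT$ good blocks with probability $1-\eee^{-cT}$. The per-block conditional lower bound would come from the conditional variance estimate (the same structure as $Q$, one scale down) together with a Paley--Zygmund argument; I expect this to be the hardest step.
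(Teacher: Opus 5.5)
Your Step~1 is correct. It is an elementary replacement for the deterministic half \eqref{eq:HarperDet} of Harper's Proposition~1, which the paper uses. For $k\le x^{\delta}$, the primes in $(x/(k+1),x/k]$ exceed $x^{1/2}$ and satisfy $M(x/p)=M(k)$. A short-interval prime number theorem then gives $Q(x)\ge x\,S(\eee^{\delta T})/(5T)$ uniformly on the block. Since $S(K)\ge\int_1^K (M(u))^2u^{-2}\,\dd u$, the proof would be finished by the probabilistic half \eqref{eq:HarperProb}. Applied with $W\asymp(\log T)^{1/100}$, it gives failure probabilities $O(\eee^{-c(\log T)^{1/100}})$, which are summable over $T=2^r$; this is exactly how the paper concludes. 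The genuine gap is that you do not establish this lower bound, and it is the entire probabilistic content of the lemma. One must show $\int_1^K (M(u))^2u^{-2}\,\dd u\ge(\log K)^{1-o(1)}$ with summable failure probability. This is nontrivial because $\me|F(\sigma+\ii t)|^2\asymp\log K$ comes from rare large values of the Euler product, the multiplicative-chaos effect behind the $(\log\log x)^{1/4}$ savings.

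The mechanisms you sketch do not supply this bound. As you note, Jensen's inequality on windows only recovers bounded contributions. It would have to be applied on windows of length $\asymp1/\log K$, together with a first-order lower bound for the maximum of the log-correlated field $\operatorname{Re}\sum_{p\le K}f(p)p^{-\sigma-\ii t}$, $|t|\le1$, and you do not provide that.

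The block-aggregation argument is circular. Given $\F_{2^{j/2}}$, the conditional mean of the contribution of $k\in(2^j,2^{j+1}]$ is governed by $\sum_{p>2^{j/2}}(M(k/p))^2$. This is a quantity of the same type as $Q$ one scale down, and its natural lower bound (your Step~1 at scale $2^j$) is of order $j^{-1}S(2^{\delta j})$. Requiring it to be $\gg(\log\log K)^{-1/2}$ is therefore the same statement at a smaller scale. The ``good block'' events are functions of one common small-prime environment, so they are strongly positively correlated, not conditionally fresh trials of probability at least $1/2$. Freedman's inequality needs precisely a lower bound on the sum of these conditional means. Indeed, a bound $S\gtrsim T(\log T)^{-1/2}$ outside probability $\eee^{-cT}$ should not be expected at all, since this normalized quantity is expected to have a nondegenerate chaos-type limit law (compare \cite{Hardy:2025,Gorodetsky+Wong:2025}).

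A smaller issue concerns the tail $k>K^A$. With $\sigma-1/2=1/\log K$ and $A$ fixed, Markov's inequality only bounds the probability that this tail exceeds $T^{1-b/2}$ by $O(T^{b/2}\eee^{-cA})$, which does not tend to zero. You would need $\sigma-1/2\asymp\log T/\log K$ instead.
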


The following proposition provides the uniform interpolation estimate. We postpone its proof to Section \ref{proof:prop2.1}.
\begin{proposition}\label{prop:interp}
Fix $\alpha\in (0,1/4)$. Almost surely, for all sufficiently large dyadic $T$ and each $x\in[\eee^T, \eee^{2T}]$, there is a $z=z_T(x)\in C_T$ such that $z\le x$, $\log(x/z)\leq 2T^{-\alpha}$, and
\begin{equation*}
 |M^{>}(x)-M^{>}(z)|+d(x,z)\leq 2x^{1/2}\,T^{-\alpha/12}.
\end{equation*}
\end{proposition}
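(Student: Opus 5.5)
Take $z=z_T(x)$ to be the largest point of $C_T$ with $z\le x$, so that $\log(x/z)\le T^{-\alpha}$. Write $h=x-z\le 2xT^{-\alpha}$. We must bound $|M^{>}(x)-M^{>}(z)|$ and $d(x,z)$ uniformly over the continuum of $x$ in the block. Controlling each at one fixed $x$ is an $L^2$ (or $L^4$) estimate. The real work is the uniformity. For that we introduce a finer auxiliary grid $F_T$ with logarithmic spacing $T^{-\gamma}$ for a large $\gamma$, and control the oscillation between consecutive fine points crudely. For $x'\le x$ in one fine cell, the changes in $M^{>}$ and $\mathbf a$ come only from integers $n\in(x',x]$ and from primes crossing the moving threshold $x^{1/2}$. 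Both numbers are deterministically small: at most $xT^{-\gamma}+1$ and $x^{1/2}T^{-\gamma}+1$ respectively. Using $|M(u)|\le \max_{u\le x}|M(u)|\le x^{2/3}$ from \eqref{eq:maxM} (Borel--Cantelli along dyadic $T$ with a polynomial tail), this oscillation is $o(x^{1/2}T^{-\alpha/12})$ once $\gamma$ is large. We then handle only pairs $(x,z)$ with $x\in F_T$, which is a set of size $O(T^{1+\gamma})$.

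Fix such a pair. We decompose
\[
\mathbf a(x)-\mathbf a(z)=\sum_{z^{1/2}<p\le z}\bigl(M(x/p)-M(z/p)\bigr)e_p+\sum_{z<p\le x}M(x/p)e_p-\sum_{z^{1/2}<p\le x^{1/2}}M(z/p)e_p .
\]
The first piece is the coefficient change. The second and third are summands entering and leaving the sum, respectively; this is the decomposition referenced as \eqref{eq:S-entry-exit-short}.

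By \eqref{eq:orth}, $\me[d(x,z)^2]$ is at most the number of squarefree $n$ lying either in $(z,x]$ or in a range with $P(n)\in(z^{1/2},x^{1/2}]$ and $n\le z$. This count is $O(xT^{-\alpha})+O\bigl(x\log(x/z)/\log x\bigr)=O(xT^{-\alpha})$. Likewise $M^{>}(x)-M^{>}(z)=\sum_p f(p)(a_p(x)-a_p(z))$ has second moment $O(xT^{-\alpha})$.

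Second moments alone give a tail probability of $T^{-\alpha(1-1/6)}$ at level $x^{1/2}T^{-\alpha/12}$, which is too weak against $T^{1+\gamma}$ grid points. So the key step is a fourth-moment bound, Lemma~\ref{lem:fourth}:
\[
\me\bigl[d(x,z)^4\bigr]+\me\bigl[(M^{>}(x)-M^{>}(z))^4\bigr]\ll x^2T^{-2\alpha+o(1)} .
\]
To get it, we view each coordinate $M(x/p)-M(z/p)$ and the difference of the sums as a multilinear polynomial in the Rademacher signs $f(q)$. The degree is at most the number of prime factors, $\ll\log x/\log\log x$. Hypercontractivity (Bonami) bounds the fourth moment by $9^{\deg}$ times the squared second moment. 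This is too lossy if applied directly, so we split by $\omega(n)$ and use that integers with many prime factors are rare. For $d(x,z)^4=\bigl(\sum_p(\cdot)^2\bigr)^2$ we apply Cauchy--Schwarz across $p$ together with hypercontractivity coordinatewise.

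Markov's inequality then gives a tail of order $T^{-2\alpha+\alpha/3+o(1)}$ per pair. This is still not summable against $T^{1+\gamma}$ points, so we must chain: a dyadic multi-scale chaining between $z$ and $x$ within the cell, where increments over logarithmic length $2^{-j}T^{-\alpha}$ have fourth moments scaling like $(x2^{-j}T^{-\alpha})^2$. This is standard Kolmogorov--Chentsov-type chaining with exponent $4$ and increment power $2$, which is borderline. To escape it, we plan to combine it with the conditional sub-Gaussian bound \eqref{eq:predictable-tail} for the $M^{>}$ difference. Given $d(x,z)$, that bound yields exponential tails, so only $d$ needs the fourth moment; and $d^2$ is a sum of squares whose increments we control through the entry/exit counts. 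Balancing these produces the loss $T^{-\alpha/12}$.

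The main obstacle is this fourth-moment control of $d(x,z)$ uniformly over the block: the hypercontractive degree loss, and making the chaining summable over dyadic $T$.
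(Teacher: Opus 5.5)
Your overall architecture matches the paper's: the coarse grid $C_T$, a polynomially fine grid, and the conditional sub-Gaussian bound \eqref{eq:predictable-tail} to control $M^{>}(y)-M^{>}(c)$ once $d(c,y)$ is known to be small. However, both steps that carry the uniformity fail as written.

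\textbf{The within-cell step.} The oscillation inside a fine cell cannot be controlled by deterministic counts. For $x'<x$ in a cell of logarithmic length $T^{-\gamma}$, about $xT^{-\gamma}$ integers enter, each contributing $\pm1$ to $M^{>}$. About $x^{1/2}T^{-\gamma-1}$ primes cross the threshold, each carrying a coefficient $M(x'/p)$ of typical size $x^{1/4}$. Since $x\ge\eee^T$, these trivial bounds exceed $x^{1/2}T^{-\alpha/12}$ by a factor exponential in $T$, for every fixed $\gamma$. Making them $o(x^{1/2})$ would force spacing of order $x^{-1/2}$, i.e.\ $\eee^{cT}$ grid points, which the per-point bound $2\exp(-T^{\alpha/6}/2)$ cannot absorb.

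What you need is cancellation uniformly inside a cell, i.e.\ a maximal inequality. This is the actual role of the paper's Lemma~\ref{lem:fourth}. It bounds $\me[\sup_{u\le x\le v}(|M^{>}(x)-M^{>}(u)|+d(u,x))^4]$ by $Cu^{2/3}(v-u)^{4/3}(\log u)^{52/3}$. The ingredients are the weighted hypercontractive bound $\me|\sum_n c_nf(n)|^4\le(\sum_n 3^{\omega(n)}c_n^2)^2$ and its vector analogue, H\"older with divisor sums, and the Longnecker--Serfling inequality (Lemma~\ref{eq:aux}). The last is applied separately to the ordered entering integers and to the leaving groups indexed by primes with $p^2\in(u,v]$. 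Because the exponent $4/3$ of $v-u$ exceeds $1$, cells of logarithmic length $T^{-120}$ give a summable union bound, which yields \eqref{eq:finecells}. The factor $(\log u)^{52/3}\asymp T^{52/3}$ also shows that this fourth moment is useless at the coarse scale $\log(x/z)\asymp T^{-\alpha}$, where your plan uses it.

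\textbf{The coarse-to-fine step.} Uniform control of $d(c,y)$ over the $T^{O(1)}$ fine points, which you flag as the main obstacle, is left open. Even granting $\me[d(c,y)^4]\ll c^2T^{-2\alpha+o(1)}$, Markov's inequality gives only $T^{-5\alpha/3+o(1)}$ per point. Chaining with exponent $4$ and increment power $2$ is borderline, and ``balancing'' is not an argument.

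The missing idea is to work in $L^2$ and remove the dependence on position. Lemma~\ref{lem:distance} smooths each prime over $[p,(1+\delta)p]$ with $\delta=T^{-250}$, counts primes in short intervals, and substitutes $u=x/t$. This gives $d(x,y)^2/x\le 10A_N(\theta)/T+4R_T(x)/x+8R_T(y)/y+C(\omega)(\log T)^6T^{-1-\alpha}$, with $\theta=\log(y/x)$. The last term comes from threshold-crossing primes and is handled pathwise by \eqref{eq:LTW}. The remainders satisfy $\me[R_T(z)]\le\delta z+1$, so a union bound over polynomially many deterministic points is harmless (Lemma~\ref{lem:R}).

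The main term $A_N(\theta)$ depends only on the shift. Lemma~\ref{lem:shifts} uses $D_\theta(u)=D_t(u)-D_{t-\theta}(\eee^\theta u)$, where $D_s(u)=M_N(\eee^su)-M_N(u)$. This dominates $\sup_{0\le\theta\le T^{-\alpha}}A_N(\theta)$ by a constant times $T^{\alpha}\int_0^{2T^{-\alpha}}A_N(s)\dd s$, so one Markov estimate per dyadic block suffices. This yields \eqref{eq:dxy}, and only then does \eqref{eq:predictable-tail} finish the argument as you intended.

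A minor point: your decomposition of $\mathbf a(x)-\mathbf a(z)$ counts the primes in $(z^{1/2},x^{1/2}]$ twice; the first sum should run over $x^{1/2}<p\le z$.
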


We are ready to prove the main result of the first part of the paper.
\begin{proof}[Proof of Proposition \ref{prop:upper bound}]
Fix $\alpha\in (0,1/4)$ and put $\beta=2\alpha$. Given $x\in [\eee^T, \eee^{2T}]$, let $z=z_T(x)$ be as in Proposition~\ref{prop:interp}. Throughout the remainder of the proof, $T$ ranges over dyadic
values. All inequalities hold a.s.\ for all sufficiently large $T$, and all asymptotic relations hold a.s.\ as $T\to\infty$, uniformly in $x\in[\eee^T,\eee^{2T}]$.

Using the inequality which has already appeared in the paragraph following \eqref{eq:avector} we obtain $$\big|(Q(x))^{1/2}-(Q(z))^{1/2}\big| \le d(x,z)\leq 2x^{1/2}T^{-\alpha/12}.$$ Applying  Lemma~\ref{lem:lowerclock} with $b=\alpha/12$ and Proposition \ref{prop:interp}, we infer $$\Big|\Big(\frac{Q(z)}{Q(x)}\Big)^{1/2}-1\Big|=\frac{\big|(Q(z))^{1/2}-(Q(x))^{1/2}\big|}{(Q(x))^{1/2}}\leq \frac{d(x,z)}{(Q(x))^{1/2}}\leq
\frac{2x^{1/2}T^{-\alpha/12}}{x^{1/2}T^{-\alpha/24}}=2T^{-\alpha/24}~\to~0.$$ By \eqref{eq:logclock}, $\log\log Q(z)/\log\log Q(x)=1+o(1)$. % uniformly in $x\in [\eee^T, \eee^{2T}]$ as $T\to\infty$.
By Lemma~\ref{lem:lowerclock}, with $b=\alpha/12$, we have $Q(x)\geq xT^{-\alpha/12}$. In particular, $\log Q(x)\geq \log x- (\alpha/12)\log T \geq T/2$, whence $\log\log Q(x)\geq \log(T/2)\geq (\log T)/2$ for all sufficiently large $T$. Using these inequalities to bound the denominator and Proposition \ref{prop:interp} to bound the numerator, we obtain $$\frac{|M^{>}(x)-M^{>}(z)|}{(2Q(x)\log\log Q(x))^{1/2}} \leq \frac{2x^{1/2}T^{-\alpha/12}}{x^{1/2}T^{-\alpha/24}(\log T)^{1/2}}=2\frac{T^{-\alpha/24}}{(\log T)^{1/2}}~\to~0.$$ Finally, writing, for $x\in [\eee^T,\eee^{2T}]$,
\begin{multline*}
\frac{|M^{>}(x)|}{(2Q(x)\log\log Q(x))^{1/2}}\\\leq \frac{|M^{>}(z)|}{(2Q(z)\log\log Q(z))^{1/2}}\Big(\frac{Q(z)\log\log Q(z)}{Q(x)\log\log Q(x)}\Big)^{1/2}+\frac{|M^{>}(x)-M^{>}(z)|}{(2Q(x)\log\log Q(x))^{1/2}}
\end{multline*}
and applying the preceding asymptotic relations and Proposition~\ref{prop:grid} yields $$\limsup_{r\to\infty}\sup_{x\in[\eee^{2^r},\eee^{2^{r+1}}]}\frac{|M^{>}(x)|}{(2Q(x)\log\log Q(x))^{1/2}}\leq 1+\beta\quad\text{a.s.}$$ Since the intervals $\big[\eee^{2^r}, \eee^{2^{r+1}}\big]$ cover all sufficiently large $x$, we obtain $$\limsup_{x\to\infty}\frac{|M^{>}(x)|}{(2Q(x)\log\log Q(x))^{1/2}}\leq1+\beta\quad\text{a.s.}$$ Taking a countable sequence $\alpha\to 0+$, with $\beta=2\alpha$, completes the proof.
\end{proof}

\section{Proof of Proposition~\ref{prop:interp}}\label{proof:prop2.1}

We start with a number of auxiliary results needed for the proof of Proposition~\ref{prop:interp}.

Throughout this section $T=2^r$ tends to $\infty$ through dyadic values and $0<\alpha<1/4$ is fixed. Put $N=\lfloor \eee^{2T}\rfloor$ and $\delta=T^{-250}$.

For $u\ge 0$ and $s\geq 0$, put
\begin{equation*}
M_N(u)=\sum_{n\le \min(u,N)}f(n)\quad\text{and}\quad A_N(s)=\int_0^\infty \frac{(M_N(\eee^su)-M_N(u))^2}{u^2} \dd u.
\end{equation*}
Also, for $z\in [\eee^T, \eee^{2T}]$, put
\begin{equation*}
R_T(z)=\sum_{p>z^{1/2}}\frac1{\delta p} \int_p^{(1+\delta)p}(M(z/p)-M(z/t))^2\dd t.
\end{equation*}
This sum is finite because $M(z/p)=M(z/t)=0$ for $t>p>z$.

Put $\pi(x):=\#\{p:~p\leq x\}$ for $x\geq 0$. As a preparation for the proof of Lemma \ref{lem:distance} given below, we note that, by formula (6.14) on p.~179 in~ \cite{MV},
\begin{equation}\label{eq:pix}
\pi(x)=\int_2^x\frac{{\rm d}y}{\log y}+R(x),\quad x\geq 2,
\end{equation}
where, for some positive constants $c$ and $c_1$,
\begin{equation}\label{eq:Restim}
|R(x)|\leq c_1x\eee^{-c(\log x)^{1/2}},\quad x\geq 2.
\end{equation}

\begin{lemma}\label{lem:distance}
Almost surely, for all sufficiently large dyadic $T$, and each $\eee^T\le x\le y\le \eee^{2T}$ with $\theta=\log(y/x)\le T^{-\alpha}$,
\begin{equation}\label{eq:distancebound}
\frac{(d(x,y))^2}{x}\leq 10\frac{A_N(\theta)}{T}+4\frac{R_T(x)}{x}+8\frac{R_T(y)}{y}+C(\omega)\frac{(\log T)^6}{T^{1+\alpha}}.
\end{equation}
Here, % $C_1>0$ is an absolute constant, whereas
$C(\omega)$ is an almost surely finite random constant independent of $x$, $y$ and $T$.
\end{lemma}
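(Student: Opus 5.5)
The plan is to split the coordinates of $\mathbf a(x)-\mathbf a(y)$ according to whether the prime $p$ is active for both $x$ and $y$, or only for $x$. Since $x\le y$, we have $x^{1/2}\le y^{1/2}$, and
\begin{equation*}
(d(x,y))^2=\sum_{p>y^{1/2}}\big(M(x/p)-M(y/p)\big)^2+\sum_{x^{1/2}<p\le y^{1/2}}(M(x/p))^2 .
\end{equation*}
The second sum consists of exit terms, and it should be absorbed into the random error term $C(\omega)(\log T)^6T^{-1-\alpha}$.

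\emph{Exit terms.} For $x^{1/2}<p\le y^{1/2}$ we have $x/p<x^{1/2}$. By \eqref{eq:wu} with $\varepsilon=1/2$ there is an a.s.\ finite $C(\omega)$ such that $\max_{u\le x^{1/2}}(M(u))^2\le C(\omega)x^{1/2}(\log T)^5$. The number of integers in $(x^{1/2},y^{1/2}]$ is at most $x^{1/2}(\eee^{\theta/2}-1)+1\le x^{1/2}\theta+1$. I will count integers rather than primes, so as not to need a prime count in possibly very short intervals. This gives an exit contribution of at most
\begin{equation*}
C(\omega)\big(x\theta(\log T)^5+x^{1/2}(\log T)^5\big).
\end{equation*}
Dividing by $x$ and using $\theta\le T^{-\alpha}$ and $x^{-1/2}\le \eee^{-T/2}$ shows that this part is $O\big(C(\omega)(\log T)^6T^{-1-\alpha}\big)$. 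I could even spend the extra $\log T$ on using $(\log\log x)^{2+\varepsilon}$ with larger $\varepsilon$.

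\emph{Smoothing the active terms.} For $p>y^{1/2}$ and $t\in[p,(1+\delta)p]$ I will write
\begin{equation*}
M(x/p)-M(y/p)=\big[M(x/p)-M(x/t)\big]+\big[M(x/t)-M(y/t)\big]+\big[M(y/t)-M(y/p)\big].
\end{equation*}
Next I apply the weighted inequality $(u+v+w)^2\le 4u^2+\tfrac85v^2+8w^2$, which is Cauchy--Schwarz with weights $1/4,5/8,1/8$. Then I average over $t\in[p,(1+\delta)p]$ with weight $(\delta p)^{-1}$ and sum over $p>y^{1/2}$. The first and third pieces give at most $4R_T(x)+8R_T(y)$, since $p>y^{1/2}\ge x^{1/2}$. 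Dividing by $x$ and using $y\ge x$ on the $R_T(y)$ term yields $4R_T(x)/x+8R_T(y)/y\cdot(y/x)$. The factor $y/x\le \eee^{T^{-\alpha}}$ is harmless: it can be absorbed either by slightly enlarging the constant or by handling it with the leftover slack.

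\emph{The middle term: the main obstacle.} It remains to bound
\begin{equation*}
\sum_{p>y^{1/2}}\frac1{\delta p}\int_p^{(1+\delta)p}\big(M(x/t)-M(y/t)\big)^2\dd t=\int_{y^{1/2}}^\infty\big(M(x/t)-M(y/t)\big)^2 w(t)\dd t,
\end{equation*}
where $w(t)=\sum_{p\in[t/(1+\delta),\,t],\,p>y^{1/2}}(\delta p)^{-1}$. The integrand vanishes for $t>y$. For $t\ge y^{1/2}\ge \eee^{T/2}$ I will use \eqref{eq:pix}--\eqref{eq:Restim}. The number of primes in $[t/(1+\delta),t]$ is $\delta t/\log t\,(1+O(\delta))$ plus an error $O(t\eee^{-c(\log t)^{1/2}})$. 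The error is $o(\delta t/\log t)$ because $\delta=T^{-250}$ while $\eee^{-c\sqrt{T/2}}$ decays faster than any power of $T$. Hence $w(t)\le(1+o(1))/\log t\le(2+o(1))/T$, and this step is exactly where the choice $\delta=T^{-250}$ is needed.

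Finally I substitute $u=x/t$, so that $\dd t=x\,\dd u/u^2$ and $y/t=\eee^\theta u$. Since all arguments are at most $y\le N$, we may replace $M$ by $M_N$. Extending the range of $u$ to $(0,\infty)$, the middle term is at most
\begin{equation*}
\tfrac85\cdot(2+o(1))\,\frac{x}{T}\,A_N(\theta)\le 10\,\frac{x\,A_N(\theta)}{T}.
\end{equation*}
Collecting the three contributions and dividing by $x$ gives \eqref{eq:distancebound}. The only a.s.\ input is \eqref{eq:wu}, which is uniform in $x,y,T$, so $C(\omega)$ does not depend on them.
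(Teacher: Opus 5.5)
Your overall architecture is the paper's. You split $(d(x,y))^2$ into the sum over $p>y^{1/2}$ and the exit sum over $x^{1/2}<p\le y^{1/2}$. You average over $t\in[p,(1+\delta)p]$ to produce $R_T(x)$ and $R_T(y)$, bound the weight $w(t)$ via the prime number theorem, and substitute $u=x/t$ to reach $xA_N(\theta)$. Your bound $w(t)\le(2+o(1))/T$ is sharper than the paper's $5/T$. The smoothing and middle-term steps are fine.

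\textbf{The gap is in the exit terms.} Counting \emph{integers} in $(x^{1/2},y^{1/2}]$ gives a contribution $C(\omega)x\theta(\log T)^5$. After division by $x$ this is $C(\omega)\theta(\log T)^5$, which for $\theta$ near $T^{-\alpha}$ is of order $C(\omega)T^{-\alpha}(\log T)^5$. That is \emph{not} $O\big(C(\omega)(\log T)^6T^{-1-\alpha}\big)$: it is larger by a factor $T/\log T$, and no choice of $\varepsilon$ in \eqref{eq:wu} recovers a power of $T$.

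The missing factor $1/T$ is exactly the density $1/\log p\asymp 1/T$ of primes near $x^{1/2}$, so a prime count cannot be avoided. Your concern about very short intervals disappears because the summands are nonnegative. After bounding each $(M(x/p))^2$ uniformly, enlarge the range of $p$ to $(x^{1/2},\eee^{T^{-\alpha}/2}x^{1/2}]$. This interval has length at most $x^{1/2}T^{-\alpha}$ with $x^{1/2}\ge\eee^{T/2}$, so \eqref{eq:pix}--\eqref{eq:Restim} apply; this is \eqref{eq:primecount} with $\lambda=\alpha$. It contains at most $5x^{1/2}T^{-1-\alpha}$ primes. Hence the exit sum divided by $x$ is at most $5C(\omega)(\log T)^5T^{-1-\alpha}$, uniformly in $\theta\le T^{-\alpha}$.

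As written, you have proved the inequality with error term $C(\omega)(\log T)^5T^{-\alpha}$. That weaker version would still suffice for its later use in \eqref{eq:dxy}, where only $o(T^{-\alpha/3})$ is needed, but it is not the stated lemma.

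A smaller point concerns the constant on $R_T(y)/y$. Your weights $(4,\tfrac85,8)$ leave $8\,\eee^{\theta}R_T(y)/y$, which exceeds the stated coefficient $8$. Slack in the middle term does not transfer to this term unless you change the weights. Use instead $(a+b+c)^2\le 4a^2+2b^2+4c^2$, as the paper does. Then $4R_T(y)/x\le 4\eee^{T^{-\alpha}}R_T(y)/y\le 8R_T(y)/y$, and the middle term receives coefficient $2(2+o(1))\le 10$.
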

\begin{proof}
%We divide the proof into three steps.
%\medskip
%\noindent
%\textbf{Step 1.}
%We shall use the classical effective prime number theorem in the form
%\begin{equation}\label{eq:pnt}
% \vartheta(t)
% :=\sum_{p\le t}\log p
% =t+O\!\left(t e^{-c\sqrt{\log t}}\right),
% \qquad t\ge2,
%\end{equation}
%for some absolute constant \(c>0\); see, for example,
%Montgomery--Vaughan~\cite[Chapter~6]{MV}.

%We record the consequence needed below.  Fix \(A>0\), and suppose that
%\begin{equation*}\label{eq:prime-range}
% \tfrac12e^{T/2}\le v\le2e^{2T},
% \qquad
% \ell\asymp vT^{-A}.
%\end{equation*}
%Subtracting \eqref{eq:pnt} at \(v+\ell\)
%and \(v\) gives
%\[
%\vartheta(v+\ell)-\vartheta(v)
%=
%\ell+O\!\left(v e^{-c'\sqrt T}\right)
%=O_A(\ell),
%\]
%because \(T^Ae^{-c'\sqrt T}\to0\).  Every prime in \((v,v+\ell]\)
%has logarithm comparable to \(T\).  Hence
%\begin{equation}\label{eq:primecount}
% \#\{p:v<p\le v+\ell\}
% \le C_A\frac{\ell}{T}.
%\end{equation}
%By formula (6.14) on p.~179 in \cite{MV},
%$$
%\pi(x)=\int_2^x\frac{\dd y}{\log y}+R(x),
%$$
%where, for some absolute constants $c,c_1>0$,
%\begin{equation}\label{eq:Restim}
%|R(x)|\leq c_1x\eee^{-c(\log x)^{1/2}},
%\qquad x\geq2.
%\end{equation}
Fix $\lambda>0$. We claim that, % there exists a constant $C_\alpha>0$ such that,
for all sufficiently large $T$, all $v\in [\eee^{T/2}/2, 2\eee^{2T}]$ and all $\ell\in [0, vT^{-\lambda}]$,
\begin{equation}\label{eq:primecount}
\#\{p:~v\leq p\leq v+\ell\} \leq 5 vT^{-\lambda-1}.
\end{equation}
Indeed, for all sufficiently large $T$, we have $\log v\geq T/3$ and $v+\ell\leq 2v$. Hence, by \eqref{eq:Restim},
\begin{multline*}
\#\{p:~v< p\leq v+\ell\}=\pi(v+\ell)-\pi(v)=\int_v^{v+\ell}\frac{\dd y}{\log y}+R(v+\ell)-R(v)\\\leq \frac{\ell}{\log v}+c_1(v+\ell)\eee^{-c(\log(v+\ell))^{1/2}}+c_1v\eee^{-c(\log v)^{1/2}}\\\leq \frac{3\ell}{T}+3c_1v\eee^{-c'T^{1/2}}\leq 3v T^{-\lambda-1}+3c_1v\eee^{-c'T^{1/2}},
\end{multline*}
where $c':=3^{-1/2}c$. Since $\lim_{T\to\infty} T^{\lambda+1}\eee^{-c'T^{1/2}}=0$, the last expression is at most $4vT^{-\lambda-1}$ for all sufficiently large $T$. Including the possible prime at the left endpoint $v$ adds
at most one. As $vT^{-\lambda-1}\geq \eee^{T/2}T^{-\lambda-1}/2 \to \infty$ as $T\to\infty$, this proves \eqref{eq:primecount}.

%\medskip
%\noindent
%\textbf{Step 2.}
%From the definition of \(d(x,y)\),

Write, for $2\leq x\leq y$,
\begin{equation*} %\label{eq:AB}
(d(x,y))^2=A(x,y)+B(x,y),
\end{equation*}
where $$A(x,y)=\sum_{p>y^{1/2}}|M(y/p)-M(x/p)|^2\quad\text{and}\quad B(x,y)=\sum_{x^{1/2}<p\leq y^{1/2}} |M(x/p)|^2.$$ We first estimate $A(x,y)$. %  Put
%\(g(t)=M(y/t)-M(x/t).\)
Using twice the inequality $(a+b)^2\leq 2(a^2+b^2)$, $a,b\in\mr$, we obtain, for $t\in [p, (1+\delta)p]$, $$(M(y/p)-M(x/p))^2\leq 2(M(y/t)-M(x/t))^2+4(M(y/p)-M(y/t))^2+4(M(x/p)-M(x/t))^2.$$
Now we divide the latter inequality by $\delta p$, integrate it over $t\in [p, (1+\delta)p]$ and then sum over $p>y^{1/2}$. This gives
\begin{multline*}
A(x,y)\leq 2\sum_{p>y^{1/2}}(\delta p)^{-1}\int_p^{(1+\delta)p}(M(y/t)-M(x/t))^2\,{\rm d}t+4R_T(y)+4R_T(x)\\\leq 2\int_{y^{1/2}}^y (M(y/t)-M(x/t))^2 \sum_{t/(1+\delta)\le p\le t}
(\delta p)^{-1}\,{\rm d}t+4R_T(y)+4R_T(x)
\end{multline*}
having utilized for the last inequality Fubini's theorem, the fact that the inequalities $p>y^{1/2}$ and $t\in [p,(1+\delta)p]$ entail $t>y^{1/2}$ and $p\in [t/(1+\delta),t]$, and also $M(y/t)-M(x/t)=0$ for $t>y$. Further, for $t\in [y^{1/2},y]$,
$$\sum_{t/(1+\delta)\le p\le t}(\delta p)^{-1}\leq (1+\delta)(\delta t)^{-1}\#\{p:~t/(1+\delta)\leq p\leq t\}\leq 5/T.$$ The last inequality is justified by \eqref{eq:primecount}, with $\lambda=250$, $v=t/(1+\delta)$ and $\ell=\delta t/(1+\delta)=\delta v=vT^{-250}$. The formula applies because $v\in [\eee^{T/2}/2, \eee^{2T}]$ for all sufficiently large $T$.
%The second and the third terms above are bounded by
%\(
%4R_T(y)+4R_T(x).
%\)
%For the first term, Tonelli's theorem gives the upper bound
%\begin{equation}
%\label{eq:smoothed-main-term}
%2\int_{\sqrt y}^{\infty}
%|g(t)|^2
%\sum_{t/(1+\delta)\le p\le t}
%\frac{1}{\delta p}\,dt.
%\end{equation}
%It remains to estimate the inner prime sum.
%In the integral in \eqref{eq:smoothed-main-term}, we have
%$t\ge\sqrt y$. On the other hand,
%$g(t)=0$ for $t>y$, since then both $y/t$ and $x/t$ are less
%than $1$. Thus only $t\in[\sqrt y,y]$ contributes to this
%integral, and throughout this range $e^{T/2}\le t\le e^{2T}$.
%Thus \(t\) lies in the range in which the short-interval prime estimate
%\eqref{eq:primecount} is available.
%Since
%\(
%t-\frac{t}{1+\delta}
%\asymp
%\delta t
%\) and \(\delta=T^{-250}\),
% the interval
%\(
%[{t}/{(1+\delta)},t]
%\)
%has length comparable to
%\(
%tT^{-250}.
%\)
%We may therefore apply \eqref{eq:primecount} with \(A=250\), obtaining
%\[
%\sum_{t/(1+\delta)\le p\le t}
%\frac{1}{\delta p}
%\le
%\frac{1+\delta}{\delta t}
%\#\left\{
%p:\frac{t}{1+\delta}\le p\le t
%\right\}
%\le
%\frac{C}{T}.
%\]
Substituting this estimate into the preceding bound for $A(x,y)$ gives
\begin{equation*} %\label{eq:A-first-bound}
A(x,y)\le 10T^{-1}\int_{y^{1/2}}^y (M(y/t)-M(x/t))^2\,{\rm d}t+4R_T(x)+4R_T(y).
\end{equation*}
Both arguments of $M$ in this integral are at most $y^{1/2}\leq \eee^T<N$. Hence, $M$ may be replaced by $M_N$. Changing the variable $u=x/t$ and enlarging the domain of integration, we obtain
%
%  Put
%\(
%u=x/t,
%\theta=\log(y/x).
%\)
%Then \(0\le\theta\le h\), and after enlarging the domain of integration,
%which is legitimate because the integrand is nonnegative, we obtain
\begin{multline*}
\int_{y^{1/2}}^y (M(y/t)-M(x/t))^2\,\dd t=x\int_{x/y}^{x/y^{1/2}}\frac{(M_N(\eee^\theta u)-M_N(u))^2}{u^2}\,\dd u\\ \leq x \int_0^\infty \frac{(M_N(\eee^\theta u)-M_N(u))^2}{u^2}\,\dd u=x A_N(\theta),
\end{multline*}
where $\theta=\log(y/x)$. Thus,
\begin{equation}\label{eq:A-bound}
A(x,y)\leq 10T^{-1}x A_N(\theta)+4R_T(x)+4R_T(y).
\end{equation}

%\medskip
%\noindent
%\textbf{Step 3. } Now we recall an almost-sure upper bound for the full partial
%sum \(M(u)\).

Now we work towards estimating $B(x,y)$. % Theorem~1.1 % of Lau--Tenenbaum--Wu
%in~\cite{Lau+Tenenbaum+Wu:2013}
Relation \eqref{eq:wu}, with $\varepsilon=1$, implies that, a.s., after enlarging a finite random constant if necessary,
we have %  bounded \(u\),
\begin{equation}\label{eq:LTW}
|M(u)|\leq C_1(\omega)u^{1/2}\,(1+\log\log(u+3))^3, \quad u\geq 1.
\end{equation}
The inequality $x/p \leq x^{1/2}\leq \eee^T$ holds whenever $x^{1/2}<p\le y^{1/2}$.  %For $\eee^{T_r}\le x\le Y_r$ and $x^{1/2}<p\leq a_r$, we have $1\le x/p< x^{1/2}\leq a_r \leq \eee^{T_r}$. Here, the lower bound follows from $p\le a_r\le \eee^{T_r}\le x$.
Consequently, there exists an absolute constant ${c_1}>0$ % which does not depend on $x$, nor $p$
such that, for all sufficiently large $T$, $$1+\log\log(x/p+3)\leq 1+\log\log(\eee^T+3)\leq {c_1}\log T.$$

Observe that $x/p\geq 1$ as a consequence of $p\leq y^{1/2}\leq\eee^T\leq x$. Using now \eqref{eq:LTW} we infer $(M(x/p))^2 \leq C_2(\omega) p^{-1}x (\log T)^6$ a.s., where $C_2(\omega):={c_1^6}(C_1(\omega))^2$ and thereupon
%On the probability one event on which \eqref{eq:LTW} holds, for
%\(\sqrt x<p\le\sqrt y\) we have \(x/p\le\sqrt x\le e^T\), and therefore
%\[
%|M(x/p)|^2
%\le
%C(\omega)\frac{x}{p}(\log T)^6.
%\]
%It follows that
\begin{equation}\label{eq:B-first}
B(x,y)\leq  C_2(\omega)x(\log T)^6\sum_{x^{1/2}<p\le y^{1/2}}p^{-1}.
\end{equation}
Further, $$\sum_{x^{1/2}<p\le y^{1/2}}p^{-1}\leq \sum_{x^{1/2}<p\le \eee^{T^{-\alpha}/2} x^{1/2}}p^{-1}\leq x^{-1/2}\#\{p:~x^{1/2}< p\leq \eee^{T^{-\alpha}/2} x^{1/2}\}\leq 5T^{-1-\alpha}.$$
Here, the first inequality holds because $y/x\leq \exp(T^{-\alpha})$ entails $y^{1/2}\leq \exp(T^{-\alpha}/2)x^{1/2}$. The last inequality is justified by \eqref{eq:primecount}, with $\lambda=\alpha$, $v=x^{1/2}$ and $\ell=x^{1/2}\big(\exp(T^{-\alpha}/2)-1\big)$. The formula applies because $v\in[\eee^{T/2},\eee^T]$ and, for all sufficiently large $T$, $\exp(T^{-\alpha}/2)-1\leq T^{-\alpha}$, whence $\ell\leq vT^{-\alpha}$.
%The interval \((\sqrt x,e^{h/2}\sqrt x]\) has length comparable to
%\(h\sqrt x\).  Applying \eqref{eq:primecount} with $A=\alpha$, its number
%of primes is at most \(Ch\sqrt x/T\).  Since every reciprocal is at most
%\(1/\sqrt x\),
%\[
%\sum_{\sqrt x<p\le\sqrt y}\frac1p
%\le
%C\frac hT.
%\]
Substituting the estimate for the prime sum into \eqref{eq:B-first}, we obtain
\begin{equation}\label{eq:B-bound}
B(x,y)\leq C(\omega)x T^{-1-\alpha}(\log T)^6,
\end{equation}
where $C(\omega):=5C_2(\omega)$. Finally, combining \eqref{eq:A-bound}, \eqref{eq:B-bound} and $$\frac{R_T(y)}{x}=\frac{y}{x}\frac{R_T(y)}{y}\leq \eee^{T^{-\alpha}}\frac{R_T(y)}{y}\leq 2\frac{R_T(y)}{y}$$ for sufficiently large $T$, we arrive at \eqref{eq:distancebound}.
\end{proof}
\begin{lemma}\label{lem:shifts}
%With the notation \eqref{eq:auxparams},
Almost surely, for all sufficiently large dyadic $T$,
\begin{equation}\label{eq:shiftsas}
\sup_{0\le\theta\le T^{-\alpha}}\frac{A_N(\theta)}{T}\leq T^{-\alpha/2},
\end{equation}
where $\alpha\in (0,1/4)$ is fixed.
\end{lemma}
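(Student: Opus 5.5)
The plan is to dominate the whole family $(A_N(\theta))_{0\le\theta\le T^{-\alpha}}$ by a \emph{single} nonnegative random variable whose expectation is $O(T^{1-\alpha})$. Then Markov's inequality and Borel--Cantelli over dyadic $T$ finish the argument. The device for this is Plancherel's theorem after a logarithmic change of variables, which turns the shift $u\mapsto \eee^\theta u$ into multiplication by a Fourier factor that depends monotonically on $\theta$.

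\emph{Step 1: Plancherel representation.} Substitute $u=\eee^v$ and put $h(v):=M_N(\eee^v)\eee^{-v/2}$. This function vanishes for $v<0$ and equals $M_N(N)\eee^{-v/2}$ for $v\ge\log N$, so $h\in L^2(\mr)\cap L^1(\mr)$. We get
$$A_N(\theta)=\int_{\mr}\big|\eee^{\theta/2}h(v+\theta)-h(v)\big|^2\dd v .$$
A direct computation gives
$$\hat h(\xi)=\int_{\mr} M_N(\eee^v)\eee^{-v/2-\ii\xi v}\dd v=\frac{F(s)}{s},\qquad s=\tfrac12+\ii\xi,\qquad F(s):=\sum_{n\le N}f(n)n^{-s}.$$
Hence Plancherel yields
$$A_N(\theta)=\frac1{2\pi}\int_{\mr}\big|\eee^{\theta s}-1\big|^2\frac{|F(s)|^2}{|s|^2}\dd\xi .$$

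\emph{Step 2: Monotone majorant.} Put $\Theta=T^{-\alpha}\le1$. For $0\le\theta\le\Theta$ and $\mathrm{Re}\,s=1/2$ we have $|\eee^{\theta s}-1|\le 2\eee^{1/2}$ and also $|\eee^{\theta s}-1|\le \theta|s|\eee^{\theta/2}$. Therefore $|\eee^{\theta s}-1|^2\le C\min(1,\Theta^2|s|^2)$ with an absolute constant $C$. This gives
$$\sup_{0\le\theta\le\Theta}A_N(\theta)\le Y_T:=C\int_{\mr}\min\big(|s|^{-2},\Theta^2\big)\,|F(s)|^2\dd\xi .$$

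\emph{Step 3: Expectation and Borel--Cantelli.} By the orthogonality relation \eqref{eq:orth},
$$\me|F(\tfrac12+\ii\xi)|^2=\sum_{n\le N}(\mu(n))^2n^{-1}\le 1+\log N\le 3T .$$
Also $\int_{\mr}\min\big((1/4+\xi^2)^{-1},\Theta^2\big)\dd\xi\le C'\Theta$: split at $|\xi|=1/\Theta$, where the part $|\xi|\le 1/\Theta$ contributes $O(\Theta^2\cdot\Theta^{-1})$ and the tail contributes $\int_{|\xi|>1/\Theta}\xi^{-2}\dd\xi=O(\Theta)$. By Tonelli, $\me Y_T\le C''T\,T^{-\alpha}$. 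Markov's inequality then gives
$$\mmp\{Y_T/T>T^{-\alpha/2}\}\le C''T^{-\alpha/2}=C''2^{-r\alpha/2},$$
which is summable over $r$. Borel--Cantelli yields \eqref{eq:shiftsas}.

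The main obstacle is the uniformity in $\theta$: a pointwise bound followed by a union bound over a grid would require a further interpolation in $\theta$. Step 2 removes this need, so the points to check carefully are the justification of Plancherel (integrability of $h$, which holds since $h$ vanishes on $(-\infty,0)$ and decays like $\eee^{-v/2}$) and the exact Fourier transform formula. Note also that $N$ depends on $T$; this is harmless because only the bound $\log N\le 2T$ enters.
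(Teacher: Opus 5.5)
Your proof is correct, and it takes a different route from the paper.

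\textbf{The paper's route.} The paper stays in physical space. It uses the identity $D_\theta(u)=D_t(u)-D_{t-\theta}(\eee^\theta u)$ for $t\in[T^{-\alpha},2T^{-\alpha}]$. Averaging over $t$ gives $\sup_{0\le\theta\le T^{-\alpha}}A_N(\theta)\le c_\ast T^{\alpha}\int_0^{2T^{-\alpha}}A_N(s)\,\dd s$. It then computes $\me[A_N(s)]=(\eee^s-1)\sum_{n\le N}\mu^2(n)/n$ directly from orthogonality. This gives the same bound $O(T^{1-\alpha})$ for the expected supremum, followed by the same Markov and Borel--Cantelli step.

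\textbf{Your route.} You pass to the line $\mathrm{Re}\,s=1/2$ via Plancherel, where the multiplicative shift becomes the factor $\eee^{\theta s}-1$. The bound $|\eee^{\theta s}-1|^2\le C\min(1,\Theta^2|s|^2)$ then yields one random majorant $Y_T$ that dominates every $A_N(\theta)$ with $\theta\le\Theta$ at once.

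\textbf{Checks.} The details hold: $h\in L^1\cap L^2$, $\hat h(\xi)=F(s)/s$, and the $\xi$-integral of the weight is at most $4\Theta$. As a consistency check, your Plancherel identity combined with orthogonality reproduces the paper's exact formula for $\me[A_N(\theta)]$. This works because $\frac{1}{2\pi}\int_{\mr}|\eee^{\theta s}-1|^2|s|^{-2}\,\dd\xi=\eee^\theta-1$.

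\textbf{What each approach buys.} The paper's averaging step needs no Fourier analysis, only $(a-b)^2\le 2a^2+2b^2$ and a change of variables. It would therefore adapt easily to other exponents in place of the square. Your route gives an exact spectral representation of $A_N(\theta)$ and a clean pointwise domination uniform in $\theta$. It also ties this step to the quantities $\int|F(\tfrac12+\ii\xi)|^2|s|^{-2}\,\dd\xi$ and the Euler-product integrals that the paper uses later, for instance in Lemma~\ref{lem:perron}.
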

\begin{proof}
For $u,s\geq 0$, put $D_s(u):=M_N(\eee^su)-M_N(u)$. If $0\le\theta\le T^{-\alpha}$ and $T^{-\alpha} \leq t \leq 2T^{-\alpha}$, then $D_\theta(u)=D_t(u)-D_{t-\theta}(\eee^\theta u)$. The inequality $(a-b)^2\leq 2a^2+2b^2$, $a,b\in\mr$, in combination with the change of variable $v=\eee^\theta u$, gives $$A_N(\theta)\leq 2 A_N(t)+2\eee^\theta A_N(t-\theta).$$ Integrating this inequality over $t\in[T^{-\alpha},2T^{-\alpha}]$ and then multiplying both sides by $T^\alpha$, we obtain, simultaneously for all $\theta\in [0,T^{-\alpha}]$, $$A_N(\theta)\leq 2T^\alpha\int_{T^{-\alpha}}^{2T^{-\alpha}}A_N(t)\,{\rm d}t+2\eee^{\theta}T^\alpha\int_{T^{-\alpha}-\theta}^{2T^{-\alpha}-\theta}A_N(s)\,{\rm d}s\leq c_\ast T^\alpha \int_0^{2T^{-\alpha}}A_N(s)\,{\rm d}s$$ and thereupon
\begin{equation}\label{eq:expected-sup-shifts}
\me\big[\sup_{0\le\theta\le T^{-\alpha}}A_N(\theta)\big]\leq c_\ast T^\alpha\int_0^{2T^{-\alpha}}\me [A_N(s)]\,{\rm d}s
\end{equation}
for all sufficiently large $T$, where $c_\ast:=2(1+\eee^{1/2})$. We have used that both $[T^{-\alpha}, 2T^{-\alpha}]$ and $[T^{-\alpha}-\theta, 2T^{-\alpha}-\theta]$ are contained in $[0,2T^{-\alpha}]$, and that $\eee^\theta\le \eee^{1/2}$ for all sufficiently large $T$.  %we obtain, simultaneously for all $\theta\in [0,T^{-\alpha}]$,
%$$A_N(\theta)\leq \frac{C}{h}\int_0^{2T^{-\alpha}}A_N(s)\,{\rm d}s.$$ In particular,
%\begin{equation}\label{eq:expected-sup-shifts}
%\me\big[\sup_{0\le\theta\le T^{-\alpha}}A_N(\theta)\big]\leq CT^\alpha\int_0^{2T^{-\alpha}}\me [A_N(s)]\,{\rm d}s.
%\end{equation}

To estimate the expectation, we start with a representation $$D_s(u)=M_N(\eee^su)-M_N(u)=\sum_{n\leq N}f(n)\1_{\{u<n\leq\eee^su\}}=\sum_{n\leq N}f(n)\1_{\{n\eee^{-s}\leq u<n\}}.$$ Invoking \eqref{eq:orth} we then obtain $$\me[(D_s(u))^2]=\sum_{n,m\leq N}\me[f(n)f(m)]\1_{\{n\eee^{-s}\leq u<n\}} \1_{\{m\eee^{-s}\leq u<m\}}=\sum_{n\leq N}(\mu(n))^2 \1_{\{n\eee^{-s}\leq u<n\}}.$$ In view of Fubini's theorem and
$$\sum_{n=1}^m n^{-1}=1+\sum_{n=2}^m n^{-1}\leq 1+\int_1^m u^{-1}\dd u=1+\log m,\quad m\in\mn,$$ we conclude that % $\sum_{n\leq N}n^{-1}\leq 1+\log N$,
\begin{multline*}
\me[A_N(s)]=\int_0^\infty \frac{\me[D_s(u)^2]}{u^2}\,\dd u=\sum_{n\leq N}(\mu(n))^2\int_{n\eee^{-s}}^n\frac{\dd u}{u^2}=(\eee^s-1)\sum_{n\leq N}\frac{\mu^2(n)}n \\\leq (1+\log N)(\eee^s-1)\leq 3T(\eee^s-1),
\end{multline*}
where the last inequality follows from $N=\lfloor\eee^{2T}\rfloor$ and $1+\log N\leq 1+2T\leq 3T=3\cdot 2^r$ for $r\in\mn$.
%observe that a fixed $n\le N$ occurs
%in $D_s(u)$ exactly when $ne^{-s}\le u<n$. Orthogonality
%\eqref{eq:ortho} and Tonelli's theorem therefore give
%\[
% \mathbb E A_N(s)
% =\sum_{n\le N}\mu^2(n)
%   \int_{ne^{-s}}^n\frac{du}{u^2}
% =(e^s-1)\sum_{n\le N}\frac{\mu^2(n)}{n}
% \le 3T(e^s-1),
%\]
%where the last inequality uses $N\le e^{2T}$.
Substituting this bound into \eqref{eq:expected-sup-shifts} yields
\begin{multline*}
T^{-1}\me\Big[\sup_{0\leq\theta\leq T^{-\alpha}}A_N(\theta)\Big]\leq 3c_\ast T^\alpha \int_0^{2T^{-\alpha}}(\eee^s-1)\,\dd s =3c_\ast T^\alpha \big(\eee^{2T^{-\alpha}}-1-2T^{-\alpha}\big)\leq 9c_\ast T^{-\alpha}
\end{multline*}
for all sufficiently large $T$. By Markov's inequality, $$\mmp\Big\{T^{-1}\sup_{0\leq\theta\leq T^{-\alpha}}A_N(\theta)>T^{-\alpha/2}\Big\}\leq 9c_\ast T^{-\alpha/2}.$$ Since $T=2^r$, these probabilities are summable in $r$. An application of the direct part of the Borel-Cantelli lemma completes the proof of \eqref{eq:shiftsas}.
\end{proof}

\begin{lemma}\label{lem:R}
Let $G_T\subset [\eee^T, \eee^{2T}]$ be a deterministic set such that $|G_T|=O(T^{122})$ as $T\to\infty$. Then, almost surely, for all sufficiently
large dyadic $T$,
\begin{equation}\label{eq:Ras}
\max_{z\in G_T}\frac{R_T(z)}{z}\leq T^{-\alpha/2},
\end{equation}
where $\alpha\in (0,1/4)$ is fixed.
\end{lemma}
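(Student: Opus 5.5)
Since $G_T$ has only polynomially many points, the plan is a first-moment bound at each fixed $z$, then a union bound over $G_T$, then Borel--Cantelli along $T=2^r$. Markov's inequality at level $T^{-\alpha/2}$ alone is too weak, since we need summability after multiplying by $T^{122}$. I would therefore use a high moment of $R_T(z)/z$, or else show that $\me[R_T(z)]/z$ is extremely small, of order $T^{-200}$ or smaller. The very small choice $\delta=T^{-250}$ is exactly what should make the expectation tiny.

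\textbf{Computing $\me[R_T(z)]$.} For $t\in[p,(1+\delta)p]$ we have
$$M(z/p)-M(z/t)=\sum_{z/t<n\le z/p}f(n).$$
By \eqref{eq:orth},
$$\me\big[(M(z/p)-M(z/t))^2\big]\le \#\{n:\ z/((1+\delta)p)<n\le z/p\}.$$
This count is zero unless the interval $(z/((1+\delta)p),\,z/p]$ contains an integer. When it does, the count is at most $\delta z/p+1$. Hence
$$\me[R_T(z)]\le \sum_{z^{1/2}<p\le z}\Big(\frac{\delta z}{p}+\1_{\{\exists n:\ z/((1+\delta)p)<n\le z/p\}}\Big).$$
The first part is at most $\delta z\sum_{p\le z}p^{-1}\le 2\delta z\log T$.

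For the second part, group the primes by $n=\lfloor z/p\rfloor\le z^{1/2}$. For a fixed $n$, the primes $p$ for which $n$ lies in the interval satisfy $z/((1+\delta)n)<p\le z/n$. This is an interval of length about $\delta z/n$, with left endpoint at least $z^{1/2}/2\ge \eee^{T/2}/2$. Using \eqref{eq:primecount} with $\lambda=250$ (and the trivial bound $1$ plus interval length when the interval is short), the number of such primes is at most $5(z/n)T^{-251}+1$. Summing over $n\le z^{1/2}$ gives at most
$$5zT^{-251}(1+\log z)+z^{1/2}\ll zT^{-250}.$$
Altogether $\me[R_T(z)]/z\le CT^{-249}$, uniformly in $z\in[\eee^T,\eee^{2T}]$.

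\textbf{Conclusion.} By Markov's inequality and the union bound,
$$\mmp\Big\{\max_{z\in G_T}R_T(z)/z>T^{-\alpha/2}\Big\}\le |G_T|\,C\,T^{-249+\alpha/2}=O\big(T^{122-249+1/8}\big).$$
This is summable over $T=2^r$, and the direct part of the Borel--Cantelli lemma gives \eqref{eq:Ras}.

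\textbf{Main obstacle.} The delicate point is the term $+1$ in the count of $n$. When $\delta z/p<1$, the integer-rounding term dominates, and a naive bound of $1$ per prime would give $\pi(z)\approx z/T$, which is far too large. The fix is to charge the $+1$ to the event that some integer lies in a very short interval, and then count primes per integer $n$ via \eqref{eq:primecount}, as above. I would check carefully that \eqref{eq:primecount} applies uniformly here, including the case where $\delta z/n$ is below $1$; there the trivial bound of at most one prime (for large $z/n$) suffices.
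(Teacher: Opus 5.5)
Your proof is correct, and its overall structure is the same as the paper's: a first-moment bound via \eqref{eq:orth}, then Markov, a union bound over $G_T$, and Borel--Cantelli along $T=2^r$.

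The two arguments differ only in how they bound $\sum_{p>z^{1/2}}\#\{n:\ z/((1+\delta)p)<n\le z/p\}$.
\begin{itemize}
\item You split each count into $\delta z/p$ plus a rounding indicator. You then control the indicators by regrouping by $n$ and invoking the short-interval prime count \eqref{eq:primecount}.
\item The paper observes that this double sum counts pairs $(p,n)$ with $p>z^{1/2}$ and $z/(1+\delta)<pn\le z$. Since $n\le z/p<z^{1/2}<p$, the prime $p$ is the unique prime factor of $m=pn$ exceeding $z^{1/2}$. So $(p,n)\mapsto pn$ is injective into the integers of $(z/(1+\delta),z]$, which gives $\me[R_T(z)]\le\delta z+1$ at once.
\end{itemize}

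Your regrouping by $n$ is in fact counting exactly these pairs. So the \emph{main obstacle} you flag disappears once you count the products $m=pn$ directly, rather than counting the $n$'s prime by prime.

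The paper's route is elementary, since it needs no prime number theorem with error term. It is also slightly sharper: $\delta+\eee^{-T}$ versus your $CT^{-249}$, which loses a logarithm from summing $1/p$ over all $p\le z$. Your route is valid but uses heavier input. Both bounds are far below what the union bound over $O(T^{122})$ points requires.

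Incidentally, the case $\delta z/n<1$ you worry about never arises for large $T$. The constraint $n<z^{1/2}$ forces $\delta z/n>T^{-250}\eee^{T/2}$.
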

\begin{proof}
Fix a deterministic $z\in[\eee^T, \eee^{2T}]$. For $p\le t\le(1+\delta)p$, we obtain with the help of \eqref{eq:orth}
$$\me\big[(M(z/p)-M(z/t))^2\big]=\sum_{z/t<n \le z/p}(\mu(n))^2 \le \#\{n:~ ((1+\delta)p)^{-1}z<n\leq p^{-1}z\}.$$ This, in combination with Fubini's theorem, gives
\begin{multline*}
\me[R_T(z)]=\sum_{p>z^{1/2}}(\delta p)^{-1}\int_p^{(1+\delta)p}\me\big[(M(z/p)-M(z/t))^2\big]{\rm d}t\\\leq \sum_{p>z^{1/2}}\#\{n:~ (1+\delta)^{-1}z<np\leq z\}=\#\{m:~(1+\delta)^{-1}z<m\leq z,~ P(m)>z^{1/2}\}\\\leq \#\{m:~(1+\delta)^{-1}z <m\le z\}\leq \delta z+1,
\end{multline*}
whence $z^{-1}\me [R_T(z)]\leq \delta+\eee^{-T}$. To explain the last equality, observe that each pair $(p,n)$ counted in the prime sum gives an integer $m=pn$ satisfying $(1+\delta)^{-1}z<m\leq z$. Moreover, $p>z^{1/2}$ and $n\leq z/p<z^{1/2}<p$ ensure that $p$ is the unique prime divisor of $m$ exceeding $z^{1/2}$. Thus distinct counted pairs give distinct integers $m$. Conversely, each integer $m$ counted on the right-hand side of the equality gives the pair $(p,n)=(P(m),m/P(m))$ counted on the left.

% Averaging over $t\in[p,(1+\delta)p]$ and summing over
%$p>\sqrt z$, we see that $\mathbb E R_T(z)$ is at most the
%number of pairs $(p,m)$ satisfying
%\[
% p>\sqrt z,
% \qquad
% \frac{z}{1+\delta}<pm\le z.
%\]
%Different such pairs give different integers $pm$: an integer
%$n\le z$ cannot have two prime factors greater than $\sqrt z$.
%Consequently,
%\[
% \mathbb E R_T(z)
% \le \#\left\{n:\frac{z}{1+\delta}<n\le z\right\}
% \le \delta z+1,
%\]
%and thus
%\[
% \mathbb E\frac{R_T(z)}{z}\le\delta+e^{-T}.
%\]

Since $G_T$ is deterministic, Markov's inequality and a union
bound yield
\begin{multline*}
\mmp\{\max_{z\in G_T}(z^{-1}R_T(z))>T^{-\alpha/2}\}\leq \sum_{z\in G_T}\mmp\{z^{-1}R_T(z)>T^{-\alpha/2}\}\leq \big(T^{-250}+\eee^{-T}\big)|G_T|T^{\alpha/2}\\=\big(T^{-250}+\eee^{-T}\big)O\big(T^{122+\alpha/2}\big).
\end{multline*}
These bounds are summable over $T=2^r$. An application of the direct part of the Borel-Cantelli lemma completes the proof of \eqref{eq:Ras}.
\end{proof}

Lemma \ref{eq:aux} is a special case of Lemma 2 on pp.~130--131 in~ \cite{Longnecker+Serfling:1977}, with $\nu=4$ and $\gamma=2$. Although their result is stated for real random variables, the same argument applies to random vectors, with the absolute value replaced by a norm.
\begin{lemma}\label{eq:aux}
For $m\in\mn$, let $Y_1,\ldots,Y_m$ be random scalars or random vectors and let $u_1,\ldots,u_m$ be nonnegative deterministic numbers. Suppose that, for each collection of consecutive indices $I=\{a,a+1,\ldots,b\}\subseteq\{1,\ldots,m\}$,
%we have
\begin{equation*}
\me\Big[\Big\|\sum_{j\in I}Y_j\Big\|^4\Big]\leq \Big(\sum_{j\in I}u_j\Big)^2.
\end{equation*}
Then
\begin{equation}\label{eq:max-fourth-short}
\me\Big[\max_{0\leq k\leq m}\Big\|\sum_{j\leq k}Y_j\Big\|^4\Big]\leq
C\Big(\sum_{j=1}^mu_j\Big)^2
\end{equation}
for an absolute constant $C>0$.
\end{lemma}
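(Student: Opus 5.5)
We will prove \eqref{eq:max-fourth-short} by the classical dyadic chaining argument behind the Rademacher--Menshov and Longnecker--Serfling maximal inequalities. The same argument works for scalars and for vectors, since only the triangle inequality for the norm is used. Write $S_k=\sum_{j\le k}Y_j$, $S_0=0$, and for $a\le b$ put $S(a,b)=S_b-S_a$. Then the hypothesis reads $\me\|S(a,b)\|^4\le (U_b-U_a)^2$, where $U_k=\sum_{j\le k}u_j$.

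\emph{Step 1: reduction by bisection of mass.} We induct on $m$ and aim for the bound
\[
\me\Big[\max_{k\le m}\|S_k\|^4\Big]\le C\,U_m^2 .
\]
If $U_m=0$, then every partial sum vanishes a.s.\ (apply the hypothesis with $I=\{1,\dots,k\}$), so the bound is trivial. Otherwise, choose an index $h$ with $U_{h-1}\le U_m/2$ and $U_m-U_h\le U_m/2$; this is possible by splitting the mass at its median. For $k<h$ the maximum is taken over the left block. For $k\ge h$ we have $\|S_k\|\le \|S_{h}\|+\max_{h\le k\le m}\|S(h,k)\|$.

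\emph{Step 2: a sharper recursion.} The naive recursion $\max\|S_k\|\le \max_{\rm left}+\|S_h\|+\max_{\rm right}$ loses a factor $2^4$ at each level, and this cannot be summed. We therefore use the standard trick of raising to the fourth power with weights. By the elementary inequality
\[
(x+y)^4\le (1+\eta)^3x^4+(1+\eta^{-1})^3 y^4 ,
\]
combined with the fact that the left and right maxima never occur simultaneously, we get
\[
\max_k\|S_k\|^4\le \max\Big(L^4,\ (1+\eta)^3\|S_h\|^4... \Big).
\]
More cleanly, we bound the maximum by $\max(L,\|S_h\|+R)$, where $L$ and $R$ are the left and right block maxima, and use $\max(a,b)^4\le a^4+b^4$. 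The inductive hypothesis then applies to $L$ and $R$ with masses at most $U_m/2$, giving contributions $C(U_m/2)^2=CU_m^2/4$ each. The middle term $\|S_h\|$ has fourth moment at most $U_m^2$, and it is the only genuinely new contribution.

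\emph{Step 3: closing the induction (the main obstacle).} The obstacle is that $(\|S_h\|+R)^4$ mixes the two terms. We handle this with the weighted inequality above: choose $\eta$ so that $(1+\eta)^3/4<1/2$, say $\eta=1/4$. This gives
\[
\me\max_k\|S_k\|^4\le \tfrac{C}{4}U_m^2+(1+\eta)^3\tfrac{C}{4}U_m^2+(1+\eta^{-1})^3U_m^2 .
\]
Since $1/4+(5/4)^3/4<1$, the right-hand side is at most $CU_m^2$ once $C$ is taken large enough, namely $C\ge 125/(1-0.738)$. This closes the induction with an absolute $C$.

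One detail requires care in Step 3. The recursion uses the hypothesis on the consecutive subfamilies $\{1,\dots,h-1\}$ and $\{h+1,\dots,m\}$, together with the single interval giving $S_h$. In each case the relevant partial sums are sums over consecutive index sets, so the hypothesis applies. The edge cases where $h=1$ or $h=m$ are trivial, since the corresponding block is then empty.
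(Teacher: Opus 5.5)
Your mass-bisection induction is correct. Use the weighting as in Step~3, with $(1+\eta)^3$ on $R^4$ and $(1+\eta^{-1})^3$ on $\|S_h\|^4$, not as in the abandoned display of Step~2. This is essentially the classical argument for exponent $\gamma=2>1$ behind Lemma~2 of Longnecker and Serfling, which the paper simply cites rather than proves. Since you use only the triangle inequality for the norm, your write-up also directly confirms the paper's remark that the argument carries over to random vectors.
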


The proof of Lemma~\ref{lem:fourth} uses hypercontractive moment bounds, as in Lemmas~2.2 and~2.3 of \cite{Lau+Tenenbaum+Wu:2013}, where the full sum $M$ is considered, together with the maximal inequality \eqref{eq:max-fourth-short}. A related argument for full weighted Steinhaus sums appears in Lemma~2 of \cite{Hardy:2024}. Here, we treat $M^{>}$ and simultaneously control the coefficient distance $d$, taking account of both the appearance and the removal of summands.
\begin{lemma}\label{lem:fourth}
There is an absolute constant $C>0$ such that, for all sufficiently large
$u$ and $u+u^{1/2} \leq v \leq 3u/2$,
\begin{equation}\label{eq:fourth}
\me\big[\sup_{u\le x\le v}\big(|M^{>}(x)-M^{>}(u)|+d(u,x)\big)^4\big] \leq Cu^{2/3} (v-u)^{4/3}(\log u)^{52/3}.
\end{equation}
\end{lemma}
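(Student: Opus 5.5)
The plan is to reduce the supremum over real $x$ to a maximal inequality for partial sums of vectors indexed by integers, and then apply Lemma~\ref{eq:aux}. Since $M(x/p)=M(\lfloor x\rfloor/p)$ and the condition $p>x^{1/2}$ is equivalent to $p^2>\lfloor x\rfloor$ for primes $p$, both $M^{>}(x)$ and $\mathbf a(x)$ depend on $x$ only through $\lfloor x\rfloor$. We therefore work with the $\mr\times\ell^2(\Primes)$-valued process
$$W(n):=\big(M^{>}(n)-M^{>}(u),\ \mathbf a(n)-\mathbf a(u)\big),\qquad n=\lfloor u\rfloor,\dots,\lfloor v\rfloor.$$
Up to a factor $2^4$, the quantity $\big(|M^{>}(x)-M^{>}(u)|+d(u,x)\big)^4$ is bounded by $\|W(\lfloor x\rfloor)\|^4$.

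Using \eqref{eq:avector}, we split the increments into an entering part and a leaving part, and treat each separately.
\begin{itemize}
\item The \emph{entering part} is $\sum_{u<n\le x,\,P(n)>u^{1/2}}\big(f(n),\,f(n/P(n))e_{P(n)}\big)$. Replacing the threshold $x^{1/2}$ by $u^{1/2}$ produces an error made of integers $n\in(u,x]$ with $P(n)\in(u^{1/2},x^{1/2}]$, which is absorbed into the leaving part.
\item The \emph{leaving part} collects the coordinates $p\in(u^{1/2},x^{1/2}]$ that drop out. These contribute $-\sum_{u^{1/2}<p\le x^{1/2}}f(p)M(u/p)$ to the scalar coordinate and $-M(u/p)e_p$ to the vector coordinate.
\end{itemize}

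\emph{Entering part.} Take a block of consecutive integers $J=(a,b]\subseteq(u,v]$ with $h=b-a$. The squared norm of the vector component is $\sum_{p>u^{1/2}}S_p^2$, where $S_p=\sum_{m\in(a/p,b/p]}f(m)$. Minkowski's inequality in $L^2$ gives
$$\me\Big[\Big(\sum_p S_p^2\Big)^2\Big]\le\Big(\sum_p(\me S_p^4)^{1/2}\Big)^2 .$$
The hypercontractive fourth-moment bound for short sums of $f$ (as in Lemmas~2.2--2.3 of \cite{Lau+Tenenbaum+Wu:2013}) gives $\me S_p^4\le C(h/p+1)^2(\log u)^{c}$. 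Only primes $p$ that have a multiple in $J$ contribute, and these number $O(h\log\log u)$. So the fourth moment of the vector component over the block is at most $C h^2(\log u)^{c'}$. The scalar component $\sum_{n\in J,\,P(n)>u^{1/2}}f(n)$ is handled by the same hypercontractive bound and satisfies the same estimate. Lemma~\ref{eq:aux} with $u_j=(\log u)^{c'/2}$ then gives a maximal fourth moment of order $(v-u)^2(\log u)^{c'}$. Since $v-u\le u/2$, we have $(v-u)^2\le u^{2/3}(v-u)^{4/3}$, so this term is acceptable.

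\emph{Leaving part.} The relevant primes lie in $(u^{1/2},v^{1/2}]$, and there are $K\ll (v-u)/(u^{1/2}\log u)$ of them by \eqref{eq:primecount}.
\begin{itemize}
\item The vector coordinate has squared norm at most $\sum_{u^{1/2}<p\le v^{1/2}}M(u/p)^2$, which is monotone in $x$, so no maximal inequality is needed. Using $\me M(u/p)^4\ll u(\log u)^{c}$ from the same hypercontractive bound, its second moment is $\ll K^2u(\log u)^c\ll(v-u)^2(\log u)^{c}$.
\item The scalar coordinate $\sum_{u^{1/2}<p\le x^{1/2}}f(p)M(u/p)$ is a prefix sum over primes in increasing order. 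Since $M(u/p)$ involves only primes below $u^{1/2}$, we condition on $\F_{u^{1/2}}$; the summands then form a sum of independent Rademacher variables with fixed coefficients. Doob's (or Burkholder's) inequality conditionally gives a maximal fourth moment of order $\me\big[(\sum_p M(u/p)^2)^2\big]$, which is again $\ll(v-u)^2(\log u)^c$.
\item The error terms from entering integers with $P(n)\in(u^{1/2},x^{1/2}]$ are treated in the same way.
\end{itemize}

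The main obstacle is obtaining the exact form $u^{2/3}(v-u)^{4/3}(\log u)^{52/3}$ with the stated power of $\log u$. My guess is that the hypercontractive constant for a sum over $n$ in a short interval is not $h^2(\log)^c$ uniformly. Instead it is roughly $\min\big(h^4,\,h\,u^{1/2}\cdots\big)$-type: for short blocks one has the trivial bound $h^4$, and for long blocks the $L^2$-to-$L^4$ hypercontractive bound carries a multiplicative loss. Optimising over a block-length threshold $h_0\asymp u^{1/3}(v-u)^{2/3}$ when choosing the weights $u_j$ in Lemma~\ref{eq:aux} is what would produce the exponents $2/3$ and $4/3$. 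I would first try to verify the block condition with $u_j=u^{1/3}(v-u)^{-1/3}(\log u)^{26/3}$, splitting into the cases $h\le h_0$ (trivial bound) and $h>h_0$ (hypercontractivity). I would then track the power of $\log u$ coming from the $3^{\omega(n)}$-weights in the Bonami inequality.
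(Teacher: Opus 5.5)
Your reduction to integer $x$, the Minkowski step for the vector coordinate, and your treatment of the leaving part are sound. The leaving part (monotone norm for the vector coordinate, conditional Doob plus Khintchine for the scalar one) gives $(v-u)^2(\log u)^{c}$, as in the paper.

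The gap is in the entering part. You apply Lemma~\ref{eq:aux} with a constant weight per integer. That needs $\me|\sum_{n\in J}f(n)|^4\le\big(h(\log u)^{c}\big)^2$ for \emph{every} block $J$ of $h$ consecutive integers, and similarly for the vector sums. Hypercontractivity \eqref{eq:fourth-moment-short} only gives $\big(\sum_{n\in J}3^{\omega(n)}\big)^2$, and $\sum_{n\in J}3^{\omega(n)}\ll h(\log u)^c$ does not hold uniformly over short blocks. Since the lemma is uniform in $u$, a block of length $h\le u^{\varepsilon}$ can contain a squarefree $n$ with $3^{\omega(n)}=n^{(\log 3+o(1))/\log\log n}$. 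Meanwhile the trivial bound $h^4$ is useless once $h$ exceeds a power of $\log u$. For blocks of length at least $u^{\varepsilon}$ you would also need a Shiu-type short-interval bound, which you do not supply. The same objection applies to your claim $\me S_p^4\le C(h/p+1)^2(\log u)^c$.

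Your proposed repair fails for the same reason. Take the constant weight $w=u^{1/3}(v-u)^{-1/3}(\log u)^{26/3}$. The trivial bound handles only $h\le w$. The available per-block estimate, H\"older against $\sum_{n\le v}d_{27}(n)$, gives $\sum_{n\in J}3^{\omega(n)}\le c\,h^{2/3}u^{1/3}(\log u)^{26/3}$, which is at most $hw$ only when $h$ is comparable to $v-u$. Blocks of intermediate length are not covered, and your threshold $h_0\asymp u^{1/3}(v-u)^{2/3}$ is far larger than $w$.

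The missing idea is that Lemma~\ref{eq:aux} allows arbitrary nonnegative deterministic weights. The right choice is $u_j=3^{\omega(n_j)}$, where $n_1<\dots<n_m$ enumerate $\{n\in\mathcal A:u<n\le v\}$.
\begin{itemize}
\item The block condition for every consecutive block is then literally \eqref{eq:fourth-moment-short}, or \eqref{eq:vector-fourth-short} for the vector part. No short-interval arithmetic is needed.
\item The maximal fourth moment is therefore at most $C\big(\sum_{u<n\le v}3^{\omega(n)}\big)^2$.
\item Only this one total needs an arithmetic estimate, and H\"older gives $\sum_{u<n\le v}3^{\omega(n)}\le(2(v-u))^{2/3}\big(\sum_{n\le v}d_{27}(n)\big)^{1/3}\ll u^{1/3}(v-u)^{2/3}(\log u)^{26/3}$.
\end{itemize}
This, not the optimisation of a block-length threshold, is where the exponents $2/3$, $4/3$ and $52/3$ come from.

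Restricting to $\mathcal A=\{n\ \text{squarefree}:\ n<(P(n))^2\}$, as the paper does, also removes your two-parameter ``error'' term. Then $M^{>}(x)-M^{>}(u)$ is exactly a prefix sum in $n$ (entering) minus a prefix sum in $P(n)$ (leaving). The leaving part is handled by the same lemma with weights $\sum_{n:\,P(n)=p_j}3^{\omega(n)}$.
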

\begin{proof}
We divide the proof into five steps.

\noindent {\sc Step 1}.
%\medskip
%\noindent
%\textbf{Step 1.
%%Alternative representations of \(M^{>}(x)\) and \(\mathbf a(x)\).
%}
Recall that, for an integer $n\geq 2$, $P(n)$ denotes its largest prime factor.
%Since \(f(n)=0\) unless \(n\) is squarefree, we only need to consider
%squarefree integers.
Define $$\mathcal A:=\big\{n\geq 2:~n~\text{is squarefree and } n<(P(n))^2\big\}.$$ The set ${\mathcal A}$ consists precisely of the squarefree integers which can occur in (that is, give a nonzero contribution to) $M^{>}(x)$ for some $x$. Indeed, recalling that $f(n)=0$ unless $n$ is squarefree, the integer $n$ is present in $M^{>}(x)$ if, and only if, $n\leq x< (P(n))^2$. Consequently,
\begin{equation*}
M^{>}(x)=\sum_{n\in\mathcal A}f(n)\,\1_{\{n\leq x<(P(n))^2\}}.
\end{equation*}
For $n\in\mathcal A$ and $u\leq x$, $$\1_{\{n\leq x<(P(n))^2\}}-\1_{\{n\leq u<(P(n))^2\}}=\1_{\{u<n\leq x\}}-
\1_{\{u<{(P(n))^2}\leq x\}}$$ and thereupon
%Therefore
\begin{equation}\label{eq:S-entry-exit-short}
M^{>}(x)-M^{>}(u)=E(u,x)-L(u,x),
\end{equation}
where $$E(u,x):=\sum_{\substack{n\in\mathcal A\\u<n\leq x}}f(n)\quad\text{and}\quad L(u,x):=\sum_{\substack{n\in\mathcal A\\u<(P(n))^2\leq x}}f(n).$$

The coefficient vector $\mathbf a(x)$ admits the analogous representation, see \eqref{eq:distance} for the definition and \eqref{eq:avector} for the alternative representation: %. If $e_p$ denotes the standard unit vector in the
%coordinate indexed by the prime \(p\), then
\begin{equation}\label{eq:a-active-short}
\mathbf a(x)=\sum_{n\in\mathcal A}f\!\Big(\frac{n}{P(n)}\Big)e_{P(n)}
\1_{\{n\leq x<(P(n))^2\}}.
\end{equation}
Indeed, for fixed prime $p$, the integers $n\in\mathcal A$ satisfying
$P(n)=p$ have the form $n=pm$ with squarefree $m<p$. Their contribution to the coordinate indexed by $p$ of the right-hand side of \eqref{eq:a-active-short} is $$\1_{\{x<p^2\}}\sum_{m\le x/p}f(m)=\1_{\{p>x^{1/2}\}}M(x/p)=a_p(x).$$ This proves \eqref{eq:a-active-short}.

Finally,
\begin{equation}\label{eq:a-entry-exit-short}
\mathbf a(x)-\mathbf a(u)= E_{\mathbf a}(u,x)-L_{\mathbf a}(u,x),
\end{equation}
where $$E_{\mathbf a}(u,x):=\sum_{\substack{n\in\mathcal A\\u<n\le x}}
f\!\Big(\frac{n}{P(n)}\Big)e_{P(n)}\quad \mbox{and} \quad L_{\mathbf a}(u,x):=
\sum_{\substack{n\in\mathcal A\\u<(P(n))^2\le x}}f\!\Big(\frac{n}{P(n)}\Big)e_{P(n)}.$$

\noindent {\sc Step 2.} By formula~(2.2) in \cite{Lau+Tenenbaum+Wu:2013}, applied with $m=4$,
for each finitely supported family of real coefficients $(c_n)$ indexed by squarefree integers, % we have
%Let $(c_n)$ be a finitely supported % real
%sequence of real numbers indexed
%by squarefree integers. The $(2,4)$-hypercontractivity
%theorem for the discrete cube
%\cite[Section~9.2]{ODonnell2014} gives
\begin{equation}\label{eq:fourth-moment-short}
\me\Big[\Big|\sum_n c_nf(n)\Big|^4\Big] \leq \Big(\sum_n 3^{\omega(n)}c_n^2\Big)^2,
\end{equation}
where $\omega(n)$ denotes the number of distinct prime factors of $n$. With some extra work, this inequality can also be derived from Theorem~3 on p.~376 in~\cite{Bonami}, applied with $p=2$, $q=4$ and $r=3^{-1/2}$, as well as from the $(2,4)$-Hypercontractivity Theorem in Section~9.2 of \cite{ODonnell2014}.

We shall also need a vector-valued version of \eqref{eq:fourth-moment-short}.  For each finitely supported family of real coefficients $c_n$ indexed by squarefree integers $n\in
\mathcal A$, % we have
\begin{equation}\label{eq:vector-fourth-short}
\me\Big[\Big\|\sum_n c_n f\!\Big(\frac{n}{P(n)}\Big)e_{P(n)}\Big\|_2^4\Big]\leq \Big(\sum_n 3^{\omega(n)}c_n^2\Big)^2.
\end{equation}
To prove this, for each prime $p$, let $V_p$ denote the coordinate indexed by  $p$ of the random vector on the left-hand side of \eqref{eq:vector-fourth-short}, that is, $\sum_n c_n f\!(n/P(n))e_{P(n)}=\sum_p V_p e_p$. %\)-th
%coordinate of $V$.
%denote $V = \sum_n
%c_n
%f\!\left(\frac n{P(n)}\right)e_{P(n)}$
%and  for each prime \(p\) let \(V_p\) denote the \(p\)-th
%coordinate of $V$.
Explicitly, $$V_p=\sum_{n:~P(n)=p}c_n f(n/p).$$ Applying \eqref{eq:fourth-moment-short} to the coefficients $c_n$ appearing in the latter sum gives
\begin{equation}\label{eq:Vp-fourth}
\big(\me\big[|V_p|^4\big]\big)^{1/2}\leq \sum_{n:~P(n)=p}3^{\omega(n/p)}c_n^2=\sum_{n:~P(n)=p}3^{\omega(n)-1}c_n^2.
\end{equation}
We have used for the equality that, for a squarefree integer $n\geq 2$, $\omega(n/P(n))=\omega(n)-1$. Combining the Minkowski inequality with \eqref{eq:Vp-fourth}, we obtain
\begin{multline*}
\Big(\me \Big\|\sum_n c_n f\!\Big(\frac{n}{P(n)}\Big)e_{P(n)}\Big\|_2^4\Big)^{1/2}=
\Big(\me\Big[\Big(\sum_p |V_p|^2\Big)^2\Big]\Big)^{1/2}\\
\leq \sum_p \big(\me\big[|V_p|^4\big]\big)^{1/2} \leq \sum_p \sum_{n:~P(n)=p}
3^{\omega(n)-1}c_n^2=\sum_n 3^{\omega(n)-1}c_n^2.
\end{multline*}
Squaring both sides and using $1/3<1$ proves \eqref{eq:vector-fourth-short}.

\noindent {\sc Step 3}. Now we prove that
\begin{equation}\label{eq:entry-short}
\me \big[\sup_{u\leq x\leq v}|E(u,x)|^4\big]+\me\big[\sup_{u\leq x\leq v}\|E_{\mathbf a}(u,x)\|_2^4\big]\leq C_1u^{2/3} (v-u)^{4/3}(\log u)^{52/3}
\end{equation}
for an absolute constant $C_1>0$.

Let $d_k(n)$ denote the $k$-fold divisor function, that is, $$d_k(n):=\#\big\{(a_1,\ldots,a_k)\in\mn^k:a_1\cdot\ldots \cdot a_k=n\big\}.$$ If $n$ is squarefree, then $d_k(n)=k^{\omega(n)}$ because each prime factor of $n$ can be assigned independently to one of the $k$ factors. For each fixed integer $k\ge 2$, according to Exercise~18(c) on p.~43 in \cite{MV},
\begin{equation}\label{eq:divisor-short}
\sum_{n\leq z} d_k(n) \leq c_0 z(\log z)^{k-1},\quad z\ge 2
\end{equation}
for a constant $c_0=c_0(k)>0$.

Define $\mathcal E:=\{n\in\mathcal A:~u<n\leq v\}$ and observe that $|\mathcal E|\leq v-u+1 \leq 2(v-u)$. Combining H\"older's inequality with \eqref{eq:divisor-short}, we then obtain
\begin{multline}
\sum_{n\in\mathcal E}3^{\omega(n)}\leq |\mathcal E|^{2/3}\Big(\sum_{\substack{n\leq v\\n\ {\rm squarefree}}}27^{\omega(n)}\Big)^{1/3}\leq |\mathcal E|^{2/3}\Big(\sum_{n\leq v}d_{27}(n)\Big)^{1/3}\\\leq
c_1 u^{1/3} (v-u)^{2/3}(\log u)^{26/3},\label{eq:w}
\end{multline}
where $c_1:=2^9 3^{1/3}(c_0(27))^{1/3}$, having utilized $\log (3/2)+\log u\leq 2\log u$ for $u\geq 3/2$.

Write $\mathcal E=\{n_1,\ldots,n_m\}$, where $n_1<\ldots<n_m$. For $x\in[u,v]$, put $$k(x):=\#\{j\in\{1,\ldots,m\}:~n_j\leq x\}.$$ Then $$E(u,x)=\sum_{j=1}^{k(x)}f(n_j)~\text{and}~E_{\mathbf a}(u,x)=\sum_{j=1}^{k(x)}f(n_j/P(n_j))e_{P(n_j)},$$ where an empty sum is understood to be zero. Consequently,
\begin{align*}
\sup_{u\leq x\leq v}|E(u,x)|
&=\max_{0\leq k\leq m}\Big|\sum_{j=1}^k f(n_j)\Big|,\\
\sup_{u\leq x\leq v}\|E_{\mathbf a}(u,x)\|_2
&=\max_{0\leq k\leq m}\Big\|\sum_{j=1}^k f(n_j/P(n_j))e_{P(n_j)}\Big\|_2.
\end{align*}
Assume that $m\geq 1$. For $1\leq j\leq m$, put
$Y_j:=f(n_j)$ and $u_j:=3^{\omega(n_j)}$. By \eqref{eq:fourth-moment-short}, for all $1\leq a\leq b\leq m$, $$\me\Big[\Big|\sum_{j=a}^bY_j\Big|^4\Big]
\leq\Big(\sum_{j=a}^bu_j\Big)^2.$$ Hence, \eqref{eq:max-fourth-short} and \eqref{eq:w} give
\begin{multline*}
\me\Big[\sup_{u\leq x\leq v}|E(u,x)|^4\Big]
=\me\Big[\max_{0\leq k\leq m}
\Big|\sum_{j=1}^kY_j\Big|^4\Big]
\leq C\Big(\sum_{j=1}^mu_j\Big)^2
=C\Big(\sum_{n\in\mathcal E}
3^{\omega(n)}\Big)^2\\\leq Cc_1^2 u^{2/3} (v-u)^{4/3}(\log u)^{52/3}.
\end{multline*}
Applying \eqref{eq:max-fourth-short} to $(f(n_j/P(n_j))e_{P(n_j)})_{1\leq j\leq m}$,
with the same weights $u_j$, and using \eqref{eq:vector-fourth-short}, gives the same bound for the second expectation in \eqref{eq:entry-short}.
Thus, \eqref{eq:entry-short} holds with $C_1:=2Cc_1^2$.
%Applying \eqref{eq:max-fourth-short} to these two sequences,
%with the bounds for consecutive sums supplied by
%\eqref{eq:fourth-moment-short} and
%\eqref{eq:vector-fourth-short}, respectively, and then using
%the preceding inequality, we obtain \eqref{eq:entry-short}.
If $m=0$, both $x\mapsto E(u,x)$ and $x\mapsto E_{\mathbf a}(u,x)$ vanish on $[u,v]$, and \eqref{eq:entry-short} holds trivially.

%Now we order the elements of $\mathcal E$ in the increasing order. With $u<v$ fixed, $E(u,x)$, as $x$ runs through $[u,v]$, is an initial partial sum
%of the corresponding sequence $f(n)$.  Similarly,
%$E_{\mathbf a}(u,x)$ is an initial partial sum of the sequence $f(nP(n))e_{P(n)}$. The fourth-moment bounds \eqref{eq:fourth-moment-short} and \eqref{eq:vector-fourth-short}, combined with \eqref{eq:max-fourth-short} and the above inequality, give \eqref{eq:entry-short}.

\noindent {\sc Step 4}. %\medskip \noindent \textbf{Step 5. Estimates of $L_S$ and $L_{\mathbf a}$.}
We next prove that
\begin{equation}\label{eq:exit-short}
\me\big[\sup_{u\leq x\leq v} |L(u,x)|^4\big]+\me \big[\sup_{u\leq x\leq v}\|L_{\mathbf a}(u,x)\|_2^4\big]\leq C_2 (v-u)^2 (\log u)^4 %u^{2/3}(v-u)^{4/3} (\log u)^8,
\end{equation}
for an absolute constant $C_2>0$.

Define $\mathcal L:=\{n\in\mathcal A:~u<(P(n))^2\leq v\}$. %\[
%W_L
%:=
%\sum_{n\in\mathcal L}3^{\omega(n)}
%\qquad \mbox{for} \qquad
%\mathcal L
%:=
%\{n\in\mathcal A:u<P(n)^2\le v\}.
%\]
If $n\in\mathcal L$ and $p=P(n)$, then $u^{1/2}<p\leq v^{1/2}$. Conversely, once such a prime $p$ is fixed, the relevant integers have the form $n=pm$, with $m$ squarefree and $m<p$. Hence
$$\sum_{n\in\mathcal L}3^{\omega(n)}=3\sum_{u^{1/2}<p\leq v^{1/2}}\sum_{\substack{m<p\\m\ {\rm squarefree}}}3^{\omega(m)}.$$
%We first estimate the number of possible primes.
Trivially,
\begin{equation}\label{eq:exit-prime-count}
\#\{p:~u^{1/2}<p\leq v^{1/2}\}\leq v^{1/2}-u^{1/2}+1\leq u^{-1/2}(v-u)/2+1.
\end{equation}
For each such prime $p$, using \eqref{eq:divisor-short} with $k=3$ and $p\leq (3/2)^{1/2}u^{1/2}$, we infer $$\sum_{\substack{m<p\\m\ {\rm squarefree}}}3^{\omega(m)}\leq \sum_{m<p}d_3(m)\leq c_0(3)p(\log p)^2\leq c_3u^{1/2}(\log u)^2,$$ where $c_3=(3/2)^{{5/4}}c_0(3)$. The last inequality holds for $u\geq (3/2)^{1/2}$ and, in particular, under the standing assumptions which imply that $u\geq 4$. Combining this with \eqref{eq:exit-prime-count} and using $u^{1/2}\leq v-u$, we obtain
\begin{equation}\label{eq:WL-short}
\sum_{n\in\mathcal L}3^{\omega(n)}\leq 3c_3 (v-u+u^{1/2})(\log u)^2\leq 6c_3 (v-u)(\log u)^2.
\end{equation}

Write the primes in $(u^{1/2},v^{1/2}]$ as $p_1<\ldots<p_s$ and put $$Y_j:=\sum_{\substack{n\in\mathcal L\\P(n)=p_j}}f(n),\quad \boldsymbol Y_j:=
\sum_{\substack{n\in\mathcal L\\P(n)=p_j}}f(n/p_j)e_{p_j}~\text{and}~u_j:=\sum_{\substack{n\in\mathcal L\\P(n)=p_j}}3^{\omega(n)}.$$ Assume that $s\geq 1$. Then $L(u,x)$ and $L_{\mathbf a}(u,x)$ are the partial
sums of $(Y_j)$ and $(\boldsymbol Y_j)$, respectively, taken over those indices $j$ for which $p_j^2\leq x$. The bounds \eqref{eq:fourth-moment-short} and \eqref{eq:vector-fourth-short} apply to every consecutive
collection of these groups. Invoking \eqref{eq:max-fourth-short}, with weights $u_j$, for these two sequences and then using \eqref{eq:WL-short}, we obtain \eqref{eq:exit-short}, with $C_2:=72Cc_3^2$,
where $C$ is the constant in \eqref{eq:max-fourth-short}. If $s=0$, both sums vanish, and the conclusion holds trivially.

\noindent {\sc Step 5.} Recall that $d(u,x)=\|\mathbf a(x)-\mathbf a(u)\|_2$. It follows from \eqref{eq:S-entry-exit-short} and \eqref{eq:a-entry-exit-short} that
\begin{align*}
|M^{>}(x)-M^{>}(u)|+\|\mathbf a(x)-\mathbf a(u)\|_2 \leq |E(u,x)|+|L(u,x)|+\|E_{\mathbf a}(u,x)\|_2+\|L_{\mathbf a}(u,x)\|_2.
\end{align*}
Using the inequality $(a_1+\ldots+a_4)^4\le 4^3\sum_{j=1}^4 a_j^4$ and then \eqref{eq:entry-short} and the consequence of \eqref{eq:exit-short}
\begin{multline*}
\me\big[\sup_{u\leq x\leq v} |L(u,x)|^4\big]+\me \big[\sup_{u\leq x\leq v}\|L_{\mathbf a}(u,x)\|_2^4\big]\leq C_2 (v-u)^2 (\log u)^4\\\leq 2^{-2/3}C_2 u^{2/3}(v-u)^{4/3}(\log u)^{52/3},
\end{multline*}
we arrive at \eqref{eq:fourth}, with $C:=4^3(C_1+2^{-2/3}C_2)$. The proof of Lemma \ref{lem:fourth} is complete.
\end{proof}

\begin{proof}[Proof of Proposition~\ref{prop:interp}]
We introduce a very fine deterministic grid inside $[\eee^T, \eee^{2T}]$ given by
\begin{equation*}
D_T=\{\eee^{T+jT^{-120}}:~j=0,1,\ldots,T^{121}\}.
\end{equation*}
The grid contains both endpoints of $[\eee^T,\eee^{2T}]$ and splits this interval into $T^{121}$ closed intervals with disjoint interiors and equal logarithmic length $T^{-120}$, which we call cells.

We first compare arbitrary points with the endpoints of their cells. To this end, we show that each cell $[u,v]$ satisfies the standing assumptions of Lemma \ref{lem:fourth}. Indeed, $$u^{-1/2}(v-u)=u^{1/2}\big(\eee^{T^{-120}}-1\big)\geq \eee^{T/2}T^{-120}~\to~\infty,\quad T\to\infty.$$ Moreover, for all sufficiently large $T$, $v/u=\eee^{T^{-120}}\leq 3/2$ and thereupon $u+u^{1/2}\leq v\leq 3u/2$. Hence, Lemma~\ref{lem:fourth} applies and yields, in combination with Markov's inequality % For a cell $[u,v]$, Lemma~\ref{lem:fourth} and Markov's inequality secure
\begin{multline*}
\mmp\big\{\sup_{u\leq x\leq v}(|M^{>}(x)-M^{>}(u)|+d(u,x))>u^{1/2}\,T^{-2}\big\}\\ \leq 2^{52/3}C\frac{u^{2/3}\big(u\big(\eee^{T^{-120}}-1\big)\big)^{4/3}T^{52/3}}{u^2T^{-8}}=O\big(T^{-134}\big).
\end{multline*}
A union bound then gives $$\mmp\big\{\max_{[u,v]~\text{a cell}}u^{-1/2}\sup_{u\leq x\leq v}(|M^{>}(x)-M^{>}(u)|+d(u,x))>T^{-2}\big\}=T^{121}O\big(T^{-134}\big)=O\big(T^{-13}\big).$$ Since the right-hand side is summable over dyadic $T$, we conclude,
%over the $T^{121}$ cells gives a failure probability
%$O(T^{-7})$, summable over dyadic $T$. Therefore,
by the direct part of the Borel-Cantelli lemma, that, a.s., for all sufficiently large $T$ and simultaneously for all cells,
\begin{equation}\label{eq:finecells}
\sup_{u\le x\le v}(|M^{>}(x)-M^{>}(u)|+d(u,x))\le u^{1/2}\,T^{-2}.
\end{equation}

For $y\in D_T$, let $c=c_T(y)$ be the largest % greatest
point of $C_T$, see \eqref{eq:coarse} for the definition, with $c\leq y$. The deterministic pair $(c,y)$ satisfies $\log(y/c)\leq T^{-\alpha}$. Indeed, if $y=\eee^{2T}$, then $c=\eee^{2T}$. If $y=\exp(T+jT^{-120})$ for some $j\in\mn_0$, $j\leq T^{121}-1$, then $c=\exp(T+\lfloor jT^{-120+\alpha}\rfloor T^{-\alpha})$, so that $\log (y/c)=jT^{-120}-\lfloor jT^{-120+\alpha}\rfloor T^{-\alpha}\leq T^{-\alpha}$. Combining Lemmas~\ref{lem:distance}, \ref{lem:shifts} and Lemma~\ref{lem:R} (applied to $G_T=C_T\cup D_T$), we obtain, a.s., for all sufficiently large $T$, simultaneously for all $y\in D_T$,
\begin{equation}\label{eq:dxy}
\frac{(d(c,y))^2}{c}\leq 22T^{-\alpha/2}+C(\omega)T^{-1-\alpha}(\log T)^6 \leq T^{-\alpha/3}.
\end{equation}
For each fixed pair $(c,y)$, put $b_p:=a_p(y)-a_p(c)$. Whenever $a_p(y)$ is nonzero, we have $p>y^{1/2}$ and $y/p<p$, so $a_p(y)$ is $\mathcal F_{p^-}$-measurable.
The same argument applies to $a_p(c)$. Hence, $b_p$ is $\mathcal F_{p^-}$-measurable for each prime $p$. Moreover, $b_p=0$ outside the deterministic finite set $\mathcal P_1:=\mathcal P\cap(c^{1/2},y]$.  Applying \eqref{eq:predictable-tail}, with the deterministic choices $t=c^{1/2}\,T^{-\alpha/12}$ and $v=cT^{-\alpha/3}$, we obtain
$$\mmp\big\{|M^{>}(y)-M^{>}(c)|>c^{1/2}\,T^{-\alpha/12},~(d(c,y))^2\le cT^{-\alpha/3}\big\} \leq 2\exp\big(-T^{\alpha/6}/2\big).$$ From this and a union bound it follows that
\begin{multline*}
\mmp\Big\{\bigcup_{y\in D_T}\big\{|M^{>}(y)-M^{>}(c_T(y))|>c_T(y)^{1/2}T^{-\alpha/12},~ d(c_T(y),y)^2\leq c_T(y)T^{-\alpha/3}\big\}\Big\}\\\leq 2(T^{121}+1)\exp\big(-T^{\alpha/6}/2\big). %=2(2^{121r}+1)\exp\big(-2^{\alpha r/6}/2\big).
\end{multline*}
These bounds are summable over $T=2^r$. By the direct part of the Borel--Cantelli lemma, the events on the left only occur finitely often a.s. %There are only $O(T^{121})$ deterministic pairs $(c_T(y),y)$.
%A union bound followed by Borel--Cantelli proves that these event holds a.s. only for finitely  many indices.
Recalling \eqref{eq:dxy}, % which holds a.s., for all sufficiently large $T$, simultaneously for all $y\in D_T$,
we conclude that a.s., for all sufficiently large $T$ and all $y\in D_T$,
\begin{equation}\label{eq:gridincrements}
|M^{>}(y)-M^{>}(c_T(y))|\leq (c_T(y))^{1/2} T^{-\alpha/12}\quad\text{and}\quad d(c_T(y),y)\leq (c_T(y))^{1/2}T^{-\alpha/6}.
\end{equation}
Here, the second inequality follows from \eqref{eq:dxy}, whereas the first inequality follows from the second and the preceding Borel-Cantelli argument.

Finally, for $x\in[\eee^T, \eee^{2T})$, choose the largest $y\in D_T$ with $y\le x$ and put $z=c_T(y)$. Then % Both grid maps are deterministic,
$z\leq y\leq x$, $\log(x/z)\leq 2T^{-\alpha}$ and the points $x$ and $y$ lie in the same cell of $D_T$. By \eqref{eq:finecells}, \eqref{eq:gridincrements}, and the triangle inequality,
\begin{multline*}
|M^{>}(x)-M^{>}(z)|+d(x,z)\leq |M^{>}(x)-M^{>}(y)|+d(x,y)+|M^{>}(y)-M^{>}(c_T(y))|+d(y,c_T(y))\\\leq y^{1/2}T^{-2}+ (c_T(y))^{1/2}\big(T^{-\alpha/12}+ T^{-\alpha/6}\big)\leq 2x^{1/2}T^{-\alpha/12}.
\end{multline*}
If $x=\eee^{2T}$, then $z=y=x$, and the latter inequality holds trivially. The proof of Proposition~\ref{prop:interp} is complete. %This proves the proposition uniformly for every real $x$.
\end{proof}

\section{Proof of Theorem \ref{thm:main} -- the lower bound % for the $\limsup$
}\label{sect:lower}

\subsection{Our strategy}\label{sect:strat}

The second part of the paper is concerned with the lower bound for $\limsup$ and the upper bound for $\liminf$.
\begin{proposition}\label{prop:lower bound}
We have
\begin{equation}\label{eq:lower sup}
\limsup_{x\to\infty}\frac{M^{>}(x)}{(2Q(x)\log\log Q(x))^{1/2}}\geq 1\quad\text{{\rm a.s.}}
\end{equation}
and $$\liminf_{x\to\infty}\frac{M^{>}(x)}{(2Q(x)\log\log Q(x))^{1/2}}\leq -1\quad\text{{\rm a.s.}}$$
\end{proposition}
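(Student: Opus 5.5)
By the symmetry $f\mapsto -f$, which preserves the law of $(f(p))_p$, maps $M^{>}$ to $-M^{>}$ and leaves $Q$ unchanged, it suffices to prove the statement for $\limsup$. The approach is the classical one for the lower half of an LIL: find sparse, well-separated points $x_k$ at which the events $\{M^{>}(x_k)>(1-\varepsilon)(2Q(x_k)\log\log Q(x_k))^{1/2}\}$ have conditional probabilities that are not summable and are nearly independent. Then apply a conditional (Lévy) form of the second Borel--Cantelli lemma.

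\textbf{Choice of points and filtration.} Take $x_k=\exp(k^{\gamma})$ or a similarly sparse sequence, with $\log\log x_k\sim\gamma\log k$. Conditionally on $\F_{x_k^{1/2}}$, we have $M^{>}(x_k)=\sum_{x_k^{1/2}<p\le x_k}f(p)M(x_k/p)$. This is a sum of independent Rademacher signs with fixed coefficients. Its maximal coefficient is at most $\max_{u\le x_k^{1/2}}|M(u)|\le x_k^{1/3}$ with high probability by \eqref{eq:maxM}, which is negligible compared with $Q(x_k)^{1/2}\ge x_k^{1/2}(\log x_k)^{-1}$ by Lemma~\ref{lem:clockelementary}. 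A conditional lower tail bound for weighted Rademacher sums (Kolmogorov's converse exponential inequality or a Berry--Esseen bound, which is legitimate because the coefficients are uniformly small) then gives
\[
\mmp_{x_k^{1/2}}\big\{M^{>}(x_k)>(1-\varepsilon)(2Q(x_k)\log\log Q(x_k))^{1/2}\big\}\ge (\log x_k)^{-(1-\varepsilon')}\ge k^{-\gamma(1-\varepsilon')}.
\]
This is non-summable once $\gamma(1-\varepsilon')\le1$.

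\textbf{Decoupling across $k$.} The difficulty is that the points $x_k$ share prime signs. The large primes $(x_k^{1/2},x_k]$ used at step $k$ reappear as small primes for later steps if $x_{k+1}^{1/2}>x_k$. That is fine: I would use a nested filtration $\mathcal G_k=\F_{x_k}$ and ask that $x_{k+1}\ge x_k^{2}$. The event at step $k$ is then $\mathcal G_k$-measurable, and its conditional probability given $\mathcal G_{k-1}\subseteq\F_{x_k^{1/2}}$ is the expectation of the bound above. But $x_{k+1}\ge x_k^2$ forces $\log\log x_k\asymp k$, so the conditional probabilities $k^{-(1-\varepsilon')\log 2\cdot\ldots}$ become geometric and summable. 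So the geometric spacing cannot work directly. One must use denser points, $\log x_k=k^{\gamma}$ with $\gamma$ close to $1$, at which the prime ranges overlap. This is where the conditional covariance estimate \eqref{eq:covinput}, announced in the introduction, enters.

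The plan for the dense points is as follows. Conditionally on $\F_{y}$ with $y$ slightly below the smallest of a block of points $x_{k}$, $k\in K$, the vector $(M^{>}(x_k))_{k\in K}$ is a sum of independent Rademacher vectors. Its conditional covariance is $\sum_p a_p(x_k)a_p(x_j)$. Using random Euler products (Hardy's approximation) and high moments of their integrals, one shows that for most well-separated pairs this covariance is $o\big((Q(x_k)Q(x_j))^{1/2}\big)$. Given this near-orthogonality, a multivariate Gaussian comparison yields the needed lower bound for the probability that at least one event in the block occurs, which is the Erdős--Rényi/Kochen--Stone form of Borel--Cantelli. Finally, $\varepsilon\to0$ along a countable sequence, together with Lemma~\ref{lem:clockelementary} to pass from $\log\log Q$ to $\log\log x$, concludes the argument.

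\textbf{Main obstacle.} The hardest step is the conditional covariance bound \eqref{eq:covinput}: showing that the sums $M^{>}(x_j)$ at comparably sized points are almost surely nearly uncorrelated given the small primes. Here the random environment does not average out, so I would attack it through moments of $\int|F(1/2+it)|^2\,|x_k^{it}-x_j^{it}|^2\,dt/|t|^2$ for the random Euler product $F$.
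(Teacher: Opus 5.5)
Your overall architecture is the paper's: many points of comparable size whose large-prime sums are nearly uncorrelated given the small primes, a maximum over each block reaching the LIL level, and L\'evy's conditional Borel--Cantelli lemma across blocks. However, the conditioning step as you state it is false, and the calibration that makes it work is missing. For a block of distinct points there is no $y$ for which $(M^{>}(x_k))_{k\in K}$, given $\F_y$, is a sum of independent Rademacher vectors with fixed coefficients. If $y<\min_k x_k^{1/2}$, the coefficients $M(x_k/p)$ with $p<x_k/y$ still involve unrevealed signs, so you get a chaos of degree at least two. If $y\ge x_k^{1/2}$ for some $k$, part of the large-prime sum at $x_k$ is already revealed.

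The paper conditions on $\mathcal H_r=\F_{a_r}$, where $a_r=Y_r^{1/2}$ and $Y_r\ge\max\mathcal X_r$, and writes $M^{>}(x)=U_r(x)+V_r(x)$. It must then show that the revealed part is negligible in the strong sense $u_r(x)=o(v_r(x))$. Otherwise the maximum of $V_r$ only reaches $(2v_r\log n)^{1/2}$, a constant factor below $(2Q\log n)^{1/2}$. Now $\me[u_r(x)]\asymp x\log(\log Y_r/\log x)$, while $v_r(x)/x$ is only known to exceed $c_0(\log\log x)^{-11/20}$. That lower bound is \eqref{eq:var v+w}, from Harper's Proposition~1, and you also need it to turn \eqref{eq:covinput}, which is stated relative to $(xy)^{1/2}$, into correlation bounds; Lemma~\ref{lem:clockelementary} is far too weak for that. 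So, with the available bounds, the block must have logarithmic width $o(\log x\,(\log\log x)^{-11/20})$. Yet it must contain $(\log x)^{1-\varepsilon}$ unit-spaced points, so that after discarding bad neighbours $\log|\mathcal S_r|\ge(1-\varepsilon')\log\log Q$. This is what dictates $\log x\in[T_r,T_r+T_r^{1-\lambda}]$ with $T_r=4^r$, the pruning in Lemma~\ref{lem:prune}, and the greedy independent set $\mathcal S_r$.

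The within-block step also fails as you describe it, via a Kochen--Stone second-moment bound. From \eqref{eq:covinput} and \eqref{eq:var v+w} the only guaranteed correlation bound is $\rho_r\asymp(\log\log a_r)^{-1/20}$, while the level satisfies $t^2\approx 2\log n\asymp\log\log a_r$. A Chung--Erd\H{o}s/Kochen--Stone bound then gives only $\mmp\{\max_i Z_i>t\}\ge n^{-O(\rho_r)}=\exp(-O((\log\log a_r)^{19/20}))$. This tends to zero and is summable over $T_r=4^r$, so it cannot feed the conditional Borel--Cantelli lemma. An unconditional Kochen--Stone argument would in any case give only positive probability and would need a separate $0$--$1$ law.

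The paper supplies the missing boost in Lemma~\ref{lem:rademacher-max}. Talagrand's concentration inequality for the convex $2$-Lipschitz function $\max_i Z_i$ of the signs pushes the median above $\alpha(2\log n)^{1/2}-o((\log n)^{1/2})$. Hence the block exceedance has conditional probability $1-o(1)$ on events $\mathcal E_r$ with $\mmp(\mathcal E_r)\to1$, and conditional Borel--Cantelli gives probability one directly. A Slepian comparison after a Gaussian approximation of the maximum could replace Talagrand, but some step beyond the second moment is needed.

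Finally, your proposed route to \eqref{eq:covinput} points the wrong way. Moment bounds for $\int|F(1/2+\ii t)|^2|x^{\ii t}-y^{\ii t}|^2\,\dd t/|t|^2$ bound that distance from above, i.e.\ the correlation from below. The paper instead bounds high moments of the cross term $I_r(x,y)$ summed over $y$, exploiting cancellation in $\sum_y y^{-\ii s}$. It first restricts to the barrier events $\mathcal A_r(t)$, because unrestricted high moments of the Euler product are dominated by rare spikes.
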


Theorem~\ref{thm:main} is an immediate consequence of Propositions~\ref{prop:upper bound} and~\ref{prop:lower bound}.

Fix $\lambda\in(0,1)$ and, for $r\in\mn$, put
$$T_r:=4^r,\quad Y_r:=\exp(T_r+T_r^{1-\lambda}),\quad a_r:=Y_r^{1/2}\quad\text{and}\quad \mathcal H_r:=\mathcal F_{a_r}.$$
Define the deterministic set
$$\mathcal X_r:=\{\exp(T_r+k):~k\in\mathbb Z,~0\leq k\leq\lfloor T_r^{1-\lambda}\rfloor\}.$$
For $x\in\mathcal X_r$, put
\begin{align*}
U_r(x)&:=\sum_{x^{1/2}<p\leq a_r}f(p)M(x/p),&u_r(x)&:=\sum_{x^{1/2}<p\leq a_r}(M(x/p))^2,\\
V_r(x)&:=\sum_{a_r<p\leq x}f(p)M(x/p),&v_r(x)&:=\sum_{a_r<p\leq x}(M(x/p))^2.
\end{align*}
Then $M^{>}(x)=U_r(x)+V_r(x)$ and $Q(x)=u_r(x)+v_r(x)$.
Each $x\in\mathcal X_r$ satisfies $x\leq Y_r=a_r^2$. For $p>a_r$, this entails $x/p<a_r$. Hence $v_r(x)$ and all the coefficients $M(x/p)$ appearing in $V_r(x)$ are $\mathcal H_r$-measurable. Conditionally on $\mathcal H_r$, the variables $(f(p))_{a_r<p\leq Y_r}$ remain independent Rademacher variables. In particular,
$$\me_{a_r}[V_r(x)]=0\quad\text{and}\quad\me_{a_r}[(V_r(x))^2]=v_r(x).$$
For the primes appearing in $U_r(x)$ and $u_r(x)$, we have $p\leq a_r$ and $x/p<x^{1/2}\leq a_r$. Hence $U_r(x)$ and $u_r(x)$ are also $\mathcal H_r$-measurable.

%For a finite discrete set $A$, we write interchangeably $|A|$ or $\#A$ for the number of its elements.

The lower bound for the $\limsup$ is driven by the principal term $V_r$, comprising all primes exceeding $a_r$. Formula \eqref{eq:covinput} in  Proposition~\ref{prop:novel} is the main novel ingredient of a number-theoretic nature and the most technically demanding part of the paper. With probability tending to one, it % provides a uniform lower bound for $v_r(x)$ and
controls, for each $x\in\mathcal X_r$, the number of points $y\neq x$ for which $|\me_{a_r}[V_r(x)V_r(y)]|$ is large relative to $(xy)^{1/2}$. Lemma~\ref{lem:prune} provides a large $\mathcal H_r$-measurable subset $\mathcal G_r\subseteq\mathcal X_r$ on which $U_r(x)$ is negligible relative to the intended normalization and $Q(x)/v_r(x)$ is close to one, on suitable events of probability tending to one. A greedy selection then gives a sufficiently large $\mathcal H_r$-measurable subset $\mathcal S_r\subseteq\mathcal G_r$ whose normalized sums have small pairwise conditional covariances. Lemma~\ref{lem:rademacher-max} shows that the maximum of $n$ weighted Rademacher sums with unit variances, sufficiently small coefficients and vanishing covariance upper bounds exceeds $(1-d)(2\log n)^{1/2}$ with probability tending to one, for each fixed $d\in(0,1)$. Applying this lemma conditionally on $\mathcal H_r$ directly to the sums $V_r(x)$ indexed by $\mathcal S_r$, and using the bound for $U_r(x)$, gives the required conditional exceedance estimate for $M^{>}$. The conditional Borel-Cantelli lemma then yields infinitely many exceedances a.s. The assertion for the $\liminf$ follows by reversing the signs of both terms.

\subsection{Auxiliary results}\label{sect:aux}

We start with a number of preparatory results, needed for the proof of Proposition \ref{prop:lower bound}.

For $x,y\in\mathcal X_r$, put $$C_r(x,y):=\me_{a_r}[V_r(x)V_r(y)]=\sum_{a_r<p\leq \min(x,y)}M(x/p)M(y/p).$$
\begin{proposition}\label{prop:novel}
There is $c_0>0$ such that
\begin{equation}\label{eq:var v+w}
\lim_{r\to\infty}\mmp\Big\{\inf_{x\in\mathcal X_r}\frac{v_r(x)}{x}\geq \frac{c_0}{(\log\log a_r)^{11/20}}\Big\}=1.
\end{equation}
Also, for each fixed $\kappa\in (0,1)$,
\begin{equation}\label{eq:covinput}
\lim_{r\to\infty}\mmp\Big\{\max_{x\in\mathcal X_r}\#\Big\{y\in\mathcal X_r\setminus\{x\}:~|C_r(x,y)|>(\log\log a_r)^{-3/5}(xy)^{1/2}\Big\}\leq (\log a_r)^\kappa\Big\}=1.
\end{equation}
\end{proposition}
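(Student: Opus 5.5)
The plan is to pass from prime sums to integrals of $M$, and then, via Parseval for Dirichlet series, to integrals of the random Euler product $F(s)=\prod_{p\le N}(1+f(p)p^{-s})$, in the spirit of Harper \cite{Harper:2020} and Hardy \cite{Hardy:2025}. Write $x=\eee^{T_r+j}\in\mathcal X_r$. Note that $x/a_r\in[\eee^{T_r/2-T_r^{1-\lambda}/2},\eee^{T_r/2+T_r^{1-\lambda}/2}]$, so $\log(x/a_r)\asymp T_r$ and $\log\log a_r=\log T_r+O(1)$.

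\emph{Step 1 (prime sums to integrals).} As in the proof of Lemma~\ref{lem:distance}, average each term of $v_r(x)$ and $C_r(x,y)$ over $t\in[p,(1+\delta)p]$ with $\delta=T_r^{-250}$. The prime number theorem in short intervals \eqref{eq:pix}--\eqref{eq:Restim} then replaces $\sum_{a_r<p\le x}$ by $\int_{a_r}^x\cdot\,\dd t/\log t$. For the lower bound I would use instead a two-sided version of \eqref{eq:primecount}, which is available since $\delta t\gg t\,\eee^{-c\sqrt{\log t}}$. The smoothing errors are controlled exactly as in Lemma~\ref{lem:R}: each has expectation $O(\delta x)$, and Markov's inequality with a union bound over the $O(T_r^{2})$ pairs from $\mathcal X_r$ shows they are negligible with probability tending to one. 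After the substitution $u=x/t$ this gives, with $X_r(x):=x/a_r$,
\begin{equation*}
\frac{v_r(x)}{x}\approx\frac{1}{\log x}\int_1^{X_r(x)}\frac{(M(u))^2}{u^2}\,\dd u,\qquad \frac{C_r(x,y)}{(xy)^{1/2}}\approx\frac{\eee^{k/2}}{T_r}\int_1^{X_r(x)}\frac{M(u)M(\eee^ku)}{u^2}\,\dd u,
\end{equation*}
where $y=x\eee^k$ with $k\ge1$. For \eqref{eq:var v+w} the key simplification is monotonicity: the integral is increasing in its upper limit and $\log x\le 2T_r$. Hence $\inf_{x\in\mathcal X_r}v_r(x)/x\gtrsim T_r^{-1}\int_1^{X_*}M(u)^2u^{-2}\,\dd u$ with the single deterministic $X_*=\eee^{T_r}/a_r$, so no union over $x$ is needed.

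\emph{Step 2 (variance lower bound).} By Parseval, $\int_1^\infty M(u)^2u^{-2-2\sigma}\dd u=(2\pi)^{-1}\int_{\mr}|F(\tfrac12+\sigma+\ii t)|^2|\tfrac12+\sigma+\ii t|^{-2}\dd t$. I take $\sigma=K/\log X_*$ with $K$ a large constant. The left side is at most $\int_1^{X_*}M^2u^{-2}\dd u$ plus a tail $\int_{X_*}^\infty M^2u^{-2-2\sigma}\dd u$, whose expectation is $O(\eee^{-2K}\log X_*/K)$. It therefore suffices to show that, with probability tending to one,
\begin{equation*}
\int_{-1/2}^{1/2}|F(\tfrac12+\sigma+\ii t)|^2\,\dd t\ \ge\ \frac{\log X_*}{(\log\log X_*)^{11/20}}.
\end{equation*}
Compared with the typical size $\log X_*/(\log\log X_*)^{1/2}$, the exponent $11/20$ leaves room. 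I would import this as a lower-tail (low moment) estimate for the critical multiplicative chaos $|F|^2$, from Harper's $q$-th moment lower bounds with $q\to0$ and Paley--Zygmund, or from Hardy's Theorem~2. The tail term is then absorbed because its expectation is smaller than the main term by the factor $\eee^{-2K}$, while the chaos lower bound costs only a power of $\log\log$.

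\emph{Step 3 (covariance count; the main obstacle).} By the same Parseval identity, the normalized covariance is a Fourier coefficient: $C_r(x,y)/(xy)^{1/2}\approx T_r^{-1}\widehat G(k)$, where $G(t)=|F(\tfrac12+\sigma+\ii t)|^2|\tfrac12+\sigma+\ii t|^{-2}$ and $\widehat G(k)=\int G(t)\eee^{-\ii kt}\dd t$, up to a factor $\eee^{\pm k\sigma}$. This factor is harmless for $k\le T_r^{1-\lambda}\ll 1/\sigma$. The truncation at $X_r(x)$ contributes a tail term handled as in Step 2, with its expectation only needed up to $T_r^{-1}(\log T_r)^{-3/5}$. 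The count in \eqref{eq:covinput} is then at most $\#\{1\le|k|\le T_r^{1-\lambda}:|\widehat G(k)|>T_r(\log T_r)^{-3/5}/2\}$. For each $x$ this reduces to a count over frequencies with the same Euler product, so the union over $x$ costs only the smoothing and tail errors. For the count itself I would compute high moments: for an integer $q$, expand $\me|\widehat G(k)|^{2q}$ as a $2q$-fold integral of $\me\prod_i|F(\tfrac12+\sigma+\ii t_i)|^2$ against $\eee^{\ii k\sum\pm t_i}$. The joint moments behave like $\prod_{i<j}\min(\log X,|t_i-t_j|^{-1})^{\pm2}$ times $(\log X)^{2q}$. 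Integrating by parts in the $t_i$, the oscillation at frequency $k$ forces the $t_i$ into clusters at scale $1/k$, which should give $\me|\widehat G(k)|^{2q}\ll_q T_r^{2q}(\log T_r)^{A_q}k^{-1}$. Summing over $k\le T_r^{1-\lambda}$ and applying Markov's inequality then bounds the expected count by $(\log T_r)^{A_q+6q/5}\cdot O(\log T_r)$, which is $o(T_r^{\kappa})$. I expect this moment estimate to be the hardest part: the needed decay in $k$ of the Fourier transform of the critical chaos, uniformly in the random environment. If the clean $k^{-1}$ decay is out of reach, I would fall back on the cruder bound $\sum_k|\widehat G(k)|^{2q}\le$ a $2q$-th moment of $\int G$ localized at scale $T_r^{-\kappa/2}$, which still gives a count of order $T_r^{\kappa}$.
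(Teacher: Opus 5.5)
\paragraph{The main gap: the Step~3 moment bound.} The bound $\me|\widehat G(k)|^{2q}\ll T_r^{2q}(\log T_r)^{A_q}k^{-1}$ is false, and no bound of this kind holds for the untruncated Euler product.

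Put $L:=\sigma^{-1}=T_r^{1-o(1)}$. For $t_1,t_2\in[1,2]$ one has $\me[|F(\tfrac12+\sigma+\ii t_1)F(\tfrac12+\sigma+\ii t_2)|^2]\asymp L^2\min(L,|t_1-t_2|^{-1})^2$. On the region $|t_1-t_2|\le L^{-1}$ the phase $\eee^{\ii k(t_1-t_2)}$ does not oscillate, because $k\le T_r^{1-\lambda}\ll L$. This region alone contributes $\asymp L^3$. Hence $\me|T_r^{-1}\widehat G(k)|^2\ge T_r^{1-o(1)}$ with no decay in $k$, and fully clustered configurations give $\me|\widehat G(k)|^{2q}\gg L^{2q+(2q-1)^2}$. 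Markov's inequality therefore yields a count bound worse than the trivial one, $|\mathcal X_r|$. The moments are dominated by rare events on which $|F|$ is atypically large (the critical multiplicative chaos phenomenon). Your fallback via moments of localized integrals of $G$ has the same defect.

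\paragraph{The missing idea: barriers before moments.} You need to remove those rare events before taking moments.
\begin{itemize}
\item[(i)] Restrict the integrand to the events $\mathcal D_r(t)$ and $\mathcal A_r(t)$. On these, the partial products $F_{j,a_r}(t)$ stay below the weak barriers $q_{j,a_r}(\log\log a_r)^2$ and, for $j\le(1-b)\log\log a_r$, below the strong barriers $q_{j,a_r}(\log\log a_r)^{-B}$.
\item[(ii)] Show that the discarded part is $o((\log\log a_r)^{-3/5})$ with probability tending to one. The paper does this via the tilted measure $\mmp_{r,t}$, a comparison with a Gaussian random walk, and the ballot estimate of Lemma~\ref{lem:strip} (Lemmas~\ref{lem:weak}, \ref{lem:weightedBarrier}, \ref{lem:truncation}).
\item[(iii)] Under the barriers, the $2k$-point moment is bounded by a chain product $\prod_j\chi_r(t_{j+1}-t_j)$ over consecutive gaps, with a gain $(\log\log a_r)^{-2B}$ for each gap shorter than $(\log a_r)^{-b}$. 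This replaces your product over all pairs (Lemma~\ref{lem:cell}).
\end{itemize}
Only this makes moments of growing order $2k_r\asymp\log\log a_r/\log\log\log a_r$ affordable (Lemma~\ref{lem:highmoment}), and that is what produces the bound $(\log a_r)^\kappa$ in Section~\ref{sect:proofcov}.

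\paragraph{The Fourier-coefficient reduction.} The reduction $C_r(x,y)(xy)^{-1/2}\approx T_r^{-1}\widehat G(k)$ is not justified; the prefactor should also be $\eee^{-k/2}$. The covariance is an integral over $u\le x/a_r$ with a sharp cutoff and a weight $1/\log(x/u)$ that varies by a factor of about $2$. The factor $u^{-2\sigma}$, by contrast, is a smooth damping.

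The two error pieces $\int_1^{X}(1-u^{-2\sigma})(\cdots)\dd u$ and $\int_X^\infty u^{-2\sigma}(\cdots)\dd u$ are signed integrals of the same type as $C_r$ itself. First-moment bounds make the first small only if $K\to0$, and the second only if $K\to\infty$. In Step~2 this was harmless only because the integrand was nonnegative and a one-sided inequality sufficed.

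The correct object is the double integral of Lemma~\ref{lem:perron}, taken against the kernel $K_r(v-t)=\sum_{a_r<p\le Y_r}p^{-1-\ii(v-t)}$ of width $\asymp T_r^{-1}$. In it, $x$ enters through $x^{\ii(v-t)}$ and not only through $k$. The paper gets uniformity in $x$ from $|x^{\ii s(\boldsymbol v)}|=1$ after expanding the $2k$-th power, and bounds $\sum_{y\in\mathcal X_r}y^{-\ii s(\boldsymbol t)}$ by a Dirichlet kernel.

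\paragraph{Part \eqref{eq:var v+w}.} Your argument is essentially the paper's: monotonicity gives one $x$-independent integral of $M(u)^2u^{-2}$, followed by a lower-tail estimate. The paper uses \eqref{eq:HarperDet}, then \eqref{eq:HarperProb} with $W=(\log\log\log a_r)/44$, whence $11/20=1/2+2.2/44$. Two details need repair.
\begin{itemize}
\item[(a)] For fixed $K$, the tail $\int_{X_*}^\infty M^2u^{-2-2\sigma}\dd u$ is controlled only through its mean $\asymp\eee^{-2K}\log X_*/K$. This is not negligible against $\log X_*(\log\log X_*)^{-11/20}$. So $K$ must grow at least like a suitable multiple of $\log\log\log X_*$, or you should skip Parseval altogether.
\item[(b)] Paley--Zygmund applied to low moments gives only a probability bounded away from zero, not tending to one. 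A genuine lower-tail bound such as \eqref{eq:HarperProb} has to be quoted.
\end{itemize}
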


Here, we only give a proof of \eqref{eq:var v+w}, for it rests on a result already available in the literature. Since the proof of \eqref{eq:covinput} requires a novel argument, it is postponed until Section \ref{sect:proofcov}. The auxiliary estimates are developed in Section~\ref{sect:proofnovel}.
\begin{proof}[Proof of \eqref{eq:var v+w} and Lemma \ref{lem:lowerclock}]
According to Proposition 1 in \cite{Harper:2023}, there exist absolute constants $c_H,C_H>0$ such that, for all sufficiently large $X$,
\begin{equation}\label{eq:HarperDet}
\frac{1}{z}\sum_{X<p\le z}(M(z/p))^2\geq \frac{c_H}{\log X}\int_1^{X^{1/7}}\frac{(M(u))^2}{u^2}\,{\rm d}u, \quad X^{8/7}\leq z\le X^{4/3}
\end{equation}
and, for $1\le W\le(\log\log X)^{1/100}$,
\begin{equation}\label{eq:HarperProb}
\mmp\left\{\frac1{\log X}\int_1^{X^{1/7}}\frac{(M(u))^2}{u^2}\,{\rm d}u
\geq \frac{\eee^{-2.2W}}{(\log\log X)^{1/2}}\right\}\geq 1-C_H\eee^{-0.1W}.
\end{equation}

We first treat \eqref{eq:var v+w}. For all sufficiently large $r$, each $x\in\mathcal X_r$ satisfies
$$x^{3/4}>a_r,\quad x^{3/28}\ge a_r^{1/7}\quad\text{and}\quad \log(x^{3/4})\le (3/2)\log a_r.$$ Indeed, the first two inequalities follow, respectively, from $(3/4)T_r>(1/2)(T_r+T_r^{1-\lambda})$ and $(3/28) T_r\ge (1/14)(T_r+T_r^{1-\lambda})$, both of which hold eventually. The third is a consequence of $x\leq a_r^2$, which is secured by $x\in\mathcal X_r$.

Applying \eqref{eq:HarperDet} with $X=x^{3/4}$ and $z=x$, which is admissible because $X^{8/7}=x^{6/7}\le x=X^{4/3}$, we obtain
\begin{multline*}
\frac{v_r(x)}{x}=\frac{1}{x}\sum_{a_r<p\le x}(M(x/p))^2 \geq \frac{1}{x}\sum_{x^{3/4}<p\le x}(M(x/p))^2 \geq\frac{c_H}{\log(x^{3/4})} \int_1^{x^{3/28}}\frac{(M(u))^2}{u^2}\,{\rm d}u\\
\geq\frac{2c_H}{3\log a_r}\int_1^{a_r^{1/7}}\frac{(M(u))^2}{u^2}\,{\rm d}u.
\end{multline*}
This inequality holds simultaneously for all $x\in\mathcal X_r$, and its right-hand side does not depend on $x$.

Next we apply \eqref{eq:HarperProb} with $X=a_r$ and $W=(\log\log\log a_r)/44$. For all sufficiently large $r$, we have $1\le W\le (\log\log a_r)^{1/100}$ and thereupon $$\mmp\Big\{\frac{1}{\log a_r}\int_1^{a_r^{1/7}}
\frac{(M(u))^2}{u^2}\,{\rm d}u\geq (\log\log a_r) ^{-11/20}\Big\}\geq 1-C_H(\log\log a_r)^{-1/440}.$$ Consequently, choosing $c_0:=2c_H/3$, we obtain $$\mmp\left\{\inf_{x\in\mathcal X_r}\frac{v_r(x)}{x}\geq \frac{c_0}{(\log\log a_r)^{11/20}}\right\}\geq 1-C_H(\log\log a_r)^{-1/440}~\to~1,\quad r\to\infty,$$ thereby proving \eqref{eq:var v+w}.

%We use Proposition~1 of Harper~\cite{Harper:2023}, in the following
%form. There exist absolute constants \(c_H,C_H>0\) such that, for all
%sufficiently large \(X\), and uniformly for
%\(
%X^{8/7}\le z\le X^{4/3},
%\)
%one has the deterministic inequality
%\begin{equation}
%\label{eq:Harperdet}
%\frac1z
%\sum_{X<p\le z}
%|M(z/p)|^2
%\ge
%\frac{c_H}{\log X}
%\int_1^{X^{1/7}}
%|M(u)|^2\frac{du}{u^2}.
%\end{equation}
%Moreover, if
%\(
%1\le W\le (\log\log X)^{1/100},
%\)
%then
%\begin{equation}
%\label{eq:Harperprob}
%\mathbb P\!\left(
%\frac1{\log X}
%\int_1^{X^{1/7}}
%|M(u)|^2\frac{du}{u^2}
%\ge
%\frac{e^{-2.2W}}{\sqrt{\log\log X}}
%\right)
%\ge
%1-C_He^{-0.1W}.
%\end{equation}
%We now apply them uniformly on one block.
Now we pass to the proof of Lemma \ref{lem:lowerclock}. Put $q:=7/6$ and then  $J:=\left\lceil \log 2/\log q\right\rceil=5$.
%Since \(q>1\), the number \(J\) is an absolute constant, independent of
%\(T\) and \(r\).
For $0\le j<J$, define $z_j:=\eee^{Tq^j}$ and $X_j:=z_j^{7/8}$.
%The $J$ intervals $[z_j,z_j^{7/6}]$ cover $[e^T,e^{2T}]$.
If $x\in[z_j,z_j^{7/6}]$, then $X_j^{8/7}=z_j \le x\le z_j^{7/6}=X_j^{4/3}$ and $x^{1/2} \le z_j^{7/12}<z_j^{7/8}=X_j$. Hence, each prime satisfying $X_j<p\le x$ also satisfies $x^{1/2}<p\le x$. Using \eqref{eq:HarperDet}, with $X=X_j$ and $z=x$, we obtain %. Since all summands are nonnegative,
\begin{equation*}
\frac{Q(x)}{x}=\frac{1}{x}\sum_{x^{1/2}<p\leq x}(M(x/p))^2 \geq \frac{1}{x}\sum_{X_j<p\le x}(M(x/p))^2\geq\frac{c_H}{\log X_j}\int_1^{X_j^{1/7}}\frac{(M(u))^2}{u^2}\dd u.
\end{equation*}
%where the last inequality is \eqref{eq:Harperdet} with $X=X_j$ and $z=x$.

Next, we apply \eqref{eq:HarperProb} to each of the $J=5$ integrals, with $W_j=(\log\log X_j)^{1/100}/2$. For all sufficiently large $T$, these choices satisfy $1\leq W_j\leq(\log\log X_j)^{1/100}$ for each $0\leq j<J$. Put
$$\mathcal E_T:=\bigcap_{j=0}^{J-1}\Big\{\frac{1}{\log X_j}\int_1^{X_j^{1/7}}\frac{(M(u))^2}{u^2} \dd u \geq \frac{\eee^{-1.1\,(\log\log X_j)^{1/100}}}{(\log\log X_j)^{1/2}}\Big\}.$$ Since $\log\log X_j=\log T+\log(7/8)+j\log(7/6)$, we have, for all sufficiently large $T$ and all $0\leq j<J$, $(\log T)/2\leq\log\log X_j\leq 2\log T$. Consequently, the union bound and \eqref{eq:HarperProb} ensure that
$$\mmp(\mathcal E_T^c)\leq C_H\sum_{j=0}^{J-1}\eee^{-(\log\log X_j)^{1/100}/20}\leq 5C_H\eee^{-c_1(\log T)^{1/100}},$$ where $c_1:=2^{-1/100}/20$.
For dyadic $T=2^r$, we have, for $r_0\in\mn$ large enough, $$\sum_{r\geq r_0}\mmp(\mathcal E_{2^r}^c)\leq 5C_H\sum_{r\geq r_0}\eee^{-c_1(r\log2)^{1/100}}<\infty.$$
By the direct part of the Borel-Cantelli lemma, $\mathcal E_{2^r}$ occurs eventually a.s.

Observe that $\bigcup_{j=0}^{J-1}[z_j,z_j^{7/6}]=[\eee^T,\eee^{Tq^J}]\supseteq [\eee^T,\eee^{2T}]$ because $q^J>2$. On $\mathcal E_T$, this fact, together with the preceding deterministic comparison, entails
\begin{equation*}
\inf_{\eee^T\leq x\leq\eee^{2T}}\frac{Q(x)}{x}\geq c_H\min_{0\leq j<J}\frac{\eee^{-1.1(\log\log X_j)^{1/100}}}{(\log\log X_j)^{1/2}}\geq \frac{c_2\eee^{-C_2(\log T)^{1/100}}}{(\log T)^{1/2}},
\end{equation*}
where $c_2:=2^{-1/2}c_H$ and $C_2:=1.1\times 2^{1/100}$. Finally, $$\lim_{T\to\infty} \frac{C_2(\log T)^{1/100}+(\log\log T)/2-\log c_2}{\log T}=0.$$ Thus, for each fixed $b>0$ and all sufficiently large $T$,
$C_2(\log T)^{1/100}+(\log\log T)/2-\log c_2 \leq b\log T$, or, equivalently, $$\frac{c_2\eee^{-C_2(\log T)^{1/100}}}{(\log T)^{1/2}}\geq \eee^{-b\log T}=T^{-b}.$$ Combining these observations completes the proof of Lemma \ref{lem:lowerclock}.
\end{proof}

For each realization $\omega$ and each $x\in\mathcal X_r$, define $$B_r(x;\omega):=\{y\in\mathcal X_r\setminus\{x\}:~|C_r(x,y;\omega)|>(\log\log a_r)^{-3/5}(xy)^{1/2}\}.$$ For distinct $x,y\in\mathcal X_r$, we call $(x,y)$ a bad pair if $y\in B_r(x;\omega)$, and a good pair otherwise. We also call $y$ a bad or good neighbor of $x$, respectively.

Formula \eqref{eq:covinput} gives, for each fixed $\kappa\in(0,1)$,
\begin{equation}\label{eq:badneighbors}
\lim_{r\to\infty}\mmp\{\max_{x\in\mathcal X_r}|B_r(x)|\leq(\log a_r)^\kappa\}=1.
\end{equation}
By definition, each good pair $(x,y)$ satisfies $|C_r(x,y)|\leq(\log\log a_r)^{-3/5}(xy)^{1/2}$.

On the events appearing in \eqref{eq:var v+w}, we have $(v_r(x)v_r(y))^{1/2}\geq c_0(\log\log a_r)^{-11/20}(xy)^{1/2}$. Consequently, for each good pair $(x,y)$,
\begin{multline}\label{eq:goodneighbors}
\frac{|C_r(x,y)|}{(v_r(x)v_r(y))^{1/2}}=\frac{|C_r(x,y)|}{(xy)^{1/2}}\frac{(xy)^{1/2}}{(v_r(x)v_r(y))^{1/2}}\leq \frac{1}{(\log\log a_r)^{3/5}}\frac{(\log\log a_r)^{11/20}}{c_0}\\=\frac{1}{c_0(\log\log a_r)^{1/20}}=o(1),\quad r\to\infty.
\end{multline}

For later use, we note that by Mertens' theorem (see, for instance, Theorem 2.7 (d) in \cite{MV}),
\begin{equation}\label{eq:mertens}
\sum_{p\leq z}\frac{1}{p}=\log\log z+b+O\Big(\frac{1}{\log z}\Big),\quad z\to\infty,
\end{equation}
where $b=\gamma-\sum_p\sum_{k\geq 2}(1/(kp^k))$ and $\gamma$ is the Euler-Mascheroni constant.

With $c_0$ as in Proposition \ref{prop:novel}, put $$\mathcal G_r:=\left\{x\in\mathcal X_r:~(U_r(x))^2+u_r(x) \leq \frac{c_0x}{2(\log\log a_r)^{31/20}}\right\}.$$ At this point the set $\mathcal G_r$ may be empty. Recall the notation $\mathcal H_r=\mathcal F_{a_r}$.
\begin{lemma}\label{lem:prune}
The random set $\mathcal G_r$ is $\mathcal H_r$-measurable %\add{(recall $\mathcal H_r=\mathcal F_{a_r}$)}
and $\lim_{r\to\infty}\mmp\{|\mathcal G_r|\geq |\mathcal X_r|/2\}=1$.

On the events appearing in \eqref{eq:var v+w}, for each $x\in\mathcal G_r$,
\begin{equation}\label{eq:compare Q}
\frac{Q(x)}{v_r(x)}=1+O\big((\log\log a_r)^{-1}\big)\quad\text{and}\quad
|U_r(x)|\leq \Big(\frac{Q(x)}{\log\log a_r}\Big)^{1/2}
\end{equation}
where the $O$-term is uniform in $x\in\mathcal G_r$.
\end{lemma}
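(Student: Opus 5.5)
Measurability of $\mathcal G_r$ comes first. For $x\in\mathcal X_r$, both $U_r(x)$ and $u_r(x)$ are $\mathcal H_r$-measurable: this was observed right after the definitions, since $p\le a_r$ and $x/p<x^{1/2}\le a_r$. The set $\mathcal X_r$ is deterministic and finite, so $\mathcal G_r=\{x\in\mathcal X_r:\ (U_r(x))^2+u_r(x)\le c_0x/(2(\log\log a_r)^{31/20})\}$ is an $\mathcal H_r$-measurable random subset.

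For the size bound, the plan is a first-moment computation followed by Markov's inequality. Fix $x\in\mathcal X_r$. By \eqref{eq:orthogon}, $\me[u_r(x)]=\sum_{x^{1/2}<p\le a_r}\me[(M(x/p))^2]\le x\sum_{x^{1/2}<p\le a_r}p^{-1}$. Also $\me[(U_r(x))^2]=\me[u_r(x)]$, because conditionally on $\F_{x^{1/2}}$ the coefficients $M(x/p)$ are fixed and the signs $f(p)$, $p>x^{1/2}$, are independent. Since $x\ge \eee^{T_r}$ and $a_r=\exp((T_r+T_r^{1-\lambda})/2)$, Mertens' theorem \eqref{eq:mertens} gives
\[
\sum_{x^{1/2}<p\le a_r}\frac1p=\log\frac{\log a_r}{\log x^{1/2}}+O(T_r^{-1})\le \log\big(1+T_r^{-\lambda}\big)+O(T_r^{-1})=O(T_r^{-\lambda}).
\]
Hence $\me[(U_r(x))^2+u_r(x)]\le Cx T_r^{-\lambda}$. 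Markov's inequality then gives $\mmp\{x\notin\mathcal G_r\}\le C' T_r^{-\lambda}(\log\log a_r)^{31/20}\to0$ uniformly in $x$, because $\log\log a_r\asymp\log T_r$. Consequently $\me|\mathcal X_r\setminus\mathcal G_r|=o(|\mathcal X_r|)$, and applying Markov once more yields $\mmp\{|\mathcal X_r\setminus\mathcal G_r|>|\mathcal X_r|/2\}\to0$. There is a small technical point about $x$ near the top of $\mathcal X_r$, where the range $(x^{1/2},a_r]$ may even be empty; this only helps. No union bound over $x$ is needed, which is why the polynomial saving $T_r^{-\lambda}$ is more than enough.

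For \eqref{eq:compare Q}, work on the event in \eqref{eq:var v+w} and take $x\in\mathcal G_r$. Then $v_r(x)\ge c_0x(\log\log a_r)^{-11/20}$, while $u_r(x)\le c_0x/(2(\log\log a_r)^{31/20})$. Since $Q=u_r+v_r$,
\[
0\le \frac{Q(x)}{v_r(x)}-1=\frac{u_r(x)}{v_r(x)}\le \frac{1}{2(\log\log a_r)},
\]
uniformly in $x$. For the bound on $U_r$, use $Q(x)\ge v_r(x)$:
\[
(U_r(x))^2\le \frac{c_0x}{2(\log\log a_r)^{31/20}}\le \frac{v_r(x)}{2\log\log a_r}\le \frac{Q(x)}{\log\log a_r}.
\]
Taking square roots gives the second part of \eqref{eq:compare Q}.

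The only mildly delicate step is the Mertens estimate for the short prime range $(x^{1/2},a_r]$. The exponents $31/20$ and $11/20$ are chosen precisely so that they differ by exactly $1$, which is what makes the algebra in the last step close.
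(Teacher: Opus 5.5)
Your proposal is correct and follows essentially the same route as the paper: $\mathcal H_r$-measurability of $U_r(x)$ and $u_r(x)$, the first-moment bound $\me[(U_r(x))^2]=\me[u_r(x)]=O(xT_r^{-\lambda})$ via \eqref{eq:orthogon} and Mertens, Markov's inequality applied twice, and then the same two-line algebra using the exponent gap $31/20-11/20=1$ for \eqref{eq:compare Q}. The only cosmetic difference is that you apply Mertens directly to $(x^{1/2},a_r]$, whereas the paper first enlarges this range to $(\eee^{T_r/2},a_r]$.
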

\begin{proof}
Measurability follows because, for each $x\in\mathcal X_r$, $U_r(x)$ and $u_r(x)$ are $\mathcal H_r$-measurable.

Uniformly in $x\in\mathcal X_r$,
\begin{multline*}
\me \big[(U_r(x))^2\big]=\me [u_r(x)]\leq x\sum_{x^{1/2}<p\leq a_r}p^{-1}\leq x\sum_{\exp(T_r/2)<p\leq a_r}p^{-1}=O(xT_r^{-1})+O(xT_r^{-\lambda})\\=O(xT_r^{-\lambda}),\quad r\to\infty.
\end{multline*}
Here, the first equality is a consequence of $\me_{x^{1/2}}\big[(U_r(x))^2\big]=u_r(x)$, the first inequality is secured by \eqref{eq:orthogon}, the second equality follows from \eqref{eq:mertens} and $$\log\log a_r-\log\log \exp(T_r/2)=\log (T_r+T_r^{1-\lambda})-\log T_r=O(T_r^{-\lambda}),\quad r\to\infty,$$ and the last equality holds because $\lambda\in (0,1)$.

Hence, by Markov's inequality, uniformly in $x\in\mathcal X_r$, $$\mmp\{x\notin\mathcal G_r\}\leq\frac{2(\log\log a_r)^{31/20}}{c_0 x}\me\big[(U_r(x))^2+u_r(x)\big]=O((\log\log a_r)^{31/20}T_r^{-\lambda})=o(1)$$ as $r\to\infty$. The last equality is ensured by $\log\log a_r\sim \log T_r$ as $r\to\infty$. Summing over the deterministic set $\mathcal X_r$ we obtain $\me |\mathcal X_r\setminus\mathcal G_r|=o(|\mathcal X_r|)$ as $r\to\infty$. With this at hand, another application of Markov's inequality gives $$\mmp\{|\mathcal G_r|\leq |\mathcal X_r|/2\}=\mmp\{|\mathcal X_r\setminus \mathcal G_r|\geq |\mathcal X_r|/2\}\leq \frac{2}{|\mathcal X_r|}\me\big[|\mathcal X_r\setminus\mathcal G_r|\big]~\to~0,\quad r\to\infty.$$

On the events appearing in \eqref{eq:var v+w}, for each $x\in\mathcal G_r$, $$0\leq\frac{u_r(x)}{v_r(x)}\leq\frac{1}{2\log\log a_r},\quad \frac{Q(x)}{v_r(x)}=1+\frac{u_r(x)}{v_r(x)}.$$
This proves the first assertion of \eqref{eq:compare Q}, uniformly over these points and realizations. Furthermore, $$|U_r(x)|\leq\Big(\frac{c_0x}{2(\log\log a_r)^{31/20}}\Big)^{1/2}
\leq\Big(\frac{v_r(x)}{2\log\log a_r}\Big)^{1/2}\leq\Big(\frac{Q(x)}{\log\log a_r}\Big)^{1/2},$$ which proves the second assertion.
\end{proof}

Fix $\kappa\in(0,1-\lambda)$ and let $c_0$ be as in Proposition~\ref{prop:novel}. Denote by $\mathcal E_r$ the intersection of the events
$$\begin{gathered}
\max_{u\leq a_r}|M(u)|\leq a_r^{2/3},\qquad
\min_{x\in\mathcal X_r}\frac{v_r(x)}x\geq\frac{c_0}{(\log\log a_r)^{11/20}},\\
\max_{x\in\mathcal X_r}|B_r(x)|\leq(\log a_r)^\kappa,\qquad
|\mathcal G_r|\geq|\mathcal X_r|/2.
\end{gathered}$$
Each event is $\mathcal H_r$-measurable, hence so is $\mathcal E_r$. By \eqref{eq:maxM}, with $b_r=a_r$, \eqref{eq:var v+w}, \eqref{eq:badneighbors} and Lemma~\ref{lem:prune}, we have $\lim_{r\to\infty}\mmp(\mathcal E_r)=1$.

Fix a realization and consider the finite graph whose vertex set is $\mathcal G_r$, with two distinct vertices joined by an edge if they form a bad pair. Since $C_r(x,y)=C_r(y,x)$, this graph is undirected. An independent set in this graph, being a set containing no two vertices joined by an edge, is a subset of $\mathcal G_r$ containing no bad pair.

We construct such a set recursively. Initially, all points of $\mathcal G_r$ are available. If there is an available point, select the smallest one, include it in $\mathcal S_r$, and remove both
that point and all its available bad neighbors from the available set. Repeat until no available points remain. If $\mathcal G_r$ is empty, put $\mathcal S_r=\varnothing$. Every two distinct selected points form a good pair. Indeed, when a point is selected, all its bad neighbors that could otherwise be selected later are removed. Thus $\mathcal S_r$ is an independent set. The construction is $\mathcal H_r$-measurable. Indeed, the set $\mathcal X_r$ is finite and deterministic, membership in $\mathcal G_r$ and each bad-neighbor relation are $\mathcal H_r$-measurable, and at each step the choice is determined by the usual order on the real line.

On $\mathcal E_r$, we have $|\mathcal G_r|\geq |\mathcal X_r|/2$ by the definition of $\mathcal E_r$, and each selection removes at most $1+(\log a_r)^\kappa$ points, namely, the selected point $x$ and at most $|B_r(x)|\leq(\log a_r)^\kappa$ of its bad neighbors. Since  the number of selections is equal to $|\mathcal S_r|$, each point of $\mathcal G_r$ is removed at most once, and all points of $\mathcal G_r$ have been removed when the procedure terminates, we infer $|\mathcal G_r|\leq |\mathcal S_r|\big(1+(\log a_r)^\kappa\big)$. Hence,
\begin{equation*}
|\mathcal S_r| \geq\frac{|\mathcal X_r|}{2(1+(\log a_r)^\kappa)}.
\end{equation*}
In view of $\log |\mathcal X_r|=\log\big( (\lfloor T_r^{1-\lambda}\rfloor)+1\big)=(1-\lambda)\log T_r+o(1)$ and $\log\log a_r=\log T_r+O(1)$ as $r\to\infty$, we obtain, on $\mathcal E_r$,
\begin{equation}\label{eq:sizeS}
\log |\mathcal S_r|\geq (1-\lambda-\kappa)\log T_r-O(1),\quad r\to\infty.
\end{equation}
Here, the constant implicit in $O(1)$ is deterministic and independent of the realization in $\mathcal E_r$.

We proceed with a lemma regarding maxima of weighted Rademacher sums. This is the last ingredient needed for the proof of Proposition \ref{prop:lower bound}.
\begin{lemma}\label{lem:rademacher-max}
For each $n\in\mn$, let $I_n$ be a finite set and let $(\xi_{n,j})_{j\in I_n}$ be independent copies of a random variable $\xi$ with $\mmp\{\xi=\pm 1\}=1/2$. Put
$$Z_{n,i}:=\sum_{j\in I_n}c_{n,j,i}\xi_{n,j},\quad 1\leq i\leq n,$$ where the coefficients are real and deterministic, and
$\sum_{j\in I_n}c_{n,j,i}^2=1$ for $1\leq i\leq n$. Assume that $\beta_n:=\max_{j\in I_n,~1\leq i\leq n}|c_{n,j,i}|$ satisfies $\lim_{n\to\infty}\beta_n\log n=0$ and  that, for some deterministic sequence $(\rho_n)_{n\geq 1}$ %\add{of nonnegative numbers }
satisfying $\lim_{n\to\infty}\rho_n=0$, $$\max_{1\leq i<k\leq n}\sum_{j\in I_n}c_{n,j,i}c_{n,j,k}\leq \rho_n$$ for all sufficiently large $n$. Then, for each fixed $\delta\in(0,1)$,
\begin{equation}\label{eq:rademacher max}
\lim_{n\to\infty} \mmp\big\{\max_{1\leq i\leq n} Z_{n,i}>(1-\delta)(2\log n)^{1/2}\big\}=1.
\end{equation}
With $n_0\geq 2$ fixed independently of the coefficient array, the convergence is uniform over coefficient arrays satisfying, for all $n\geq n_0$, $\beta_n\leq b_n$ and
$$\max_{1\leq i<k\leq n}\sum_{j\in I_n}c_{n,j,i}c_{n,j,k}\leq r_n,$$ where $(b_n)_{n\geq 1}$ and $(r_n)_{n\geq 1}$ are given deterministic nonnegative sequences satisfying $\lim_{n\to\infty}b_n\log n=0$ and $\lim_{n\to\infty}r_n=0$.
\end{lemma}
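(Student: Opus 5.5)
We will prove Lemma~\ref{lem:rademacher-max} by Gaussian approximation followed by a Slepian/Sudakov--Fernique-type comparison, but since covariances are only bounded above (not in absolute value), we use a second-moment (Paley--Zygmund) argument on the count of exceedances instead.

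Fix $\delta$, put $t_n:=(1-\delta)(2\log n)^{1/2}$ and $N_n:=\sum_{i=1}^n\1_{\{Z_{n,i}>t_n\}}$. Then the Paley--Zygmund inequality gives
\[
\mmp\{N_n>0\}\geq \frac{(\me N_n)^2}{\me[N_n^2]},
\]
so it suffices to show $\me N_n\to\infty$ and $\me[N_n^2]\leq (1+o(1))(\me N_n)^2$.

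\textbf{Marginals.} We need $\mmp\{Z_{n,i}>t_n\}\geq n^{-(1-\delta)^2-o(1)}$ uniformly in $i$. This is a moderate deviation lower bound for weighted Rademacher sums with unit variance and $\max_j|c_{n,j,i}|\leq\beta_n$. We will prove it by exponential tilting. Let $\mmp^\theta$ have density $\prod_j \eee^{\theta c_j\xi_j}/\cosh(\theta c_j)$ with $\theta\approx t_n$. Since $\theta\beta_n\to0$ (because $\beta_n\log n\to0$), the cumulant $\sum_j\log\cosh(\theta c_j)=\theta^2/2+O(\theta^4\beta_n^2)$ with $\theta^4\beta_n^2=o(\log n)$. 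Under $\mmp^\theta$ the sum has mean $\approx\theta$ and variance $\approx1$. A Chebyshev argument on a window $[t_n,t_n+1]$ then gives the bound. Consequently $\me N_n\geq n^{1-(1-\delta)^2-o(1)}\to\infty$.

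\textbf{Pairs.} For $i\neq k$ we need
\[
\mmp\{Z_{n,i}>t_n,\,Z_{n,k}>t_n\}\leq (1+o(1))\,p_ip_k,\qquad p_i:=\mmp\{Z_{n,i}>t_n\},
\]
or at least that the sum over pairs is at most $(1+o(1))(\me N_n)^2$. We will use the joint tilt by $\theta(\xi\cdot(c_i+c_k))$, or more simply a bivariate moderate deviation estimate. The joint upper tail is governed by $\exp(-t_n^2/(1+\rho_{ik}))$ with $\rho_{ik}\leq\rho_n$, uniformly because $\beta_n\log n\to0$ controls the Taylor remainders. Compared with $p_ip_k\approx\exp(-t_n^2)$ up to subexponential factors, this gives a pair sum of at most $n^{2}\,n^{-2(1-\delta)^2/(1+\rho_n)+o(1)}$. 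Only an upper bound on $\rho_{ik}$ is needed here; negative correlation only helps, as the Chernoff bound with $\lambda(c_i+c_k)$ shows.

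\textbf{Main obstacle.} Turning $n^{o(1)}$-level errors into a $(1+o(1))$ ratio is too crude. We will therefore fall back on a weaker but sufficient criterion: lower the threshold slightly to $t_n'=(1-\delta/2)(2\log n)^{1/2}\cdot(1-\delta/2)^{-1}(1-\delta)$, which equals $t_n$. Precisely, run the argument with $\delta'<\delta$ so that $\me N_n=n^{\epsilon}$ with $\epsilon>0$, and bound the pair probabilities by
\[
\exp\bigl(-t_n^2/(1+\rho_n)+o(\log n)\bigr)\leq \exp\bigl(-t_n^2(1-2\rho_n)+o(\log n)\bigr).
\]
The delicate point is obtaining matching constants in the exponent, and the Paley--Zygmund ratio then reads $n^{2\epsilon\pm o(1)}/n^{2\epsilon+O(\rho_n\log n)}$. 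Since $\rho_n\log n$ need not vanish, this crude approach may fail. If it does, we will instead apply Gaussian coupling: a multivariate Lindeberg/Berry--Esseen replacement of $(\xi_j)$ by Gaussians on the tilted scale, with error governed by $\beta_n$. For the Gaussian vector with correlations at most $\rho_n$, the classical result gives $\max_i G_i\geq(1-\delta)(2\log n)^{1/2}$ with high probability, via Sudakov--Fernique comparison against an equicorrelated family with correlation $\rho_n$, whose maximum is $(1-\rho_n)^{1/2}(2\log n)^{1/2}(1+o(1))$. Uniformity over coefficient arrays follows because all the error terms depend only on $b_n$ and $r_n$.
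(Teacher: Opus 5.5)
You have a genuine gap. Your second-moment set-up matches the first stage of the paper's proof: a tilted lower bound for each $\mmp\{Z_{n,i}>t_n\}$, plus a Chernoff bound $\exp(-t_n^2/(1+\rho_n))$ for pairs that uses only the upper bound on the correlations. You also correctly see that it stalls. The marginal bound carries a factor $\eee^{-O(t_n)}$, and the pair bound a factor $\exp(O(\rho_n\log n))$ (not $n^{O(\rho_n\log n)}$ as you wrote). So Paley--Zygmund only gives $\mmp\{\max_iZ_{n,i}>t_n\}\ge R_n^{-1}$ with $\log R_n=o(\log n)$, i.e.\ a probability $n^{-o(1)}$, not one tending to $1$. (A Chebyshev window of length $1$ is also too short when the tilted variance is close to $1$; the paper tilts to $t_n+4$ and uses half-width $2$.)

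The gap is the step meant to close this. The Gaussian-coupling fallback is only asserted, and it is not available off the shelf here. You would have to transfer $\mmp\{\max_{i\le n}Z_{n,i}>t\}$ to Gaussian inputs in dimension $n\to\infty$ with $|I_n|$ unrestricted. Multivariate Berry--Esseen bounds degrade with the dimension. The usual Lindeberg swap for a soft-max smoothing of the maximum gives an error of order $\mathrm{polylog}(n)\sum_{j\in I_n}\max_i|c_{n,j,i}|^3$. For disjointly supported coefficient vectors with all nonzero entries equal to $\beta_n$, this sum equals $n\beta_n$, which need not vanish when only $\beta_n\log n\to0$ is assumed. So ``error governed by $\beta_n$'' is exactly the part you have not proved.

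The missing idea is concentration of the maximum itself. This is the same ingredient hidden in your ``classical'' Gaussian step (Borell--TIS), but it is available directly on the cube. Each coefficient vector has unit Euclidean norm, so $F_n(z)=\max_i\sum_j c_{n,j,i}(2z_j-1)$ is convex and $2$-Lipschitz on $[0,1]^{I_n}$. Moreover $H_n:=\max_iZ_{n,i}=F_n\big(((\xi_{n,j}+1)/2)_{j}\big)$. Talagrand's convex concentration inequality then gives $\mmp\{|H_n-\mu_n|>s\}\le4\eee^{-s^2/16}$ for a median $\mu_n$ of $H_n$.

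Your instinct to introduce slack is right, but it only pays off together with this. Run the second-moment argument at $t_n=\alpha(2\log n)^{1/2}$ with $\alpha\in(1-\delta,1)$. If $\mu_n<t_n$, then $R_n^{-1}\le\mmp\{H_n>t_n\}\le4\exp(-(t_n-\mu_n)^2/16)$. So in every case $\mu_n\ge t_n-4(\log(4R_n))^{1/2}=t_n-o((\log n)^{1/2})$. Hence $s_n:=\mu_n-(1-\delta)(2\log n)^{1/2}\to\infty$, and applying the concentration bound again gives $\mmp\{H_n\le(1-\delta)(2\log n)^{1/2}\}\le4\eee^{-s_n^2/64}\to0$.

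This is how the paper finishes. It needs no Gaussian comparison, and uniformity over coefficient arrays is automatic because every estimate depends only on $n$, $b_n$ and $r_n$.
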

\begin{proof}
Replacing $\rho_n$ by $\max(\rho_n,0)$ if needed, we can and do assume that $\rho_n\in [0,1)$ for all sufficiently large $n$.

Fix $\delta\in(0,1)$ and choose $\alpha\in(1-\delta,1)$. Put $t_n:=\alpha(2\log n)^{1/2}$ and $h_n:=t_n+4$ for $n\in\mn$. We first obtain a uniform lower bound for the individual exceedance probabilities.
For $1\leq i\leq n$ and $h\in\mr$, put $$K_{n,i}(h):=\log \me\big[\eee^{hZ_{n,i}}\big]=\sum_{j\in I_n} \log\cosh(hc_{n,j,i}).$$ In view of $$\sum_{j\in I_n} c_{n,j,i}^4 \leq\beta_n^2\sum_{j\in I_n}c_{n,j,i}^2=\beta_n^2,$$
the expansions $\log\cosh z=z^2/2+O(z^4)$ and $\tanh z=z+O(z^3)$ as $z\to 0$ show that, uniformly in $1\leq i\leq n$,
\begin{multline}\label{eq:mgf}
K_{n,i}(h_n)=h_n^2/2+O(\beta_n^2h_n^4)=h_n^2/2+o(1)\quad\text{and}\quad K_{n,i}^\prime(h_n)=\sum_{j\in I_n}c_{n,j,i}\tanh (h_n c_{n,j,i})\\=h_n+O(\beta_n^2h_n^3)=h_n+o(1),\quad n\to\infty.
\end{multline}
Here, we have used $\lim_{n\to\infty} \beta_n\log n=0$. Moreover, in view of $\cosh z\geq 1$ for $z\in\mr$, we conclude that, for each $h\in\mr$,
\begin{equation}\label{eq:rad variance}
K_{n,i}^{\prime\prime}(h)=\sum_{j\in I_n}\frac{c_{n,j,i}^2}{\cosh^2(hc_{n,j,i})}\leq 1.
\end{equation}

Introduce the exponentially tilted probability measure $\mmp_{n,i}^{(h_n)}$ by $$\frac{{\rm d}\mmp_{n,i}^{(h_n)}}{{\rm d}\mmp}=\exp(h_nZ_{n,i}-K_{n,i}(h_n)),\quad 1\leq i\leq n,~n\geq 2.$$
Under $\mmp_{n,i}^{(h_n)}$, the variable $Z_{n,i}$ has mean $K_{n,i}'(h_n)$ and variance $K_{n,i}''(h_n)\leq1$. Hence, by Chebyshev's inequality, $\mmp_{n,i}^{(h_n)}\big\{|Z_{n,i}-K_{n,i}'(h_n)|\leq 2\big\}\geq 3/4$. By the second part of \eqref{eq:mgf}, for all sufficiently large $n$, uniformly in $1\leq i\leq n$, $$\{|Z_{n,i}-K_{n,i}'(h_n)|\leq 2\}\subseteq \{t_n<Z_{n,i}\leq t_n+7\}.$$
Consequently,
\begin{multline*}
\mmp\{Z_{n,i}>t_n\}=\me_{n,i}^{(h_n)}\left[\eee^{-h_nZ_{n,i}+K_{n,i}(h_n)}\1_{\{Z_{n,i}>t_n\}}\right]\geq \me_{n,i}^{(h_n)}\left[\eee^{-h_nZ_{n,i}+K_{n,i}(h_n)}\1_{\{t_n<Z_{n,i}\leq t_n+7\}}\right]\\
\geq (3/4)\exp(K_{n,i}(h_n)-h_n(t_n+7)).
\end{multline*}
Recalling the first part of \eqref{eq:mgf} and noting that $h_n^2/2-h_n(t_n+7)=-t_n^2/2-7t_n-20$, we conclude that there exists an absolute constant $C>0$ such that, for all
sufficiently large $n$,
\begin{equation}\label{eq:onedim}
\min_{1\leq i\leq n}\mmp\{Z_{n,i}>t_n\}\geq \exp(-t_n^2/2-7t_n-C)=:q_n.
\end{equation}

Now we estimate the joint exceedance probabilities. Put
$$r_{n,i,k}:=\sum_{j\in I_n}c_{n,j,i}c_{n,j,k},\quad 1\leq i,k\leq n,~n\geq 2.$$ For $1\leq i,k\leq n$, $i\neq k$ and $h>0$, the inequality $\log\cosh z\leq z^2/2$ for $z\in\mr$ entails
\begin{equation*}
\log\me\big[\eee^{h(Z_{n,i}+Z_{n,k})}\big]\leq (h^2/2)\sum_{j\in I_n}(c_{n,j,i}+c_{n,j,k})^2=h^2(1+r_{n,i,k})\leq h^2(1+\rho_n).
\end{equation*}
An application of Markov's inequality, with
$h=t_n/(1+\rho_n)$, then yields
\begin{equation}\label{eq:twodim}
\mmp\{Z_{n,i}>t_n,~ Z_{n,k}>t_n\}\leq \mmp\{Z_{n,i}+Z_{n,k}>2t_n\}\leq \exp(-t_n^2/(1+\rho_n)).
\end{equation}
For $n\in\mn$, put $N_n:=\sum_{i=1}^n\1_{\{Z_{n,i}>t_n\}}$ and $H_n:=\max_{1\leq i\leq n}Z_{n,i}$. By \eqref{eq:onedim},
$$\me[N_n]\geq nq_n=\exp\{(1-\alpha^2)\log n-O\big((\log n)^{1/2}\big)\}~\to~\infty,\quad n\to\infty.$$ Combining \eqref{eq:onedim} and \eqref{eq:twodim}, we obtain
\begin{multline*}
\frac{\me\big[N_n^2\big]}{(\me [N_n])^2}=\frac{1}{\me [N_n]}+\frac{\sum_{1\leq i\neq k\leq n}\mmp\{Z_{n,i}>t_n,~Z_{n,k}>t_n\}}{(\me [N_n])^2}\\\leq \frac{1}{nq_n}+\frac{\exp(-t_n^2/(1+\rho_n))}{q_n^2}=\frac{1}{nq_n}+\exp\left(\frac{\rho_nt_n^2}{1+\rho_n}+14t_n+2C\right)=:R_n.
\end{multline*}
Hence, by the second moment method, % \del{that is, by the Cauchy-Schwarz inequality}
\begin{equation}\label{eq:rad-weak-exceedance}
\mmp\{H_n>t_n\}=\mmp\{N_n>0\}\geq \frac{(\me [N_n])^2}{\me\big[N_n^2\big]}\geq R_n^{-1}.
\end{equation}
For later use, we note that % Plainly,
$R_n\geq\eee^{2C}\geq 1$. Since $t_n^2=2\alpha^2\log n$, $\lim_{n\to\infty}nq_n=\infty$ and $\lim_{n\to\infty}\rho_n=0$, we further have
$$\log R_n=\frac{2\alpha^2\rho_n}{1+\rho_n}\log n+O\big((\log n)^{1/2}\big)=o(\log n),\quad n\to\infty.$$ %that is, $R_n$ grows slower than any positive power of $n$.

Now we work towards strengthening the lower bound in \eqref{eq:rad-weak-exceedance}. % this lower bound.
The function $F_n$ defined by $F_n(z):=\max_{1\leq i\leq n}\sum_{j\in I_n}c_{n,j,i}(2z_j-1)$ for $z\in\mr^{|I_n|}$ is convex. It is also $2$-Lipschitz with respect to the
Euclidean norm, because, by the % another
Cauchy-Schwarz inequality,
\begin{multline*}
|F_n(z)-F_n(z')|\leq 2\max_{1\leq i\leq n}\left|\sum_{j\in I_n}c_{n,j,i}(z_j-z_j')\right|\leq 2\max_{1\leq i\leq n} \Big(\sum_{j\in I_n}c_{n,j,i}^2\Big)^{1/2}\Big(\sum_{j\in I_n}(z_j-z_j')^2\Big)^{1/2}\\=2\Big(\sum_{j\in I_n}(z_j-z_j')^2\Big)^{1/2}.
\end{multline*}
Let $\mu_n$ be a median of the distribution of $H_n$, that is, a not necessarily unique number satisfying $\mmp\{H_n\leq \mu_n\}\geq 1/2$ and $\mmp\{H_n\geq \mu_n\}\geq 1/2$. Since $H_n=F_n(\eta^{(n)})$, where $\eta^{(n)}$ is the vector with coordinates $(\xi_{n,j}+1)/2$, $j\in I_n$, we obtain with the help of Talagrand's convex concentration inequality as given in Corollary~4 of \cite{Johnson+Schechtman:1991} with $p=2$,
\begin{equation}\label{eq:concentr}
\mmp\{|H_n-\mu_n|> s\}\leq 4\exp(-s^2/16), \quad s>0.
\end{equation}
%\del{For later use we note that the latter inequality also holds with `$\geq$' replacing `$>$'. Indeed, choosing a sequence $(s_k)_{k\geq 1}$ satisfying $0<s_k<s$ and $s_k\uparrow s$ as $k\to\infty$, continuity of probability from above gives $\mmp\{|H_n-\mu_n|\geq s\}=\lim_{k\to\infty}\mmp\{|H_n-\mu_n|>s_k\}\leq 4\exp(-s^2/16)$.}

If $\mu_n<t_n$, then \eqref{eq:rad-weak-exceedance} and \eqref{eq:concentr} imply $R_n^{-1}\leq 4\exp(-(t_n-\mu_n)^2/16)$. As a consequence, $$\mu_n\geq t_n-4(\log(4R_n))^{1/2}=\alpha(2\log n)^{1/2}-o\big((\log n)^{1/2}{\big)},\quad n\to\infty$$ because $\log R_n=o(\log n)$ as $n\to\infty$. If $\mu_n\geq t_n$, then the latter estimate holds trivially. Since $\alpha>1-\delta$, we conclude that $s_n:=\mu_n-(1-\delta)(2\log n)^{1/2}\to \infty$ as $n\to\infty$. In particular, $s_n>0$ for all sufficiently large $n$. For such $n$, an application of \eqref{eq:concentr} with $s=s_n/2$ yields
\begin{multline*}
\mmp\{H_n\leq (1-\delta)(2\log n)^{1/2}\}=\mmp\{H_n-\mu_n \leq -s_n\}\\ \leq\mmp\{|H_n-\mu_n|>s_n/2\}\leq 4\eee^{-s_n^2/64}~\to 0
\end{multline*}
as $n\to\infty$. This proves \eqref{eq:rademacher max}.

For the claimed uniformity, repeat the preceding argument with $\rho_n$ replaced by $r_n$ and $\beta_n$ bounded by $b_n$. All remainder estimates and all thresholds for sufficiently large $n$ can then be chosen independently of the coefficient array.
\end{proof}

\subsection{Proof of Proposition \ref{prop:lower bound}}

Fix $q\in (0,1)$. Choose $\lambda\in (0,1)$, $\kappa\in (0, 1-\lambda)$ and $d\in (0,1)$ satisfying
\begin{equation}\label{eq:ineq2}
(1-d)(1-\lambda-\kappa)^{1/2}>q.
\end{equation}
The constructions of Sections~\ref{sect:strat} and~\ref{sect:aux} are carried out with these $\lambda$ and $\kappa$. Recall that $\mathcal S_r\subseteq\mathcal G_r$ is the $\mathcal H_r$-measurable set obtained by the greedy selection in Subsection~\ref{sect:aux}, so that any two distinct points of $\mathcal S_r$ form a good pair. Put $t_r(x):=(1-d)(2v_r(x)\log(\max(|\mathcal S_r|,2)))^{1/2}$ for $x\in\mathcal X_r$, and define
$$A_r:=\{\exists x\in\mathcal S_r:~V_r(x)>t_r(x)\}.$$ By \eqref{eq:sizeS}, there is a deterministic sequence $(n_r)_{r\geq 1}$ with $\lim_{r\to\infty}n_r=\infty$ such that, on $\mathcal E_r$, $|\mathcal S_r|\geq n_r$ for all sufficiently large $r$. In particular, the maximum in the definition of $t_r(x)$ only matters outside $\mathcal E_r$. We first estimate $\mmp_{a_r}(A_r)$ from below. To this end, we apply Lemma~\ref{lem:rademacher-max} conditionally on $\mathcal H_r$. On $\mathcal E_r$, for $x\in\mathcal S_r$, we use a representation $$\frac{V_r(x)}{(v_r(x))^{1/2}}=\sum_{a_r<p\leq Y_r}c_{r,p,x}f(p),$$ where $c_{r,p,x}:=\1_{\{p\leq x\}}M(x/p)(v_r(x))^{-1/2}$. As explained in Subsection \ref{sect:strat}, all the coefficients $c_{r,p,x}$ are $\mathcal H_r$-measurable, and conditionally on $\mathcal H_r$, the variables $(f(p))_{a_r<p\leq Y_r}$ remain independent Rademacher variables. Also, $\sum_{a_r<p\leq Y_r}c_{r,p,x}^2=1$. On $\mathcal E_r$, the bounds $\max_{u\leq a_r}|M(u)|\leq a_r^{2/3}$ and $v_r(x)\geq c_0x/(\log\log a_r)^{11/20}$ for $x\in\mathcal X_r$ give
\begin{multline}\label{eq:rad-application-beta}
\beta_r:=\max_{x\in\mathcal S_r, a_r<p\leq Y_r}|c_{r,p,x}|\leq c_0^{-1/2}(\log\log a_r)^{11/40}a_r^{2/3}\eee^{-T_r/2}\\=
c_0^{-1/2}(\log\log a_r)^{11/40}\exp\left(-T_r/6+T_r^{1-\lambda}/3\right)\leq \eee^{-T_r/8}
\end{multline}
for all sufficiently large $r$, uniformly over the realizations in $\mathcal E_r$, having utilized the bound $x\geq \eee^{T_r}$ secured by $x\in\mathcal X_r$. Since $\log |\mathcal S_r|\leq \log |\mathcal X_r|\sim (1-\lambda)\log T_r$ as $r\to\infty$, inequality \eqref{eq:rad-application-beta} entails $\lim_{r\to\infty}\beta_r\log |\mathcal S_r|=0$ uniformly over the realizations in $\mathcal E_r$.

Every two distinct points of $\mathcal S_r$ form a good pair. Consequently, uniformly over the realizations in $\mathcal E_r$, for $x,y\in\mathcal S_r$, $x\neq y$,
\begin{equation*}
\Big|\sum_{a_r<p\leq Y_r}c_{r,p,x}c_{r,p,y}\Big|=\frac{|C_r(x,y)|}{(v_r(x)v_r(y))^{1/2}}\leq \frac{1}{c_0(\log\log a_r)^{1/20}}=:\rho_r~\to~0,\quad r\to\infty.
\end{equation*}

The estimates in the proof of Lemma~\ref{lem:rademacher-max} depend only on the number of sums, the maximal coefficient, and the upper bound on the pairwise covariances.
By \eqref{eq:sizeS}, \eqref{eq:rad-application-beta} and the bound on $\rho_r$, we have $|\mathcal S_r|\geq n_r$, $\lim_{r\to\infty}\beta_r\log |\mathcal S_r|=0$ and $\lim_{r\to\infty}\rho_r=0$, with deterministic bounds uniform over the realizations in $\mathcal E_r$. More precisely, on $\mathcal E_r$, $n_r\leq|\mathcal S_r|\leq|\mathcal X_r|\leq T_r+1$ for all sufficiently large $r$. Hence, on $\mathcal E_r$, $\beta_r\leq\eee^{-T_r/8}\leq b_{|\mathcal S_r|}$ and $\rho_r\leq r_{|\mathcal S_r|}$, where $b_n:=\eee^{-(n-1)/8}$ and $r_n:=c_0^{-1}(\log((n-1)/2))^{-1/20}$ for $n\geq 4$. These deterministic sequences satisfy $\lim_{n\to\infty}b_n\log n=0$ and $\lim_{n\to\infty}r_n=0$, so that the uniform version of Lemma~\ref{lem:rademacher-max} applies, with $n=|\mathcal S_r|$ and $\delta=d$. If $\eta_n(d)$ denotes the resulting deterministic upper bound for the probability of the complement of the event in \eqref{eq:rademacher max}, then one can take $\theta_r(d):=\sup_{n\geq n_r}\eta_n(d)$ below. Applying the lemma conditionally on $\mathcal H_r$, we conclude that, for the chosen $d\in(0,1)$, there exists a {\it deterministic} sequence $(\theta_r(d))_{r\geq 1}$ with $\lim_{r\to\infty}\theta_r(d)=0$ such that, for all sufficiently large $r$, a.s.\ on $\mathcal E_r$,
\begin{equation}\label{eq:Vmax-rademacher}
\mmp_{a_r}(A_r)\geq1-\theta_r(d).
\end{equation}

Lemma \ref{lem:deterministic-Q-bounds} implies that, uniformly in $x\in\mathcal X_r$,
\begin{equation}\label{eq:loglog trivial}
\log\log Q(x)=\log T_r+O(1)
\end{equation}
with a deterministic $O(1)$-constant.

We claim that, on $\mathcal E_r$, a point $x\in\mathcal S_r$ satisfying $V_r(x)>(1-d)(2v_r(x)\log |\mathcal S_r|)^{1/2}$ also satisfies $M^{>}(x)=U_r(x)+V_r(x)>q(2Q(x)\log\log Q(x))^{1/2}$ for all sufficiently large $r$. Indeed, since $\mathcal S_r\subseteq\mathcal G_r$, such a point satisfies
\begin{multline*}
\frac{U_r(x)+V_r(x)}{(2Q(x)\log\log Q(x))^{1/2}}>(1-d)\left(\frac{v_r(x)}{Q(x)}\frac{\log|\mathcal S_r|}{\log\log Q(x)}\right)^{1/2}-\frac{1}{(2\log\log a_r\log\log Q(x))^{1/2}}\\\geq(1-d)(1-\lambda-\kappa)^{1/2}-o(1)>q
\end{multline*}
for all sufficiently large $r$. Here we have used the second part of \eqref{eq:compare Q} for the first inequality, the first part of \eqref{eq:compare Q}, \eqref{eq:sizeS} and \eqref{eq:loglog trivial} for the second inequality, with all error estimates being uniform over $x\in\mathcal S_r$ and the realizations in $\mathcal E_r$. The last inequality follows from the fact that \eqref{eq:ineq2} is a strict inequality.

Define the events $$B_r^+:=\big\{\exists x\in\mathcal X_r:~M^{>}(x)>q(2Q(x)\log\log Q(x))^{1/2}\big\},\quad r\in\mn.$$ By the preceding implication, for all sufficiently large $r$, a.s.\ on $\mathcal E_r$,
\begin{equation*}
\mmp_{a_r}(B_r^+)\geq\mmp_{a_r}(A_r)\geq1-\theta_r(d).
\end{equation*}
Consequently, for all sufficiently large $r$, a.s., $\mmp_{a_r}\big(B_r^+\big)\geq \1_{\mathcal E_r}/2$. Since $\lim_{r\to\infty}\mmp(\mathcal E_r)=1$ entails $\mmp(\mathcal E_r\ \text{i.o.})=1$, we conclude that
\begin{equation}\label{eq:diver}
 \sum_{r\geq1}\mmp_{a_r}(B_r^+)=\infty \quad\text{a.s.}
\end{equation}

Now we verify the filtration requirements for the conditional Borel-Cantelli lemma, see, for instance, Theorem 5.3.2 on p.~240 in \cite{Durrett:2010}. The sigma-fields $(\mathcal H_r)_{{r\geq 0}}$ form a filtration.
Moreover, $B_r^+$ is $\mathcal F_{Y_r}$-measurable. Since $\log Y_r=T_r+T_r^{1-\lambda}$ and $\log a_{r+1}=2T_r+(4T_r)^{1-\lambda}/2$, we have $Y_r<a_{r+1}$ for all {sufficiently large $r$}. Hence
$B_r^+\in\mathcal H_{r+1}$. With this at hand, the conditional Borel-Cantelli lemma applied to
\eqref{eq:diver} yields $\mmp(B_r^+~~\text{i.o.})=\mmp\{\sum_r\mmp_{a_r}(B_r^+)=\infty\}=1$. Taking a countable sequence $q\to 1-$, with the corresponding choices $\lambda=\lambda_q$, $\kappa=\kappa_q$ and $d=d_q$
satisfying \eqref{eq:ineq2}, and intersecting the resulting events of probability one, we arrive at \eqref{eq:lower sup}.

The second assertion of Proposition~\ref{prop:lower bound} follows by symmetry. Indeed, conditionally on $\mathcal H_r$, the vector $(f(p))_{a_r<p\leq Y_r}$ has the same distribution as $(-f(p))_{a_r<p\leq Y_r}$, and this sign reversal changes $V_r(x)$ into $-V_r(x)$, whereas the $\mathcal H_r$-measurable quantities $U_r(x)$, $v_r(x)$, $Q(x)$, $\mathcal S_r$ and $\mathcal E_r$ remain unchanged. Hence, the preceding argument applies to $-M^{>}(x)=-U_r(x)-V_r(x)$.

\section{Proof of Corollary \ref{cor:cluster}}\label{sect:cor}

Since $M^{>}(x)=M^{>}(\lfloor x\rfloor)$ and $Q(x)=Q(\lfloor x\rfloor)$ for $x\geq 1$ (all changes occur at integers $pm$ or prime squares $p^2$), the function whose cluster set appears in \eqref{eq:cluster} is constant on each interval $[n,n+1)$, $n\in\mn$. Hence, it suffices to prove \eqref{eq:cluster} with $x$ replaced by $n\in\mn$.

For each prime $p$ and $n\in\mn_0$, put $v_p(n):=M(n/p)\1_{\{p>n^{1/2}\}}$, so that $Q(n)=\sum_{p\leq n}(v_p(n))^2$. Also, put $$h_n:=\Big(\sum_{p\leq n}(v_p(n)-v_p(n-1))^2\Big)^{1/2},\quad n\in\mn.$$
In the notation of Section~\ref{sect:upper}, $v_p(n)=a_p(n)$ and $h_n=d(n-1,n)$ for $n\geq 3$. If $n$ is not the square of a prime, then, for $p\leq n^{1/2}$, $v_p(n)=v_p(n-1)=0$, whereas, for $p>n^{1/2}$, $$v_p(n)-v_p(n-1)=M(n/p)-M((n-1)/p)=
\begin{cases}
f(n/p),& \text{if}~p|n,\\
0,& \text{if}~p\nmid n.
\end{cases}
$$
There is at most one prime divisor $p>n^{1/2}$, since two such divisors would have product greater than $n$. Consequently,
$$M^{>}(n)-M^{>}(n-1)=\begin{cases}
f(p)f(n/p),& \text{if there exists a prime }~p>n^{1/2}~\text{dividing}~n,\\
0,& \text{otherwise}.
\end{cases}
$$
If $n=p^2$ for a prime $p$, then $v_p(n)-v_p(n-1)=-M(p-1)$, whereas $v_q(n)-v_q(n-1)=0$ for each prime $q\neq p$. Hence, $$M^{>}(n)-M^{>}(n-1)=-f(p)M(p-1).$$ Summarizing, $$|M^{>}(n)-M^{>}(n-1)|\leq h_n\leq
 \begin{cases}
  1,& n\text{ is not the square of a prime},\\
  |M(p-1)|,& n=p^2,\ p\text{ prime}.
 \end{cases}
$$
Using \eqref{eq:LTW} yields $$h_n=O\big(n^{1/4}(\log\log n)^3\big),\quad n\to\infty\quad\text{a.s.}$$ Together with the deterministic lower bound $Q(n)\geq n/(3\log n)$ for all sufficiently large $n$ (see \eqref{eq:deterministic-Q-bounds}), this implies
\begin{equation}\label{eq:hn}
\lim_{n\to\infty}\frac{h_n}{(Q(n))^{1/2}}=0\quad\text{a.s.}
\end{equation}

The triangle inequality in $\ell^2$ (the space of square summable sequences) yields $\big|(Q(n))^{1/2}-(Q(n-1))^{1/2}\big|\leq h_n$. Thus \eqref{eq:hn} and $\lim_{n\to\infty} Q(n)=\infty$ a.s.\
give $\lim_{n\to\infty}(Q(n-1)/Q(n))=1$ a.s. and thereupon, with $D(x):=(2Q(x)\log\log Q(x))^{1/2}$ for all sufficiently large $x$, $\lim_{n\to\infty}(D(n-1)/D(n))=1$ a.s. because $\log\log Q(x)\sim \log\log x$ as $x\to\infty$ (see Lemma \ref{lem:deterministic-Q-bounds}). Writing $Z_n=M^{>}(n)/D(n)$, we have
$$Z_n-Z_{n-1}=\frac{M^{>}(n)-M^{>}(n-1)}{D(n)}+Z_{n-1}\Big(\frac{D(n-1)}{D(n)}-1\Big).$$ The sequence $(Z_n)$ is eventually bounded by Theorem \ref{thm:main}. Therefore,
\begin{equation}\label{eq:zn}
\lim_{n\to\infty} (Z_n-Z_{n-1})=0\quad\text{a.s.}
\end{equation}

Fix a sample point at which both limit relations of Theorem \ref{thm:main} and \eqref{eq:zn} hold. For any $z\in(-1,1)$, the sequence $(Z_n)$ crosses $z$ upwards i.o. Hence, there exists an increasing sequence
$(n_k)$, depending on the chosen sample point, such that $Z_{n_k-1}\leq z<Z_{n_k}$. Therefore, $$0<Z_{n_k}-z\leq |Z_{n_k}-Z_{n_k-1}|~\to~0,\quad k\to\infty,$$ so that $z$ is a subsequential limit.
The endpoints $-1$ and $1$ are subsequential limits by Theorem \ref{thm:main}, and the same relation excludes
subsequential limits outside $[-1,1]$. We conclude that \eqref{eq:cluster} holds.

\section{Preparation for the proof of \eqref{eq:covinput}}\label{sect:proofnovel}

We start with an outline of the proof of \eqref{eq:covinput}. Lemma~\ref{lem:perron} represents $C_r(x,y)(xy)^{-1/2}$, up to a negligible error, as a double integral involving the random Euler product $F_{a_r}$. Lemma~\ref{lem:truncation} shows that, with probability tending to one, this double integral can be replaced by the integral $I_r(x,y)$ over pairs of nearby arguments at which the partial Euler products stay below the so-called strong barriers. The integrals $I_r(x,y)$ are controlled simultaneously for all $x,y\in\mathcal X_r$ in Lemma~\ref{lem:highmoment} with the help of high moments, using the moment bound of Lemma~\ref{lem:cell} for the Euler products subject to the strong barriers. The barrier estimates are obtained in Section~\ref{sec:barriers} and in the three subsequent sections % subsections
by means of a change of measure and a comparison with a Gaussian random walk. Finally, \eqref{eq:covinput} is deduced from Lemmas~\ref{lem:truncation} and~\ref{lem:highmoment} in Section~\ref{sect:proofcov}.

\subsection{Standard analytic estimates} We start with an estimate for a sum over primes which is secured by the prime number theorem with a remainder.
\begin{lemma}\label{lem:osc-prime-sum}
There exist absolute constants $c',C'>0$ such that, for all
$u>s\ge3$ and $v\in\mr$,
\begin{equation}\label{eq:osc}
\left|\sum_{s<p\le u}p^{-1-\ii v}-\int_{\log s}^{\log u}\frac{\eee^{-\ii vy}}{y}\,\dd y\right| \le C'(1+|v|)\eee^{-c'(\log s)^{1/2}}.
\end{equation}
\end{lemma}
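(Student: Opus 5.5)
We will write the prime sum as a Riemann–Stieltjes integral against $\pi$ and use the prime number theorem with remainder in the form \eqref{eq:pix}--\eqref{eq:Restim}. Specifically,
$$\sum_{s<p\le u}p^{-1-\ii v}=\int_{s}^{u}t^{-1-\ii v}\,\dd\pi(t)=\int_s^u\frac{t^{-1-\ii v}}{\log t}\,\dd t+\int_s^u t^{-1-\ii v}\,\dd R(t).$$
In the main term we substitute $t=\eee^{y}$, so that $\dd t/t=\dd y$ and $t^{-\ii v}=\eee^{-\ii vy}$. This turns it exactly into $\int_{\log s}^{\log u}\eee^{-\ii vy}y^{-1}\,\dd y$, with no error, since the density in \eqref{eq:pix} is $1/\log t$ for $t\ge 2$ and $s\ge3$.

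It remains to bound the remainder integral. We integrate by parts:
$$\int_s^u t^{-1-\ii v}\,\dd R(t)=u^{-1-\ii v}R(u)-s^{-1-\ii v}R(s)+(1+\ii v)\int_s^u t^{-2-\ii v}R(t)\,\dd t.$$
The boundary terms are bounded via \eqref{eq:Restim} by $c_1\eee^{-c(\log u)^{1/2}}+c_1\eee^{-c(\log s)^{1/2}}\le 2c_1\eee^{-c(\log s)^{1/2}}$. The integral term is bounded in modulus by
$$(1+|v|)c_1\int_s^u t^{-1}\eee^{-c(\log t)^{1/2}}\,\dd t\le(1+|v|)c_1\int_{\log s}^\infty \eee^{-cw^{1/2}}\,\dd w.$$

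This tail integral is the only step requiring a small computation, and it is the mild obstacle. Substituting $w=z^2$ gives $2\int_{(\log s)^{1/2}}^\infty z\eee^{-cz}\,\dd z=O\big(((\log s)^{1/2}+1)\eee^{-c(\log s)^{1/2}}\big)$. The polynomial factor is absorbed by decreasing the constant, say to $c'=c/2$. Collecting the boundary terms and the integral term then gives \eqref{eq:osc} with absolute constants $c',C'$, uniformly in $u>s\ge3$ and $v\in\mr$.

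One technical point needs a remark: the Stieltjes manipulation with the jump function $\pi$. It is justified because $t\mapsto t^{-1-\ii v}$ is $C^1$ on $[s,u]$ and $R=\pi-\mathrm{li}$ is of bounded variation there. Endpoint conventions (primes equal to $s$ or $u$) are handled by using right-continuous $\pi$, matching the half-open range $s<p\le u$.
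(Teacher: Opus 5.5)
Your proposal is correct and follows the paper's proof essentially step by step. The shared steps are: the Stieltjes representation via \eqref{eq:pix}, exact evaluation of the main term by the substitution $t=\eee^y$, integration by parts against $R$ combined with \eqref{eq:Restim}, and absorbing the polynomial factor in the tail integral $2c^{-2}\big(1+c(\log s)^{1/2}\big)\eee^{-c(\log s)^{1/2}}$ by taking any $c'<c$.
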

\begin{proof}
Recall the notation $\pi(x)=\#\{p:~p\leq x\}$ for $x\geq 0$ for the prime counting function. % By formula (6.14) on p.~179 in \cite{MV}, $$\pi(x)=\int_2^x\frac{{\rm d}y}{\log y}+R(x),$$ where, for % all $x\geq 2$ and
%some positive constants $c$ and $c_1$,
%\begin{equation}\label{eq:Restim}
%|R(x)|\leq c_1x\eee^{-c(\log x)^{1/2}},\quad x\geq 2.
%\end{equation}
Representing the sum as a Lebesgue-Stieltjes integral with respect to $\pi$ and using \eqref{eq:pix}, we obtain
$$\sum_{s<p\le u}p^{-1-{\rm i}v}=\int_s^u\frac{z^{-1-{\rm i}v}}{\log z}\,{\rm d}z+\int_{(s,u]}z^{-1-{\rm i}v}\,{\rm d}R(z)=\int_{\log s}^{\log u}\frac{\eee^{-\ii vy}}{y}\,\dd y+\int_{(s,u]}z^{-1-{\rm i}v}\,{\rm d}R(z).$$
Integration by parts, in combination with \eqref{eq:Restim}, gives
\begin{multline*}
\left|\int_{(s,u]}z^{-1-{\rm i}v}\,{\rm d}R(z)\right|=\left|u^{-1-{\rm i}v}R(u)-s^{-1-{\rm i}v}R(s)+(1+{\rm i}v)\int_s^u R(z)z^{-2-{\rm i}v}\,{\rm d}z\right|\\\leq 2c_1 \eee^{-c(\log s)^{1/2}}+c_1(1+|v|)\int_s^\infty z^{-1}\eee^{-c(\log z)^{1/2}}{\rm d}z
\end{multline*}
for all $s,u$ satisfying $u>s\geq 3$ and all $v\in\mr$. Noting that
$$\int_s^\infty z^{-1}\eee^{-c(\log z)^{1/2}}{\rm d}z=2c^{-2}(1+c(\log s)^{1/2})\eee^{-c(\log s)^{1/2}}\leq C_1^\prime\eee^{-c'(\log s)^{1/2}}$$ for any fixed $c'\in(0,c)$ and all $s\geq 3$, with an appropriate
constant $C_1'>0$ independent of $s$, we arrive at \eqref{eq:osc}.
\end{proof}

For $v\ne0$, integration by parts yields
\begin{equation}\label{eq:oscBound}
\left|\int_{\log s}^{\log u}\frac{\eee^{-\ii vy}}y\,\dd y\right|\leq \frac{3}{|v|\log s}.
\end{equation}

The asymptotic relations below are standard consequences of Mertens’ estimates.
\begin{lemma}\label{cor:Mert-consequences}
As $q\to\infty$,
\begin{equation}\label{eq:Mert conseq}
\prod_{p\le \eee^q}(1+p^{-1})=6\pi^{-2}\eee^\gamma q +O(1),
\quad
\sum_{p\le \eee^q}\frac{\log p}{p}=q+O(1),
\quad
\sum_{p\le \eee^q}\frac{(\log p)^2}{p}=O(q^2),
\end{equation}
where $\gamma$ is the Euler-Mascheroni constant.
\end{lemma}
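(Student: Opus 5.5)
The plan is to derive all three relations in \eqref{eq:Mert conseq} from Mertens' theorem \eqref{eq:mertens}, its companion $\sum_{p\le z}(\log p)/p=\log z+O(1)$ (Theorem 2.7 in \cite{MV}), and the third Mertens estimate $\prod_{p\le z}(1-p^{-1})^{-1}=\eee^\gamma\log z+O(1)$. Throughout, put $z=\eee^q$.

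The second relation is exactly the first Mertens estimate with $\log z=q$, so nothing needs to be proved. For the third, we use partial summation against $S(t):=\sum_{p\le t}(\log p)/p=\log t+O(1)$:
$$\sum_{p\le z}\frac{(\log p)^2}{p}=\int_{2^-}^{z}\log t\,\dd S(t)=S(z)\log z-\int_2^z\frac{S(t)}{t}\,\dd t=O(q^2).$$
A cruder argument also works: $\log p\le q$, so the sum is at most $q\,S(z)=O(q^2)$.

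For the first relation, write $1+p^{-1}=(1-p^{-2})/(1-p^{-1})$. This gives
$$\prod_{p\le z}(1+p^{-1})=\prod_{p\le z}(1-p^{-2})\cdot\prod_{p\le z}(1-p^{-1})^{-1}.$$
The first factor tends to $\prod_p(1-p^{-2})=\zeta(2)^{-1}=6\pi^{-2}$, with error $O(\sum_{p>z}p^{-2})=O(1/z)$. The second factor equals $\eee^\gamma q+O(1)$ by the third Mertens estimate. Multiplying out,
$$\prod_{p\le z}(1+p^{-1})=\big(6\pi^{-2}+O(\eee^{-q})\big)\big(\eee^\gamma q+O(1)\big)=6\pi^{-2}\eee^\gamma q+O(1),$$
since $q\eee^{-q}=O(1)$.

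The only nontrivial step is the first relation, and its sole delicate input is the form of the third Mertens estimate with an $O(1)$ error, which gives the relative error $O(1/\log z)$. This is the version stated in \cite{MV}, Theorem 2.7(e). If only \eqref{eq:mertens} were taken as given, I would instead take logarithms: write $\sum_{p\le z}\log(1+p^{-1})=\sum_{p\le z}p^{-1}-\sum_p\sum_{k\ge2}(-1)^k/(kp^k)+O(1/z)$, then insert \eqref{eq:mertens}. The constant would then be identified by comparison with the known identity $b=\gamma-\sum_p\sum_{k\ge2}1/(kp^k)$.
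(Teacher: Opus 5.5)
Your proposal is correct and follows the paper's proof essentially verbatim. For the first relation you use the same factorization $\prod_{p\le z}(1+p^{-1})=\prod_{p\le z}(1-p^{-2})\prod_{p\le z}(1-p^{-1})^{-1}$ combined with Theorem~2.7(e) of \cite{MV}; the second is cited directly, and the third comes from $(\log p)^2\le q\log p$, exactly as in the paper (your partial-summation variant is a valid, slightly sharper alternative giving $q^2/2+O(q)$).
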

\begin{proof}
By Theorem~2.7(e) in~\cite{MV}, $\prod_{p\le z}(1-p^{-1})^{-1}=\eee^\gamma\log z+O(1)$. It is a standard fact that $$\prod_{p\le z}(1-p^{-2})=6 \pi^{-2}+O\left(\sum_{p>z}p^{-2}\right)=6 \pi^{-2}+O(z^{-1}).$$
The first asymptotic relation in \eqref{eq:Mert conseq} follows from these and a representation $$\prod_{p\le z}(1+p^{-1})=\prod_{p\le z}(1-p^{-2})\prod_{p\le z}(1-p^{-1})^{-1}.$$ The second asymptotic relation is just Theorem~2.7 (b) in~\cite{MV}. The third follows from the second with the help of $(\log p)^2\leq q\log p$ for $p\leq \eee^q$.
\end{proof}

For each $z>\eee^\eee$ and $j\in\mn_0$, put $J_*(z):=\floor{\log\log z-1}$ and $q_{j,z}=\eee^{-j}\log z$. Define now the finite products
\begin{equation}\label{eq:Euler}
F_{j,z}(t)=\prod_{p\le\exp(q_{j,z})}(1+f(p)p^{-1/2-\ii t}), \quad j\in\mathbb{Z},~ 0\le j\le J_*(z),~ t\in\mr.
\end{equation}
Each product is nonzero because $p^{-1/2}<1$. Recalling that $P(n)$ denotes the largest prime factor of an integer $n\geq 2$ and that $P(1)=1$, we expand the product to obtain
\begin{equation}\label{eq:expan}
F_{j,z}(t)=\sum_{\substack{n\geq 1:~n~\text{squarefree}\\ P(n)\le \eee^{q_{j,z}}}}\frac{f(n)}{n^{1/2+{\rm i}t}}.
\end{equation}
Using orthogonality of $f(2)$, $f(3),\ldots$ for the first equality and the first part of~\eqref{eq:Mert conseq} for the asymptotic relation we obtain
\begin{equation}\label{eq:EulerSecond}
\me\big[|F_{j,z}(t)|^2\big]=\sum_{\substack{n\geq 1:~n~\text{squarefree}\\ P(n)\le \eee^{q_{j,z}}}} n^{-1}=\prod_{p\le \eee^{q_{j,z}}}(1+p^{-1})= 6\pi^{-2}\eee^\gamma q_{j,z}+O(1)
\end{equation}
with an absolute $O(1)$-constant.

We shall write for simplicity $F_z$ for $F_{0,z}$ and state explicitly that $$F_z(t)=\prod_{p\le z}\left(1+f(p)p^{-1/2-\ii t}\right).$$
\begin{lemma}\label{lem:twofrequency}
There exists an absolute constant $K>0$ such that, for all $z>\eee^\eee$ and $t,w\in\mr$,
\begin{multline}\label{eq:twofrequency}
\me\big[|F_z(t)F_z(w)|\big]\leq K(\log z)^{1/2}\Big(\min\,(\log z,|t-w|^{-1})^{1/2}\\+\min\,(\log z,|t+w|^{-1})^{1/2}+\log(2+|t|+|w|)\Big),
\end{multline}
where $1/0:=\infty$.
\end{lemma}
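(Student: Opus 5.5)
The plan is to bound the first absolute moment by Cauchy--Schwarz applied to a second-moment-type quantity, using independence over primes. Since the factors $1+f(p)p^{-1/2-\ii t}$ are independent across $p$, we have
$$\me\big[|F_z(t)F_z(w)|^2\big]=\prod_{p\le z}\me\big[|1+f(p)p^{-1/2-\ii t}|^2|1+f(p)p^{-1/2-\ii w}|^2\big].$$
However, bounding $\me|F_zF_z|\le(\me|F_z(t)F_z(w)|^2)^{1/2}$ gives the wrong shape. When $t\approx w$ the second moment is roughly $\exp(\sum 4/p\cdot\ldots)\approx(\log z)^{4}$ in the worst case, so its square root is $(\log z)^2$. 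The target is only $(\log z)^{1/2}\cdot(\log z)^{1/2}=\log z$ on the diagonal. Instead I would use the elementary factorization $|F_z(t)F_z(w)|\le\frac12(|F_z(t)|^2+|F_z(w)|^2)$. This is too crude off the diagonal, where the answer should decay. So I would use a tilted Cauchy--Schwarz:
$$\me|F_z(t)F_z(w)|\le\big(\me[|F_z(t)|^2]\big)^{1/2}\big(\me[|F_z(w)|^2\,|F_z(t)|^2]/\me[|F_z(t)|^2]\big)^{1/2}.$$
In other words, we write $\me|F_z(t)F_z(w)|=\me[|F_z(t)|^2]^{1/2}\cdot\widetilde{\me}[|F_z(w)|^2/|F_z(t)|^2\cdot\ldots]$. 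Concretely, I would apply Cauchy--Schwarz as $\me|F(t)F(w)|\le(\me|F(t)|^2)^{1/2}(\me|F(w)|^2)^{1/2}$ only inside a product decomposition over prime ranges.

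Concretely, split the primes at $y:=\exp(\min(\log z,|t-w|^{-1}))$ (together with the analogous scale for $t+w$, and with the primes below $\exp(\log^2(2+|t|+|w|))$ treated separately). By independence, the expectation of $|F_z(t)F_z(w)|$ factorizes over disjoint prime ranges. On the small primes $p\le y$, the factors at $t$ and $w$ are nearly equal. There I bound $\me\prod_{p\le y}|\cdot||\cdot|\le\big(\me\prod_{p\le y}|1+f(p)p^{-1/2-\ii t}|^2\big)^{1/2}\big(\me\prod_{p\le y}|1+f(p)p^{-1/2-\ii w}|^2\big)^{1/2}=\prod_{p\le y}(1+1/p)\asymp\log y$, using \eqref{eq:EulerSecond}. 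On the large primes $y<p\le z$, I use per-prime Cauchy--Schwarz in the form $\me|A_pB_p|\le(\me|A_p|^2|B_p|^2)^{1/2}$, which is not multiplicative in the right way. So instead I compute exactly: each factor contributes $\me|A_pB_p|=1+O(p^{-1})$ with leading terms that cancel in the mean.

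It is cleaner to use $\me|X|\le(\me|X|^{2})^{1/2}$ on the large-prime block with the exact product
$$\prod_{y<p\le z}\Big(1+\frac{2}{p}+\frac{2\cos((t-w)\log p)+2\cos((t+w)\log p)}{p}+O(p^{-2})\Big).$$
This product equals $\exp\big(2\sum_{y<p\le z}1/p+2\Re\sum p^{-1-\ii(t-w)}+2\Re\sum p^{-1-\ii(t+w)}+O(1)\big)$. By Lemma \ref{lem:osc-prime-sum} and \eqref{eq:oscBound}, the oscillatory sums are $O(1)$ once $p>y$, with the $+O((1+|v|)e^{-c'\sqrt{\log s}})$ error absorbed after restricting to $\log s\gg\log^2(2+|t|+|w|)$. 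Hence the square root of this second moment is $\asymp\log z/\log y$.

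This gives only $\log y\cdot(\log z/\log y)=\log z$, i.e.\ no gain, which is wrong. The fix is to take the square root of the block second moment only for the \emph{fluctuation}, while using $\me|\cdot|\le(\me|\cdot|^2)^{1/2}$ on the whole product. Doing so yields $\me|F(t)F(w)|\le\big(\prod_{p\le z}(\ldots)\big)^{1/2}\asymp(\log z\cdot\log y_{-}\cdot\log y_{+})^{1/2}$, where $\log y_{\pm}=\min(\log z,|t\mp w|^{-1})$ comes from Mertens plus Lemma \ref{lem:osc-prime-sum}: the sum $\sum_{p\le y_\pm}\cos(v\log p)/p\approx\log\log y_\pm$ since $v\log p\le1$ there. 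Finally, using $\sqrt{ab}\le a+b$ together with the fact that one of $\log y_{+},\log y_{-}$ is $O(\log(2+|t|+|w|))$ unless $|t|,|w|$ are both tiny, I obtain the stated sum form. The main obstacle is the primes with $p\le\exp(\log^2(2+|t|+|w|))$, where the prime number theorem error in Lemma \ref{lem:osc-prime-sum} is not small. There I would bound the cosine terms trivially by $2/p$, giving a factor $\exp(O(\log\log(2+|t|+|w|)))$, which is a polylog in $|t|+|w|$. Making that factor appear additively as $\log(2+|t|+|w|)$ rather than multiplicatively requires care: I would handle it by separating the cases $|t|+|w|\le\log z$ versus larger, and in the latter case using the crude bound $\me|F_z(t)F_z(w)|\le(\me|F_z(t)|^2\me|F_z(w)|^2)^{1/2}\ll\log z$.
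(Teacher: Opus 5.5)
\textbf{There is a genuine gap.} The decisive step of your proposal does not hold. The product
$\prod_{p\le z}\big(1+\frac2p+\frac{2\cos((t-w)\log p)+2\cos((t+w)\log p)}{p}+O(p^{-2})\big)$
is exactly $\me|F_z(t)F_z(w)|^2$. By Mertens and Lemma~\ref{lem:osc-prime-sum} it is roughly $(\log z\,\log y_-\,\log y_+)^2$, not $\log z\,\log y_-\,\log y_+$. Its square root is therefore roughly $\log z\,\log y_-\,\log y_+$, which is just the bound $(\me|F_z(t)F_z(w)|^2)^{1/2}$ you rejected in your first paragraph. Concretely, at $t=w=0$ it equals $\prod_{p\le z}(1+6p^{-1}+p^{-2})^{1/2}\asymp(\log z)^3$, whereas $\me|F_z(0)|^2=\prod_{p\le z}(1+p^{-1})\asymp\log z$.

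No argument that applies $\me|X|\le(\me|X|^2)^{1/2}$ to a long block of primes can give the $(\log z)^{1/2}$ saving. Per prime, the second moment carries the diagonal coefficient $2/p$, while the first moment carries $1/(2p)$.

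Two further problems:
\begin{itemize}
\item Even granting your intermediate bound $(\log z\,\log y_-\,\log y_+)^{1/2}$, it exceeds the right-hand side of \eqref{eq:twofrequency} when $|t|,|w|\lesssim 1$. At $t=w=0$ it is $(\log z)^{3/2}$ against $\asymp K\log z$, and this ``both tiny'' case is left untreated.
\item For generic $t,w$ (say $|t\pm w|\ge 1$), the crude bound $\ll\log z$ lies below the right-hand side only when $\log(2+|t|+|w|)\gg(\log z)^{1/2}$. Splitting at $|t|+|w|=\log z$ therefore leaves the range $\log z<|t|+|w|<\exp(c(\log z)^{1/2})$ uncovered.
\end{itemize}

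\textbf{The missing idea} is to compute the \emph{first} moment prime by prime and multiply using independence, with no Cauchy--Schwarz on the large primes. You gestured at this in your fourth paragraph and then dropped it. Write $|1+f(p)p^{-1/2-\ii t}|=(1+p^{-1}+2f(p)p^{-1/2}\cos(t\log p))^{1/2}$ and expand both square roots to second order:
\[
\me\big[|1+f(p)p^{-1/2-\ii t}|\,|1+f(p)p^{-1/2-\ii w}|\big]=1+\frac{1}{2p}+\frac{\cos((t-w)\log p)+\cos((t+w)\log p)}{2p}-\frac{\cos(2t\log p)+\cos(2w\log p)}{4p}+O(p^{-2}).
\]
Sum the logarithms with Lemma~\ref{lem:osc-prime-sum} over primes above $Y:=\exp(C\log^2(2+|t|+|w|))$. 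Then:
\begin{itemize}
\item the term $1/(2p)$ gives the factor $(\log z/\log Y)^{1/2}$;
\item the terms in $t\mp w$ give factors $\min(\log z,|t\mp w|^{-1})^{1/2}$, suitably clipped at $\log Y$;
\item the negative terms in $2t,2w$ give $\min(\log z,|2t|^{-1})^{-1/4}\min(\log z,|2w|^{-1})^{-1/4}$.
\end{itemize}
Since $|t|,|w|\le|t|+|w|=\max(|t-w|,|t+w|)$, the last factor cancels the factor attached to the larger of $|t\pm w|$. This cancellation is what handles small $t,w$ and turns the product into a maximum.

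The primes $p\le Y$ are bounded by per-prime Cauchy--Schwarz as $\prod_{p\le Y}(1+p^{-1})\asymp\log Y$. The total is then
\[
\ll(\log z)^{1/2}\big(\log Y+\min(\log z,|t-w|^{-1})+\min(\log z,|t+w|^{-1})\big)^{1/2},
\]
and $(\log Y)^{1/2}\asymp\log(2+|t|+|w|)$ yields the additive logarithm. When $\log Y>\log z$, the trivial bound suffices.

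This computation is the content of Harper's Euler Product Result~2, which the paper simply quotes with $x=200$ and $\sigma=0$. The paper absorbs the primes $p\le 200$ into $\prod_{p\le200}(1+p^{-1/2})^2$. It then splits the minimum via $\min(L,a+b+c)\le\min(L,a)+\min(L,b)+c$ and subadditivity of the square root, and treats $\eee^\eee<z<200$ trivially.
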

\begin{proof}
Put $R:=2+|t|+|w|$. For $z\ge 200$, apply the second part of Euler Product Result~2 on pp.~2107--2108 in~ \cite{Harper:2023} with $x=200$, $y=z$, $\sigma=0$, $t_1=t$ and $t_2=w$. Since
$$\prod_{p\le200}|1+f(p)p^{-1/2-\ii t}||1+f(p)p^{-1/2-\ii w}|\leq \prod_{p\le200}(1+p^{-1/2})^2,$$ that result implies the existence of an absolute constant $K_1>0$ such that $$\me\big[|F_z(t)F_z(w)|\big] \leq K_1(\log z)^{1/2}\left(1+\min\,\left(\log z,~|t-w|^{-1}+|t+w|^{-1}+(\log R)^2\right)\right)^{1/2}.$$

Using the inequality $\min(L,a+b+c)\leq\min(L,a)+\min(L,b)+c$ which holds for $L>0$ and $a,b,c\ge0$, together with subadditivity of $x\mapsto x^{1/2}$ on $[0,\infty)$ we infer
\begin{multline*}
\left(1+\min\left(\log z,~|t-w|^{-1}+|t+w|^{-1}+(\log R)^2\right)\right)^{1/2}\\\leq 1+\min(\log z,~|t-w|^{-1})^{1/2}+\min(\log z,|t+w|^{-1})^{1/2}+\log R\\ \leq (1+1/\log2)
\left(\min(\log z,|t-w|^{-1})^{1/2}+\min(\log z,|t+w|^{-1})^{1/2}+\log R\right).
\end{multline*}
We have used $R\ge2$ for the last inequality. These inequalities remain valid when $a$ or $b$ is infinite, with the convention in the statement. This proves \eqref{eq:twofrequency} for $z\ge 200$ with
$K=K_1(1+1/\log2)$.

For $\eee^\eee<z<200$, $\me\big[|F_z(t)F_z(w)|\big] \leq B:=\prod_{p\le200}(1+p^{-1/2})^2$, whereas $(\log z)^{1/2}\log R\geq \eee^{1/2}\log2$. Thus choosing  $$K:=\max\left(K_1\left(1+\frac1{\log2}\right),~\frac{B}{\eee^{1/2}\log2}\right)$$ establishes \eqref{eq:twofrequency} throughout the range $z>\eee^\eee$ and $t,w\in\mr$.
\end{proof}

For $r\in\mn$, put $H_r:=T_r^8$, $\omega_r:=(\log\log a_r)^4/T_r$ and $
K_r(v):=\sum_{a_r<p\leq Y_r}p^{-1-\ii v}$ for $v\in\mr$.
Recall that $Y_r=a_r^2$ and
$\log a_r=(T_r+T_r^{1-\lambda})/2$. We claim that
\begin{equation}\label{eq:kernelBounds}
|K_r(v)|\leq C\min\left(1,\frac{1}{T_r|v|}\right),
\quad |v|\leq 2H_r
\end{equation}
and
\begin{equation}\label{eq:kernelBounds2}
\int_{-\omega_r}^{\omega_r}|K_r(v)|\,\dd v \leq 10C\frac{\log\log T_r}{T_r}
\end{equation}
for an absolute constant $C>0$ and all sufficiently large $r$, with the minimum interpreted as $1$ at $v=0$. Indeed, the integral appearing in \eqref{eq:osc}, with $s=a_r$ and $u=Y_r$, is bounded
in absolute value by $\log\log Y_r-\log\log a_r=\log 2$ and, for $v\neq0$, by $3/(|v|\log a_r)$, see \eqref{eq:oscBound}. The right-hand side of \eqref{eq:osc}, uniformly in $|v|\leq 2H_r$, is
$$O\left(T_r^8\exp(-c'(\log a_r)^{1/2})\right)=o(T_r^{-9}), \quad r\to\infty.$$ Therefore, it can be absorbed into the bound in \eqref{eq:kernelBounds}. Integrating this bound separately over the regions $|v|\leq T_r^{-1}$ and $T_r^{-1}<|v|\leq\omega_r$ proves \eqref{eq:kernelBounds2}. For instance, $$\int_{T_r^{-1}<|v|\leq \omega_r}|K_r(v)|\,\dd v\leq \frac{2C}{T_r}\int_{T_r^{-1}}^{\omega_r}\frac{{\rm d}v}{v}=\frac{8C}{T_r}\log\log ((T_r+T_r^{1-\lambda})/2)\leq 8C \frac{\log\log T_r}{T_r}.$$

Lemma \ref{lem:perron} represents the conditional covariances by double integrals involving random Euler products, with an error uniform over $\mathcal X_r$. Related representations are derived in Section~3.2 of \cite{Harper:2023} and Lemma~5.2 of \cite{Gou+Xu:2026}. Since those results do not directly cover our setting, we provide a proof for $\mathcal X_r$, using orthogonality to control the quadratic truncation error directly on the line $\Re s=1/2$.

Recall the notation  $$C_r(x,y)=\sum_{a_r<p\leq\min(x,y)}M(x/p)M(y/p), \quad x,y\in\mathcal X_r.$$
\begin{lemma}\label{lem:perron}
There exist $\mathcal H_r$-measurable events $\mathcal P_r$ with $\lim_{r\to\infty}\mmp(\mathcal P_r)=1$ such that, on $\mathcal P_r$, simultaneously for all $x,y\in\mathcal X_r$,
\begin{equation}\label{eq:perron}
\frac{C_r(x,y)}{(xy)^{1/2}}=\frac{1}{4\pi^2}\int_{-H_r}^{H_r}\int_{-H_r}^{H_r}\frac{F_{a_r}(v)\overline{F_{a_r}(t)}x^{\ii v}y^{-\ii t}}{(1/2+\ii v)(1/2-\ii t)}K_r(v-t)\,\dd v\,\dd t+O(T_r^{-1}),
\end{equation}
where $\overline{x}$ denotes the complex conjugate of $x\in\mathbb{C}$. The implicit constant is deterministic and independent of $x,y$ and the realization in $\mathcal P_r$.
\end{lemma}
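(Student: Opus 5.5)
The plan is to use Perron's formula on the line $\Re s=1/2$ together with Parseval/orthogonality. Fix $x,y\in\mathcal X_r$. Every $p\in(a_r,Y_r]$ satisfies $x/p<a_r$, so $M(x/p)=\sum_{n\le x/p}f(n)$ with all $n<a_r$. Hence $n$ is $a_r$-smooth and squarefree, and the Dirichlet series $\sum_{n:\,P(n)\le a_r}f(n)n^{-s}=F_{a_r}(-\ii\Im s)$ evaluated at $s=1/2+\ii v$ is exactly $F_{a_r}(v)$.

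The truncated Perron formula at abscissa $1/2$ would give
\[
\frac{M(x/p)}{(x/p)^{1/2}}=\frac{1}{2\pi}\int_{-H_r}^{H_r}\frac{F_{a_r}(v)(x/p)^{\ii v}}{1/2+\ii v}\,\dd v+\mathcal R(x/p),
\]
but the remainder $\mathcal R$ is not absolutely small pointwise on $\Re s=1/2$. I would therefore avoid pointwise bounds and control the error in mean square. Write $G_x(v):=F_{a_r}(v)x^{\ii v}/(1/2+\ii v)$ and define the approximant $\widetilde M(x/p):=(x/p)^{1/2}(2\pi)^{-1}\int_{-H_r}^{H_r}G_x(v)p^{-\ii v}\,\dd v$.

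First, I would substitute $\widetilde M$ for $M$ in $C_r(x,y)$. The resulting main term is
\[
\frac{1}{(xy)^{1/2}}\sum_{a_r<p\le\min(x,y)}\widetilde M(x/p)\overline{\widetilde M(y/p)}.
\]
Here $M$ is real and $\overline{\widetilde M}$ contributes the conjugate factor $\overline{F_{a_r}(t)}y^{-\ii t}/(1/2-\ii t)$. The factor $p^{-1}$ from $(x/p)^{1/2}(y/p)^{1/2}$ combines with $p^{-\ii v}p^{\ii t}$ to give $p^{-1-\ii(v-t)}$. Exchanging sum and integral and extending the prime sum to $(a_r,Y_r]$ then yields exactly $K_r(v-t)$. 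Two points need care. The restriction $p\le\min(x,y)$ versus $p\le Y_r$ is harmless, because $M(x/p)=0$ for $p>x$; but $\widetilde M$ is not zero there, so I would either truncate first or use the fact that the Perron integral for $x/p<1$ is $O$ of the mean-square error anyway. The second point is the main work: showing the mixed terms $\sum_p(M-\widetilde M)(x/p)\,\widetilde M(y/p)/(xy)^{1/2}$ and the squared error are $O(T_r^{-1})$ on a high-probability event, uniformly over the $O(T_r^{2})$ pairs.

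Second, I would bound the error. By Cauchy--Schwarz it suffices to show $\sum_{a_r<p\le Y_r}(M-\widetilde M)(x/p)^2/x\ll T_r^{-3}$ (say) with high probability, and $\sum_p\widetilde M(y/p)^2/y\ll T_r$. For the first, I would take expectations. The difference $M(u)-\widetilde M(u)$ is the linear combination $\sum_{n<a_r}f(n)\big(\1_{n\le u}-\phi_{H_r}(u/n)\big)$, where $\phi_H$ is the truncated Perron kernel with $|\1_{w\ge1}-\phi_H(w)|\ll\min(1,(H|\log w|)^{-1})$. Orthogonality \eqref{eq:orth} then gives
\[
\me\big[(M-\widetilde M)(u)^2\big]\ll\sum_{n<a_r}\min\big(1,(H_r|\log(u/n)|)^{-2}\big)\ll u/H_r+u^{\varepsilon}\quad\text{(roughly)}.
\]
Summing over $p$ with weight $1/x$ gives $\ll T_r^{-8}$ up to logarithms. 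Markov's inequality and a union bound over the $O(T_r)$ points $x\in\mathcal X_r$ then give probability $1-o(1)$. For the second, $\me[\widetilde M(u)^2]\ll u$ in the same way, and summing $\sum_p 1/p$ over $(a_r,Y_r]$ gives $O(1)$, so Markov's inequality with threshold $T_r$ suffices. Define $\mathcal P_r$ as the intersection of these events; it is $\mathcal H_r$-measurable since everything involves only $f(n)$ with $n<a_r$.

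The main obstacle is making the orthogonality calculation of $\me[(M-\widetilde M)^2]$ clean: handling $n$ near $u$, where the kernel is not small, and the sum over $p$ of those near-diagonal contributions. I would split according to $|\log(u/n)|\le H_r^{-1/2}$. The near part counts $\ll uH_r^{-1/2}+1$ integers. Summed over $p$ with weight $1/x$, this gives $\ll H_r^{-1/2}=T_r^{-4}$, which is still comfortably below $T_r^{-2}$. The square root loss in Cauchy--Schwarz then yields an overall error $\ll T_r^{-2}\cdot T_r^{1/2}\le T_r^{-1}$.
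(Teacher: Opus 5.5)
Your strategy is the paper's: control the truncated Perron error on $\Re s=1/2$ in mean square via orthogonality, apply Markov's inequality with a union bound over $\mathcal X_r$, and pass to the inner products by Cauchy--Schwarz. Your $\widetilde M(x/p)$ is exactly $x^{1/2}B_{r,p}(x)$ in the paper's notation. Your second option for $p>\min(x,y)$ is also what the paper does: sum the error over all $a_r<p\le Y_r$, using $M(x/p)=0$ for $p>x$. The gap is in the central estimate of $x^{-1}\sum_{a_r<p\le Y_r}\me[(M-\widetilde M)(x/p)^2]$, which, as you set it up, does not give the bound you claim.

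First, the kernel bound and the range of $n$ are wrong, and this matters. The truncation error at abscissa $1/2$ satisfies $|\1_{[1,\infty)}(w)-\phi_H(w)|\ll w^{1/2}\min(1,(H|\log w|)^{-1})$. The factor $w^{1/2}$ is genuine, since the discarded tail is $w^{1/2}(2\pi)^{-1}\int_{|v|>H}w^{\ii v}(1/2+\ii v)^{-1}\,\dd v$. Moreover, $\widetilde M(u)=\sum_n f(n)\phi_{H_r}(u/n)$ with $n$ running over \emph{all} squarefree $n$ with $P(n)\le a_r$, since that is what $F_{a_r}$ expands into, and not only over $n<a_r$. Orthogonality therefore gives
\[
\frac1x\sum_{a_r<p\le Y_r}\me\big[(M-\widetilde M)(x/p)^2\big]\ll\sum_{a_r<p\le Y_r}\ \sum_{\substack{n\ \text{squarefree}\\ P(n)\le a_r}}\frac{1}{pn}\min\Big(1,\frac{1}{H_r^2\log^2(x/(pn))}\Big).
\]
It is the weight $1/(pn)$ that makes the range $|\log(x/(pn))|>1$ harmless: that range contributes at most $H_r^{-2}\sum_{a_r<p\le Y_r}p^{-1}\prod_{q\le a_r}(1+q^{-1})\ll T_rH_r^{-2}$.

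Second, and this is the missing idea, the near-diagonal terms cannot be bounded prime by prime. You count $\ll uH_r^{-1/2}+1$ integers $n$ per prime, each of weight about $1/x$. The ``$+1$'' then contributes about $1/x$ for every prime $p\lesssim x$, that is, about $1/\log x\asymp T_r^{-1}$ in total, not $H_r^{-1/2}=T_r^{-4}$. An expected error of order $T_r^{-1}$ is useless for Markov's inequality with a union bound.

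The fix is that $p>a_r\ge P(n)$, so $(p,n)\mapsto m=pn$ is injective. The near pairs are then counted by integers $m\in[x/\eee,\eee x]$, and
\[
\frac{\eee}{x}\sum_{x/\eee\le m\le\eee x}\min\Big(1,\frac{\eee^2x^2}{H_r^2|x-m|^2}\Big)=O(H_r^{-1}),
\]
with the boundary term now appearing once instead of once per prime. This gives $\me\sum_{a_r<p\le Y_r}(M-\widetilde M)(x/p)^2/x=O(H_r^{-1})$ uniformly in $x\in\mathcal X_r$. As in the paper, thresholds $T_r^{-2}$ for the error norm and $T_r$ for $\|(x^{-1/2}M(x/p))_p\|_2$, together with Cauchy--Schwarz, then yield the $O(T_r^{-1})$ error.

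Your split at $H_r^{-1/2}$ combined with injectivity would give only $O(T_r^{-4})$ in expectation. That still suffices, but only with thresholds $T_r^{-3/2}$ and $T_r^{1/2}$ and using $|\mathcal X_r|\ll T_r^{1-\lambda}$, not with the $T_r^{-2}$ you state.
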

\begin{proof}
For $h\geq 1$ and $z>0$, put $$P_h(z):=\frac{1}{2\pi}\int_{-h}^{h}\frac{z^{1/2+\ii t}}{1/2+\ii t}\,\dd t\quad \text{and}\quad \Delta_h(z):=\1_{[1,\infty)}(z)-P_h(z).$$ We claim that there exists an absolute constant $C>0$ such that
\begin{equation}\label{eq:Delta-bound}
|\Delta_h(z)|\leq C z^{1/2}\min\left(1,\frac1{h|\log z|}\right),
\quad h\geq 1,\quad z>0,
\end{equation}
where the minimum is interpreted as $1$ at $z=1$.

\noindent {\sc Proof of \eqref{eq:Delta-bound}}. Assume first that $h|\log z|>1$, so that $z\neq 1$. An application of Theorem~5.1 on p.~138 in~ \cite{MV}, with $a_1=1$,
$a_n=0$ for $n\geq 2$, $\sigma_0=1/2$ and $x=z$ yields $$\1_{[1,\infty)}(z)=\lim_{R\to\infty}\frac{1}{2\pi}\int_{-R}^{R}\frac{z^{1/2+\ii t}}{1/2+\ii t}\,\dd t.$$ Here, the half-weight convention present in the theorem has no effect because $z\neq 1$ and all coefficients other than $a_1$ vanish. The preceding equality entails $$\Delta_h(z)=\frac{z^{1/2}}{2\pi}\lim_{R\to\infty}\int_{h\leq |t|\leq R}\frac{\eee^{\ii t\log z}}{1/2+\ii t}\,\dd t.$$
For $R>h$, integration by parts gives $$\left|\int_h^R\frac{\eee^{\ii t\log z}}{1/2+\ii t}\,\dd t\right|\leq\frac1{|\log z|}\left(\frac1R+\frac1h+\int_h^R\frac{\dd t}{t^2}\right)=\frac2{h|\log z|}.$$ The same bound holds for the integral over $[-R,-h]$. Hence, $$|\Delta_h(z)|\leq \frac{2z^{1/2}}{\pi h|\log z|},$$
which proves \eqref{eq:Delta-bound} in this case.

Now assume that $h|\log z|\leq 1$. After simple manipulations, we obtain
$$P_h(z)=\frac{z^{1/2}}{\pi}\int_0^h\frac{\frac12\cos(t\log z)+t\sin(t\log z)}{1/4+t^2}\,\dd t.$$
Using $|\sin(t\log z)|\leq t|\log z|$ for $t\geq 0$, we infer
\begin{multline*}
|P_h(z)|\leq\frac{z^{1/2}}{\pi}\left(\frac{1}{2}\int_0^h \frac{\dd t}{1/4+t^2}+|\log z|\int_0^h\frac{t^2}{1/4+t^2}\,\dd t\right)\\\leq \frac{z^{1/2}}{\pi}\left(\frac{\pi}{2}+h|\log z|\right)\leq \left(\frac{1}{2}+\frac{1}{\pi}\right)z^{1/2}.
\end{multline*}
Since $\1_{[1,\infty)}(z)\leq z^{1/2}$ for all $z>0$, this entails $|\Delta_h(z)|\leq (3/2+1/\pi) z^{1/2}$ and proves \eqref{eq:Delta-bound} also in the second case, including $z=1$.

For $x\in\mathcal X_r$ and $a_r<p\leq Y_r$, put $$A_{r,p}(x):=x^{-1/2}M(x/p)\quad\text{and}\quad B_{r,p}(x):=\frac{1}{2\pi p^{1/2}}
\int_{-H_r}^{H_r}\frac{F_{a_r}(t)(x/p)^{\ii t}}{1/2+\ii t}\,\dd t$$ and denote the corresponding vectors by $\boldsymbol A_r(x):=(A_{r,p}(x))_{a_r<p\leq Y_r}$ and $\boldsymbol B_r(x):=(B_{r,p}(x))_{a_r<p\leq Y_r}$. Using \eqref{eq:expan} with $j=0$ and $z=a_r$, we conclude that $$B_{r,p}(x)
=x^{-1/2}
\sum_{\substack{n\geq 1:~ n~ \mathrm{squarefree}\\
                 P(n)\leq a_r}}
f(n)P_{H_r}\left(x/(pn)\right).$$
Since $x\leq Y_r=a_r^2$ and $p>a_r$, we have $x/p<a_r$. Hence, $$A_{r,p}(x)=x^{-1/2}\sum_{\substack{n\geq1:\ n\ \mathrm{squarefree}\\ P(n)\leq a_r}} f(n)\1_{[1,\infty)}(x/(pn))$$ and thereupon
$$A_{r,p}(x)-B_{r,p}(x)=x^{-1/2}\sum_{\substack{n\geq1:~ n~ \mathrm{squarefree}\\ P(n)\leq a_r}} f(n)\Delta_{H_r}\left(x/(pn)\right).$$
All sums in these representations are finite.

By orthogonality of $f(2)$, $f(3),\ldots$ and \eqref{eq:Delta-bound},
\begin{multline*}
\me\left[|A_{r,p}(x)-B_{r,p}(x)|^2\right]
=\frac{1}{x}\sum_{\substack{n\geq1:\ n\ \mathrm{squarefree}\\ P(n)\leq a_r}}\left|\Delta_{H_r}\left(\frac{x}{pn}\right)\right|^2\\
\leq C^2 \sum_{\substack{n\geq1:\ n\ \mathrm{squarefree}\\ P(n)\leq a_r}} \frac{1}{pn}\min\left(1,\frac1{H_r^2|\log(x/(pn))|^2}\right).
\end{multline*}
Summing over $p$ and writing $\|\cdot\|_2$ for the Euclidean norm, we obtain
\begin{equation}\label{eq:quadratic-Perron-error}
\me\left[\|\boldsymbol A_r(x)-\boldsymbol B_r(x)\|_2^2\right]
\leq C^2 \mathcal R_r(x),
\end{equation}
where
$$\mathcal R_r(x):=\sum_{a_r<p\leq Y_r}\sum_{\substack{n\geq1:\ n\ \mathrm{squarefree}\\ P(n)\leq a_r}} \frac{1}{pn}\min\left(1,\frac1{H_r^2|\log(x/(pn))|^2}\right).$$ Here and hereafter, minimum involving an expression with a vanishing denominator is interpreted as $1$.

We next prove that, uniformly in $x\in\mathcal X_r$,
\begin{equation}\label{eq:R-x-bound}
\mathcal R_r(x)=O(H_r^{-1}),\quad r\to\infty.
\end{equation}
To this end, we treat separately the ranges $x/\eee \leq pn\leq\eee x$ and $pn<x/\eee$ or $pn>\eee x$. In the first range, $p$ is the unique prime factor of
the integer $pn$ exceeding $a_r$. Thus,
\begin{multline*}
\sum_{a_r<p\leq Y_r}\sum_{\substack{n\geq1:\ n\ \mathrm{squarefree}\\ P(n)\leq a_r,\ x/\eee\leq pn\leq\eee x}}\frac{1}{pn}\min\left(1,\frac1{H_r^2|\log(x/(pn))|^2}\right)\\ \leq
\sum_{\substack{m\in\mn\\x/\eee\leq m\leq\eee x}}\frac{1}{m} \min\left(1,\frac1{H_r^2|\log(x/m)|^2}\right) \leq\frac{\eee}{x}\sum_{\substack{m\in\mn\\x/\eee\leq m\leq\eee x}}
\min\left(1,\frac{\eee^2x^2}{H_r^2|x-m|^2}\right)\\ \leq\frac{2\eee}{x} \left(1+\sum_{j\geq1} \min\left(1,\frac{\eee^2x^2}{H_r^2j^2}\right)\right)\\ \leq\frac{2\eee^3}{x}
\left(1+\sum_{j\geq1}\min\left(1,\frac{x^2}{H_r^2j^2}\right)\right).
\end{multline*}
Here, the second inequality follows from $m^{-1}\leq\eee/x$ and $|\log(x/m)|\geq|x-m|/(\eee x)$, which is justified by the mean value theorem for differentiable functions.
For the third inequality, we have used
$$
\#\{m\in\mn:|m-x|<1\}\leq 2,\qquad
\#\{m\in\mn:j\leq|m-x|<j+1\}\leq2,\quad j\in\mn.
$$
The inequality $x\geq\eee^{T_r}$, secured by $x\in\mathcal X_r$, implies that $\lim_{r\to\infty}(x/H_r)=\infty$ uniformly in $x\in\mathcal X_r$. Splitting the last sum at $j=\lfloor x/H_r\rfloor$ gives
$$\sum_{j\geq1}\min\left(1,\frac{x^2}{H_r^2j^2}\right)\leq\frac{x}{H_r}+\frac{x^2}{H_r^2}\sum_{j>x/H_r}j^{-2}=O(x/H_r).$$ Thus the contribution from the first range is $O(H_r^{-1})$,
uniformly in $x\in\mathcal X_r$.

In the second range, $|\log(x/(pn))|>1$. This ensures that the contribution of the second range to $\mathcal R_r(x)$ does not exceed
\begin{multline*}
H_r^{-2}\sum_{a_r<p\leq Y_r}p^{-1}\sum_{\substack{n\geq1:~ n~\mathrm{squarefree}\\ P(n)\leq a_r}}n^{-1}\\=H_r^{-2}\sum_{a_r<p\leq a_r^2}p^{-1}\prod_{q\leq a_r}(1+q^{-1})=O(T_rH_r^{-2})=o(H_r^{-1}),\quad r\to\infty.
\end{multline*}
Here, we have used \eqref{eq:mertens}, \eqref{eq:Mert conseq} and $\log a_r=O(T_r)$. This proves \eqref{eq:R-x-bound}. Now \eqref{eq:R-x-bound}, in combination with \eqref{eq:quadratic-Perron-error},
secures
\begin{equation}\label{eq:ab-L2-error}
\me\left[\|\boldsymbol A_r(x)-\boldsymbol B_r(x)\|_2^2\right]
=O(H_r^{-1})
\end{equation}
uniformly in $x\in\mathcal X_r$.

Since $|\mathcal X_r|=\lfloor T_r^{1-\lambda}\rfloor+1\leq 2T_r$, Markov's inequality and the union bound entail
\begin{multline}\label{eq:ab-uniform}
\mmp\left\{\max_{x\in\mathcal X_r}\|\boldsymbol A_r(x)-\boldsymbol B_r(x)\|_2>T_r^{-2}\right\}\\\leq T_r^4
\sum_{x\in\mathcal X_r}
\me\left[\|\boldsymbol A_r(x)-\boldsymbol B_r(x)\|_2^2\right]
=O(T_r^5/H_r)=O(T_r^{-3}).
\end{multline}
Furthermore, by \eqref{eq:orthogon} and \eqref{eq:mertens}, $$\me\left[\|\boldsymbol A_r(x)\|_2^2\right]=x^{-1}\sum_{a_r<p\leq Y_r}\me[(M(x/p))^2]\leq\sum_{a_r<p\leq Y_r}p^{-1}=O(1)$$ uniformly in $x\in\mathcal X_r$. Another application of Markov's inequality and the union bound gives
\begin{equation}\label{eq:A-uniform}
\mmp\left\{\max_{x\in\mathcal X_r}\|\boldsymbol A_r(x)\|_2>T_r\right\}=O(T_r^{-1}).
\end{equation}

For $r\in\mn$, define the events
$$\mathcal P_r:=
\left\{
\max_{x\in\mathcal X_r}
\|\boldsymbol A_r(x)-\boldsymbol B_r(x)\|_2\leq T_r^{-2},
\quad
\max_{x\in\mathcal X_r}\|\boldsymbol A_r(x)\|_2\leq T_r
\right\}.$$
All coordinates of the vectors involved are $\mathcal H_r$-measurable, hence so is $\mathcal P_r$. It follows from \eqref{eq:ab-uniform} and \eqref{eq:A-uniform} that $\mmp(\mathcal P_r^c)=O(T_r^{-1})$ as $r\to\infty$.

For complex vectors, we shall use the notation $\langle a,b\rangle=\sum_p a_p\overline{b_p}$. On $\mathcal P_r$, the Cauchy-Schwarz inequality ensures that,
simultaneously for all $x,y\in\mathcal X_r$,
\begin{multline*}
\left|\langle\boldsymbol B_r(x),\boldsymbol B_r(y)\rangle-\langle\boldsymbol A_r(x),\boldsymbol A_r(y)\rangle\right|\\\leq \left|\langle\boldsymbol B_r(x)-\boldsymbol A_r(x),\boldsymbol A_r(y)\rangle\right|+\left|\langle \boldsymbol A_r(x),\boldsymbol B_r(y)-\boldsymbol A_r(y)\rangle\right|+\left|\langle\boldsymbol B_r(x)-\boldsymbol A_r(x),\boldsymbol B_r(y)-\boldsymbol A_r(y)\rangle\right|\\ \leq
\|\boldsymbol B_r(x)-\boldsymbol A_r(x)\|_2 \|\boldsymbol A_r(y)\|_2+\|\boldsymbol A_r(x)\|_2\|\boldsymbol B_r(y)-\boldsymbol A_r(y)\|_2+\|\boldsymbol B_r(x)-\boldsymbol A_r(x)\|_2 \|\boldsymbol B_r(y)-\boldsymbol A_r(y)\|_2\\\leq 2T_r^{-1}+T_r^{-4}.
\end{multline*}
Since $M(u)=0$ for $u<1$,
$$\langle\boldsymbol A_r(x),\boldsymbol A_r(y)\rangle =\frac{C_r(x,y)}{(xy)^{1/2}}.$$
On the other hand, substituting the definitions of the coordinates of $\boldsymbol B_r(x)$ and $\boldsymbol B_r(y)$, interchanging the finite prime sum with the integrals and using $\overline{\frac{y^{{\rm i}t}}{1/2+\ii t}}=\frac{y^{-{\rm i}t}}{1/2-\ii t}$ we finally obtain
\begin{equation*}
\langle\boldsymbol B_r(x),\boldsymbol B_r(y)\rangle=\frac1{4\pi^2}\int_{-H_r}^{H_r}\int_{-H_r}^{H_r}\frac{F_{a_r}(v)\overline{F_{a_r}(t)}x^{\ii v}y^{-\ii t}}{(1/2+\ii v)(1/2-\ii t)}
\left(\sum_{a_r<p\leq Y_r}p^{-1-\ii(v-t)}\right)\,\dd v\,\dd t.
\end{equation*}
The sum in parentheses is $K_r(v-t)$. Combining the last three displays proves \eqref{eq:perron}.
\end{proof}

\subsection{Barriers for the partial Euler products}\label{sec:barriers}

Recall the notation $J_*(a_r)=\lfloor \log\log a_r-1\rfloor$, $q_{j,a_r}=\eee^{-j}\log a_r$ for $j\in\mn_0$ and $$F_{j,a_r}(t)=\prod_{p\leq\exp(q_{j,a_r})}\left(1+f(p)p^{-1/2-\ii t}\right),\quad 0\leq j\leq J_*(a_r),~ t\in\mr.$$ In particular, $F_{0,a_r}(t)=F_{a_r}(t)$.

In this section we impose upper bounds, also called {\it barriers}, on the moduli of the partial Euler products $F_{j,a_r}$ associated with the Euler product $F_{a_r}$ appearing in \eqref{eq:perron}. We shall show that the part of the integral in \eqref{eq:perron} where the modulus of at least one partial Euler product exceeds its barrier makes a negligible contribution to $C_r(x,y)(xy)^{-1/2}$. Related barrier estimates are given in Multiplicative Chaos Results~1 and~2 in Section~2.4 of \cite{Harper:2023} and used in Section~3.3 of that paper. We provide proofs for the particular barriers needed here.

Fix $B\geq1$ and $b\in(0,1)$. For each $t\in\mr$, define the $\mathcal H_r$-measurable events
\begin{equation}\label{eq:weakbarrier}
\mathcal D_r(t):=\bigcap_{0\leq j\leq J_*(a_r)}\left\{|F_{j,a_r}(t)|\leq q_{j,a_r}(\log\log a_r)^2\right\}
\end{equation}
and
\begin{equation}\label{eq:strongbarrier}
\mathcal A_r(t):=\mathcal D_r(t)\cap
\bigcap_{0\leq j\leq\lfloor(1-b)\log\log a_r\rfloor}\left\{|F_{j,a_r}(t)|\leq q_{j,a_r}(\log\log a_r)^{-B}\right\}.
\end{equation}
The second event is defined for all sufficiently large $r$, satisfying $\lfloor(1-b)\log\log a_r \rfloor\leq J_*(a_r)$. We refer to the quantities $q_{j,a_r}(\log\log a_r)^2$ and $q_{j,a_r}(\log\log a_r)^{-B}$ as the {\it weak} and {\it strong barriers}, respectively. The constants $B$ and $b$ will be chosen later.

Lemma~\ref{lem:weak}, proved using a derivative estimate, shows that, with probability tending to one, the modulus of each partial Euler product remains below its weak barrier for all arguments belonging to the prescribed interval. Lemma~\ref{lem:weightedBarrier} uses a change of measure and a comparison of barrier probabilities with those of the Gaussian random walk, together with Lemma~\ref{lem:strip}, to bound the expectation of $|F_{a_r}(t)|^2$ over realizations satisfying the weak barriers but violating at least one strong barrier.
\begin{lemma}\label{lem:weak}
There exists an absolute constant $C>0$ such that, for all sufficiently large $r$,
\begin{multline*}
\mmp\big\{|F_{j,a_r}(t)|\leq q_{j,a_r}(\log\log a_r)^2\text{ for all }0\leq j\leq J_*(a_r)\text{ and }|t|\leq (\log\log a_r)^2+2
\big\}\\\geq 1-C/\log\log a_r.
\end{multline*}
\end{lemma}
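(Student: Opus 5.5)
We will prove Lemma~\ref{lem:weak} by combining a second-moment bound at grid points with a derivative estimate that transfers control to all $t$ in the interval. Put $L:=\log\log a_r$ and $\Lambda:=L^2+2$. For each $j$, use a grid $t_k\in[-\Lambda,\Lambda]$ with spacing $\eta_j:=q_{j,a_r}^{-1}L^{-2}$ or finer. At each grid point we will apply Markov's inequality to $|F_{j,a_r}(t_k)|^2$ with the threshold $\tfrac14 q_{j,a_r}^2L^4$. By \eqref{eq:EulerSecond}, $\me|F_{j,a_r}(t)|^2\le c\,q_{j,a_r}$ uniformly in $t$. The failure probability at one point is therefore $O(q_{j,a_r}^{-1}L^{-4})$. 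The number of grid points for level $j$ is $O(\Lambda q_{j,a_r}L^2)=O(q_{j,a_r}L^4)$, so the union over the grid costs only $O(1)$ per level, which is too crude. We refine in one of two ways.

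The first refinement is to use the grid spacing $q_{j,a_r}^{-1}$ instead. This gives $O(L^2q_{j,a_r})$ points at level $j$, each failing with probability $O(q_{j,a_r}^{-1}L^{-4})$. The total at level $j$ is then $O(L^{-2})$, and summing over the $J_*(a_r)+1=O(L)$ levels gives $O(L^{-1})$, which is exactly the claimed bound $C/\log\log a_r$.

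It remains to pass from the grid points to all $|t|\le\Lambda$. Write $F_{j,a_r}(t)=\exp(\sum_p\log(1+f(p)p^{-1/2-\ii t}))$ and differentiate. This gives
\begin{equation*}
F_{j,a_r}'(t)=F_{j,a_r}(t)\,G_j(t),\qquad G_j(t):=-\ii\sum_{p\le \exp(q_{j,a_r})}\frac{f(p)p^{-1/2-\ii t}\log p}{1+f(p)p^{-1/2-\ii t}}.
\end{equation*}
Rather than bounding $G_j$ pointwise, we control the oscillation within a cell directly. On a cell of length $q_{j,a_r}^{-1}$, we have $|F(t)|\le |F(t_k)|+\int_{\text{cell}}|F'|$. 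We bound the integral by Markov's inequality using $\me|F'_{j,a_r}(s)|^2$. Expanding \eqref{eq:expan} and using orthogonality gives
\begin{equation*}
\me|F'_{j,a_r}(s)|^2=\sum_{n}\frac{(\log n)^2}{n}\le \prod_{p\le \exp(q_{j,a_r})}(1+p^{-1})\cdot O(q_{j,a_r}^2)=O(q_{j,a_r}^3),
\end{equation*}
where the bound $O(q_{j,a_r}^2)$ for the factor $(\log n)^2$ follows from the second and third parts of \eqref{eq:Mert conseq} via the standard expansion $\sum_n \mu^2(n)(\log n)^2/n$ over $P(n)\le \exp(q)$. By Cauchy--Schwarz, $\me\big(\int_{\text{cell}}|F'|\big)^2\le q_{j,a_r}^{-2}\,O(q_{j,a_r}^3)=O(q_{j,a_r})$. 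This is of the same order as the point bound, so the same Markov estimate with threshold $\tfrac12 q_{j,a_r}L^2$ applies per cell and gives the same $O(L^{-2})$ per level.

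Combining the point and oscillation bounds, on the complement of an event of probability $O(L^{-1})$ every $|t|\le\Lambda$ and every $j$ satisfy
\begin{equation*}
|F_{j,a_r}(t)|\le \tfrac12 q_{j,a_r}L^2+\tfrac12q_{j,a_r}L^2 .
\end{equation*}

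The main obstacle is the moment estimate for the derivative, namely making sure that the weight $(\log n)^2$ does not cost more than $q_{j,a_r}^2$. We will handle it by writing $\log n=\sum_{p\mid n}\log p$ and expanding the square into diagonal and off-diagonal prime pairs. The diagonal terms are bounded by the third part of \eqref{eq:Mert conseq}, and the off-diagonal terms by the square of the second part, each multiplied by $\prod(1+p^{-1})=O(q_{j,a_r})$.
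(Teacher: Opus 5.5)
Your argument is correct and is essentially the paper's. Both rest on $\me|F_{j,a_r}(t)|^2=O(q_{j,a_r})$ and on the derivative bound $\me|F_{j,a_r}'(t)|^2=O(q_{j,a_r}^3)$, which comes from expanding $(\log n)^2$ over prime pairs and applying the Mertens-type estimates. Both then finish with Markov's inequality and a union bound over intervals and the $O(\log\log a_r)$ levels.

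The only difference is how the $t$-range is covered. The paper uses $O((\log\log a_r)^2)$ unit intervals and bounds $\sup_I|F_{j,a_r}|^2\le\int_I|F_{j,a_r}|^2+2\int_I|F_{j,a_r}||F_{j,a_r}'|$. You instead use cells of length $q_{j,a_r}^{-1}$, a grid point in each, plus $\int_{\text{cell}}|F_{j,a_r}'|$. This gives the same $O((\log\log a_r)^{-2})$ per level. The factorization $F'=FG_j$ you write down is never actually used.
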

\begin{proof}
Fix $0\leq j\leq J_*(a_r)$ and put $q:=q_{j,a_r}$.
Differentiating in $t$ the finite expansion \eqref{eq:expan}, we obtain $$F_{j,a_r}'(t)=-\ii\sum_{\substack{n\geq1:~n\ \mathrm{squarefree}\\ P(n)\leq\eee^q}}\frac{f(n)\log n}{n^{1/2+\ii t}}.$$
By orthogonality of $f(2)$, $f(3),\ldots$,
$$\me\big[|F_{j,a_r}'(t)|^2\big]=\sum_{\substack{n\geq1:\ n\ \mathrm{squarefree}\\ P(n)\leq\eee^q}}\frac{(\log n)^2}{n}.$$ Since $n$ is squarefree,
$$(\log n)^2=\sum_{p\mid n}(\log p)^2
+2\sum_{\substack{p<q'\\p,q'\mid n}}\log p\log q'.
$$
Here and hereafter, $p$, $q'$ and $\ell$ denote primes. Hence, $$\sum_{\substack{n\geq1:~ n~ \mathrm{squarefree}\\P(n)\leq \eee^q}}\frac{(\log n)^2}{n}=\sum_{p\leq \eee^q}(\log p)^2 \sum_{\substack{n~\mathrm{squarefree}\\P(n)\leq \eee^q,~ p\mid n}}\frac{1}{n}+2\sum_{p<q'\leq \eee^q}\log p\log q' \sum_{\substack{n~\mathrm{squarefree}\\P(n)\leq \eee^q,~ p,q'\mid n}}\frac{1}{n}.$$ For a fixed prime $p\leq \eee^q$,
$$\sum_{\substack{n~\mathrm{squarefree}\\ P(n)\leq \eee^q,~ p\mid n}}\frac{1}{n}=\frac{1}{p}\prod_{\substack{\ell\leq \eee^q\\\ell\neq p}}(1+\ell^{-1})=\frac{1}{p+1}\prod_{\ell\leq \eee^q}(1+\ell^{-1}).$$  Similarly, requiring both $p\mid n$ and $q'\mid n$ gives the factor $1/((p+1)(q'+1))$. Summarizing,
\begin{multline*}
\me\big[|F_{j,a_r}'(t)|^2\big]=\prod_{p\leq\eee^q}(1+p^{-1})\Big(\sum_{\ell \leq\eee^q}\frac{(\log \ell)^2}{\ell+1}+2\sum_{\ell<q'\leq\eee^q}
\frac{\log \ell\log q'}{(\ell+1)(q'+1)}\Big)\\\leq \prod_{p\leq\eee^q}(1+p^{-1})
\Big(\sum_{\ell \leq\eee^q}\frac{(\log \ell)^2}{\ell}+\Big(\sum_{\ell\leq\eee^q}\frac{\log \ell}{\ell}\Big)^2\Big).
\end{multline*}
Invoking \eqref{eq:Mert conseq}, we conclude that
\begin{equation}\label{eq:Fj-derivative-bound}
\me\big[|F_{j,a_r}'(t)|^2\big]\leq C_1q^3,
\end{equation}
where $C_1>0$ is a constant independent of $r,j$ and $t$.

Let $I$ be a closed interval of length one. For each realization,
there exists $s\in I$ such that $$|F_{j,a_r}(s)|^2\leq\int_I|F_{j,a_r}(u)|^2\,\dd u.$$ According to the fundamental theorem of calculus, for each $t\in I$,
$$|F_{j,a_r}(t)|^2= |F_{j,a_r}(s)|^2+ \int_s^t \frac{\dd}{\dd u}|F_{j,a_r}(u)|^2\,\dd u.$$ This, together with $$\left|\frac{\dd}{\dd u}|F_{j,a_r}(u)|^2\right|=2\left|{\rm Re}\big(F_{j,a_r}'(u)\overline{F_{j,a_r}(u)}\big)\right|\le 2|F_{j,a_r}(u)||F_{j,a_r}'(u)|,$$ entails $$\sup_{t\in I}|F_{j,a_r}(t)|^2 \leq\int_I|F_{j,a_r}(u)|^2\,\dd u+2\int_I|F_{j,a_r}(u)||F_{j,a_r}'(u)|\,\dd u.$$
Taking expectations and using the Cauchy--Schwarz inequality, \eqref{eq:EulerSecond} and \eqref{eq:Fj-derivative-bound}, we obtain
\begin{multline*}
\me\big[\sup_{t\in I}|F_{j,a_r}(t)|^2\big]\leq \int_I\me\big[|F_{j,a_r}(u)|^2\big]\,\dd u + 2\int_I \big(\me\big[|F_{j,a_r}(u)|^2\big]\big)^{1/2}\big(\me\big[|F_{j,a_r}'(u)|^2\big]\big)^{1/2}
\,\dd u \\ \leq C_2(q+q^2)\leq 2C_2 q^2
\end{multline*}
having utilized $q\geq \eee$, where $C_2>0$ is a constant independent of $r$, $j$ and $t$.

The interval $[-(\log\log a_r)^2-2,(\log\log a_r)^2+2]$ can be covered by $\lfloor 2(\log\log a_r)^2\rfloor+5$ closed intervals of length one. For each such interval $I$, Markov's inequality yields $$\mmp\big\{\sup_{t\in I}|F_{j,a_r}(t)|>q_{j,a_r}(\log\log a_r)^2\big\}\leq 2C_2(\log\log a_r)^{-4}.$$ Since there are $J_*(a_r)+1$ possible values of $j$ and $J_*(a_r)+1\leq \log\log a_r$, an application of the union bound yields, for all sufficiently large $r$, with $C=16C_2$, $$\mmp\big\{\exists~|t|\le (\log\log a_r)^2+2,~ \exists~ 0\le j\le J_*(a_r):~ |F_{j,a_r}(t)|>q_{j,a_r}(\log\log a_r)^2\big\}\leq C(\log\log a_r)^{-1},$$ thereby completing the proof.
\end{proof}

\subsection{Two estimates for a Gaussian random walk}

We shall use the following estimates to control the probability that a standard random walk with Gaussian increments stays below a barrier and, additionally, approaches that barrier at a sufficiently late time. Estimates of this type are classical. In fact, \eqref{eq:walkSurvival} holds in the stronger form $\mmp\{\max_{0\leq k\leq n}S_k\leq h\}\leq C(h+1)n^{-1/2}$, see, for instance, Lemma~3.3 in \cite{Pemantle+Peres:1995} and Probability Result~1 in \cite{Harper:2020}, and related ballot-type estimates with a prescribed endpoint can be found, for instance, in Probability Result~2 of \cite{Harper:2020}. Since Lemma~\ref{lem:strip} in the present form suffices for our purposes and we have not found \eqref{eq:walkStrip} in the literature in the form needed, we give a short self-contained proof based on the reflection principle. The terms involving $d_L$ and $L^{-19}$ are harmless in our applications.
\begin{lemma}\label{lem:strip}
Let $G_1,G_2,\ldots$ be independent random variables with the normal distribution of mean zero and variance $1/2$ and put
$S_0:=0$ and $S_k:=G_1+\cdots+G_k$ for $k\in\mn$. For integer $L\geq3$, put $d_L:=(20\log L)^{1/2}$. There exist absolute positive constants $C$ and $C'$ such that, for
integers $1\leq n\leq L$ and $h\geq0$,
\begin{equation}\label{eq:walkSurvival}
\mmp\{\max_{0\leq k\leq n}S_k\leq h\}\leq C\Big(\frac{h+d_L}{n^{1/2}}+L^{-19}\Big).
\end{equation}
Furthermore, for integers $1\leq m<n\leq L$ and $h,a\geq 0$,
\begin{equation}\label{eq:walkStrip}
\mmp\{\max_{0\leq k\leq n}S_k\leq h,~\max_{m\leq k\leq n}S_k\geq-a\}\leq C'\Big(\frac{(h+d_L)(h+a+d_L)}{(m(n-m))^{1/2}}+L^{-19}\Big).
\end{equation}
\end{lemma}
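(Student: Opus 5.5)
The plan is to reduce both bounds to the reflection principle for Brownian motion sampled at integer times. Write $S_k=B(k/2)$ for a standard Brownian motion $B$. The events $\{\max_{k\le n}S_k\le h\}$ involve only the discrete skeleton, so we cannot compare them directly with the continuous maximum. The gap is handled by the term $d_L$ together with a small-probability exceptional event. Let $\mathcal O$ be the event that some oscillation $\sup_{s\in[k/2,(k+1)/2]}|B(s)-B(k/2)|$, $0\le k<L$, exceeds $d_L$. By the Gaussian tail of the running maximum over an interval of length $1/2$, each such oscillation exceeds $d_L$ with probability at most $4\exp(-d_L^2)=4L^{-20}$. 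A union bound over $L$ indices then gives $\mmp(\mathcal O)\le 4L^{-19}$. Off $\mathcal O$, the discrete condition $\max_{k\le n}S_k\le h$ implies $\sup_{s\le n/2}B(s)\le h+d_L$.

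For \eqref{eq:walkSurvival} we combine this with the reflection principle. It gives $\mmp\{\sup_{s\le n/2}B(s)\le h+d_L\}=\mmp\{|B(n/2)|\le h+d_L\}\le (h+d_L)(4/(\pi n))^{1/2}$, and adding $\mmp(\mathcal O)$ finishes this part.

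For \eqref{eq:walkStrip} we condition at time $m$. Off $\mathcal O$, the event is contained in
\[
\Big\{\sup_{s\le m/2}B(s)\le h',\ \sup_{m/2\le s\le n/2}B(s)\le h',\ \sup_{m/2\le s\le n/2}B(s)\ge -a\Big\},\qquad h':=h+d_L .
\]
Let $W:=h'-B(m/2)\ge0$, the distance to the barrier at time $m/2$. Apply the Markov property at $m/2$. Given $W=w$, the increment process on $[m/2,n/2]$ must stay below level $w$ and also reach above $-a$ somewhere. The key observation is that if the increment stays below $w$ and its supremum is at least $-a$, then either $w\le h'+a+1$ or the whole piece lies in $[-a,h']$ near the top. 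More simply, we use the survival bound $\mmp\{\sup_{u\le (n-m)/2}\tilde B(u)\le w\}\le C w (n-m)^{-1/2}$ when $w\le h'+a$. When $w>h'+a$, reaching height $\ge -a$ requires an upward displacement of at least $w-h'-a$, which has Gaussian tail $2\exp(-(w-h'-a)^2/(n-m))$. In that regime we bound the conditional probability by $\min\big(Cw(n-m)^{-1/2},\,2\exp(-(w-h'-a)^2/(n-m))\big)$.

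It remains to bound the law of $W$ on the event $\{\sup_{s\le m/2}B\le h'\}$. By the reflection principle this has sub-density at most $C h' w\, m^{-3/2}$ for the Brownian meander-type joint law, and also total mass at most $Ch'm^{-1/2}$. Integrating the conditional bound against it yields $C h'(h'+a)/(m(n-m))^{1/2}$. The main obstacle is this final integration. The contribution from $w\le h'+a+\sqrt{n-m}$ gives $C h'\min(\ldots)$ of the right order only after using the density bound, which is $\int_0^{K} w\cdot w(n-m)^{-1/2}h'm^{-3/2}\,dw$. When $K^3$ is large relative to $m$, this must be balanced with the total-mass bound. We will therefore split at $w\asymp \sqrt m$ and check each region separately, which is the delicate step of the argument. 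Adding $4L^{-19}$ for $\mathcal O$ completes \eqref{eq:walkStrip}.
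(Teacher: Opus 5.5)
Your proof of \eqref{eq:walkSurvival} is essentially the paper's argument: an oscillation event of probability $O(L^{-19})$, followed by the reflection principle for the continuous maximum. Working with a standard Brownian motion at times $k/2$, instead of the paper's $B$ with $\me[B(s)^2]=s/2$, changes nothing.

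\textbf{The gap in \eqref{eq:walkStrip}.} The problem is not the final integration. It is the conditional bound you feed into it, which is too weak.
\begin{itemize}
\item Given $W=w$, your bound is $Cw(n-m)^{-1/2}$. For $w>h'+a$ you improve it to $\min\bigl(Cw(n-m)^{-1/2},\,2\exp(-(w-h'-a)^2/(n-m))\bigr)$.
\item On $\{\sup_{s\le m/2}B(s)\le h'\}$, the variable $W$ puts mass of order $h'm^{-1/2}$ on $[m^{1/2}/2,m^{1/2}]$ when $h'\le m^{1/2}$. Its sub-density there is of order $h'/m$.
\item Suppose $n-m\ge m$ and $w$ lies in this interval. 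Then the exponential factor is at least $2\eee^{-1}$, so your conditional bound is of order $(m/(n-m))^{1/2}$.
\item Hence your integral is at least of order $h'm^{-1/2}\cdot(m/(n-m))^{1/2}=h'(n-m)^{-1/2}$. This exceeds the target $h'(h'+a)(m(n-m))^{-1/2}$ by the factor $m^{1/2}/(h'+a)$, which is unbounded.
\end{itemize}
So no splitting at $w\asymp m^{1/2}$ can rescue the argument: the integrand is already too large where the law of $W$ is concentrated.

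This is exactly the regime needed in Lemma~\ref{lem:weightedBarrier}. There $m_r\sim b\log\log a_r/2\le N_r-m_r$ and $h'+a\asymp(\log\log\log a_r)^2$. Your method would give only $O\bigl((\log\log\log a_r)^2(\log\log a_r)^{-1/2}\bigr)$, the same order as the bound behind \eqref{eq:weakWeighted}. The strong barriers would then yield no gain.

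\textbf{The missing idea.} Given the path up to time $m/2$, the event requires the supremum of the future increment to lie in the window $[w-h'-a,\,w]$. This window has length $h'+a$, independent of $w$, but your bound uses only its upper end.
\begin{itemize}
\item By the reflection principle, this supremum has the law of $|B((n-m)/2)|$, whose density is at most $2(\pi(n-m))^{-1/2}$.
\item So the conditional probability is at most $2(h'+a)(\pi(n-m))^{-1/2}$, uniformly in $w$.
\item Multiplying by $\mmp\{\sup_{s\le m/2}B(s)\le h'\}\le 2h'(\pi m)^{-1/2}$ gives \eqref{eq:walkStrip} directly, after adding the oscillation term.
\end{itemize}
This is how the paper argues. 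It needs neither the sub-density of $W$ nor any integration in $w$.
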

\begin{proof}
Since $(S_k)_{k\in\mn_0}$ has the same distribution as $(B(k))_{k\in\mn_0}$, where $B=(B(s))_{s\geq0}$ is a Brownian motion with $\me[B(s)^2]=s/2$, we can and do work with $(B(k))_{k\in\mn_0}$. By the reflection principle (see, for instance, Example 8.4.1 on p.~372 in~\cite{Durrett:2010}),
\begin{equation}\label{eq:BrownianMaximum}
\mmp\{\sup_{0\leq s\leq u}B(s)\leq z\}=2\Phi\big(z(2/u)^{1/2}\big)-1, \quad u>0,~ z\geq0,
\end{equation}
where $\Phi$ denotes the standard normal distribution function.

For each $k\in\mn_0$, another application of the reflection principle gives
\begin{multline*}
\mmp\{\sup_{0\leq s\leq1}|B(k+s)-B(k)|>d_L\}=\mmp\{\sup_{0\leq s\leq1}|B(s)|>d_L\}\leq 4\big(1-\Phi\big((2)^{1/2}d_L\big)\big)\\=4\pi^{-1/2}\int_{d_L}^\infty \eee^{-y^2}{\rm d}y\leq 2(\pi^{1/2}d_L)^{-1} \eee^{-d_L^2} \leq (5\pi)^{-1/2}\eee^{-d_L^2}
\end{multline*}
having utilized $d_L\geq 20^{1/2}$. This ensures that the event $$\mathcal Q_{n,L}:=\{\max_{0\leq k<n}\sup_{0\leq s\leq 1}|B(k+s)-B(k)|\leq d_L\}$$ satisfies $\mmp(\mathcal Q_{n,L}^c)\leq (5\pi)^{-1/2}n\eee^{-d_L^2}\leq (5\pi)^{-1/2}L^{-19}$. For each $s\in[0,n]$, choosing $k\in\{0,\ldots,n-1\}$ such that $s\in[k,k+1]$ we obtain $$B(s)\leq B(k)+|B(s)-B(k)|\leq \max_{0\leq j\leq n}B(j)+\max_{0\leq j<n}\sup_{0\leq u\leq1}|B(j+u)-B(j)|.$$ In particular, the last inequality holds with $\sup_{0\leq s\leq n}B(s)$ replacing $B(s)$ on the left-hand side. Hence, on $\mathcal Q_{n,L}$, the inequality $\max_{0\leq k\leq n}B(k)\leq h$ implies $\sup_{0\leq s\leq n}B(s)\leq h+d_L$. Thus, using \eqref{eq:BrownianMaximum} and the trivial estimate $$2\Phi(u)-1=\frac{2}{(2\pi)^{1/2}}\int_0^u\eee^{-v^2/2}\,\dd v \leq (2/\pi)^{1/2}\,u,\quad u\geq 0,$$ we obtain $$\mmp\{\max_{0\leq k\leq n}B(k)\leq h\}\leq \mmp\{\sup_{0\leq s\leq n}B(s)\leq h+d_L\}+\mmp(\mathcal Q_{n,L}^{\,c})\leq \frac{2(h+d_L)}{(\pi n)^{1/2}}+(5\pi)^{-1/2}L^{-19}.$$ This proves \eqref{eq:walkSurvival} with $C=2\pi^{-1/2}$.

To prove \eqref{eq:walkStrip}, observe that, on $\mathcal Q_{n,L}$, the event on the left-hand side of \eqref{eq:walkStrip}, with $B(k)$ replacing $S_k$, entails $$\sup_{0\leq s\leq m}B(s)\leq h+d_L,~-a\leq\sup_{m\leq s\leq n}B(s)\leq h+d_L.$$ Conditionally on the Brownian path up to time $m$, the latter maximum has the same distribution as $B(m)+\sup_{0\leq s\leq n-m}B^\ast(s)$, where $B^\ast$ is an independent copy of $B$. It follows that
\begin{multline*}
\mmp\{\sup_{0\leq s\leq m}B(s)\leq h+d_L,~ -a\leq\sup_{m\leq s\leq n}B(s)\leq h+d_L\}\\=\me\left[\begin{aligned}
&\mmp\{-a-B(m)\leq \sup_{0\leq s\leq n-m} B^\ast(s)\leq h+d_L-B(m)\mid(B(s))_{s\in [0,m]}\}\\
&\qquad\times\1_{\{\sup_{0\leq s\leq m}B(s)\leq h+d_L,~B(m)\leq h+d_L\}}
\end{aligned}\right]\\\leq \frac{2}{\pi^{1/2}}\frac{h+a+d_L}{(n-m)^{1/2}}\mmp\{\sup_{0\leq s\leq m}B(s)\leq h+d_L\}\leq \frac{4}{\pi}\frac{(h+d_L)(h+a+d_L)}{(m(n-m))^{1/2}}.
\end{multline*}
Here, we have used that, according to \eqref{eq:BrownianMaximum}, the random variable $\sup_{0\leq s\leq n-m}B^\ast(s)$ has density $$z\mapsto \frac{2}{(\pi(n-m))^{1/2}}\exp\left(-\frac{z^2}{n-m}\right)\1_{(0,\infty)}(z), \quad z\in\mr,$$ which is bounded by $2(\pi(n-m))^{-1/2}$. Consequently, conditionally on the Brownian path up to time $m$, the probability that this random variable belongs to the interval $[-a-B(m),h+d_L-B(m)]$, whose length is $h+a+d_L$, does not exceed $2(h+a+d_L)(\pi(n-m))^{-1/2}$. This inequality remains valid if the interval intersects, or is entirely contained in, the negative half-line, since the density vanishes there. Now \eqref{eq:walkStrip}, with $C'=4/\pi$, follows from
\begin{multline*}
\mmp\{\max_{0\leq k\leq n}B(k)\leq h,~\max_{m\leq k\leq n}B(k)\geq-a\}\\\leq \mmp\{\sup_{0\leq s\leq m}B(s)\leq h+d_L,~ -a\leq\sup_{m\leq s\leq n}B(s)\leq h+d_L\}+\mmp(\mathcal Q_{n,L}^{\,c}).
\end{multline*}
\end{proof}

\subsection{A change of measure and comparison of barrier probabilities}\label{subsec:tilt}

In this section we compare, under a tilted measure, the probabilities that partial sums of logarithmic Euler-product blocks satisfy prescribed barriers with the corresponding probabilities for the Gaussian random walk in Lemma~\ref{lem:strip}.

For fixed $r\in\mn$ and $t\in\mr$, introduce the probability measure
$\mmp_{r,t}$ by
\begin{equation}\label{eq:tilt}
\frac{{\rm d}\mmp_{r,t}}{{\rm d}\mmp}
=\frac{|F_{a_r}(t)|^2}{\me[|F_{a_r}(t)|^2]}
\end{equation}
and denote the corresponding expectation and variance by $\me_{r,t}$ and $\operatorname{Var}_{r,t}$, respectively. The density in \eqref{eq:tilt} is a product of factors
depending on separate prime signs. Hence, the variables $(f(p))_{p\leq a_r}$ remain independent under $\mmp_{r,t}$. Furthermore, noting that
\begin{equation*}
|1+f(p)p^{-1/2-\ii t}|^2=1+p^{-1}+2f(p)p^{-1/2}\cos (t\log p),
\end{equation*}
we infer $$\me_{r,t}[f(p)]=\frac{\me[f(p)|1+f(p)p^{-1/2-\ii t}|^2]}{\me[|1+f(p)p^{-1/2-\ii t}|^2]}=\frac{2p^{-1/2}\cos(t\log p)}{1+p^{-1}},\quad p\leq a_r.$$

For $p\in \mathcal P$ and $t\in\mr$, put $\xi_p(t):=\log|1+f(p)p^{-1/2-\ii t}|$. Since $|f(p)p^{-1/2-\ii t}|=p^{-1/2}<1$, we use an expansion
\begin{multline*}
\log|1+f(p)p^{-1/2-\ii t}|={\rm Re}\log(1+f(p)p^{-1/2-\ii t})={\rm Re} \sum_{k\geq1}\frac{(-1)^{k+1}(f(p)p^{-1/2-\ii t})^k}{k}\\=f(p)p^{-1/2}\cos(t\log p)-(2p)^{-1}\cos(2t\log p)+g_{f,t}(p),
\end{multline*}
where in the first equality $\log$ denotes the principal branch of the logarithm. Here, $g_{f,t}(p):={\rm Re}\sum_{k\geq 3}\frac{(-1)^{k+1}(f(p)p^{-1/2-\ii t})^k}{k}$, and using $1/k\leq 1/3$ and $p\geq 2$, we infer
$$|g_{f,t}(p)| \leq\sum_{k\geq3}\frac{p^{-k/2}}{k}\leq\frac{1}{3}\sum_{k\geq3} p^{-k/2}=\frac{p^{-3/2}}{3(1-p^{-1/2})}\leq \frac{p^{-3/2}}{3(1-2^{-1/2})}=:Cp^{-3/2}.$$ For $p\leq a_r$, using the preceding formula for $\me_{r,t}[f(p)]$ and the equality $2\cos^2\theta-\cos(2\theta)/2=1+\cos(2\theta)/2$, we obtain
\begin{equation}\label{eq:tiltedMoments}
\me_{r,t}[\xi_p(t)]=\frac{1}{p}+\frac{1}{2p}\cos(2t\log p)+O(p^{-3/2}).
\end{equation}
Observe that $(\log|1+f(p)p^{-1/2-\ii t}|)^2=(2p)^{-1}(1+\cos (2t\log p))+O(p^{-3/2})$. This, together with \eqref{eq:tiltedMoments}, shows that
\begin{equation}\label{eq:tiltedMoments2}
{\rm Var}_{r,t}[\xi_p(t)]=(2p)^{-1}\big(1+\cos(2t\log p)\big)+O(p^{-3/2}).
\end{equation}
In the three preceding formulas, all implicit constants are absolute. Also, in view of $$|\xi_p(t)|\leq-\log(1-p^{-1/2})\leq\frac{p^{-1/2}}{1-p^{-1/2}}\leq\frac{p^{-1/2}}{1-2^{-1/2}},$$ we infer
\begin{equation}\label{eq:tiltedMoments3}
\me_{r,t}\big[|\xi_p(t)-\me_{r,t}[\xi_p(t)]|^3
\big]\leq C_1p^{-3/2},
\end{equation}
where $C_1:=8(1-2^{-1/2})^{-3}$.

For all sufficiently large $r$, put $N_r:=\left\lfloor\log\log a_r-(\log\log\log a_r)^2\right\rfloor$ and
\begin{equation}\label{eq:anchor}
X_{r,j}(t):=\log\left|\frac{F_{j,a_r}(t)}{F_{j+1,a_r}(t)}\right|=\sum_{\exp(q_{j+1,a_r})<p\leq \exp(q_{j,a_r})}\xi_p(t),\quad 0\leq j<N_r,~t\in\mr.
\end{equation}
For each fixed $r\in\mn$ and $0\leq j<N_r$, the variable $X_{r,j}(t)$ is defined by primes inside $(\eee^{q_{j+1,a_r}}, \eee^{q_{j,a_r}}]$. For different $j$ these intervals are disjoint, which ensures that the variables $X_{r,j}(t)$ are independent under $\mmp_{r,t}$.

We claim that, uniformly in $(\log\log a_r)^{-2}\leq |t| \leq (\log\log a_r)^2+2$ and $0\leq j<N_r$, $j\in\mathbb{Z}$, for each fixed $A>0$,
\begin{equation}\label{eq:gaussianBlocks}
\me_{r,t}[X_{r,j}(t)]=1+O\big((\log\log a_r)^{-A}\big),\quad {\rm Var}_{r,t}[X_{r,j}(t)]=\frac{1}{2}+O\big((\log\log a_r)^{-A}\big),
\end{equation}
where the implicit constants depend on $A$ and do not depend on $j$, nor $t$, and that, for all sufficiently large $r$,
\begin{equation}\label{eq:gaussianBlocks2}
\sum_{\exp(q_{j+1,a_r})<p\leq\exp(q_{j,a_r})}\me_{r,t}\big[|\xi_p(t)-\me_{r,t}[\xi_p(t)]|^3\big] \leq 3C_1\exp(-q_{N_r,a_r}/2).
\end{equation}

Indeed, by \eqref{eq:mertens}, $$\sum_{\exp(q_{j+1,a_r})<p\leq\exp(q_{j,a_r})}p^{-1}=1+O(q_{N_r,a_r}^{-1})=1+O\big((\log\log a_r)^{-A}\big).$$
The last equality is a consequence of $q_{N_r,a_r}\geq\exp((\log\log\log a_r)^2)$. Furthermore, by Lemma~\ref{lem:osc-prime-sum} and \eqref{eq:oscBound}, for $t\neq 0$,
\begin{multline*}
\left|\sum_{\exp(q_{j+1,a_r})<p\leq\exp(q_{j,a_r})}\frac{\cos(2t\log p)}{p}\right|\leq\frac{3}{2|t|q_{N_r,a_r}}+C'(1+2|t|)\exp\big(-c'(q_{N_r,a_r})^{1/2}\big)\\=O((\log\log a_r)^{-A}).
\end{multline*}
Finally, for all sufficiently large $r$, $$\sum_{p>\exp(q_{N_r,a_r})}p^{-3/2}\leq\sum_{n>\exp(q_{N_r,a_r})}n^{-3/2}\leq 3\exp(-q_{N_r,a_r}/2).$$
These estimates, \eqref{eq:tiltedMoments}, \eqref{eq:tiltedMoments2} and \eqref{eq:tiltedMoments3} prove \eqref{eq:gaussianBlocks} and \eqref{eq:gaussianBlocks2}.

The Berry-Esseen theorem for sums of independent, not necessarily identically distributed random variables (see, for instance,  Theorem 3 on p.~111 in \cite{Petrov}) ensures
that
\begin{multline}\label{eq:block-normal}
\sup_{u\in\mr}\left|\mmp_{r,t}\{X_{r,j}(t)-\me_{r,t}[X_{r,j}(t)]\leq u\}-\Phi\big(({\rm Var}_{r,t}[X_{r,j}(t)])^{-1/2}\,u\big)\right|\\=O\Big(\frac{\sum_{\exp(q_{j+1,a_r})<p\leq\exp(q_{j,a_r})}\me_{r,t}\big[|\xi_p(t)-\me_{r,t}[\xi_p(t)]|^3\big]}{({\rm Var}_{r,t}[X_{r,j}(t)])^{3/2}}\Big)=O((\log\log a_r)^{-A}),\quad r\to\infty
\end{multline}
for each fixed $A>0$, uniformly in $(\log\log a_r)^{-2}\leq |t| \leq (\log\log a_r)^2+2$ and $0\leq j<N_r$, $j\in\mathbb{Z}$. Here, the last equality follows from the second part of \eqref{eq:gaussianBlocks} and \eqref{eq:gaussianBlocks2} and, as before, $\Phi$ denotes the standard normal distribution function. Now we prove that, as $r\to\infty$,
\begin{equation}\label{eq:reduction}
\sup_{u\in\mr}\left|\Phi\big(({\rm Var}_{r,t}[X_{r,j}(t)])^{-1/2}(u+1-\me_{r,t}[X_{r,j}(t)])\big)-\Phi(2^{1/2}\, u)\right|=O\big((\log\log a_r)^{-A}\big).
\end{equation}
Indeed, for all sufficiently large $r$, both $({\rm Var}_{r,t}[X_{r,j}(t)])^{-1/2}$ and $2^{1/2}$ are contained in the interval $[1,2]$. Hence, for each $u\in\mr$, by the mean value theorem for differentiable functions,  \begin{multline*}
\left|\Phi\big(({\rm Var}_{r,t}[X_{r,j}(t)])^{-1/2}\,u\big)-\Phi\big(2^{1/2}\,u\big)\right|\leq \big|({\rm Var}_{r,t}[X_{r,j}(t)])^{-1/2}-2^{1/2}\big|\sup_{v\in[1,2]}|u|\Phi'(vu)\\\leq |({\rm Var}_{r,t}[X_{r,j}(t)])^{-1/2}-2^{1/2}| \sup_{z\in\mr}|z|\Phi'(z)=\frac{|({\rm Var}_{r,t}[X_{r,j}(t)])^{-1/2}-2^{1/2}|}{(2\pi \eee)^{1/2}}\\=O\big((\log\log a_r)^{-A}\big)
\end{multline*}
as $r\to\infty$. Another application of the mean value theorem yields
\begin{multline*}
\sup_{u\in\mr}\left|\Phi\bigl(({\rm Var}_{r,t}[X_{r,j}(t)])^{-1/2}(u+1-\me_{r,t}[X_{r,j}(t)])\bigr)-\Phi\big(({\rm Var}_{r,t}[X_{r,j}(t)])^{-1/2}\,u\big)\right|\\\leq \frac{|\me_{r,t}[X_{r,j}(t)]-1|}{({\rm Var}_{r,t}[X_{r,j}(t)])^{1/2}(2\pi)^{1/2}}=O\big((\log\log a_r)^{-A}\big),\quad r\to\infty.
\end{multline*}
The $O$-estimates in both formulas are secured by \eqref{eq:gaussianBlocks}. Combining the last two displayed formulas proves \eqref{eq:reduction}.
For all sufficiently large $r$ and each $t$ satisfying $(\log\log a_r)^{-2}\leq |t|\leq(\log\log a_r)^2+2$, let $H_j$ denote the distribution function of $X_{r,j}(t)-1$ under $\mmp_{r,t}$ and put $\Psi(u):=\Phi\big(2^{1/2}\,u\big)$ for $u\in\mr$. Then \eqref{eq:block-normal} with $A=100$, together with \eqref{eq:reduction}, entails
\begin{equation}\label{eq:coupl}
\max_{0\leq j<N_r}\sup_{u\in\mr}|H_j(u)-\Psi(u)|\leq C_2(\log\log a_r)^{-100}=:\varepsilon_r
\end{equation}
for a universal constant $C_2>0$ and all sufficiently large $r$.

We now derive a direct comparison of barrier probabilities from \eqref{eq:coupl}. For fixed $t$ in the stated range, put $$Y_{r,i}(t):=X_{r,N_r-i}(t)-1,\quad 1\leq i\leq N_r,\quad Z_{r,0}(t):=0,\quad Z_{r,k}(t):=\sum_{i=1}^kY_{r,i}(t),\quad 1\leq k\leq N_r.$$ Let $G_1,\ldots,G_{N_r}$ be independent normal random variables of mean zero and variance $1/2$. We write $\mmp_G$ for their probability measure and put $S_0:=0$ and $S_k:=\sum_{i=1}^kG_i$ for $1\leq k\leq N_r$.

For integers $1\leq m<N_r$ and $h,a\geq0$, define
\begin{align*}
D_h&:=\{z\in\mr^{N_r}:~\textstyle\sum_{i=1}^kz_i\leq h,~1\leq k\leq N_r\}~~\text{and}\\
D_{h,a,m}&:=D_h\cap\{z\in\mr^{N_r}:~\textstyle\sum_{i=1}^kz_i\leq-a,~m\leq k\leq N_r\}.
\end{align*}
For fixed values of all coordinates except one, the corresponding
section of either $D_h$ or $D_{h,a,m}$ is empty or a half-line
of the form $(-\infty,u]$. We shall write $D$ for either $D_h$ or $D_{h,a,m}$.
We claim that
\begin{equation}\label{eq:barrier-comparison}
\left|\mmp_{r,t}\{(Y_{r,i}(t))_{1\leq i\leq N_r}\in D\}
-\mmp_G\{(G_i)_{1\leq i\leq N_r}\in D\}\right|\leq N_r\varepsilon_r.
\end{equation}
Indeed, extend the probability space by adjoining $G_1,\ldots,G_{N_r}$, independently of
$(Y_{r,i}(t))_{1\leq i\leq N_r}$, and denote the resulting probability measure by $\mmp_{r,t}^\ast$. Put $\boldsymbol Z^{(0)}:=(Y_{r,1}(t),\ldots,Y_{r,N_r}(t))$, $\boldsymbol Z^{(N_r)}:=(G_1,\ldots, G_{N_r})$ and
$
\boldsymbol Z^{(i)}
:=(G_1,\ldots,G_i,Y_{r,i+1}(t),\ldots,Y_{r,N_r}(t))$ for $1\leq i<N_r$. By the triangle inequality,
\begin{multline*}
\left|\mmp_{r,t}\{(Y_{r,i}(t))_{1\leq i\leq N_r}\in D\}
-\mmp_G\{(G_i)_{1\leq i\leq N_r}\in D\}\right|\\
\leq\sum_{i=1}^{N_r}
\left|\mmp_{r,t}^\ast\{\boldsymbol Z^{(i-1)}\in D\}
-\mmp_{r,t}^\ast\{\boldsymbol Z^{(i)}\in D\}\right|
\leq N_r\varepsilon_r.
\end{multline*}
To obtain the last inequality, it suffices to show that each modulus does not exceed $\varepsilon_r$. To check this for the $i$th summand, we condition on the common coordinates
$G_1,\ldots,G_{i-1},Y_{r,i+1}(t),\ldots,Y_{r,N_r}(t)$. If the corresponding section is empty, the two
conditional probabilities are zero. Otherwise, their difference
is $H_{N_r-i}(u)-\Psi(u)$ for the endpoint $u$ of that section.
By independence and \eqref{eq:coupl}, the absolute value of that difference  does not exceed $\varepsilon_r$. Taking expectations proves the
claimed bound for the $i$th summand, hence also for the whole sum.

In particular, \eqref{eq:barrier-comparison} ensures that, for each $h\geq 0$,
\begin{equation}\label{eq:survival-comparison}
\left|\mmp_{r,t}\{\max_{0\leq k\leq N_r}Z_{r,k}(t)\leq h\}
-\mmp_G\{\max_{0\leq k\leq N_r}S_k\leq h\}\right|\leq N_r\varepsilon_r.
\end{equation}
Since
$$D_h\setminus D_{h,a,m}=\{z:~\max_{1\leq k\leq N_r}\textstyle\sum_{i=1}^kz_i\leq h,~\max_{m\leq k\leq N_r}\textstyle\sum_{i=1}^kz_i>-a\},$$
applying \eqref{eq:barrier-comparison} to $D_h$ and then to $D_{h,a,m}$ gives
\begin{multline}\label{eq:strip-comparison}
\left|\mmp_{r,t}\{\max_{0\leq k\leq N_r}Z_{r,k}(t)\leq h,~\max_{m\leq k\leq N_r}Z_{r,k}(t)>-a\}\right.\\
\left.-\mmp_G\{\max_{0\leq k\leq N_r}S_k\leq h,~\max_{m\leq k\leq N_r}S_k>-a\}\right|\leq2N_r\varepsilon_r.
\end{multline}
Formulas \eqref{eq:barrier-comparison}, \eqref{eq:survival-comparison} and \eqref{eq:strip-comparison} hold uniformly in $(\log\log a_r)^{-2}\leq|t|\leq(\log\log a_r)^2+2$ as well as in the indicated barriers and indices. Since $N_r\leq\log\log a_r$, we have
$$N_r\varepsilon_r\leq C_2(\log\log a_r)^{-99}=o((\log\log a_r)^{-19}),\quad r\to\infty.$$

\subsection{From weak to strong barriers}

Lemma \ref{lem:weightedBarrier} bounds the second moment of the Euler product under the weak barriers and the contribution from realizations satisfying the weak barriers but violating at least one strong barrier. Formula \eqref{eq:badWeighted} is a variant of Multiplicative Chaos Result~2 in Section~2.4 of \cite{Harper:2023}, which, as stated, does not cover the barriers and the bound considered here. In view of this, we provide a full discussion.
\begin{lemma}\label{lem:weightedBarrier}
For fixed $B\geq 1$ and $b\in(0,1)$, uniformly in $(\log\log a_r)^{-2}\leq |t| \leq (\log\log a_r)^2+2$,
\begin{equation}\label{eq:weakWeighted}
\me\left[|F_{a_r}(t)|^2\1_{\mathcal D_r(t)}\right]=O\left(\log a_r\,\frac{(\log\log\log a_r)^2}{(\log\log a_r)^{1/2}}\right),\quad r\to\infty
\end{equation}
and
\begin{equation}\label{eq:badWeighted}
\me\left[|F_{a_r}(t)|^2\1_{\mathcal D_r(t)\setminus\mathcal A_r(t)}\right]=O\left(\log a_r\,\frac{(\log\log\log a_r)^4}{\log\log a_r}\right), \quad r\to\infty.
\end{equation}
The implicit constant in \eqref{eq:weakWeighted} is absolute, whereas it only depends on $B$ and $b$ in \eqref{eq:badWeighted}.
\end{lemma}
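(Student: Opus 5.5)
The plan is to move everything to the tilted measure $\mmp_{r,t}$ of \eqref{eq:tilt}. Since $\me\big[|F_{a_r}(t)|^2\1_E\big]=\me[|F_{a_r}(t)|^2]\,\mmp_{r,t}(E)$ and, by \eqref{eq:EulerSecond}, $\me[|F_{a_r}(t)|^2]\asymp\log a_r$, it suffices to prove
\[
\mmp_{r,t}(\mathcal D_r(t))=O\Big(\frac{(\log\log\log a_r)^2}{(\log\log a_r)^{1/2}}\Big),\qquad \mmp_{r,t}(\mathcal D_r(t)\setminus\mathcal A_r(t))=O\Big(\frac{(\log\log\log a_r)^4}{\log\log a_r}\Big).
\]

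Write $\log|F_{j,a_r}(t)|=\log|F_{N_r,a_r}(t)|+\sum_{i=j}^{N_r-1}X_{r,i}(t)$. Since $q_{j,a_r}=\eee^{-j}\log a_r$, the barrier $|F_{j,a_r}(t)|\le q_{j,a_r}(\log\log a_r)^2$ becomes $\sum_{i=j}^{N_r-1}(X_{r,i}-1)\le 2\log\log\log a_r-\log|F_{N_r,a_r}(t)|+\log q_{N_r,a_r}$. Reindexing with $k=N_r-j$ turns this into $Z_{r,k}(t)\le h$, where $h$ collects the offset.

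The bottom block needs separate control. Because $q_{N_r,a_r}\approx\eee^{(\log\log\log a_r)^2}$, the quantity $\log|F_{N_r,a_r}(t)|-\log q_{N_r,a_r}$ should be $O((\log\log\log a_r)^2)$ with high $\mmp_{r,t}$-probability. I would prove this with Chebyshev's inequality, using the tilted mean and variance from \eqref{eq:tiltedMoments}--\eqref{eq:tiltedMoments2}: summed over $p\le\exp(q_{N_r,a_r})$, the mean is $\approx q_{N_r,a_r}$ up to $O(1)+O(1/|t|)$ oscillatory terms, and the variance is $O(q_{N_r,a_r})$. The events are independent of the blocks $X_{r,j}$ under $\mmp_{r,t}$, since they involve disjoint primes. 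I would condition on the bottom block and fold the exceptional set, which has probability $o((\log\log a_r)^{-1})$ if I use a high enough moment, into the error. I expect this step to be a technical nuisance, because $|t|$ can be as small as $(\log\log a_r)^{-2}$, and the oscillatory sum then needs $1/(|t|q_{N_r,a_r})$ to be small. It is, because $q_{N_r,a_r}$ is superpolylogarithmic in $\log\log\log a_r$.

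On the good event, $\mathcal D_r(t)\subseteq\{\max_{k\le N_r}Z_{r,k}\le h\}$ with $h=O((\log\log\log a_r)^2)$. For \eqref{eq:weakWeighted}, apply \eqref{eq:survival-comparison} and then \eqref{eq:walkSurvival} with $L=n=N_r$. This gives $\mmp_{r,t}(\mathcal D_r)\le C(h+d_L)N_r^{-1/2}+O(N_r\varepsilon_r)$. Since $N_r\sim\log\log a_r$ and $d_L=O((\log\log\log a_r)^{1/2})$, the first bound follows.

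For \eqref{eq:badWeighted}, violating a strong barrier at some $j\le(1-b)\log\log a_r$ means $\sum_{i\ge j}(X_{r,i}-1)>-(B+2)\log\log\log a_r+O(h)$. In walk coordinates this reads $\max_{m\le k\le N_r}Z_{r,k}\ge-a$, with $m=N_r-\lfloor(1-b)\log\log a_r\rfloor\ge b\log\log a_r/2$ and $a=O_B((\log\log\log a_r)^2)$. Indices $j$ between $(1-b)\log\log a_r$ and $N_r$ are not covered by $\mathcal A_r$, so they are irrelevant. Combining \eqref{eq:strip-comparison} with \eqref{eq:walkStrip} then gives the bound
\[
C'(h+d_L)(h+a+d_L)\big(m(N_r-m)\big)^{-1/2}+O(N_r\varepsilon_r+L^{-19}),
\]
and this is $O_{B,b}\big((\log\log\log a_r)^4/\log\log a_r\big)$ because both $m$ and $N_r-m$ are $\asymp_b\log\log a_r$. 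The main obstacle is therefore the bottom-block control, together with checking that the residual primes between $\exp(q_{N_r,a_r})$ and the truncation match the indexing exactly. Everything else is direct application of the stated comparison results.
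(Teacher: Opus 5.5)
Your architecture matches the paper's. You tilt by $|F_{a_r}(t)|^2$, encode the weak barriers as $\max_k Z_{r,k}(t)\le h$, and apply \eqref{eq:survival-comparison} with \eqref{eq:walkSurvival}, and \eqref{eq:strip-comparison} with \eqref{eq:walkStrip} for $m\asymp b\log\log a_r$. But the step you flag as the main obstacle, the bottom block, does not work with the tool you propose.

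Under $\mmp_{r,t}$, the quantity $\log|F_{N_r,a_r}(t)|=\sum_{p\le\exp(q_{N_r,a_r})}\xi_p(t)$ has mean $\log q_{N_r,a_r}+O(\log\log\log a_r)$ and variance of order $\log q_{N_r,a_r}\asymp(\log\log\log a_r)^2$. It does not have mean $\approx q_{N_r,a_r}$ and variance $O(q_{N_r,a_r})$ as you wrote. Also, this block contains all the small primes, so its oscillatory part is up to $\asymp\log(1/|t|)$ for small $|t|$; it is not governed by $1/(|t|q_{N_r,a_r})$. Your final bounds tolerate only $h=O((\log\log\log a_r)^2)$, i.e.\ a deficit of $O(\log\log\log a_r)$ standard deviations. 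At that scale, Chebyshev bounds the exceptional set only by $O((\log\log\log a_r)^{-2})$. Any fixed moment of order $2m$ bounds it at best by $O((\log\log\log a_r)^{-2m})$. Multiplied by $\me[|F_{a_r}(t)|^2]\asymp\log a_r$, this is far larger than both targets, $\log a_r(\log\log\log a_r)^2(\log\log a_r)^{-1/2}$ and $\log a_r(\log\log\log a_r)^4(\log\log a_r)^{-1}$. So the claim that the exceptional set has probability $o((\log\log a_r)^{-1})$ ``with a high enough moment'' fails for every fixed moment.

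The repair is easy but necessary. One option is an exponential (Hoeffding-type) bound for the independent, bounded $\xi_p(t)$, which gives $\exp(-c(\log\log\log a_r)^2)$. The simpler option, which is what the paper does, avoids concentration altogether. By independence of $F_{N_r,a_r}(t)$ and $F_{a_r}(t)/F_{N_r,a_r}(t)$,
\[
\me\big[|F_{a_r}(t)|^2\1_{\{|F_{N_r,a_r}(t)|<(\log\log a_r)^{-1}\}}\big]\le(\log\log a_r)^{-2}\prod_{\exp(q_{N_r,a_r})<p\le a_r}(1+p^{-1})\asymp\frac{\log a_r}{(\log\log a_r)^{2}\,q_{N_r,a_r}}.
\]
This is negligible because $q_{N_r,a_r}=\eee^{(\log\log\log a_r)^2+O(1)}$. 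On $\mathcal D_r(t)$ outside this event, you get $\max_kZ_{r,k}(t)\le\log q_{N_r,a_r}+3\log\log\log a_r$, a deterministic level.

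For the strip event you also need an upper bound on $|F_{N_r,a_r}(t)|$. Do not get it from concentration. Take it from the weak barrier at $j=N_r\le J_*(a_r)$, which is part of $\mathcal D_r(t)$ and yields $Z_{r,N_r-j}(t)>-(B+2)\log\log\log a_r$ directly. With these two replacements, the rest of your argument goes through as written.
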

\begin{proof}
Fix $t$ in the stated range. By \eqref{eq:tilt}, for each event $E$,
\begin{equation}\label{eq:tilt-identity}
\me\big[|F_{a_r}(t)|^2\1_E\big]=\me\big[|F_{a_r}(t)|^2\big]\mmp_{r,t}(E).
\end{equation}
According to \eqref{eq:EulerSecond}, $\me[|F_{a_r}(t)|^2]\sim 6\pi^{-2}\eee^\gamma \log a_r$ as $r\to\infty$ uniformly in $t\in\mr$. Hence, it suffices to estimate $\mmp_{r,t}(\mathcal D_r(t))$ and $\mmp_{r,t}(\mathcal D_r(t)\setminus\mathcal A_r(t))$.

First we discard the event $\mathcal L_r(t):=\big\{|F_{N_r,a_r}(t)|<(\log\log a_r)^{-1}\big\}$. By the definition of this event, independence of the prime signs and $\me\big[\big|1+f(p)p^{-1/2-\ii t}\big|^2\big]=1+p^{-1}$,
\begin{multline}\label{eq:small-anchor-final}
\me\big[|F_{a_r}(t)|^2\1_{\mathcal L_r(t)}\big] \leq (\log\log a_r)^{-2}\me\left[\left|\frac{F_{a_r}(t)}{F_{N_r,a_r}(t)}\right|^2\right]=(\log\log a_r)^{-2}\prod_{\exp(q_{N_r,a_r})<p\leq a_r}(1+p^{-1})\\~\sim~
\frac{\log a_r}{(\log\log a_r)^2q_{N_r,a_r}}=\frac{\log a_r}{(\log\log a_r)^2\eee^{(\log\log\log a_r)^2+O(1)}}\\=o\left(\log a_r\,(\log\log a_r)^{-A}\right),\quad r\to\infty
\end{multline}
for each fixed $A>0$. Here, the asymptotic equivalence follows from \eqref{eq:Mert conseq}, the penultimate equality is justified by $\log q_{N_r,a_r}=\log\log a_r-\lfloor \log\log a_r-(\log\log\log a_r)^2\rfloor=(\log\log\log a_r)^2+O(1)$ whereas the last equality follows from the fact that $\eee^{(\log\log\log a_r)^2}$ grows faster than any fixed positive power of $\log\log a_r$.

The partial sums introduced in Section~\ref{subsec:tilt} satisfy $$Z_{r,k}(t)=\sum_{i=1}^k(X_{r,N_r-i}(t)-1)=\log\left|\frac{F_{N_r-k,a_r}(t)}{F_{N_r,a_r}(t)}\right|-k,\quad 0\leq k\leq N_r.$$ Moreover, $q_{N_r-k,a_r}=\eee^kq_{N_r,a_r}$. On $\mathcal D_r(t)\cap (\mathcal L_r(t))^c$, we therefore have, for each $0\leq k\leq N_r$, $$Z_{r,k}(t)\leq\log\big(\eee^kq_{N_r,a_r}(\log\log a_r)^3\big)-k=\log q_{N_r,a_r}+3\log\log\log a_r=:h_r.$$ Consequently, $$\mathcal D_r(t)\cap (\mathcal L_r(t))^c \subseteq \big\{\max_{0\leq k\leq N_r} Z_{r,k}(t)\leq h_r\big\}.$$

By \eqref{eq:survival-comparison}, with $h=h_r$, and \eqref{eq:walkSurvival}, with $L=\lfloor\log\log a_r\rfloor$, $n=N_r$ and $h=h_r$, we obtain
\begin{multline*}
\mmp_{r,t}(\mathcal D_r(t)\cap(\mathcal L_r(t))^c)
\leq\mmp_{r,t}\{\max_{0\leq k\leq N_r}Z_{r,k}(t)\leq h_r\}\\
\leq\mmp_G\{\max_{0\leq k\leq N_r}S_k\leq h_r\}+N_r\varepsilon_r
\leq C\Big(\frac{h_r+d_r}{N_r^{1/2}}+\lfloor\log\log a_r\rfloor^{-19}\Big)+N_r\varepsilon_r\\
=O\Big(\frac{(\log\log\log a_r)^2}{(\log\log a_r)^{1/2}}\Big),\quad r\to\infty,
\end{multline*}
where $d_r:=(20\log\lfloor\log\log a_r\rfloor)^{1/2}$. Here, we have used $N_r\sim\log\log a_r$, $h_r\sim(\log\log\log a_r)^2$ and $N_r\varepsilon_r=O((\log\log a_r)^{-99})$. Together with \eqref{eq:tilt-identity} and \eqref{eq:small-anchor-final}, this proves \eqref{eq:weakWeighted}.

Now we treat \eqref{eq:badWeighted}. On $\mathcal D_r(t)\setminus\mathcal A_r(t)$, there exists $0\leq j\leq\lfloor(1-b)\log\log a_r \rfloor$ such that $$|F_{j,a_r}(t)|>q_{j,a_r}(\log\log a_r)^{-B}.$$
At the same time, $|F_{N_r,a_r}(t)|\leq q_{N_r,a_r}(\log\log a_r)^2$ because $N_r\leq J_\ast (a_r)$. For all sufficiently large $r$, the chosen index $j$ satisfies $j\le \lfloor(1-b)\log\log a_r \rfloor<N_r$, whence $$Z_{r,N_r-j}(t)>\log\left(\frac{q_{j,a_r}(\log\log a_r)^{-B}}{q_{N_r,a_r}(\log\log a_r)^2}\right)-(N_r-j)=-(B+2)\log\log\log a_r.$$ Also, $N_r-j\geq b \log\log a_r-(\log\log\log a_r)^2-1 \geq \lfloor b\log\log a_r/2\rfloor=:m_r$ for all sufficiently large $r$. On $(\mathcal D_r(t)\setminus\mathcal A_r(t))\cap (\mathcal L_r(t))^c$, we thus have $$\max_{0\leq k\leq N_r}Z_{r,k}(t)\leq h_r\quad\text{and}\quad \max_{m_r\leq k\leq N_r}Z_{r,k}(t)>-(B+2)\log\log\log a_r.$$ The validity of the first inequality was justified in the proof of \eqref{eq:weakWeighted}.

Put $a'_r:=(B+2)\log\log\log a_r$. By \eqref{eq:strip-comparison}, with $h=h_r$ and $a=a'_r$,
\begin{multline*}
\mmp_{r,t}((\mathcal D_r(t)\setminus\mathcal A_r(t))\cap (\mathcal L_r(t))^c)
\leq\mmp_{r,t}\{\max_{0\leq k\leq N_r}Z_{r,k}(t)\leq h_r,~\max_{m_r\leq k\leq N_r}Z_{r,k}(t)>-a'_r\}\\
\leq\mmp_G\{\max_{0\leq k\leq N_r}S_k\leq h_r,~\max_{m_r\leq k\leq N_r}S_k>-a'_r\}+2N_r\varepsilon_r\\
\leq\mmp_G\{\max_{0\leq k\leq N_r}S_k\leq h_r,~\max_{m_r\leq k\leq N_r}S_k\geq-a'_r\}+2N_r\varepsilon_r.
\end{multline*}
Applying \eqref{eq:walkStrip} with $L=\lfloor\log\log a_r\rfloor$, $n=N_r$, $m=m_r$, $h=h_r$ and $a=a'_r$, we obtain
\begin{multline*}
\mmp_{r,t}((\mathcal D_r(t)\setminus\mathcal A_r(t))\cap (\mathcal L_r(t))^c)\\
\leq C'\Big(\frac{(h_r+d_r)(h_r+(B+2)\log\log\log a_r+d_r)}{(m_r(N_r-m_r))^{1/2}}+\lfloor\log\log a_r\rfloor^{-19}\Big)+2N_r\varepsilon_r\\
=O\Big(\frac{(\log\log\log a_r)^4}{\log\log a_r}\Big),\quad r\to\infty.
\end{multline*}
Here, we have used $m_r\sim b\log\log a_r/2$, $N_r-m_r\sim(1-b/2)\log\log a_r$, $h_r=O((\log\log\log a_r)^2)$ and $N_r\varepsilon_r=O((\log\log a_r)^{-99})$. Finally, by \eqref{eq:tilt-identity} and \eqref{eq:small-anchor-final},
\begin{multline*}
\me[|F_{a_r}(t)|^2\1_{\mathcal D_r(t)\setminus\mathcal A_r(t)}]
\leq\me[|F_{a_r}(t)|^2\1_{\mathcal L_r(t)}]\\
+\me[|F_{a_r}(t)|^2]\mmp_{r,t}((\mathcal D_r(t)\setminus\mathcal A_r(t))\cap (\mathcal L_r(t))^c)
=O\Big(\log a_r\,\frac{(\log\log\log a_r)^4}{\log\log a_r}\Big),
\end{multline*}
as $r\to\infty$, uniformly in the stated range of $t$. This proves \eqref{eq:badWeighted}.
\end{proof}

\subsection{Restricting the covariance integral}\label{sec:truncate}

In this subsection we show that, uniformly over $x,y\in\mathcal X_r$, the conditional covariance can be approximated by an integral over pairs of nearby arguments at which the strong barriers are satisfied. This follows the argument in Sections~3.2--3.3 of \cite{Harper:2023}, see, in particular, formula~(3.4) and the subsequent error estimate. Since the cutoffs, barriers and error bound in~\cite{Harper:2023} differ from those used here, we provide full details.

Recalling that $\omega_r=(\log\log a_r)^4/T_r$, put $$\Delta_r:=\left\{(v,t)\in\mr^2:~(\log\log a_r)^{-2}\leq |v|,|t|\leq (\log\log a_r)^2,~|v-t|\leq \omega_r\right\}$$ and, for $x,y\in\mathcal X_r$, define
\begin{equation}\label{eq:I}
I_r(x,y):=\frac{1}{4\pi^2}\iint_{\Delta_r}\frac{\1_{\mathcal A_r(v)}F_{a_r}(v)x^{\ii v}\1_{\mathcal A_r(t)}\overline{F_{a_r}(t)}y^{-\ii t}}{(1/2+\ii v)(1/2-\ii t)}K_r(v-t)\,\dd v\,\dd t.
\end{equation}
The dependence on the fixed parameters $B$ and $b$ appearing in the definition of $\mathcal A_r(t)$ is suppressed in this notation.
\begin{lemma}\label{lem:truncation}
For each fixed $B\geq1$ and $b\in(0,1)$,
\begin{equation}\label{eq:truncation}
\lim_{r\to\infty}\mmp\left\{\max_{x,y\in\mathcal X_r}\left|\frac{C_r(x,y)}{(xy)^{1/2}}-I_r(x,y)\right|\leq (\log\log a_r)^{-5/8}\right\}=1.
\end{equation}
\end{lemma}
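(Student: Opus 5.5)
Starting from Lemma~\ref{lem:perron}, on $\mathcal P_r$ the normalized covariance equals the full double integral over $[-H_r,H_r]^2$ up to $O(T_r^{-1})$, which is negligible against $(\log\log a_r)^{-5/8}$. I would then remove pieces from this integral in three steps: (i) the region $|v-t|>\omega_r$; (ii) the region where $|v|$ or $|t|$ lies outside $[(\log\log a_r)^{-2},(\log\log a_r)^2]$; (iii) inside $\Delta_r$, the parts where $\mathcal A_r(v)$ or $\mathcal A_r(t)$ fails. Each removed piece $E_r(x,y)$ will be bounded by a random variable that does not depend on $x,y$, using $|x^{\ii v}|=|y^{-\ii t}|=1$. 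It then suffices to show that the expectation of this $x,y$-free bound (restricted to a high-probability $\mathcal H_r$-measurable event where needed) is $o((\log\log a_r)^{-5/8})$ and to apply Markov's inequality. This way no union bound over $\mathcal X_r$ is needed.

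For (i), I would use \eqref{eq:kernelBounds}: for $\omega_r<|v-t|\le 2H_r$ we have $|K_r(v-t)|\le C/(T_r|v-t|)$. Bounding $\me|F(v)F(t)|$ by Lemma~\ref{lem:twofrequency} gives roughly $(\log a_r)^{1/2}(|v-t|^{-1/2}+|v+t|^{-1/2}+\log(2+|v|+|t|))$. Integrating against $|(1/2+\ii v)(1/2-\ii t)|^{-1}(T_r|v-t|)^{-1}$ over $|v-t|>\omega_r$ yields at most $T_r^{-1/2}\,\omega_r^{-1/2}\cdot O(\log T_r)^{O(1)}$. Since $\omega_r=(\log\log a_r)^4/T_r$, this is $(\log\log a_r)^{-2}$ up to $\log\log$-powers, hence small. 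The $|v+t|^{-1/2}$ singularity is integrable, and the cut $|v|,|t|\le H_r$ is harmless.

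For (ii), with $|v-t|\le\omega_r$, I would handle the large range $|v|>(\log\log a_r)^2$ through the decay of the weight, which is about $1/|v|^2$ since $t\approx v$. The product $\me|F(v)F(t)|\cdot|K_r|$ integrated over the thin strip contributes $\int|K_r|\le 10C\log\log T_r/T_r$ by \eqref{eq:kernelBounds2}, times $(\log a_r)(\log|v|)$ via Lemma~\ref{lem:twofrequency}. This gives $O(\log\log T_r\,(\log\log a_r)^{-2}\log\log\log a_r)$. For the small range $|v|<(\log\log a_r)^{-2}$, the same bound is multiplied by the measure $(\log\log a_r)^{-2}$, and it is also small. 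Here the $|v+t|^{-1/2}$ term near the origin still needs care, but it stays integrable.

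Step (iii) is the main obstacle. I would write $\1_{\mathcal A(v)}\1_{\mathcal A(t)}=1-\1_{\mathcal A(v)^c}-\1_{\mathcal A(t)^c}\1_{\mathcal A(v)}$ and use $|F(v)F(t)|\le(|F(v)|^2+|F(t)|^2)/2$. After symmetrizing, it suffices to bound $\iint_{\Delta_r}|F(v)|^2\1_{\mathcal A_r(v)^c}|K_r(v-t)|\,|v|^{-1}|t|^{-1}$ (the weight is roughly $\min(1,|v|^{-2})$). I would intersect with the event of Lemma~\ref{lem:weak}, whose probability tends to one and on which $\mathcal D_r(v)$ holds for all relevant $v$, so that $\mathcal A_r(v)^c$ is replaced by $\mathcal D_r(v)\setminus\mathcal A_r(v)$. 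Integrating first in $t$ with \eqref{eq:kernelBounds2} gives the factor $\log\log T_r/T_r$. Then \eqref{eq:badWeighted} gives expectation at most
\[
O\Big(\frac{\log\log T_r}{T_r}\log a_r\frac{(\log\log\log a_r)^4}{\log\log a_r}\int\frac{\dd v}{1/4+v^2}\Big)=O\Big(\frac{(\log\log\log a_r)^5}{\log\log a_r}\Big),
\]
which is $o((\log\log a_r)^{-5/8})$. The delicate point is whether the AM--GM split is legitimate. If it instead forces use of the weak bound \eqref{eq:weakWeighted} on the other factor, I would apply Cauchy--Schwarz in the form $\sqrt{\text{bad}\cdot\text{weak}}$. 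This gives $(\log\log a_r)^{-3/4}$ up to $\log\log\log$-powers, which still beats $-5/8$, and I suspect this is why the exponent $5/8$ was chosen. Finally I would combine the three steps by Markov's inequality with $\mmp(\mathcal P_r)\to1$.
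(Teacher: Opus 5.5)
Your overall plan matches the paper's proof. It works on $\mathcal P_r$ from Lemma~\ref{lem:perron} and dominates every discarded piece by an $(x,y)$-free random variable, so no union bound over $\mathcal X_r$ is needed. It uses first moments plus Markov, and it imposes the strong barriers on the event of Lemma~\ref{lem:weak}.

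The paper orders the removals differently:
\begin{itemize}
\item first $\max(|v|,|t|)>(\log\log a_r)^2$, using only $\me|F_{a_r}(v)F_{a_r}(t)|=O(\log a_r)$ and the decay of $\min(1,1/(T_r|v-t|))$;
\item then $|v-t|>\omega_r$ on the bounded box, via Lemma~\ref{lem:twofrequency};
\item then the strip near the origin, via \eqref{eq:kernelBounds2}.
\end{itemize}
Your order, applying Lemma~\ref{lem:twofrequency} on all of $[-H_r,H_r]^2$, also works. The $|v-t|^{-1/2}$ term gives exactly $O((T_r\omega_r)^{-1/2})=O((\log\log a_r)^{-2})$. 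The $|v+t|^{-1/2}$ and $\log(2+|v|+|t|)$ terms are $O(T_r^{-1/2}(\log T_r)^2)$. Only those secondary terms may carry powers of $\log T_r$; since $\log T_r\asymp\log\log a_r$, an unspecified $(\log T_r)^{O(1)}$ factor on the main term would not be harmless.

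The one real issue is step (iii). The unweighted AM--GM split you list first does fail, so the Cauchy--Schwarz alternative is required rather than optional.

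\textbf{Why AM--GM fails.} After AM--GM you are left with cross terms such as $|F_{a_r}(t)|^2\1_{\mathcal D_r(v)\setminus\mathcal A_r(v)}\1_{\mathcal D_r(t)}$, where the bad indicator sits at the other point. Swapping $v$ and $t$ only moves the mismatch. Moreover, \eqref{eq:badWeighted} is a one-point estimate under the tilt $\mmp_{r,t}$ and does not control $\me[|F_{a_r}(t)|^2\1_{\mathcal D_r(v)\setminus\mathcal A_r(v)}]$. Since $\omega_r\log a_r\asymp(\log\log a_r)^4$, the top partial products at $v$ and $t$ need not be close. If you drop the indicator at $v$, you are left, up to constants, with $\Lambda_r\int_{E_r}|F_{a_r}(t)|^2\1_{\mathcal D_r(t)}(1/4+t^2)^{-1}\dd t$, where $\Lambda_r=\int_{-\omega_r}^{\omega_r}|K_r(u)|\dd u$. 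The best available bound for this, from \eqref{eq:weakWeighted}, is $O((\log\log\log a_r)^3(\log\log a_r)^{-1/2})$, which does not beat $(\log\log a_r)^{-5/8}$.

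\textbf{The balanced version is the paper's argument.} Let $A_r(t)$ and $B_r(t)$ be $|F_{a_r}(t)|(1/4+t^2)^{-1/2}\1_{E_r}(t)$ multiplied by $\1_{\mathcal D_r(t)}$ and by $\1_{\mathcal D_r(t)\setminus\mathcal A_r(t)}$, respectively. The paper substitutes $u=v-t$, applies Cauchy--Schwarz in $t$ for fixed $u$, and uses translation invariance. This gives $\iint|K_r(v-t)|\1_{\{|v-t|\le\omega_r\}}B_r(v)A_r(t)\,\dd v\,\dd t\le\Lambda_r\|A_r\|_2\|B_r\|_2$. It then applies Markov separately to $\|A_r\|_2^2$ and $\|B_r\|_2^2$, with thresholds $\log a_r(\log\log a_r)^{-2/5}$ and $\log a_r(\log\log a_r)^{-9/10}$. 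The result is $O(\log\log\log a_r\,(\log\log a_r)^{-13/20})=o((\log\log a_r)^{-5/8})$.

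Your variant is equally valid. Apply Cauchy--Schwarz pointwise over the probability space, $\me[B_r(v)A_r(t)]\le(\me[B_r(v)^2]\,\me[A_r(t)^2])^{1/2}$, and then use the uniform bounds of Lemma~\ref{lem:weightedBarrier}. This gives the rate $(\log\log a_r)^{-3/4}$, up to $\log\log\log a_r$ powers, that you computed.
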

\begin{proof}
Put $g(t):=(1+|t|)^{-1}$ for $t\in\mr$. Since $$(1/4+t^2)^{-1/2}\leq 5^{1/2}g(t),\quad t\in\mr$$ and $|x^{\ii v}y^{-\ii t}|=1$, all estimates that follow hold simultaneously for $x,y\in\mathcal X_r$.

We first remove the part of the integral in \eqref{eq:perron} in which $\max(|v|,|t|)>(\log\log a_r)^2$. Recalling that $H_r=T_r^8$, denote its absolute majorant by $$\mathcal R_r^{(1)}:=\iint_{\substack{|v|,|t|\leq H_r\\ \max(|v|,|t|)>(\log\log a_r)^2}}|F_{a_r}(v)F_{a_r}(t)|g(v)g(t)|K_r(v-t)|\,\dd v\,\dd t.$$
By the Cauchy-Schwarz inequality and \eqref{eq:EulerSecond},
\begin{equation}\label{eq:two-products-trivial}
\me\big[|F_{a_r}(v)F_{a_r}(t)|\big]\leq \left(\me[|F_{a_r}(v)|^2]\me[|F_{a_r}(t)|^2]\right)^{1/2}=O(\log a_r),\quad r\to\infty
\end{equation}
for all $v,t\in\mr$. By \eqref{eq:two-products-trivial} and \eqref{eq:kernelBounds}, $$\me\big[\mathcal R_r^{(1)}\big]=O(\log a_r)\iint_{\substack{|v|,|t|\leq H_r\\ \max(|v|,|t|)>(\log\log a_r)^2}}g(v)g(t)\min\left(1,\frac1{T_r|v-t|}\right)\,\dd v\,\dd t.$$ We further split the region of integration in $\mathcal R_r^{(1)}$ into subregions.

\noindent {\sc Subregion where $|v|/2\leq|t|\leq 2|v|$}. By symmetry in $|v|$ and $|t|$, it suffices to consider the region $(\log\log a_r)^2<|v|\leq H_r$ and $|v|/2\leq|t|\leq|v|$. On it, $$g(v)g(t)\leq \frac{2}{(1+|v|)^2}.$$ Therefore, enlarging the range of integration in $t$, the expected contribution from the subregion that we treat now is at most $$O(\log a_r)\int_{(\log\log a_r)^2<|v|\leq H_r}\frac{1}{(1+|v|)^2}\left[\int_{-H_r}^{H_r}\min\left(1,\frac1{T_r|v-t|}\right)\,\dd t\right]\dd v.$$ For each $|v|\leq H_r$, the inner integral is bounded by $$2\int_0^{2H_r}\min\left(1,\frac1{T_ru}\right)\,\dd u=\frac2{T_r}\big(1+\log(2H_rT_r)\big).$$ Therefore, using $\log a_r\sim T_r/2$ we conclude that the preceding expression does not exceed
\begin{multline*}
O(1)\big(1+\log(2H_rT_r)\big)\int_{|v|>(\log\log a_r)^2}\frac{\dd v}{(1+|v|)^2}=O\big(\big(1+\log(2H_rT_r)\big)(\log\log a_r)^{-2}\big)\\=O((\log\log a_r)^{-1}),\quad r\to\infty.
\end{multline*}

\noindent {\sc Subregion where $|v|\geq 2|t|$ and $|v|>(\log\log a_r)^2$.} Here, $|v-t|\geq |v|-|t|\geq |v|/2$. Hence the contribution of this subregion to $\me[\mathcal R_r^{(1)}]$ does not exceed
\begin{multline*}
O(1)\int_{(\log\log a_r)^2<|v|\leq H_r}\frac{1}{|v|(1+|v|)}\left(\int_{|t|\leq|v|/2}\frac{\dd t}{1+|t|}\right)\dd v\\=4O(1)\int_{(\log\log a_r)^2}^{H_r}\frac{\log(1+v/2)}{v(1+v)}\,\dd v\\\leq O(1)\int_{(\log\log a_r)^2}^\infty\frac{\log v}{v^2}\,\dd v=O(1)\frac{2\log\log\log a_r+1}{(\log\log a_r)^2}= O\left(\frac{\log\log\log a_r}{(\log\log a_r)^2}\right).
\end{multline*}
Here, the inequality holds whenever $(\log\log a_r)^2\geq 2$, that is, for all sufficiently large $r$. The same estimate holds when $|t|\geq2|v|$ and $|t|>(\log\log a_r)^2$. Consequently, $$\me[\mathcal R_r^{(1)}]=O((\log\log a_r)^{-1})\quad\text{and}\quad \mmp\{\mathcal R_r^{(1)}>(\log\log a_r)^{-3/4}\}=O((\log\log a_r)^{-1/4}),
\quad r\to\infty$$ having utilized Markov's inequality for the second equality.

Next we remove the part for which $\max (|v|,|t|)\leq (\log\log a_r)^2$ and $|v-t|>\omega_r$. Denote its absolute majorant by
$$\mathcal R_r^{(2)}:=\iint_{\substack{|v|,|t|\leq (\log\log a_r)^2\\ |v-t|>\omega_r}}|F_{a_r}(v)F_{a_r}(t)|g(v)g(t)|K_r(v-t)|\,\dd v\,\dd t.$$ Lemma~\ref{lem:twofrequency}, together with
\eqref{eq:kernelBounds}, \eqref{eq:two-products-trivial} and $\log a_r\sim T_r/2$, shows that $\me[\mathcal R_r^{(2)}]$ is bounded by the sum of
\begin{align*}
J_{r,1}&:=O((\log a_r)^{-1/2})\iint_{\substack{|v|,|t|\leq (\log\log a_r)^2\\ |v-t|>\omega_r}}\frac{\min(T_r,|v-t|^{-1})^{1/2}}{|v-t|}\,g(v)g(t)\,\dd v\,\dd t,\\
J_{r,2} &:=O((\log a_r)^{-1/2})\iint_{\substack{|v|,|t|\leq (\log\log a_r)^2\\ |v-t|>\omega_r}} \frac{\min(T_r,|v+t|^{-1})^{1/2}}{|v-t|}\,g(v)g(t)\,\dd v\,\dd t,\\
J_{r,3}&:= O((\log a_r)^{-1/2})\iint_{\substack{|v|,|t|\leq (\log\log a_r)^2\\ |v-t|>\omega_r}}\frac{\log(2+|v|+|t|)}{|v-t|}\,g(v)g(t)\,\dd v\,\dd t.
\end{align*}

For the first term, putting $u=v-t$, we write $$\int_{\mr}g(t+u)g(t)\,\dd t \leq \big(\int_\mr (g(t+u))^2\,\dd t\big)^{1/2}\left(\int_{\mr}(g(t))^2\,\dd t\right)^{1/2}=\int_\mr (g(t))^2\,\dd t=2.$$
Since $T_r\omega_r=(\log\log a_r)^4>1$ for large $r$, we obtain $$J_{r,1}\leq 2O((\log a_r)^{-1/2})\int_{\omega_r<|u|\leq 2(\log\log a_r)^2}|u|^{-3/2}\,\dd u =O((T_r\omega_r)^{-1/2})=O((\log\log a_r)^{-2}).$$
For the second term, putting $u=v-t$, $s=v+t$ and using $g(v)g(t)\leq1$ gives
\begin{multline*}
J_{r,2}\leq O((\log a_r)^{-1/2})\int_{\omega_r<|u|\leq 2(\log\log a_r)^2}\frac{\dd u}{|u|}\int_{|s|\leq 2 (\log\log a_r)^2} \min(T_r,|s|^{-1})^{1/2}\,\dd s\\=O\Big(\frac{(\log\log a_r)^2}{(\log a_r)^{1/2}}\Big).
\end{multline*}
Here, each integral is $O(\log\log a_r)$. For the first, it follows from \begin{multline*}
\int_{\omega_r<|u|\leq 2(\log\log a_r)^2}\frac{\dd u}{|u|}
=2\log \Big(\frac{2(\log\log a_r)^2}{\omega_r}\Big)\\
=2\log T_r-4\log\log\log a_r+2\log 2=O(\log\log a_r),
\end{multline*} where we have used $\log T_r=\log\log a_r+O(1)$. Finally,
\begin{multline*}
J_{r,3}\leq O((\log a_r)^{-1/2})\log\log\log a_r \int_{\omega_r<|u|\leq 2(\log\log a_r)^2}\frac{\dd u}{|u|}\int_{|t|\leq (\log\log a_r)^2}\dd t\\=O\Big(\frac{(\log\log a_r)^3\log\log\log a_r}{(\log a_r)^{1/2}}\Big).
\end{multline*}
Thus, $$\me[\mathcal R_r^{(2)}]=O((\log\log a_r)^{-2})\quad\text{and}\quad \mmp\{\mathcal R_r^{(2)}>(\log\log a_r)^{-1}\}=O((\log\log a_r)^{-1}), \quad r\to\infty.$$ Here, the second equality is justified by another application of Markov's inequality.

We now remove the part where $|v-t|\leq\omega_r$ and $\min(|v|,|t|)<(\log\log a_r)^{-2}$. In this region, $\max(|v|,|t|)\leq (\log\log a_r)^{-2}+\omega_r$. Denoting its absolute majorant by $\mathcal R_r^{(3)}$, invoking \eqref{eq:kernelBounds2} and \eqref{eq:two-products-trivial} and noting that $\omega_r=o\big((\log\log a_r)^{-2}\big)$ we arrive at
\begin{equation*}
\me[\mathcal R_r^{(3)}]\leq O(\log a_r)((\log\log a_r)^{-2}+\omega_r)\int_{-\omega_r}^{\omega_r}|K_r(u)|\,\dd u=O\Big(\frac{\log\log\log a_r}{(\log\log a_r)^2}\Big).
\end{equation*}
Consequently, $\mmp\{\mathcal R_r^{(3)}>(\log\log a_r)^{-1}\}=O((\log\log a_r)^{-1}\log\log\log a_r)$ as $r\to\infty$. For later use, observe that, on the event $\mathcal P_r$ appearing in Lemma \ref{lem:perron},
\begin{multline}\label{eq:inter}
\max_{x,y\in\mathcal X_r}\left|\frac{C_r(x,y)}{(xy)^{1/2}}-\frac{1}{4\pi^2}\iint_{\Delta_r}\frac{F_{a_r}(v)x^{\ii v}\overline{F_{a_r}(t)}y^{-\ii t}}{(1/2+\ii v)(1/2-\ii t)}K_r(v-t)\,\dd v\,\dd t\right|\\ \leq \mathcal R_r^{(1)}+\mathcal R_r^{(2)}+\mathcal R_r^{(3)}+O(T_r^{-1}).
\end{multline}

It remains to impose the strong barriers on $\Delta_r$. Put $E_r:=\{t\in\mr:~(\log\log a_r)^{-2}\leq|t|\leq (\log\log a_r)^2\}$,
\begin{equation*}
\mathcal U_r:=\int_{E_r}\frac{|F_{a_r}(t)|^2\1_{\mathcal D_r(t)}}{1/4+t^2}\,\dd t\quad\text{and}\quad \mathcal V_r:=\int_{E_r}\frac{|F_{a_r}(t)|^2\1_{\mathcal D_r(t)\setminus\mathcal A_r(t)}}{1/4+t^2}\,\dd t.
\end{equation*}
By Fubini's theorem and Lemma~\ref{lem:weightedBarrier}, $$\me[\mathcal U_r]=O\Big(\frac{\log a_r\,(\log\log\log a_r)^2}{(\log\log a_r)^{1/2}}\Big)\quad\text{and}\quad \me[\mathcal V_r]=O\Big(\frac{\log a_r\,(\log\log\log a_r)^4}{\log\log a_r}\Big),\quad r\to\infty.$$ Now Markov's inequality ensures that
\begin{equation}\label{eq:energies}
\lim_{r\to\infty}\mmp\big\{\mathcal U_r\leq\log a_r\,(\log\log a_r)^{-2/5},~\mathcal V_r\leq\log a_r\,(\log\log a_r)^{-9/10}\big\}=1.
\end{equation}

On the event appearing in Lemma~\ref{lem:weak}, the event $\mathcal D_r(t)$ holds for each $t\in E_r$. Thus, on the intersection of $\mathcal P_r$ and the event from Lemma~\ref{lem:weak}, multiplying the integrands in \eqref{eq:I} and \eqref{eq:inter} by $\1_{\mathcal D_r(v)}\1_{\mathcal D_r(t)}$ does not change the values of the integrals. In particular, \eqref{eq:inter} remains valid. For $t,u\in\mr$, put $k_r(u):=|K_r(u)|\1_{[-\omega_r, \omega_r]}(u)$,
\begin{equation*}
A_r(t):=\frac{|F_{a_r}(t)|}{(1/4+t^2)^{1/2}}\1_{E_r}(t)\1_{\mathcal D_r(t)}\quad\text{and}\quad B_r(t):=\frac{|F_{a_r}(t)|}{(1/4+t^2)^{1/2}}\1_{E_r}(t)\1_{\mathcal D_r(t)\setminus\mathcal A_r(t)}.
\end{equation*}
With this notation laid down, on the aforementioned intersection of the events, we obtain
\begin{multline}\label{eq:maximum}
\max_{x,y\in\mathcal X_r}\left|\frac{1}{4\pi^2}\iint_{\Delta_r}\frac{F_{a_r}(v)\1_{\mathcal D_r(v)}x^{\ii v}\overline{F_{a_r}(t)}\1_{\mathcal D_r(t)}y^{-\ii t}}{(1/2+\ii v)(1/2-\ii t)}K_r(v-t)\,\dd v\,\dd t-I_r(x,y)\right|\\ \leq \frac{1}{4\pi^2} \iint_{\mr^2}k_r(v-t)\big(B_r(v)A_r(t)+A_r(v)B_r(t)\big)\,\dd v\,\dd t
\end{multline}
having utilized the inequality $$\1_{\mathcal D_r(v)}\1_{\mathcal D_r(t)}(1-\1_{\mathcal A_r(v)}\1_{\mathcal A_r(t)})\leq \1_{\mathcal D_r(v)\setminus\mathcal A_r(v)}\1_{\mathcal D_r(t)}+\1_{\mathcal D_r(v)}\1_{\mathcal D_r(t)\setminus\mathcal A_r(t)}.$$ Now we show that the latter double integral is bounded from above by $$2\Big(\int_{\mr}k_r(u)\,\dd u\Big) \Big(\int_{\mr}(A_r(t))^2\,\dd t\Big)^{1/2}\Big(\int_{\mr}B_r(t)^2\,\dd t\Big)^{1/2}=2\Big(\int_{-\omega_r}^{\omega_r}|K_r(u)|\,\dd u\Big) (\mathcal U_r\mathcal V_r)^{1/2}.$$ Indeed, changing the variable $u=v-t$ and applying Fubini's theorem gives $$ \iint_{\mr^2}k_r(v-t)B_r(v)A_r(t)\,\dd v\,\dd t= \int_{\mr}k_r(u)\left(\int_{\mr}B_r(t+u)A_r(t)\,\dd t\right)\dd u.$$ For each fixed $u$, the Cauchy–Schwarz inequality and translation invariance of Lebesgue measure yield
\begin{multline*}
\int_{\mr}B_r(t+u)A_r(t)\,\dd t \leq \Big(\int_{\mr}(B_r(t+u))^2\,\dd t\Big)^{1/2}\Big(\int_{\mr}(A_r(t))^2\,\dd t\Big)^{1/2}\\=\Big(\int_{\mr}(B_r(t))^2\,\dd t\Big)^{1/2}\Big(\int_{\mr}(A_r(t))^2\,\dd t\Big)^{1/2}.
\end{multline*}
Hence, $$\iint_{\mr^2}k_r(v-t)B_r(v)A_r(t)\,\dd v\,\dd t\leq \int_{\mr}k_r(u)\,\dd u \Big(\int_{\mr}(A_r(t))^2\,\dd t\Big)^{1/2}\Big(\int_{\mr}B_r(t)^2\,\dd t\Big)^{1/2}.$$ The other term admits the same upper bound.

Using \eqref{eq:kernelBounds2} and \eqref{eq:energies},
we conclude that, with probability tending to one, the maximum in \eqref{eq:maximum} is at most
\begin{multline*}
10C\, T_r^{-1}\log\log T_r\log a_r\,(\log\log a_r)^{-13/20}=O(\log\log\log a_r\,(\log\log a_r)^{-13/20})\\=o\big((\log\log a_r)^{-5/8}\big).
\end{multline*}
Finally, with probability tending to one, the maximum in \eqref{eq:inter} is at most $$(\log\log a_r)^{-3/4}+2(\log\log a_r)^{-1}+O\big(T_r^{-1}\big)=o\big((\log\log a_r)^{-5/8}{\big)}.$$
This proves \eqref{eq:truncation}.
\end{proof}

\subsection{How often is \texorpdfstring{$|C_r(x,y)|$}{the conditional covariance} large?}

Next we estimate the high-order moments of the integrals $I_r(x,y)$ and use these estimates to bound, simultaneously for $x\in\mathcal X_r$, the number of points $y$ for which $|C_r(x,y)|$ is large.

The unit spacing of the logarithms of the points in $\mathcal X_r$ gives rise to a periodic function. We first estimate its integral and then prove an auxiliary inequality for the gaps between ordered arguments. For $u\in\mr$ and $m\in\mn$, put $d_{2\pi}(u):=\inf_{j\in\mathbb Z}|u-2\pi j|$ and $D_m(u):=\min(m, \pi/d_{2\pi}(u))$ with $D_m(u):=m$ when $u\in2\pi\mathbb Z$. We claim that
\begin{equation}\label{eq:geometric}
\Big|\sum_{j=0}^{m-1}\eee^{\ii ju}\Big|\leq D_m(u),\quad m\in\mn,~u\in\mr.
\end{equation}
Indeed, if $u\in2\pi\mathbb Z$, then \eqref{eq:geometric} is trivial. Note that $|\sin(u/2)|\geq d_{2\pi}(u)/\pi$ for $u\in\mr$. With this at hand, if $u\notin 2\pi\mathbb Z$, it follows from $$\Big|\sum_{j=0}^{m-1}\eee^{\ii ju}\Big|=\Big|\frac{\eee^{\ii mu}-1}{\eee^{\ii u}-1}\Big|=\Big|\frac{\sin (mu/2)}{\sin(u/2)}\Big|\leq \frac{\pi}{d_{2\pi}(u)},$$ and also from $\big|\sum_{j=0}^{m-1}\eee^{\ii ju}\big|\leq m$.

For integer $1\leq n\leq2\log\log a_r$, put $$\mathcal K_{r,n}(u):=\sup_{|v-u|\leq n\omega_r}D_{|\mathcal X_r|}(v),
\quad u\in\mr,$$ where $\omega_r=(\log\log a_r)^4/T_r$. This function is $2\pi$-periodic and does not exceed $|\mathcal X_r|=\lfloor T_r^{1-\lambda}\rfloor+1$. Since $\omega_r=o((\log\log a_r)^{-1})$, we conclude that, for all sufficiently large $r$, $n\omega_r<1$. Further, for such $r$,
$$\mathcal K_{r,n}(u)=
\begin{cases}
|\mathcal X_r|,&d_{2\pi}(u)\leq n\omega_r,\\[2mm]
\displaystyle\min(|\mathcal X_r|, \pi/(d_{2\pi}(u)-n\omega_r)),& d_{2\pi}(u)>n\omega_r.
\end{cases}
$$
Using symmetry about $\pi$, we obtain
\begin{multline*}
\int_0^{2\pi}\mathcal K_{r,n}(u)\,\dd u =2\int_0^{n\omega_r} |\mathcal X_r|\,\dd u+2\int_{n\omega_r}^\pi\min (|\mathcal X_r|,\pi/(u-n\omega_r))\,\dd u\\
=2n\omega_r |\mathcal X_r|+2\int_0^{\pi-n\omega_r}
\min(|\mathcal X_r|,\pi/s)\,\dd s\leq 2(n\omega_r|\mathcal X_r|+\pi(1+\log |\mathcal X_r|))
\end{multline*}
having utilized $$\int_0^\pi \min(m,\pi/s)\,\dd s =\int_0^{\pi/m}m\,\dd s
+\pi\int_{\pi/m}^{\pi}s^{-1}\dd s=\pi(1+\log m),$$ which holds for any $m\in\mn$. For later use, we note that
\begin{multline}\label{eq:thickened}
0\leq\mathcal K_{r,n}(u)\leq |\mathcal X_r|\quad\text{and}\\ \sup_{\substack{J\subset\mr\ {\rm interval}\\ |J|\leq2}}
\int_J\mathcal K_{r,n}(u)\,\dd u\leq \int_0^{2\pi}\mathcal K_{r,n}(u)\,{\rm d}u\leq 2(n\omega_r|\mathcal X_r|+\pi(1+\log |\mathcal X_r|))
\end{multline}
for all sufficiently large $r$, uniformly in $1\leq n\leq 2\log\log a_r$.

The following inequality will control the total interaction between different gaps.
\begin{lemma}\label{lem:gap}
For each $n\in\mn$ and all positive numbers $a_1,\ldots,a_n$,
\begin{equation}\label{eq:gap}
\sum_{1\leq j<l\leq n} \frac{\min(a_j,a_l)}{a_j+\ldots+a_{l-1}}
\leq 2n\log n.
\end{equation}
\end{lemma}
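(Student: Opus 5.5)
The plan is to charge each pair $(j,l)$ to the index at which the minimum $\min(a_j,a_l)$ is attained, and to show that for each fixed index the charged terms are dominated by a harmonic series. This gives a bound of order $\log n$ per index and hence $2n\log n$ in total.

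\emph{Charging.} For each pair $j<l$ designate one index $i\in\{j,l\}$ with $a_i=\min(a_j,a_l)$, breaking ties arbitrarily but fixing the choice, and call the other index $k$ the partner. Every pair is counted exactly once. So it suffices to show that, for each fixed $i$, the total of the terms charged to $i$ is at most $2\log n$.

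\emph{Partners to the right of $i$.} Fix $i$. List the partners $k>i$ charged to $i$ in increasing order $k_1<k_2<\cdots<k_\alpha$. Each satisfies $a_{k_s}\ge a_i$. For the $m$-th one, the denominator $a_i+\cdots+a_{k_m-1}$ contains $a_i$ and also $a_{k_1},\ldots,a_{k_{m-1}}$, since these indices lie strictly between $i$ and $k_m$. Hence the denominator is at least $m a_i$, and the term is at most $1/m$. Summing gives at most $H_\alpha:=\sum_{m=1}^\alpha m^{-1}$.

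\emph{Partners to the left of $i$.} List the partners $k<i$ in decreasing order $k_1>k_2>\cdots>k_\beta$. For the $m$-th one, the denominator $a_{k_m}+\cdots+a_{i-1}$ contains $a_{k_m},a_{k_{m-1}},\ldots,a_{k_1}$. These are $m$ numbers, each at least $a_i$. So the term is again at most $1/m$, and the sum is at most $H_\beta$.

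\emph{Summing up.} Since $\alpha+\beta\le n-1$, the contribution charged to $i$ is at most $H_\alpha+H_\beta$. The elementary inequality I will use is $H_k\le\log(2k+1)$ for $k\ge0$. It can be checked from $H_1=1<\log 3$ and the increment comparison $1/(k+1)\le\log\frac{2k+3}{2k+1}$, which holds because $\log(1+x)\ge x/(1+x/2)$ with $x=2/(2k+1)$, giving exactly $2/(2k+2)$. Then
\[
H_\alpha+H_\beta\le\log\big((2\alpha+1)(2\beta+1)\big)\le 2\log(\alpha+\beta+1)\le2\log n,
\]
where the middle step is the AM--GM inequality $(2\alpha+1)(2\beta+1)\le(\alpha+\beta+1)^2$. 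Summing over the $n$ indices $i$ yields \eqref{eq:gap}.

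The only delicate point is the bookkeeping of the denominator's contents. In the right-partner case the term $a_{k_m}$ itself is excluded from the denominator, while $a_i$ is included; in the left-partner case it is the reverse. I have checked that in both cases exactly $m$ summands of size at least $a_i$ appear, so the $1/m$ bounds hold as stated.
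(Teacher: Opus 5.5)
Your proof is correct, but it takes a different route from the paper. The charging step, the $1/m$ bounds for partners on both sides, the induction for $H_k\le\log(2k+1)$ via $\log(1+x)\ge 2x/(2+x)$, and the AM--GM step all check out.

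The paper does not charge pairs to the smaller element. It fixes the left index $j$ and introduces truncated ratios $b_l=\min(1,a_l/a_j)$ and running sums $B_l=1+b_{j+1}+\cdots+b_{l-1}$. It then observes that $a_j+\cdots+a_{l-1}\ge a_jB_l$, so each term is at most $b_l/B_l\le 2\log(B_{l+1}/B_l)$. The row sum therefore telescopes to $2\log B_{n+1}\le 2\log(n-j+1)$, and summing over $j$ gives $2\log(n!)\le 2n\log n$.

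What the paper's route buys:
\begin{itemize}
\item It is one-sided: only indices to the right of $j$ are examined.
\item It needs no case distinction on which endpoint attains the minimum, because small $a_l$ are absorbed automatically through the weights $b_l<1$.
\item It lands directly on the slightly sharper bound $2\log(n!)$.
\end{itemize}

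What your route buys:
\begin{itemize}
\item It makes the harmonic structure transparent. A term charged to $i$ is at most the reciprocal of its rank among $i$'s partners on that side, since all earlier partners lie inside the denominator and each is at least $a_i$.
\end{itemize}

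The price is that you need two-sided bookkeeping. You also need the sharpened estimate $H_k\le\log(2k+1)$ together with AM--GM to get exactly the constant $2$. A cruder bound such as $H_\alpha+H_\beta\le 2H_{n-1}\le 2(1+\log(n-1))$ would only give $2n\log n+O(n)$, which fails the stated inequality for small $n$.
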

\begin{proof}
Fix $1\leq j<n$ and put $b_l:=\min(1,a_l/a_j)$, $A_l:=a_j+\ldots+a_{l-1}$ for $j<l\leq n$ and $B_l:=1+b_{j+1}+\ldots+b_{l-1}$ for $j<l\leq n+1$. Since
$A_l\geq a_jB_l$, $B_{l+1}=B_l+b_l$ and $0<b_l/B_l\leq 1$ (just because $b_l\leq 1$ and $B_l\geq 1$), we conclude that
$$\sum_{l=j+1}^{n}\frac{\min(a_j,a_l)}{A_l} \leq\sum_{l=j+1}^{n}\frac{b_l}{B_l} \leq2\sum_{l=j+1}^{n}\log\frac{B_{l+1}}{B_l}=2\log B_{n+1}
\leq 2\log(n-j+1)$$ having utilized $x\leq 2\log(1+x)$ for $x\in [0,1]$ for the second inequality. Summing over $1\leq j\leq n-1$ we obtain  $$\sum_{1\leq j<l\leq n}
\frac{\min(a_j,a_l)}{a_j+\ldots+a_{l-1}}
\leq 2\log(n!)\leq 2n\log n,$$ thereby completing the proof.
\end{proof}

Next we bound joint moments of the Euler products subject to the strong barriers by a product of deterministic functions of the gaps between their arguments. For $0<x\leq(\log\log a_r)^2$, put $$\psi_r(x):=\min\big(\log a_r,~\max\big(1/x,(\log\log a_r)^2\big)\big)$$ and $$\chi_r(x):=
\begin{cases}
(\log\log a_r)^{-2B}\psi_r(x),   &0<x<(\log a_r)^{-b},\\
(\log\log a_r)^4\psi_r(x),  &(\log a_r)^{-b}\leq x\leq(\log\log a_r)^2,
\end{cases}
$$
where $B\geq 1$ and $b\in (0,1)$ are fixed numbers as appear in \eqref{eq:strongbarrier}. Also, we recall the definition of events $\mathcal A_r(t)$ from \eqref{eq:strongbarrier}.
\begin{lemma}\label{lem:cell}
There exists an absolute constant $C>0$ such that, for all
sufficiently large $r$, integers
$1\leq k\leq\lfloor\log\log a_r\rfloor$ and arguments
$$(\log\log a_r)^{-2}\leq t_1<\cdots<t_{2k}\leq(\log\log a_r)^2,$$ we have
\begin{equation}\label{eq:cellMoment}
\me\Big[\prod_{j=1}^{2k}\big(\1_{\mathcal A_r(t_j)}|F_{a_r}(t_j)|^2\big)\Big]\leq (Ck)^{Ck}(\log a_r)^{2k}\prod_{j=1}^{2k-1}\chi_r(t_{j+1}-t_j).
\end{equation}
The constant $C$ does not depend on $B$ or $b$.
\end{lemma}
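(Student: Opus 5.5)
We follow Harper's "decoupling along scales" scheme for high moments of Euler products. For each gap $g_j:=t_{j+1}-t_j$ let $\ell_j\in\{0,\ldots,J_*(a_r)\}$ be the least index with $\eee^{-\ell_j}\log a_r\le \max(1/g_j,(\log\log a_r)^2)$, capped at $\ell_j=0$ when $1/g_j\ge\log a_r$. This is the scale at which the frequencies $t_j$ and $t_{j+1}$ begin to decorrelate. Primes below $\exp(q_{\ell,a_r})$ see the points $t_j,t_{j+1}$ as essentially equal, while primes above it see them as essentially independent.

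\textbf{Step 1: splitting by scale and independence.} We order the $2k$ points and group them by gap size, using a binary-tree (or single-linkage) clustering. At each scale $\ell$, the points fall into clusters whose diameters are at most $\eee^{\ell}/\log a_r$, up to the $(\log\log a_r)^2$ cap. Write $|F_{a_r}(t)|^2$ as a product over the dyadic-exponential prime ranges $(\exp(q_{\ell+1,a_r}),\exp(q_{\ell,a_r})]$. By independence of the prime signs, the expectation factors over these ranges. Within one range, points of a single cluster are replaced by one representative, at the cost of a bounded factor: Lemma~\ref{lem:osc-prime-sum} shows that $\sum p^{-1}(\cos(u\log p)-1)$ over the range is $O(1)$ when $u$ is at most the cluster diameter. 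Distinct clusters contribute multiplicatively up to $\exp(O(\sum p^{-1}|\cos((t-t')\log p)|))$ cross terms. Here Lemma~\ref{lem:gap} enters: summing these interactions over all pairs, and bounding each by $\min(g_j,g_l)/(g_j+\cdots+g_{l-1})$-type quantities through \eqref{eq:oscBound}, gives a total of at most $2(2k)\log(2k)$. That produces the factor $(Ck)^{Ck}$.

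\textbf{Step 2: barriers at the merge scale.} The strong or weak barrier is applied at the point where clusters merge. Take the top range, below $\exp(q_{\ell_j,a_r})$, where $t_j$ and $t_{j+1}$ effectively coincide, so that the squared moduli merge into a fourth power. There we do not take a fourth moment, which would be too large. Instead we bound one copy of $|F_{\ell_j,a_r}(t_{j+1})|^2$ deterministically on $\mathcal A_r(t_{j+1})$: by $q_{\ell_j,a_r}^2(\log\log a_r)^{-2B}$ if $\ell_j\le(1-b)\log\log a_r$, which corresponds to $g_j<(\log a_r)^{-b}$, and by $q_{\ell_j,a_r}^2(\log\log a_r)^4$ otherwise, from $\mathcal D_r$. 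The other copy contributes a second moment $\asymp q_{\ell_j,a_r}$ by \eqref{eq:EulerSecond}. The prime range above $\exp(q_{\ell_j,a_r})$ contributes $\log a_r/q_{\ell_j,a_r}$ for each point separately, again by \eqref{eq:EulerSecond} and Mertens. Bookkeeping then gives, per merge, $(\log a_r)\cdot q_{\ell_j,a_r}\cdot(\text{barrier factor})$ divided by the ratio already counted. Since $q_{\ell_j,a_r}\asymp\psi_r(g_j)$, this produces exactly $\chi_r(g_j)$. The root point contributes an extra $\log a_r$, giving $(\log a_r)^{2k}\prod_j\chi_r(g_j)$ after collecting the $2k$ second moments. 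The $\max$ with $(\log\log a_r)^2$ in $\psi_r$ reflects the scale below which the points are treated as all independent, where we simply use Lemma~\ref{lem:twofrequency}-type bounds.

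\textbf{Main obstacle.} The difficulty is making Step 2 rigorous when several points merge at nested scales simultaneously. A barrier on $|F_{\ell,a_r}|$ is a condition on a product over \emph{all} primes up to $\exp(q_\ell)$, not on a single range, so the factorization over ranges in Step 1 conflicts with inserting indicators. We would handle this by applying each barrier only once, at the scale $\ell_j$ of the $j$th merge. We bound the indicator of $\mathcal A_r(t_{j+1})$ by that single event, drop all other barriers (they are only used as upper bounds), and then use that $\1\{|F_{\ell_j}|\le\beta\}|F_{\ell_j}|^2\le\beta^2$ is a deterministic function of small primes only. With this, the remaining expectation still factors over the independent larger-prime ranges. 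Ensuring that the $2k-1$ merges use disjoint copies of the moduli is achieved by the tree ordering: each merge consumes the square of the point being absorbed. This is the only place where the constant $C$ must be shown to be independent of $B$ and $b$.
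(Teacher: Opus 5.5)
Your proposal is essentially the paper's proof. The paper likewise extracts one barrier per gap, at the scale $q_{\widetilde h_j,a_r}\asymp\psi_r(t_{j+1}-t_j)$ capped below at $(\log\log a_r)^2$, applying it to one endpoint: the paper uses the left endpoint with root $t_{2k}$, where you use the right endpoint with root $t_1$. It then factors the residual moment over primes, takes the diagonal $\log a_r\prod_j\eee^{\widetilde h_j}$ from Mertens, and controls the cross terms by \eqref{eq:oscBound} and Lemma~\ref{lem:gap}; those cross terms include the $\cos((t_j+t_l)\log p)$ terms, which are dominated because all $t_j>0$.

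Three remarks. First, the cluster-representative step and the appeal to Lemma~\ref{lem:twofrequency} are unnecessary: after the extraction, any two points are already separated on the primes where both are active. Second, the cross terms need the signed sum $\sum_p p^{-1}\cos(\cdot)$, not $\sum_p p^{-1}|\cos(\cdot)|$, since only the former has the cancellation that \eqref{eq:oscBound} exploits. Third, take your merge index one scale coarser, as the paper's $h_j=\lceil\log(x_j\log a_r)\rceil-1$ does. Otherwise $g_j<(\log a_r)^{-b}$ can give $\ell_j=\lfloor(1-b)\log\log a_r\rfloor+1$, which lies outside the range where the strong barrier is available.
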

\begin{proof}
Put $n:=2k$ and $x_j:=t_{j+1}-t_j$ for $1\leq j<n$.
For these indices, define $$h_j:=\max\big(0,~\lceil \log(x_j\log a_r)\rceil-1\big)\quad\text{and}\quad \widetilde h_j:=\min\big(h_j,~\lfloor\log\log a_r-2\log\log\log a_r\rfloor\big).$$
For all sufficiently large $r$, each $\widetilde h_j$ belongs to $\{0,\ldots,J_\ast(a_r)\}$, where $J_\ast(a_r)=\lfloor\log\log a_r-1\rfloor$. We claim that
\begin{equation}\label{eq:gap-cutoffs}
(\log\log a_r)^2\leq q_{\widetilde h_j,a_r}\leq \eee\psi_r(x_j).
\end{equation}

\noindent {\sc Proof of \eqref{eq:gap-cutoffs}.} We first show that $q_{h_j,a_r}\leq \eee/x_j$. Indeed, if $h_j\geq 1$, then $h_j<\log(x_j\log a_r)\leq h_j+1$, whence $1/x_j<q_{h_j,a_r}\leq \eee/x_j$. If $h_j=0$, then $x_j\log a_r\leq \eee$, so that $q_{0,a_r}=\log a_r \leq \eee/x_j$.

Further, using $$\log\log a_r-2\log\log\log a_r-1<H_r^\ast:=\lfloor \log\log a_r-2\log\log\log a_r\rfloor\leq\log\log a_r-2\log\log\log a_r,$$ we infer $$(\log\log a_r)^2\leq q_{H_r^\ast, a_r}=\exp(-H_r^\ast)\log a_r <\eee (\log\log a_r)^2.$$ Since the function $h\mapsto q_{h,a_r}$ is decreasing, we have $q_{\widetilde h_j,a_r}=\max(q_{h_j,a_r},q_{H_r^\ast,a_r})$.
Consequently, $$(\log\log a_r)^2\leq q_{\widetilde h_j,a_r}\leq \eee\max(1/x_j,(\log\log a_r)^2).$$ This proves the left-hand inequality in \eqref{eq:gap-cutoffs}. To obtain the right-hand inequality in \eqref{eq:gap-cutoffs}, we combine the right-hand inequality in the latter displayed formula with $q_{\widetilde h_j,a_r}\leq \log a_r$ which follows from $\widetilde h_j\geq 0$:
\begin{multline*}
q_{\widetilde h_j,a_r}\leq \min\big(\log a_r,~\eee\max\big(1/x_j,~(\log\log a_r)^2\big)\big)\\\leq \eee\min\big(\log a_r,~\max\big(1/x_j,~(\log\log a_r)^2\big)\big)=\eee\psi_r(x_j).
\end{multline*}
The proof of \eqref{eq:gap-cutoffs} is complete.

Put $$\alpha_j:=
\begin{cases}
(\log\log a_r)^{-B},
   &x_j<(\log a_r)^{-b},\\
(\log\log a_r)^2,
   &x_j\geq(\log a_r)^{-b}.
\end{cases}
$$
If $x_j<(\log a_r)^{-b}$, then $\widetilde h_j\leq h_j\leq \lfloor(1-b)\log\log a_r\rfloor$ for all sufficiently large $r$. With this at hand, irrespective of whether $x_j<(\log a_r)^{-b}$ or not, the definitions of the weak and strong barriers imply that, on $\mathcal A_r(t_j)$, $|F_{\widetilde h_j,a_r}(t_j)|\leq q_{\widetilde h_j,a_r}\alpha_j$. Extracting these factors for $j<n$ and then discarding the
remaining indicators, we obtain
\begin{equation}\label{eq:moment-extraction}
\me\Big[\prod_{j=1}^{n}\big(\1_{\mathcal A_r(t_j)}|F_{a_r}(t_j)|^2\big)\Big]\leq \prod_{j=1}^{n-1}\big(q_{\widetilde h_j, a_r}\alpha_j\big)^2\me\Big[|F_{a_r}(t_n)|^2\prod_{j=1}^{n-1}\Big|\frac{F_{a_r}(t_j)}
{F_{\widetilde h_j,a_r}(t_j)}\Big|^2\Big].
\end{equation}
We shall prove that
\begin{equation}\label{eq:residualMoment}
\me\Big[|F_{a_r}(t_n)|^2\prod_{j=1}^{n-1}\Big|\frac{F_{a_r}(t_j)}{F_{\widetilde h_j,a_r}(t_j)}\Big|^2\Big]\leq (C_\ast n)^{C_\ast n}\log a_r\prod_{j=1}^{n-1}\eee^{\widetilde h_j}
\end{equation}
for an absolute constant $C_\ast>0$.

The ratio with $\widetilde h_j=0$ is equal to one. Each other ratio contains only primes exceeding $\exp(q_{\widetilde h_j,a_r})\geq\exp((\log\log a_r)^2)$, see \eqref{eq:gap-cutoffs}. For $p\leq a_r$, put $$\mathcal I_p:=
\{n\}\cup \big\{j\in\{1,\ldots,n-1\}:~p>\exp(q_{\widetilde h_j,a_r})\big\}.$$ By independence of the prime signs,
\begin{multline*}
\me\Big[|F_{a_r}(t_n)|^2\prod_{j=1}^{n-1}\Big|\frac{F_{a_r}(t_j)}{F_{\widetilde h_j,a_r}(t_j)}\Big|^2
\Big]=\prod_{p\leq a_r}\me\Big[\prod_{j\in\mathcal I_p}\big(1+f(p)p^{-1/2+\ii t_j}\big)\big(1+f(p)p^{-1/2-\ii t_j}\big)\Big]\\=\prod_{p\leq a_r}\me\Big[\prod_{k=1}^{2|\mathcal I_p|}\big(1+f(p)p^{-1/2+\ii s_{p,k}}\big)\Big],
\end{multline*}
where, for each prime $p\leq a_r$, $(s_{p,k})_{1\leq k\leq|\mathcal I_p|}$ is an enumeration of $(t_j)_{j\in\mathcal I_p}$ and $s_{p,|\mathcal I_p|+k}:=-s_{p,k}$ for
$1\leq k\leq|\mathcal I_p|$. Expanding the latter product gives $$\prod_{k=1}^{2|\mathcal I_p|}\big(1+f(p)p^{-1/2+\ii s_{p,k}}\big)=\sum_{\ell=0}^{2|\mathcal I_p|}(f(p)^\ell)p^{-\ell/2}e_\ell(p^{\ii s_{p,1}},\ldots, p^{\ii s_{p,2|\mathcal I_p|}}),$$ where $e_\ell$ is the elementary symmetric polynomial of degree $\ell$ given by $$e_\ell (p^{\ii s_{p,1}},\ldots, p^{\ii s_{p,2|\mathcal I_p|}})=\sum_{1\leq k_1<\cdots<k_\ell \leq 2|\mathcal I_p|}p^{\ii (s_{p, k_1}+\ldots+ s_{p, k_\ell})}, \quad 1\leq \ell\leq 2|\mathcal I_p|$$ and $e_0\equiv 1$. Put $$L_p:=\prod_{j\in\mathcal I_p}\big(1+f(p)p^{-1/2+\ii t_j}\big)\big(1+f(p)p^{-1/2-\ii t_j}\big).$$ Noting that
\begin{multline*}
e_2\big(p^{\ii s_{p,1}},\ldots, p^{\ii s_{p,2|\mathcal I_p|}}{\big)}=|\mathcal I_p|+4\sum_{\substack{j<l\\j,l\in\mathcal I_p}}\cos (t_j\log p)\cos (t_l\log p)\\=|\mathcal I_p|+
2\sum_{\substack{j<l\\j,l\in\mathcal I_p}}(\cos((t_j-t_l)\log p)+\cos((t_j+t_l)\log p))
\end{multline*}
and $|e_\ell\big(p^{\ii s_{p,1}},\ldots, p^{\ii s_{p,2|\mathcal I_p|}}\big)|\leq \binom{2|\mathcal I_p|}{\ell}$ and passing to expectations, we obtain
\begin{equation}\label{eq:inter1}
\me [L_p]=1+|\mathcal I_p|p^{-1}+2p^{-1}\sum_{\substack{j<l\\j,l\in\mathcal I_p}}(\cos((t_j-t_l)\log p)+\cos((t_j+t_l)\log p))+R_p,
\end{equation}
where $|R_p| \leq\sum_{\ell=4}^{2|\mathcal I_p|}\binom{2|\mathcal I_p|}{\ell}p^{-\ell/2}$. Here, as usual, an empty sum is understood to be zero. Since $\binom{2|\mathcal I_p|}{\ell}\leq(2|\mathcal I_p|)^\ell/\ell!$, we infer
\begin{equation}\label{eq:inter2}
|R_p|\leq\sum_{\ell\geq 4} \frac{(2|\mathcal I_p|p^{-1/2})^\ell}{\ell!}\leq \Big(16\sum_{\ell\geq 4} \frac{1}{\ell!}\Big)\big(|\mathcal I_p|p^{-1/2}\big)^4
\end{equation}
whenever $2|\mathcal I_p|p^{-1/2}\leq 1$.
Similarly, $$|\me [L_p]-1|\leq \sum_{\ell\geq 2} \frac{(2|\mathcal I_p|p^{-1/2})^\ell}{\ell!} \leq \Big(4\sum_{\ell\geq 1}\frac{1}{\ell!}\Big)(|\mathcal I_p|p^{-1/2})^2$$ whenever $2|\mathcal I_p|p^{-1/2}\leq 1$. We are now working with primes satisfying $\exp((\log\log a_r)^2)<p\leq a_r$. Hence, $$|\mathcal I_p|p^{-1/2}\leq np^{-1/2}\leq 2\log\log a_r\,\exp(-(\log\log a_r)^2/2)~\to~0,\quad r\to\infty,$$ and we conclude that, uniformly over the primes and arguments under consideration, $|\me[L_p]-1|\leq 1/2$ for all sufficiently large $r$. Using \eqref{eq:inter1} and \eqref{eq:inter2}, in combination with $|\log(1+v)-v|\leq v^2$ for $|v|\leq 1/2$, we arrive at
\begin{multline}\label{eq:localMoment}
\log\me [L_p]=|\mathcal I_p|p^{-1}+2p^{-1}\sum_{\substack{j<l\\j,l\in\mathcal I_p}}(\cos((t_j-t_l)\log p)+\cos((t_j+t_l)\log p))+O(n^4p^{-2}),
\end{multline}
where the implicit constant is absolute.

Now we are ready to prove \eqref{eq:residualMoment}. We first note that $$n^4\sum_{p>\exp((\log\log a_r)^2)}p^{-2}\leq n^4\sum_{k>\exp((\log\log a_r)^2)}k^{-2}=O\big(n^4\exp(-(\log\log a_r)^2)\big)=o(1)$$ because $n\leq 2\lfloor\log\log a_r\rfloor$. Next, we estimate the contribution to the expectation in \eqref{eq:residualMoment} of the primes $p\leq \exp\big((\log\log a_r)^2\big)$ and the terms $|\mathcal I_p|p^{-1}$ for $\exp\big((\log\log a_r)^2\big)<p\leq a_r$. For $p\leq \exp\big((\log\log a_r)^2\big)$, we have $\mathcal I_p=\{n\}$, and the contribution of one prime from the present range is $\me\big[\big|1+f(p)p^{-1/2-\ii t_n}\big|^2\big]=1+1/p$. For $\exp((\log\log a_r)^2)<p\leq a_r$, we have $|\mathcal I_p|=1+\sum_{j=1}^{n-1}\1_{\{p>\exp(q_{\widetilde h_j,a_r})\}}$, whence
$$\sum_{\exp((\log\log a_r)^2)<p\leq a_r}|\mathcal I_p|p^{-1}=\sum_{\exp((\log\log a_r)^2)<p\leq a_r}p^{-1}+\sum_{j=1}^{n-1}\sum_{\exp(q_{\widetilde h_j,a_r})<p\leq a_r}p^{-1}.$$ Therefore, the contribution under discussion is
\begin{multline*}
\prod_{p\leq \exp((\log\log a_r)^2)}(1+p^{-1})\exp\Big(\sum_{\exp((\log\log a_r)^2)<p\leq a_r}|\mathcal I_p|p^{-1}\Big)\\ \leq \exp\Big(\sum_{p\leq a_r}p^{-1}\Big)\prod_{j=1}^{n-1}
\exp\Big(\sum_{\exp(q_{\widetilde h_j,a_r})<p\leq a_r}p^{-1}\Big).
\end{multline*}
We have used $1+1/p\leq \eee^{1/p}$ for the inequality. By \eqref{eq:mertens}, for all sufficiently large $r$, $\exp\big(\sum_{p\leq a_r}p^{-1}\big)\leq C_0\log a_r$ and, uniformly in $1\leq j<n$ and the arguments $t_1,\ldots, t_n$ in the stated regions, $$
\sum_{\exp(q_{\widetilde h_j,a_r})<p\leq a_r}p^{-1}=\log\log a_r-\log q_{\widetilde h_j,a_r}+O(1)=\widetilde h_j+O(1).$$ Hence,
$\exp\big(\sum_{\exp(q_{\widetilde h_j,a_r})<p\leq a_r}p^{-1}\big)\leq C_1\eee^{\widetilde h_j}$. When $\widetilde h_j=0$, the sum is empty and this factor is equal to $1$. Summarizing, the contribution under discussion is bounded from above by $C_2^n \log a_r \prod_{j=1}^{n-1}\eee^{\widetilde h_j}$, where $C_2:=\max(C_0, C_1)$.

It remains to estimate the contribution of the pairs in \eqref{eq:localMoment}. Fix $j<l<n$ such that both $\widetilde h_j$ and $\widetilde h_l$ are positive. If $j,l\in\mathcal I_p$,
then necessarily $\exp(\max(q_{\widetilde h_j,a_r}, q_{\widetilde h_l,a_r}))<p\leq a_r$, and an application of Lemma~\ref{lem:osc-prime-sum} and \eqref{eq:oscBound} yields
\begin{multline*}
\sum_{\exp(\max(q_{\widetilde h_j,a_r}, q_{\widetilde h_l,a_r}))<p\leq a_r}p^{-1}(\cos((t_j-t_l)\log p)+\cos((t_j+t_l)\log p))\\\leq 6\frac{\min(\eee^{\widetilde h_j},\eee^{\widetilde h_l})}{(t_l-t_j)\log a_r}+4C^\prime (\log\log a_r)^2 \eee^{-c'\log\log a_r}.
\end{multline*}
To obtain the last exponent, we have used \eqref{eq:gap-cutoffs} ensuring that $\max(q_{\widetilde h_j,a_r}, q_{\widetilde h_l,a_r})\geq (\log\log a_r)^2$. Since
$\eee^{\widetilde h_j}/\log a_r\leq \eee^{h_j}/\log a_r \leq x_j$, see the paragraph following \eqref{eq:gap-cutoffs}, the first term is at most
$6\frac{\min(x_j,x_l)}{x_j+\cdots+x_{l-1}}$. Finally, by Lemma~\ref{lem:gap} applied to $x_1,\ldots, x_{n-1}$, $\sum_{1\leq j<l< n} \frac{\min(x_j,x_l)}{x_j+\ldots+x_{l-1}}\leq 2(n-1)\log (n-1)\leq 2n\log n$.

For a pair $(j,n)$ with $j<n$ and $\widetilde h_j>0$, we have $j,n\in\mathcal I_p$ if, and only if, $\exp(q_{\widetilde h_j,a_r})<p\leq a_r$ because the index $n$ belongs to $\mathcal I_p$ for each $p\leq a_r$. Applying the same estimates as above, the sum of the corresponding first terms is at most
$$6\sum_{\substack{1\leq j<n\\\widetilde h_j>0}}\frac{\eee^{\widetilde h_j}}{\log a_r\,(t_n-t_j)}\leq 6(n-1).$$ Here, we have used $\eee^{\widetilde h_j}/\log a_r\leq x_j\leq t_n-t_j$. Indices with $\widetilde h_j=0$ contribute no such pairs. Since $n\leq 2\lfloor \log\log a_r\rfloor$, $\sum_{1\leq j<l\leq n}(\log\log a_r)^2 \eee^{-c'\log\log a_r}=(\log\log a_r)^2 \eee^{-c'\log\log a_r}n(n-1)/2=o(1)$ as $r\to\infty$. Exponentiating the resulting $C_3 n\log n$ bound, with $C_3\geq 1$, proves \eqref{eq:residualMoment}, with $C_\ast:=\max(C_2,C_3)$.

Finally, observe that $$\log a_r\prod_{j=1}^{n-1}\big(q_{\widetilde h_j,a_r}^2 \eee^{\widetilde h_j}\alpha_j^2\big)=(\log a_r)^n \prod_{j=1}^{n-1}\big(q_{\widetilde h_j,a_r}\alpha_j^2\big).$$
Combining this equality with \eqref{eq:gap-cutoffs}, \eqref{eq:moment-extraction} and \eqref{eq:residualMoment} and noting that $\alpha_j^2=\chi_r(x_j)/\psi_r(x_j)$ proves \eqref{eq:cellMoment}, with $C:={\eee C_\ast}$.
\end{proof}

To proceed, we record three consequences of the definitions of $\psi_r$ and $\chi_r$. We shall show that there exist absolute constants $c_1,c_2\geq1$ such that, for all sufficiently large $r$,
\begin{equation}\label{eq:gap-integrals}
\int_0^{(\log a_r)^{-b}}\chi_r(x)\,\dd x\leq c_1(\log\log a_r)^{1-2B},\qquad
\int_0^{(\log\log a_r)^2}\chi_r(x)\,\dd x\leq c_2(\log\log a_r)^8,
\end{equation}
and
\begin{equation}\label{eq:large-gap-bound}
\max_{(\log a_r)^{-b}\leq x\leq(\log\log a_r)^2}\chi_r(x)
=(\log a_r)^b(\log\log a_r)^4.
\end{equation}
Indeed, $(\log a_r)^{-1}<(\log a_r)^{-b}<(\log\log a_r)^{-2}$ for all sufficiently large $r$. Hence, $\psi_r(x)=\min(\log a_r, 1/x)$ for $x\in (0,(\log a_r)^{-b})$ and
\begin{multline*}
\int_0^{(\log a_r)^{-b}}\chi_r(x)\,\dd x
=(\log\log a_r)^{-2B}\Big(\int_0^{1/\log a_r}\log a_r\,\dd x+
\int_{1/\log a_r}^{(\log a_r)^{-b}}\frac{\dd x}{x}\Big)\\
=(\log\log a_r)^{-2B}\big(1+(1-b)\log\log a_r\big).
\end{multline*}
On the remaining range, $\psi_r(x)=\max(1/x,(\log\log a_r)^2)$, which proves \eqref{eq:large-gap-bound} and gives
\begin{multline*}
\int_{(\log a_r)^{-b}}^{(\log\log a_r)^2}\chi_r(x)\,\dd x
=(\log\log a_r)^4\Big(\int_{(\log a_r)^{-b}}^{(\log\log a_r)^{-2}}\frac{\dd x}{x}
+\int_{(\log\log a_r)^{-2}}^{(\log\log a_r)^2}(\log\log a_r)^2\,\dd x\Big)\\
=(\log\log a_r)^4\big(b\log\log a_r-2\log\log\log a_r+(\log\log a_r)^4-1\big)~\sim~(\log\log a_r)^8.
\end{multline*}
These formulas prove \eqref{eq:gap-integrals}, with constants independent of $B$ and $b$.

Next, integrating the preceding moment bound, we obtain an estimate which controls all rows of the covariance integral $I_r(x,y)$ simultaneously.
\begin{lemma}\label{lem:highmoment}
There exists an absolute constant $C_0\geq1$, independent of $B$ and $b$, such that, for all sufficiently large $r$ and integers $1\leq k\leq\lfloor\log\log a_r\rfloor$,
\begin{multline}\label{eq:highmoment}
\me\left[\max_{x\in\mathcal X_r}\sum_{y\in\mathcal X_r}|I_r(x,y)|^{2k}\right]\\\leq (C_0k)^{C_0k}(\log\log a_r)^{C_0k}\big((\log a_r)^{2b}+(\log a_r)(\log\log a_r)^{-2Bk}\big).
\end{multline}
\end{lemma}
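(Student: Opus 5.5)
The plan is to expand the $2k$-th power, sum over $y$ inside the integral so as to get a geometric sum, and reduce everything to Lemma~\ref{lem:cell} followed by integration over the gaps. Write $|I_r(x,y)|^{2k}=I_r(x,y)^k\overline{I_r(x,y)}^{\,k}$ and expand it via \eqref{eq:I} as a $4k$-fold integral over pairs $(v_j,t_j)\in\Delta_r$, $1\le j\le 2k$. In the conjugated factors the roles of $v_j,t_j$ and the signs of the frequencies are reversed. The $x$-dependence is a unimodular factor $x^{\ii\sum\pm v_j}$, so after taking absolute values inside the integral the maximum over $x$ disappears. The $y$-dependence is $y^{-\ii\sigma}$ with $\sigma:=\sum_j\epsilon_jt_j$ for some signs $\epsilon_j=\pm1$. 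Since $\log y=T_r+m$ with $0\le m<|\mathcal X_r|$, summing over $y\in\mathcal X_r$ and applying \eqref{eq:geometric} gives the bound $D_{|\mathcal X_r|}(\sigma)$. Because $|v_j-t_j|\le\omega_r$, we have $|\sigma-\sum_j\epsilon_jv_j|\le 2k\omega_r$, so $D_{|\mathcal X_r|}(\sigma)\le\mathcal K_{r,2k}(\sum_j\epsilon_jv_j)$ (note $2k\le2\log\log a_r$). Also $|(1/2+\ii v)^{-1}|\le 5^{1/2}g(v)$. Hence $\me[\max_x\sum_y|I_r|^{2k}]$ is at most $C^k$ times
\[
\int\!\!\int \me\Big[\prod_{j=1}^{2k}\1_{\mathcal A_r(v_j)}|F_{a_r}(v_j)|\,\1_{\mathcal A_r(t_j)}|F_{a_r}(t_j)|\Big]\prod_j g(v_j)g(t_j)|K_r(v_j-t_j)|\,\mathcal K_{r,2k}\Big(\sum_j\epsilon_jv_j\Big)\,\dd\mathbf v\,\dd\mathbf t .
\]

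Next, by AM--GM the product of the $4k$ moduli is at most $\tfrac12\big(\prod_j\1_{\mathcal A_r(v_j)}|F_{a_r}(v_j)|^2+\prod_j\1_{\mathcal A_r(t_j)}|F_{a_r}(t_j)|^2\big)$, and the two terms are symmetric, so take the $v$-term. (For the $t$-term, use $\mathcal K_{r,2k}(\sum_j\epsilon_jt_j)$ instead, which is legitimate since $D_{|\mathcal X_r|}(\sigma)\le\mathcal K_{r,2k}(\sigma)$ trivially.) The $t_j$ now appear only through $|K_r(v_j-t_j)|$ restricted to $|v_j-t_j|\le\omega_r$. By \eqref{eq:kernelBounds2}, integrating them out costs $\big(10C\log\log T_r/T_r\big)^{2k}\le (C\log\log a_r)^{2k}(\log a_r)^{-2k}$. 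The $v_j$ are distinct almost everywhere. Order them, which costs a factor $(2k)!$, and apply Lemma~\ref{lem:cell}. This yields $(Ck)^{Ck}(\log a_r)^{2k}\prod_{j<2k}\chi_r(v_{(j+1)}-v_{(j)})$, and the powers of $\log a_r$ cancel. It remains to bound
\[
\int_{v_{(1)}<\cdots<v_{(2k)}} \prod_{j=1}^{2k-1}\chi_r\big(v_{(j+1)}-v_{(j)}\big)\,\mathcal K_{r,2k}\Big(\sum_j\epsilon_jv_j\Big)\prod_j g(v_j)\,\dd\mathbf v
\]
by $(Ck)^{Ck}(\log\log a_r)^{Ck}\big((\log a_r)^{2b}+(\log a_r)(\log\log a_r)^{-2Bk}\big)$.

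For this I pass to the coordinates given by the $2k-1$ gaps together with one free variable $w$. The free variable enters $\sum_j\epsilon_jv_j$ with coefficient $\sum_j\epsilon_j$. Since there are $k$ plus signs and $k$ minus signs, I expect this coefficient to vanish, so $\mathcal K_{r,2k}$ depends on the gaps only. The integral over $w$ is then handled by $g$, or by the length $2(\log\log a_r)^2$ of the range. I then split according to how many gaps are at least $(\log a_r)^{-b}$.
\begin{itemize}
\item If no gap is large, bound $\mathcal K_{r,2k}\le|\mathcal X_r|\le 2\log a_r$ and use the first bound in \eqref{eq:gap-integrals} for each gap. This gives $(\log a_r)(c_1(\log\log a_r)^{1-2B})^{2k-1}$, which is the second term up to $(\log\log a_r)^{O(k)}$ (since $(\log\log a_r)^{2B}$ is a fixed power and can be absorbed there).
\item If some gap is large, take the largest-index large gap $x_{j_0}$. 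Use the linear dependence of $\sum_j\epsilon_jv_j$ on $x_{j_0}$ to integrate $\mathcal K_{r,2k}$ against it over windows of length at most $2$, via \eqref{eq:thickened}. This costs $O(k\omega_r|\mathcal X_r|+\log|\mathcal X_r|)=O(k\log\log a_r)$, while $\chi_r(x_{j_0})\le(\log a_r)^b(\log\log a_r)^4$ by \eqref{eq:large-gap-bound}. A possible second large gap contributes another such factor of $(\log a_r)^b$. All remaining gaps are integrated by \eqref{eq:gap-integrals}, each costing $(\log\log a_r)^8$. This produces the $(\log a_r)^{2b}$ term.
\end{itemize}

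The main obstacle is this last step, where the coefficient of $x_{j_0}$ in $\sum_j\epsilon_jv_j$ is a signed count that may vanish, so that $\mathcal K_{r,2k}$ cannot be integrated in that variable. When it vanishes I would switch to a different large gap, or to a gap with nonzero coefficient, whose $\chi_r$-integral is only polylogarithmic. Failing that, I would accept one factor $(\log a_r)^b$ for $\chi_r$ and one for $\mathcal K_{r,2k}$, using $|\mathcal K_{r,2k}|\le|\mathcal X_r|$ only when all gaps are small. I expect that the target $(\log a_r)^{2b}$ leaves exactly enough room for this bookkeeping.
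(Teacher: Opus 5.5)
Your reduction up to the ordered gap integral is essentially the paper's; the paper keeps the $t_j$ and integrates out the $v_j$, which is symmetric. The final step, however, has a genuine gap, exactly where you flag it.

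In the coordinates $(v_{(1)},x_1,\dots,x_{2k-1})$, the gap $x_i$ enters $\sum_j\epsilon_jv_{(j)}$ with coefficient $c_i=\sum_{j>i}\epsilon_j$. After ordering, $\epsilon$ can be any arrangement of $k$ plus and $k$ minus signs. Take $\epsilon=(+,-,+,-,\dots,+,-)$ and $k\ge2$; then $c_i=0$ for every even $i$. Consider the region where $x_1,x_3,\dots,x_{2k-1}<(\log a_r)^{-b}\le x_2,x_4,\dots,x_{2k-2}$. There $\mathcal K_{r,2k}$ is constant along every long gap and along $v_{(1)}$, yet not all gaps are short. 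So your first case does not apply, and your second case fails because no long gap has a nonzero coefficient.

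Your fallbacks do not repair this. Integrating $\mathcal K_{r,2k}$ in a short gap forces you to pull out $\chi_r$ at its supremum $\asymp(\log\log a_r)^{-2B}\log a_r$. There is also no pointwise bound on $\mathcal K_{r,2k}$ of size $(\log a_r)^b$; the only one available is $\mathcal K_{r,2k}\le|\mathcal X_r|$. Finally, your ``no gap is large'' case is too restrictive: the second term in \eqref{eq:highmoment} needs only $k$ short gaps, not $2k-1$.

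The missing idea is a dichotomy: either there is an integration variable with nonvanishing coefficient whose $\chi_r$-weights are bounded, or at least $k$ gaps are short. The paper integrates over a point rather than a gap. Call $v_{(i)}$ isolated if both of its adjacent gaps are at least $(\log a_r)^{-b}$.
\begin{itemize}
\item \emph{Some point is isolated.} Moving $v_{(i)}$ alone changes $\sum_j\epsilon_jv_{(j)}$ with coefficient $\epsilon_i=\pm1$, which never vanishes. Bound the at most two adjacent factors $\chi_r$ by \eqref{eq:large-gap-bound}; this is the source of $(\log a_r)^{2b}$. The integral of $\mathcal K_{r,2k}$ in $v_{(i)}$ then costs $O((\log\log a_r)^3)$ by \eqref{eq:kernel-for-moments}. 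The remaining gaps are integrated with the second bound in \eqref{eq:gap-integrals}.
\item \emph{No point is isolated.} Each point has a short adjacent gap, and each gap is adjacent to at most two points, so at least $k$ gaps are short. Then $\mathcal K_{r,2k}\le|\mathcal X_r|$, together with the first bound in \eqref{eq:gap-integrals} on those gaps, gives the second term.
\end{itemize}

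Your gap framework can also be closed directly. Suppose every long gap has $c_i=0$. The zeros of $c$, together with $c_0=c_{2k}=0$, cut the ordered signs into balanced blocks of even length. Hence at most $k-1$ interior gaps have $c_i=0$, and at least $k$ gaps are short. Otherwise some long gap has $c_i\ne0$ and can be integrated out.

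Separately, the identity $\sum_j\epsilon_j=0$ after ordering requires first restricting to positive frequencies. The paper does this via $I_r=2\,\mathrm{Re}\,J_r$ on $\Delta_r^+$. This matters because Lemma~\ref{lem:cell} covers only positive arguments, and reflecting a negative $v_j$ flips its sign.
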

\begin{proof}
Put $n:=2k$. Throughout the proof, $r$ is sufficiently large and $2\leq n\leq2\lfloor\log\log a_r\rfloor$. We first extend the periodic-kernel estimate to intervals of length at most $(\log\log a_r)^2$. Since, as $r\to\infty$, $$n\omega_r |\mathcal X_r|\leq 2(\log\log a_r)^5T_r^{-1}\big(T_r^{1-\lambda}+1\big)\leq 4(\log\log a_r)^5T_r^{-\lambda}=o(1)$$ and $$\log |\mathcal X_r|=O(\log T_r)=O(\log\log a_r),$$ formula \eqref{eq:thickened} and periodicity yield
\begin{equation}\label{eq:kernel-for-moments}
\sup_{\substack{J\subset\mr\ {\rm interval}\\|J|\leq(\log\log a_r)^2}}
\int_J\mathcal K_{r,n}(u)\,\dd u
\leq\Big(1+\frac{(\log\log a_r)^2}{2\pi}\Big)\int_0^{2\pi}\mathcal K_{r,n}(u)\,\dd u \leq C_1(\log\log a_r)^3.
\end{equation}
Also,
\begin{equation}\label{eq:kernel-pointwise-moments}
0\leq\mathcal K_{r,n}(u)\leq|\mathcal X_r|\leq T_r+1\leq3\log a_r,\quad u\in\mr.
\end{equation}
Here, we have used $\log a_r=(T_r+T_r^{1-\lambda})/2$.

%All constants introduced below are absolute.
Define $$\Delta_r^+:=\left\{(v,t):~(\log\log a_r)^{-2}\leq v,t\leq(\log\log a_r)^2,\quad |v-t|\leq\omega_r\right\}$$ and
\begin{equation}\label{eq:J-definition}
J_r(x,y):=\frac1{4\pi^2}\iint_{\Delta_r^+}
\frac{\1_{\mathcal A_r(v)}F_{a_r}(v)x^{\ii v}\1_{\mathcal A_r(t)}\overline{F_{a_r}(t)}y^{-\ii t}}
{(1/2+\ii v)(1/2-\ii t)}K_r(v-t)\,\dd v\,\dd t.
\end{equation}
Since $\omega_r<(\log\log a_r)^{-2}$, the integration domain $\Delta_r$ in the definition of $I_r(x,y)$ is the disjoint union of $\Delta_r^+$ and $-\Delta_r^+$. The identities
$$F_{a_r}(-t)=\overline{F_{a_r}(t)},\quad\mathcal A_r(-t)=\mathcal A_r(t),\quad K_r(-t)=\overline{K_r(t)},\quad t\in\mr$$ ensure that $I_r(x,y)=2\,\operatorname{Re}J_r(x,y)$ and thereupon
\begin{equation}\label{eq:positive-part-moment}
\max_{x\in\mathcal X_r}\sum_{y\in\mathcal X_r}|I_r(x,y)|^n
\leq2^n\max_{x\in\mathcal X_r}\sum_{y\in\mathcal X_r}|J_r(x,y)|^n.
\end{equation}
We shall use the bound
\begin{equation}\label{eq:denominator-moment}
\frac1{(1/4+v^2)^{1/2}(1/4+t^2)^{1/2}}\leq 4,
\quad (v,t)\in\Delta_r^+.
\end{equation}
Put
$$\widehat F_r(t):=\frac{\1_{\mathcal A_r(t)}F_{a_r}(t)}{1/2+\ii t},\quad
\mathscr H_r(v,t):=\widehat F_r(v)\overline{\widehat F_r(t)}K_r(v-t),$$
$$\sigma_j:=\begin{cases}1,&1\leq j\leq k,\\-1,&k<j\leq n,\end{cases}
\quad\text{and}\quad s(\boldsymbol t):=\sum_{j=1}^n\sigma_jt_j.$$
Writing $\dd\boldsymbol v:=\dd v_1\cdots\dd v_n$ and $\dd\boldsymbol t:=\dd t_1\cdots\dd t_n$, an expansion of $J_r(x,y)^k\overline{J_r(x,y)}^k$ gives
\begin{multline}\label{eq:expanded-row}
\sum_{y\in\mathcal X_r}|J_r(x,y)|^n
=\frac1{(4\pi^2)^n}\int_{(\Delta_r^+)^n}\prod_{j=1}^k\mathscr H_r(v_j,t_j)
\prod_{j=k+1}^n\overline{\mathscr H_r(v_j,t_j)}\\
{}\times x^{\ii s(\boldsymbol v)}\Big(\sum_{y\in\mathcal X_r}y^{-\ii s(\boldsymbol t)}\Big)
\,\dd\boldsymbol v\,\dd\boldsymbol t.
\end{multline}
By \eqref{eq:geometric},
$$\Big|\sum_{y\in\mathcal X_r}y^{-\ii s(\boldsymbol t)}\Big|
=\Big|\sum_{l=0}^{|\mathcal X_r|-1}\eee^{-\ii l s(\boldsymbol t)}\Big|
\leq D_{|\mathcal X_r|}(s(\boldsymbol t)).$$
Consequently, with $P_r(\boldsymbol t):=\prod_{j=1}^n\1_{\mathcal A_r(t_j)}|F_{a_r}(t_j)|$,
\begin{equation}\label{eq:row-before-square}
\sum_{y\in\mathcal X_r}|J_r(x,y)|^n
\leq\pi^{-2n}\int_{(\Delta_r^+)^n}P_r(\boldsymbol v)P_r(\boldsymbol t)
D_{|\mathcal X_r|}(s(\boldsymbol t))\prod_{j=1}^n|K_r(v_j-t_j)|\,\dd\boldsymbol v\,\dd\boldsymbol t.
\end{equation}
Using $P_r(\boldsymbol v)P_r(\boldsymbol t)\leq \big((P_r(\boldsymbol v))^2+(P_r(\boldsymbol t))^2\big)/2$, we bound the integral in \eqref{eq:row-before-square} by the sum of two terms. In the term containing $(P_r(\boldsymbol v))^2$, we interchange $\boldsymbol v$ and $\boldsymbol t$. The domain is unchanged, and $|K_r(t_j-v_j)|=|K_r(v_j-t_j)|$. Furthermore,
$$|s(\boldsymbol v)-s(\boldsymbol t)|\leq\sum_{j=1}^n|v_j-t_j|\leq n\omega_r.$$ With this at hand, the definition of $\mathcal K_{r,n}$ secures $$D_{|\mathcal X_r|}(s(\boldsymbol v))\leq\mathcal K_{r,n}(s(\boldsymbol t))\quad\text{and}\quad D_{|\mathcal X_r|}(s(\boldsymbol t))\leq\mathcal K_{r,n}(s(\boldsymbol t)).$$ As a result,
\begin{equation*}
\sum_{y\in\mathcal X_r}\big|J_r(x,y)\big|^n \leq \pi^{-2n}\int_{(\Delta_r^+)^n}(P_r(\boldsymbol t))^2\mathcal K_{r,n}(s(\boldsymbol t))\prod_{j=1}^n|K_r(v_j-t_j)|\,\dd\boldsymbol v\,\dd\boldsymbol t.
\end{equation*}
For fixed $\boldsymbol t\in [(\log\log a_r)^{-2},(\log\log a_r)^2]^n$, enlarging each range of $v_j$ to $[t_j-\omega_r,t_j+\omega_r]$ gives $$\int \prod_{j=1}^n|K_r(v_j-t_j)|\,\dd\boldsymbol v\leq \prod_{j=1}^n \int_{t_j-\omega_r}^{t_j+\omega_r}|K_r(v_j-t_j)|\,\dd v_j=\Lambda_r^n,$$ where $$\Lambda_r:=\int_{-\omega_r}^{\omega_r}|K_r(u)|\,\dd u.$$
Thus, simultaneously for all $x\in\mathcal X_r$,
\begin{equation}\label{eq:moment-majorant}
\sum_{y\in\mathcal X_r}|J_r(x,y)|^n \leq \pi^{-2n}\Lambda_r^n\int_{\big[(\log\log a_r)^{-2}, (\log\log a_r)^2\big]^n}\prod_{j=1}^n\big(\1_{\mathcal A_r(t_j)}|F_{a_r}(t_j)|^2\big)
\mathcal K_{r,n}(s(\boldsymbol t))\,\dd\boldsymbol t.
\end{equation}
The right-hand side does not depend on $x$. Taking expectations in \eqref{eq:positive-part-moment} and invoking \eqref{eq:moment-majorant} and Fubini's theorem, we infer
\begin{multline}\label{eq:expected-majorant}
\me\Big[\max_{x\in\mathcal X_r}\sum_{y\in\mathcal X_r}|I_r(x,y)|^n\Big]\\\leq(2/\pi^2)^n\Lambda_r^n\int_{\big[(\log\log a_r)^{-2}, (\log\log a_r)^2\big]^n}\me\Big[\prod_{j=1}^n\big(\1_{\mathcal A_r(t_j)}|F_{a_r}(t_j)|^2\big)\Big]\mathcal K_{r,n}(s(\boldsymbol t))\,\dd\boldsymbol t.
\end{multline}

For $s\in\mn$, put $\Omega_s:=\{\boldsymbol t\in \big[(\log\log a_r)^{-2}, (\log\log a_r)^2\big]^s:~t_1<\ldots<t_s\}$. For $\boldsymbol\varepsilon=(\varepsilon_1,\ldots,\varepsilon_n)\in\{-1,1\}^n$, define
\begin{equation}\label{eq:ordered-integral-definition}
\mathfrak J_{r,n}(\boldsymbol\varepsilon):=\int_{\Omega_n}\mathcal K_{r,n}\Big(\sum_{j=1}^n\varepsilon_jt_j\Big)\prod_{j=1}^{n-1}\chi_r(t_{j+1}-t_j)\,\dd\boldsymbol t.
\end{equation}
Now we estimate the integral in \eqref{eq:expected-majorant}. To this end, we partition $\big[(\log\log a_r)^{-2}, (\log\log a_r)^2\big]^n$, apart from the sets where two coordinates coincide, into the $n!$ regions corresponding to the possible orderings of the coordinates. On each region, we relabel the coordinates in increasing order. The product inside the expectation is unchanged, whereas the signs $\sigma_j$ are permuted. Lemma~\ref{lem:cell} therefore gives
\begin{multline}\label{eq:ordered-moment-bound}
\int_{\big[(\log\log a_r)^{-2}, (\log\log a_r)^2\big]^n}\me\Big[\prod_{j=1}^n\big(\1_{\mathcal A_r(t_j)}|F_{a_r}(t_j)|^2\big)\Big]\mathcal K_{r,n}(s(\boldsymbol t))\,\dd\boldsymbol t\\
\leq n!(Ck)^{Ck}(\log a_r)^n\max_{\boldsymbol\varepsilon\in\{-1,1\}^n}\mathfrak J_{r,n}(\boldsymbol\varepsilon).
\end{multline}

Next, we work towards estimating \eqref{eq:ordered-integral-definition}. For $\boldsymbol t\in\Omega_n$, put $x_j:=t_{j+1}-t_j$ and, for $1\leq i\leq n$, put $$E_i:=\{i-1,i\}\cap\{1,\ldots,n-1\}$$ and $$\Omega_{n,i}:=\{\boldsymbol t\in\Omega_n:~x_j\geq (\log a_r)^{-b}~~ \text{for all }j\in E_i\}.$$ Thus $|E_i|=1$ if $i=1$ or $i=n$, and $|E_i|=2$ otherwise. Finally, put $\Omega_{n,0}:=\Omega_n\setminus\bigcup_{i=1}^n\Omega_{n,i}$. Since the integrand in \eqref{eq:ordered-integral-definition} is nonnegative,
\begin{equation}\label{eq:geometric-splitting}
\mathfrak J_{r,n}(\boldsymbol\varepsilon)\leq \sum_{i=1}^n\int_{\Omega_{n,i}}\mathcal K_{r,n}\Big(\sum_{j=1}^n\varepsilon_jt_j\Big)\prod_{j=1}^{n-1}\chi_r(x_j)\,\dd\boldsymbol t
+\int_{\Omega_{n,0}}\mathcal K_{r,n}\left(\sum_{j=1}^n\varepsilon_jt_j\right)\prod_{j=1}^{n-1}\chi_r(x_j)\,\dd\boldsymbol t.
\end{equation}
Fix $1\leq i\leq n$. On $\Omega_{n,i}$, we have $\prod_{j\in E_i}\chi_r(x_j)\leq (\max_{(\log a_r)^{-b}\leq x\leq (\log\log a_r)^2}\chi_r(x))^{|E_i|}$. After using this bound, the remaining factors $\chi_r$ do not depend on $t_i$. For fixed values of the other coordinates, \eqref{eq:kernel-for-moments} gives $$\int_{[(\log\log a_r)^{-2}, (\log\log a_r)^2]}\mathcal K_{r,n}\left(\varepsilon_i t_i+\sum_{j\neq i}\varepsilon_jt_j\right)\,\dd t_i\leq C_1(\log\log a_r)^3.$$ Here, the substitution $u=\varepsilon_i t_i+\sum_{j\neq i}\varepsilon_jt_j$ maps the integration interval onto an interval of the same length.

For $s\in\mn$, define $$V_{r,s}:=\int_{\Omega_s}\prod_{j=1}^{s-1}\chi_r(t_{j+1}-t_j)\,\dd t_1\ldots\dd t_s$$ and put $V_{r,0}:=1$. The change of variables $(t_1,\ldots,t_s)\mapsto (t_1,t_2-t_1,\ldots,t_s-t_{s-1})$ has unit Jacobian. Therefore,
\begin{multline}\label{eq:chain-integral}
V_{r,s}\leq \big((\log\log a_r)^2-(\log\log a_r)^{-2}\big)\Big(\int_0^{(\log\log a_r)^2}\chi_r(x)\,{\rm d}x\Big)^{s-1}\\\leq (\log\log a_r)^2 \Big(\int_0^{(\log\log a_r)^2}\chi_r(x)\,{\rm d}x\Big)^{s-1}, \quad s\in\mn.
\end{multline}

After integrating over $t_i$, we discard the restrictions involving the relative positions of the coordinates to its left and to its right. The remaining domain is contained in $\Omega_{i-1}\times\Omega_{n-i}$. Consequently,
\begin{multline*}
\int_{\Omega_{n,i}}\mathcal K_{r,n}\Big(\sum_{j=1}^n\varepsilon_jt_j\Big)\prod_{j=1}^{n-1}\chi_r(x_j)\,\dd\boldsymbol t\\ \leq C_1(\log\log a_r)^3 \Big(\max_{(\log a_r)^{-b}\leq x\leq(\log\log a_r)^2}\chi_r(x)\Big)^{|E_i|}
V_{r,i-1}V_{r,n-i}\\ \leq C_1(\log\log a_r)^{3+2|E_i|}\Big(\max_{(\log a_r)^{-b}\leq x\leq(\log\log a_r)^2}\chi_r(x)\Big)^{|E_i|}\Big(\int_0^{(\log\log a_r)^2}\chi_r(x)\,\dd x\Big)^{n-1-|E_i|}.
\end{multline*}
The last inequality follows from \eqref{eq:chain-integral}, because there are exactly $|E_i|$ nonempty groups of remaining coordinates. Using \eqref{eq:gap-integrals} and
\eqref{eq:large-gap-bound}, we obtain
\begin{multline*}
C_1(\log\log a_r)^{3+2|E_i|}\Big(\max_{(\log a_r)^{-b}\leq x\leq(\log\log a_r)^2}\chi_r(x)\Big)^{|E_i|}\Big(\int_0^{(\log\log a_r)^2}\chi_r(x)\,\dd x\Big)^{n-1-|E_i|}\\
\leq C_1c_2^{n-1-|E_i|}(\log a_r)^{b|E_i|}(\log\log a_r)^{3+6|E_i|+8(n-1-|E_i|)}\leq C_3^n(\log a_r)^{2b}(\log\log a_r)^{8n},
\end{multline*}
where $C_3:=\max(1,C_1,c_2)$. Here, we have used $1\leq|E_i|\leq 2$ and $$3+6|E_i|+8(n-1-|E_i|)=8n-5-2|E_i|\leq 8n.$$ Summing over $i$ gives
\begin{equation}\label{eq:isolated-integral}
\sum_{i=1}^n\int_{\Omega_{n,i}}\mathcal K_{r,n}\Big(\sum_{j=1}^n\varepsilon_jt_j\Big)\prod_{j=1}^{n-1}\chi_r(x_j)\,\dd\boldsymbol t\leq C_3^n n(\log a_r)^{2b}(\log\log a_r)^{8n}.
\end{equation}

Now consider $\boldsymbol t\in\Omega_{n,0}$. For each $i$, at least one gap indexed by $E_i$ is shorter than $(\log a_r)^{-b}$, whence $$n\leq \sum_{i=1}^n\sum_{j\in E_i}\1_{\{x_j<(\log a_r)^{-b}\}}=2\sum_{j=1}^{n-1}\1_{\{x_j<(\log a_r)^{-b}\}}.$$ Since $n=2k$, at least $k$ gaps are shorter than $(\log a_r)^{-b}$.
In particular, $$\1_{\Omega_{n,0}}\leq \sum_{\substack{J\subset\{1,\ldots,n-1\}\\|J|=k}}\prod_{j\in J}\1_{\{x_j<(\log a_r)^{-b}\}}\quad\text{on }\Omega_n.$$ Using \eqref{eq:kernel-pointwise-moments} and then changing
variables to $t_1,x_1,\ldots,x_{n-1}$, we obtain
\begin{multline*}
\int_{\Omega_{n,0}}\mathcal K_{r,n}\Big(\sum_{j=1}^n\varepsilon_jt_j\Big)\prod_{j=1}^{n-1}\chi_r(x_j)\,\dd\boldsymbol t \leq |\mathcal X_r|\sum_{\substack{J\subset\{1,\ldots,n-1\}\\|J|=k}}
\int_{\Omega_n}\prod_{j=1}^{n-1}\chi_r(x_j)\prod_{j\in J}\1_{\{x_j<(\log a_r)^{-b}\}}\,\dd\boldsymbol t\\\leq |\mathcal X_r| (\log\log a_r)^2 \binom{n-1}{k} (\int_0^{(\log a_r)^{-b}}\chi_r(x){\rm d}x)^k (\int_0^{(\log\log a_r)^2}\chi_r(x){\rm d}x)^{n-1-k}.
\end{multline*}
For the last inequality, we enlarged the range of each gap indexed by $J$ to $(0,(\log a_r)^{-b})$ and that of each remaining gap to $\big(0,(\log\log a_r)^2\big)$. Since $\binom{n-1}{k}\leq 2^{n-1}$ and $|\mathcal X_r|\leq 3\log a_r$, formula \eqref{eq:gap-integrals} now yields
\begin{multline}\label{eq:nonisolated-integral}
\int_{\Omega_{n,0}}\mathcal K_{r,n}\Big(\sum_{j=1}^n\varepsilon_jt_j\Big)\prod_{j=1}^{n-1}\chi_r(x_j)\,\dd\boldsymbol t\leq C_4^n\log a_r\,(\log\log a_r)^{2+(1-2B)k+8(n-1-k)}\\
\leq C_4^n\log a_r\,(\log\log a_r)^{8n-2Bk}.
\end{multline}
where $C_4:=2c_1\max(c_2,3)$.

Combining \eqref{eq:geometric-splitting}, \eqref{eq:isolated-integral} and \eqref{eq:nonisolated-integral}, we conclude that
\begin{equation}\label{eq:ordered-integral-bound}
\mathfrak J_{r,n}(\boldsymbol\varepsilon)\leq C_5^n n(\log\log a_r)^{8n}\big((\log a_r)^{2b}+\log a_r\,(\log\log a_r)^{-2Bk}\big)
\end{equation}
uniformly in $\boldsymbol\varepsilon$, where $C_5:=\max(C_3,C_4)$. Substituting \eqref{eq:ordered-moment-bound} and \eqref{eq:ordered-integral-bound} into \eqref{eq:expected-majorant}, we obtain
\begin{multline*}
\me\Big[\max_{x\in\mathcal X_r}\sum_{y\in\mathcal X_r}|I_r(x,y)|^{2k}\Big]\leq C_6^n n\,n!(Ck)^{Ck}(\Lambda_r\log a_r)^n (\log\log a_r)^{8n}\\{}\times \big((\log a_r)^{2b}+\log a_r\,(\log\log a_r)^{-2Bk}\big),
\end{multline*}
where $C_6:=2\pi^{-2}C_5$. By \eqref{eq:kernelBounds2} and $\log a_r\leq T_r\leq2\log a_r$, we have $\Lambda_r\log a_r\leq C_7\log\log\log a_r$. Finally, recalling that $n=2k$ and using $n!\leq n^n$ and $\log\log\log a_r\leq\log\log a_r$ for all sufficiently large $r$, we conclude that, for an appropriate absolute constant $C_0\geq 1$, $$C_6^n n\,n!(Ck)^{Ck}(\Lambda_r\log a_r)^n(\log\log a_r)^{8n}\leq(C_0k)^{C_0k}(\log\log a_r)^{C_0k}.$$
This proves \eqref{eq:highmoment}.
\end{proof}

\section{Proof of \eqref{eq:covinput}}\label{sect:proofcov}

Fix $\kappa\in(0,1)$. With $C_0$ as in Lemma~\ref{lem:highmoment}, let $C_1:=3C_0+10$, choose\footnote{The constants $C_1$ and $B$ are larger than needed for the subsequent proof. No attempt has been made to choose them optimally.} $b=\kappa/8$, $B=100+\lceil 16(C_1+4)/\kappa\rceil$ and then put $$k_r:=\left\lfloor\frac{\log\log a_r}{B\log\log\log a_r}\right\rfloor.$$ Observe that $1\leq k_r\leq\lfloor\log\log a_r\rfloor$ for all sufficiently large $r$ and thereupon
\begin{multline*}
\log\left[\big(2(\log\log a_r)^{3/5}\big)^{2k_r}(C_0k_r)^{C_0k_r}(\log\log a_r)^{C_0k_r}\right]\leq (2C_0+4)k_r\log\log\log a_r\\ \leq C_1k_r\log\log\log a_r\leq (C_1/B)\log\log a_r.
\end{multline*}
We have used the definition of $k_r$ for the last inequality. Furthermore, $(\log\log a_r)^{-2Bk_r}=(\log a_r)^{-2+o(1)}$. With these at hand, an application of Lemma~\ref{lem:highmoment} yields
\begin{equation}\label{eq:countMoment}
\me\Big[\big(2(\log\log a_r)^{3/5}\big)^{2k_r}\max_{x\in\mathcal X_r}\sum_{y\in\mathcal X_r}|I_r(x,y)|^{2k_r}\Big]\leq (\log a_r)^{\kappa/2}+(\log a_r)^{-1/2}
\end{equation}
for all sufficiently large $r$. Indeed, the exponents arising from the two terms in \eqref{eq:highmoment} are $$2b+C_1/B+o(1)<\kappa/2\quad\text{and}\quad -1+ C_1/B+o(1)<-1/2,$$ respectively.

Denote by $\mathcal T_r$ the event in Lemma~\ref{lem:truncation}, with the chosen $B$ and $b$. Then $\lim_{r\to\infty}\mmp(\mathcal T_r)=1$ and, on $\mathcal T_r$, $$\max_{x,y\in\mathcal X_r}\left|\frac{C_r(x,y)}{(xy)^{1/2}}-I_r(x,y)\right|\leq(\log\log a_r)^{-5/8}.$$ Since $(\log\log a_r)^{-5/8}=o((\log\log a_r)^{-3/5})$, for all sufficiently large
$r$, for all $x,y\in\mathcal X_r$, $$\mathcal T_r\cap \big\{|C_r(x,y)|>(\log\log a_r)^{-3/5}(xy)^{1/2}\big\}\subseteq \{|I_r(x,y)|>(\log\log a_r)^{-3/5}/2\}.$$ Therefore, on $\mathcal T_r$,
\begin{multline*}
\max_{x\in\mathcal X_r}\#\left\{y\in\mathcal X_r\setminus\{x\}:~|C_r(x,y)|>(\log\log a_r)^{-3/5}(xy)^{1/2}\right\}\\\leq \max_{x\in\mathcal X_r}\sum_{y\in\mathcal X_r\setminus\{x\}}
\1_{\{|I_r(x,y)|>(\log\log a_r)^{-3/5}/2\}}\leq \big(2(\log\log a_r)^{3/5}\big)^{2k_r}\max_{x\in\mathcal X_r}\sum_{y\in\mathcal X_r}|I_r(x,y)|^{2k_r}.
\end{multline*}
Markov's inequality and \eqref{eq:countMoment} now give
\begin{multline*}
\mmp\big\{\max_{x\in\mathcal X_r}\#\big\{y\in\mathcal X_r\setminus\{x\}:~|C_r(x,y)|>(\log\log a_r)^{-3/5}(xy)^{1/2}\big\}>(\log a_r)^\kappa\big\}\\
\leq \mmp(\mathcal T_r^c)+(\log a_r)^{-\kappa/2}+(\log a_r)^{-\kappa-1/2}~\to~ 0, \quad r\to\infty
\end{multline*}
thereby proving \eqref{eq:covinput}.

\section*{Statement on the use of AI tools}

The development of this paper involved an extended series of
mathematical discussions with GPT-6 Astra (OpenAI). These discussions contributed in particular
to the uniform fourth-moment estimate of Lemma~\ref{lem:fourth} and to
the conditional covariance estimate \eqref{eq:covinput} in
Proposition~\ref{prop:novel}. GPT-6 Astra was also used during the
preparation of the manuscript, for discussing the presentation,
checking drafts and language editing. Claude (Anthropic) was used for
proofreading and checking the final version.

The choice of the problem, the overall strategy of the proof and the formulation of the main results are due to the authors. %We have checked
%all arguments in detail and written the proofs in the simplest form we could find. % All references have been checked against the original sources.
The authors take full responsibility for the content of the paper, the correctness of its mathematical results and proofs, and any remaining errors.

\end{document}